\documentclass[opre, nonblindrev]{informs3}
\OneAndAHalfSpacedXI

\usepackage{afterpage}
\usepackage{natbib}
\bibpunct[, ]{(}{)}{,}{a}{}{,}%
\def\bibfont{\small}%
\def\bibsep{\smallskipamount}%
\def\F{{\mathbb F}}
\def\B{{\mathbb{B}}}
\def\R{{\mathbb{R}}}
\def\Z{{\mathbb{Z}}}
\def\Be{\mathcal B}
\def\Ge{{\mathcal G}}

\def\Ee{\mathcal E}

\def\Pe{\mathcal P}
\def\Qe{\mathcal Q}

\def\Se{\mathcal S}
\def\Cupper{\overline C}
\def\Clower{\underline C}
\def\Vupper{\overline V}

\def\argmax{\mbox{argmax}}

\def\convP{{\rm conv}(\Pe)}
\def\convPprime{{\rm conv}(\Pe')}
\def\convPtwo{{\rm conv}({\Pe}_2)}
\def\dimP{{\rm dim}({\rm conv}(\Pe))}
\def\F1^+{F1\textsuperscript{+}}  
\def\FOneXprime{F1-X$'$}

\def\fred{\textcolor{black}}

\def\rred{\textcolor{black}}

\def\sred{\textcolor{black}}
\def\tred{\textcolor{black}}

\TheoremsNumberedThrough
\EquationsNumberedThrough

\usepackage{amsmath, comment, tikz, graphicx, algorithm, algpseudocode, multirow, multicol, subcaption, bbm, epsfig, enumerate, enumitem, amsfonts, subeqnarray, url, color, array, xcolor, booktabs, endnotes, float, mathpazo, pdflscape, anyfontsize, tabularx, longtable, diagbox, rotating, xr, xr-hyper}

\usepackage{pifont} 
\usetikzlibrary{positioning, arrows, decorations, backgrounds}
\usetikzlibrary{calc}
\usepackage{mathdots}
\usepackage[belowskip=10pt,aboveskip=10pt]{caption}
\usepackage[skip=2pt, font=footnotesize, labelfont=bf, labelsep=period]{caption}

\usepackage{pgfplots}
    \pgfplotsset{
        compat=1.9,
    }
\usepackage[backref = false, bookmarks, breaklinks=true, colorlinks = true, plainpages = false, citecolor = DarkBlue , urlcolor = DarkBlue, filecolor = DarkBlue, linkcolor = DarkBlue]{hyperref}

\usepackage{orcidlink}

\definecolor{DarkBlue}{rgb}{0,0.08,0.45}

\let\footnote=\endnote

\renewcommand{\theARTICLETOP}{}
\renewcommand{\theRRHFirstLine}{}
\renewcommand{\theRRHSecondLine}{}
\renewcommand{\theLRHFirstLine}{}
\renewcommand{\theLRHSecondLine}{}
\newcommand\sscriptsize{\fontsize{7.5pt}{7.5pt}\selectfont}

\TITLE{A Polyhedral Study on Unit Commitment with a Single Type of Binary Variables}

\begin{document}


\ARTICLEAUTHORS{
\AUTHOR{Bin Tian$^{{\textup a}}$, Kai Pan$^{{\textup a},*}$, Chung-Lun Li$^{\textup b}$}
\AFF{$^{\textup a}$Faculty of Business, The Hong Kong Polytechnic University, Kowloon, Hong Kong}
\AFF{$^{\textup b}$College of Business, City University of Hong Kong, Kowloon, Hong Kong}
\AFF{$^*$Corresponding author}
\AFF{Contact: \EMAIL{bin01.tian@connect.polyu.hk} (BT), \EMAIL{kai.pan@polyu.edu.hk} (KP), \EMAIL{chung-lun.li@connect.polyu.hk} (CLL)}}

\ABSTRACT{Efficient power production scheduling is a crucial concern for power system operators aiming to minimize operational costs. Previous mixed-integer linear programming formulations for unit commitment (UC) problems have primarily used two or three types of binary variables. The investigation of strong formulations with a single type of binary variables has been limited, as it is believed to be challenging to derive strong valid inequalities using fewer binary variables, and the reduction of the number of binary variables is often accompanied by a compromise in tightness. To address these issues, this paper considers a formulation for unit commitment using a single type of binary variables and develops strong valid inequality families to enhance the tightness of the formulation. Conditions under which these strong valid inequalities serve as facet-defining inequalities for the single-generator UC polytope are provided. For those large-size valid inequality families, the existence of efficient separation algorithms for determining the most violated inequalities is also discussed. The effectiveness of the proposed single-binary formulation and strong valid inequalities is demonstrated through computational experiments on network-constrained UC problems. The results indicate that the strong valid inequalities presented in this paper are effective in solving UC problems and can also be applied to UC formulations that contain more than one type of binary variables.}

\KEYWORDS{unit commitment, polyhedral study, strong valid inequalities, convex hull}

\maketitle
\vspace{-1cm}
\section{Introduction} \label{sec:introduction}
With the increased prevalence of extreme weather events such as \fred{droughts}, wildfires, and flooding around the world in recent years, power consumption in many areas hit an all-time high in the summer of 2022.
In addition, the continued retirements of coal-fired generating plants, relatively high coal prices, and lower-than-average coal stocks at power plants have limited coal consumption \citep{EIAaug2022}.
As a result, the efficient production and distribution of electricity have been identified as the biggest concern for power producers around the globe. 

The unit commitment (UC) problem, which involves the scheduling of power generators, has been a challenging optimization problem in the power industry for many years.
It often needs to be solved multiple times per day by system operators \citep{xavier2021learning}.
Because of such practical needs, it has received \fred{considerable} attention over the past decades.
\fred{The UC problem involves} scheduling a group of generators at \fred{a} possibly minimal operational cost, subject to their physical and system constraints over a finite time horizon.
\rred{Physical constraints specify the technical properties of generators, and they may vary depending on the type of generation unit, such as hydro, thermal, and wind units \citep{van2018large}.
Unless explicitly stated otherwise, all subsequent discussions are about thermal units.}

\rred{The most common physical constraints for thermal units are the ramp-up/-down rate, generation lower/upper bound, and minimum up/down time constraints.}
System constraints typically include the load (demand) requirement, system reserve constraint, and transmission flow limit.
All generation units are coupled by the system constraints to ensure the reliability of the entire system.
The operational cost comprises various components, including the generation (fuel) cost and the startup and shutdown costs of generators.
The generation cost is a significant component \citep{padhy2004unit}, and \fred{it} is generally assumed to be an increasing quadratic convex function of the generation amount \citep{takriti2000using}.
\fred{Start-up} and \fred{shut-down} costs are incurred \fred{each} time the status of a generator changes \citep{sen1998optimal}.

Many different UC problems \fred{can arise} depending on the structure of the electrical power system.
\fred{The ability to solve} UC problems \fred{efficiently can have} a great impact on both society and individual consumers.
Even small improvements in the quality of solutions for UC problems can affect the electricity price over large regions and lead to millions of \fred{U.S.} dollars of savings per day \citep{damci2016polyhedral}.
Although the \rred{single-generator UC (single-UC)} problem with only physical constraints and a quadratic generation cost function is proven to be polynomial-time solvable \citep{frangioni2006solving}, general UC problems with system constraints have not yet been solved satisfactorily.
They are often formulated as large-scale mixed-integer programming problems.
Such problems are typically NP-hard and difficult to solve when the problem sizes are large \citep{zheng2015stochastic, knueven2020mixed, tejada2020unit}.
For example, a UC problem with an arbitrary number of generators that contain only \rred{minimum up/down}, generation lower/upper bound, and demand constraints is classified as NP-hard even considering a single operational period \citep{bendotti2019complexity}.
In a day-ahead deregulated electricity market, an independent system operator (ISO) is expected to determine the generation schedule for a power system within a very short time.
Such a generation schedule involves hundreds of thermal units, thousands of transmission lines, and 24--48 operating hours.
Therefore, over the past decades, numerous approaches have been devised from both formulation and algorithm perspectives in order to solve UC problems efficiently.

Priority list and heuristic algorithms are among the earliest solution approaches used to solve UC problems \citep{kazarlis1996genetic}.
\rred{The former lists all units by their operational costs and then economically dispatches the system load to generators by a pre-determined order.
Heuristic algorithms, such as genetic algorithms and simulated annealing, are also frequently adopted as they can be converted to work on parallel computers.}
However, these approaches usually lead to suboptimal solutions. 
Dynamic programming (DP)-based approaches, \fred{in contrast}, are exact solution techniques for UC problems and are widely applied in the early period \citep{wang1993effects, baldick1995generalized}.
\rred{Using these approaches, UC problems are decomposed by time, and the state for each time period is represented by the combinations of units.
However, the number of states increases dramatically as the number of units and time periods grows.
Therefore,
t}hese approaches are also integrated with heuristic methods to reduce the search space \citep{ouyang1991intelligent}. 
Recently, \cite{frangioni2006solving} and \cite{guan2018polynomial} propose DP algorithms to solve single-UC problems and provide theoretical complexity results.
\rred{They show that single-UC problems with only physical constraints and suitable generation cost functions, such as quadratic cost functions, can be solved in polynomial time.} 
In addition, \fred{given} the large size of UC problems, Lagrangian relaxation (LR)-based approaches are proposed \citep{muckstadt1977application, abdul1996practical,  takriti2000using, lu2005unit}.
These approaches relax system constraints, such as load requirements, and integrate them into the objective function through Lagrangian multipliers. 
The resulting problem is then decomposed into subproblems either by units or by time periods, and these subproblems are solved iteratively until the primal-dual gap is \fred{difficult} to shrink. 
However, LR-based approaches suffer from slow and unsteady convergence, and the feasibility of the final solution cannot be guaranteed \citep{ma1999unit}.
\rred{To overcome the convergence problem, quadratic terms are incorporated into the objective function to penalize the violation of demand constraints and improve its convexity, resulting in augmented Lagrangian relaxation based approaches.
A comprehensive examination of solution approaches for UC problems can be found in \cite{van2018large}.}


The extensive advancement of \fred{mixed-integer linear programming} (MILP) solvers has led to the widespread application of MILP-based approaches in formulating and solving UC \rred{problems.
MILP-based} approaches can guarantee convergence to the optimal solution while providing a flexible and accurate modeling framework.
Moreover, the optimality gap is easy to \rred{obtain.
ISOs} are \fred{therefore} increasingly adopting MILP-based approaches over LR-based approaches \fred{to solve} large-scale UC problems \citep{hedman2009analyzing, wu2011tighter, ostrowski2012tight, li2021extreme}.

Two primary factors are generally considered in \fred{evaluations of} MILP formulations for UC problems: \textit{compactness} and \textit{tightness}.
Compactness refers to the size of the problem, which can be quantified by the number of constraints, decision variables, \rred{or nonzero coefficients;} tightness refers to the proximity of the linear programming (LP) relaxation of the problem to the convex hull of its feasible region (\citealt{morales2013tight, knueven2020mixed}).
For UC problems, compactness can be achieved by reducing the number of \rred{integer} variables in an MILP formulation, as fewer integer variables may lead to a reduction in the number of nodes of the search tree for the branch-and-cut method. 
Note that for UC formulations, all integer variables are binary. 
In terms of \rred{the number of binary variables used}, UC formulations can be categorized into \rred{two} categories.
A single-binary formulation uses a single set of binary variables to denote the on/off status of all units.
A three-binary formulation uses two additional sets of binary variables to represent the start-up and shut-down decisions.
As an important variant of the three-binary formulation, a two-binary formulation can be obtained by expressing the shut-down variables in terms of the on/off and start-up variables.
For a UC problem, a three-binary formulation can generally be tighter than a single-binary one because the addition of the start-up and shut-down binary variables in the three-binary formulation may facilitate the improvement of the physical constraints, potentially leading to a better LP bound.
However, the large size of its search tree and the difficulty of solving the subproblem at each branching node may increase the solution time.
On the other hand, a single-binary formulation uses fewer binary variables, which reduce the size of the search tree and may decrease the solution time.
Nevertheless, it usually has a weaker LP bound than the three-binary one.
Tightness can be achieved by deriving strong valid inequalities for the MILP formulation to tighten its LP relaxation.
Most strong valid inequalities are obtained by studying the physical constraints of a single generator (see, e.g., \citealt{lee2004min}, \citealt{rajan2005minimum}, \citealt{morales2013tight}, \citealt{damci2016polyhedral}, \citealt{pan2016polyhedral}, and \citealt{bendotti2018min}).

Three-binary formulations for UC problems are the most widely studied.
\cite{garver1962power} is the first \fred{to propose} an MILP formulation for a UC problem.
In this three-binary formulation, the generation cost function is assumed to be linear with respect to the generation amount.
\cite{arroyo2000optimal} introduce a three-binary formulation for a \rred{single}-UC problem.
They approximate the exponential start-up cost function and the nonconvex generation cost function using stairwise and piecewise functions, respectively.
\cite{chang2001experiences} present a three-binary formulation for a short-term hydro scheduling UC problem.
\cite{chang2004practical} put forward a new three-binary formulation for UC problems\rred{, and} they approximate the cubic generation cost function using a piecewise linear one with three breakpoints.
\cite{li2005price} compare the LR-based approach with the MILP-based approach in solving a price-based UC problem with various types of generators based on the formulation of \cite{chang2004practical}.
\fred{Their numerical} results indicate that the MILP-based approach exhibits superior performance \fred{on small scale problems} compared \fred{with} the LR-based approach, and the MILP formulation \fred{must} be tightened to improve its performance \fred{on} large-scale problems.
\cite{ostrowski2012tight} consider the formulation of \cite{arroyo2000optimal} \fred{and replace} their \rred{minimum up/down} constraints with those of \cite{rajan2005minimum} because the latter can reduce the computational time significantly.
By studying the physical constraints of a single generator, \fred{they derive} a class of strong valid inequalities to tighten the MILP formulation.
\rred{Their computational results indicate that their tightened formulation is more effective than \citeauthor{carrion2006computationally}'s (\citeyear{carrion2006computationally}) single-binary formulation.}
\cite{morales2013tight} propose an alternative three-binary formulation based on the formulation of \cite{ostrowski2012tight}.
The generation cost function is represented as a linear function with respect to the generation amount.
They introduce generation \fred{limit} constraints to substitute \fred{for} those in \cite{ostrowski2012tight}.
\fred{They show that the} resulting formulation \rred{has smaller size and better LP bound} than that of \cite{ostrowski2012tight}.
\cite{morales2015tight} establish a three-binary formulation based on \fred{that of} \cite{morales2013tight} by considering different start-up/shut-down trajectories, which are ignored in conventional research.
\cite{damci2016polyhedral} conduct a polyhedral study of the physical constraints \fred{based on the work} of \cite{ostrowski2012tight}.
\fred{They derive a} convex hull for the two-period case and strong valid inequalities for the multi-period case to tighten the MILP formulation.
\fred{Because} the number of these strong valid inequalities can be exponential, polynomial separation algorithms are provided to apply them in the solution process.
Computational results demonstrate that \fred{this formulation} outperforms the strong formulation of \cite{ostrowski2012tight}.
\cite{atakan2018state} develop a state-transition formulation for UC problems based on \fred{the} formulations of \cite{ostrowski2012tight} and \cite{morales2013tight}.
Transmission constraints are not considered in their formulation.
\fred{Their test} results demonstrate that the proposed formulation has a shorter computational time for long-horizon problems than \fred{the formulations of} \cite{ostrowski2012tight} and \cite{morales2013tight}.
\tred{
\cite{gentile2017tight} provide the convex hull description for the single-UC polytope considering the minimum up/down time constraints, start-up/shut-down capabilities constraints, and generation limit constraints.} 

Other variants of three-binary formulations have also received considerable attention.
\cite{rajan2005minimum} study \rred{a single-UC problem with minimum up/down times constraints using three types of binary variables.}
They provide a complete description of the convex hull of the polytope.
\rred{Based on \citeauthor{rajan2005minimum}'s three-binary formulation, \cite{pan2016strengthened} derive several families of strong valid inequalities for UC problems with gas turbine generators.}
Their strong valid inequalities are facet-defining for the polytope of physical constraints under specific conditions.
\cite{pan2016polyhedral} conduct a polyhedral study of physical constraints based on the formulation \fred{of} \cite{pan2016strengthened}.
They derive the complete convex hull descriptions for the two- and three-period polytopes under different parameter settings.
They also develop strong valid inequalities for the multi-period case and provide polynomial-time separation algorithms for \fred{exponentially large} valid inequality families. 
\cite{bendotti2018min} analyze the \rred{minimum up/down} polytope of multiple generators based on the formulation \fred{of} \cite{pan2016polyhedral}; \fred{their} generation cost function is linear \fred{in} the generation amount.
\fred{They obtain} \rred{\it up-set} and \rred{\it interval up-set} valid inequalities to accelerate the branch-and-cut algorithm.
However, given a fractional solution, the problems of separating these two types of inequalities are NP-complete and NP-hard, respectively.
\cite{pan2022polyhedral} perform a polyhedral study of a single generator by incorporating fuel constraints.
They prove that the \rred{single-UC} problem with a fuel constraint is NP-hard, and they derive strong valid inequalities to improve the computational performance.
\tred{\cite{dupin2017tighter} present two main formulations for a UC problem with min-stop ramping constraints based on the definitions of the so-called \emph{state} and \emph{level} variables.
The two formulations are compared and exhibit an isomorphism.}


Studies on single-binary formulations are limited.
\cite{lee2004min} investigate the \rred{minimum up/down} polytope using a single type of binary variables.
They give a complete \rred{convex hull} description of the polytope, obtain valid inequalities, and design an efficient separation procedure for using these valid inequalities.
\cite{carrion2006computationally} propose a single-binary MILP formulation for UC problems.
\fred{They approximate the} generation cost function and the exponential start-up cost function using linear \fred{functions} as in \cite{arroyo2000optimal}.
\fred{They also establish} new \rred{minimum up/down} constraints.
\fred{They then compare the} proposed formulation with the three-binary formulation of \cite{arroyo2000optimal}, as well as \fred{with} its variant in which one type of binary variables in \cite{arroyo2000optimal} is relaxed.
\rred{They computationally demonstrate that their single-binary formulation outperforms the other two formulations significantly.}
\cite{frangioni2006perspective} derive perspective cuts for the mixed-integer quadratic programming problem with semi-continuous variables.
They test these cuts by solving a single-binary UC formulation with a quadratic generation cost function.
These cuts can \fred{substantially} improve the performance of the branch-and-cut method.
However, \fred{their formulation does not consider} the ramp-up/-down, \rred{system} reserve, and transmission constraints.
\cite{frangioni2009tighter} apply the perspective cuts of \cite{frangioni2006perspective} to provide a new piecewise linear approximation of the generation cost function for a short-term UC problem with hydro and thermal generators.
\cite{brandenberg2017summed} consider the summed start-up cost across all time periods in a single-binary UC formulation by introducing a single continuous variable for each unit.
They derive the H-representation of its epigraph and provide an exact linear separation algorithm.

Generally, enhancing the tightness of a formulation for a UC problem necessitates the inclusion of additional variables or constraints, which may lead to an increase in the solution time since the solver needs to solve larger LP subproblems repeatedly; conversely, improving the compactness of a formulation often comes at the expense of weakening tightness, resulting in a weak lower bound \citep{morales2013tight, knueven2020mixed}.
Therefore, in practice, tightness and compactness \fred{must be balanced}.
\rred{In most cases, improvement in tightness for UC formulations is preferred over that in compactness because of the potential reduction in solution time, despite the increased complexity resulting from additional binary variables \citep{hedman2009analyzing, ostrowski2012tight}.}
Moreover, \fred{a lack of} binary variables for start-up/shut-down decisions makes it \fred{difficult} to generate strong valid inequalities \citep{ostrowski2012tight}.
Thus, few studies examine formulations with a single type of binary variables and derive strong valid inequalities to improve their tightness.
To bridge this gap, this paper studies a single-binary formulation for a UC problem with thermal units and derives strong valid inequalities to speed up the solution process.
\tred{The studied single-binary formulation also offers an alternative solution approach for UC problems, which can be leveraged in both theoretical research and practical applications.}
The main contributions of this study are summarized as follows:
\begin{itemize}
\item Through an investigation of the physical constraints, we provide a complete \rred{convex hull} description of the two-period single-UC polytope of the \rred{single-binary} formulation.
\item We develop strong valid inequality families for the multi-period single-UC polytope, and we derive the conditions under which the strong valid inequalities are facet-defining.
We also develop efficient separation algorithms for determining \rred{a} most violated inequality in each valid inequality family.
\item \fred{We demonstrate the} effectiveness of our strong valid inequalities in tightening our \rred{single-binary} formulation through computational experiments.
The results indicate that our strong valid inequalities are effective in solving UC problems and can also be applied to UC formulations that contain more than one type of binary variables.
\end{itemize}

The rest of the paper is organized as follows.
Section~\ref{sec:model} describes the UC problem \fred{under study}, presents a \rred{single-binary} MILP formulation for it, and introduces the single-UC polytope.
Section~\ref{sec:two-period} provides the complete description of the convex hull for the two-period case and discusses its importance \fred{for} solving our UC problem.
Section~\ref{sec:multi-period} presents various strong valid inequalities to tighten the single-UC polytope and discusses the existence of efficient separation algorithms.
Section~\ref{sec:comp-exper} reports the results of a computational study conducted to assess the effectiveness of our strong valid inequalities in tightening the \rred{single-binary} MILP formulation and speeding up the solution process.
Section~\ref{Conclusion} \fred{concludes the paper and offers} suggestions for future research.
\rred{All mathematical proofs are provided in Online Appendix~\ref{apx:A}.
Some additional computational results are provided in Online Appendix~\ref{apx:B}.}
\vskip0pt

\section{MILP Model for Unit Commitment}\label{sec:model}

In this section, we first present an MILP model for the UC problem \rred{with thermal units,} and then \rred{present some important properties of the single-generator case}. 
In the UC problem being studied, a system operator plans the generation schedule of a set of generators $\Ge$ for a number of time periods at minimal operating costs while satisfying physical and system constraints.
The system includes a set of buses $\Be$ and a set of transmission lines $\Ee$ that link the buses, allowing surplus power to be distributed.
Each bus can be equipped with multiple generators and is responsible for the load requirement of a geographical region.
Surplus power at one bus can be transferred to neighboring buses through transmission lines to satisfy the load requirements of other regions.
The power flow on each transmission line should not exceed the line's capacity.
To ensure the reliability of the power supply, some generation capacity should be reserved for outages.
All \fred{of the} generators should operate without violating their physical configurations.
Every time a generator starts up or shuts down, a fixed cost is incurred.
For each time period, an operational cost is incurred depending on the generation amount and the online/offline status of the generators.

We let 
$\R^n$ denote the $n$-dimensional real vector space, 
$\R_+^n$ denote the $n$-dimensional nonnegative real vector space, and 
$\B^n$ denote the $n$-dimensional binary vector space.
Given any nonnegative integers $a$ and $b$, we let $[a,b]_\Z$ denote the set of all integers between $a$ and $b$;
that is, $[a,b]_\Z=\{a,a+1,\ldots,b\}$ if $a\le b$, and $[a,b]_\Z=\emptyset$ if $a>b$.

Let $T$ be the number of time periods in the operation horizon.
For each generator $g\in\Ge$, let $L^g>0$ and $\ell^g>0$ be the \rred{minimum up} and \rred{minimum down} time \fred{requirements}, respectively.
That is, once the generator starts up, it must stay online for at least $L^g$ time periods, and once it shuts down, it must stay offline for at least $\ell^g$ time periods.
For each $g\in\Ge$, let $\Cupper^g$ and $\Clower^g$ be the generation upper and lower bounds, where $\Cupper^g>\Clower^g>0$.
For each $g\in\Ge$, let $V^g>0$ be the maximum change \fred{in} the generation amount between two consecutive online time periods\rred{, i.e., the ramp-up rate is assumed to be equal to the ramp-down rate.}
\rred{For each $g\in\Ge$, let $\Vupper^g>0$ be the start-up/shut-down ramp limit, i.e., the start-up ramp limit is assumed to be equal to the shut-down ramp limit.}
Thus, when a generator $g$ is online, its generation amount should be within the range $[\Clower^g,\Cupper^g]$.
When the generator starts up, its generation amount in the start-up period should be within the range $[\Clower^g,\Vupper^g]$.
When the generator shuts down, its generation amount in the previous time period should also be within the range $[\Clower^g,\Vupper^g]$.
We assume that $\Vupper^g+V^g\le\Cupper^g$ for all $g\in\Ge$.
This condition guarantees that a generator can ramp up at its full rate $V^g$ for at least one period after it starts up.
\tred{We also assume that $\Clower^g<\Vupper^g<\Clower^g+V^g$ for all $g\in\Ge$, which holds in most industrial settings as indicated by \cite{morales2015tight}, \cite{damci2016polyhedral}, \cite{pan2016strengthened}, and \cite{gentile2017tight}.}
For each bus $b\in\Be$, let $\Ge_b$ be the set of generators at bus $b$ (note: $\bigcup_{b\in\Be}\Ge_b=\Ge$ and $\Ge_b\cap\Ge_{b'}=\emptyset$ for all $b,b'\in\Be$ such that $b\ne b'$).
The other parameters are defined as follows:
\begin{list}{\textbullet}{}
\item $f^g(\cdot)$: \fred{The generation} cost function for generator $g$ (for each $g\in\Ge$, $f^g(\cdot)$ is a non-decreasing convex piecewise linear function with a fixed number of linear segments).
\item $c^g$: \fred{The fixed} cost incurred if generator $g$ is online ($c^g\ge 0$ for all $g\in\Ge$). 
\item $\phi^g$: \fred{The fixed} start-up cost of generator $g$ ($\phi^g\ge 0$ for all $g\in\Ge$).
\item $\psi^g$: \fred{The fixed} shut-down cost of generator $g$ ($\psi^g\ge 0$ for all $g\in\Ge$).
\item $d^b_t$: The load (demand) at bus $b$ in time period $t$ ($d^b_t\ge 0$ for all $t\in[1,T]_\Z$ and $b\in\Be$).    
\item $C_e$: \fred{The capacity} limit of transmission line $e$ ($C_e\ge 0$ for all $e\in\Ee$).
\item $K^b_e$: Line flow distribution factor for the flow on transmission line $e$ contributed by the net injection at bus $b$ ($K^b_e\ge 0$ for all $e\in\Ee$ and $b\in\Be$).
\item $r_t$: \fred{The system} reserve factor of time period \fred{$t$} ($r_t\ge 0$ for all $t\in[1,T]_\Z$).
\end{list}
Here, the non-decreasing convex piecewise linear generation cost function $f^g(x)$ is used to approximate the convex quadratic cost function $a^gx^2+b^gx$;
see \cite{carrion2006computationally} and \cite{pan2022polyhedral} for similar approximations.
Our model has the following decision variables:
\begin{list}{\textbullet}{}
\item $x^g_t$: The generation amount of generator $g\in\Ge$ in period $t\in[1,T]_\Z$.
\item $y^g_t$: The online/offline status of generator $g\in\Ge$ in period $t\in[1,T]_\Z$, where $y^g_t=1$ if $g$ is online in period $t$, and $y^g_t=0$ otherwise. 
\item $u^g_t$: The start-up cost of generator $g\in\Ge$ in period $t\in[1,T]_\Z$.
\item $v^g_t$: The shut-down cost of generator $g\in\Ge$ in period $t\in[1,T]_\Z$.
\end{list}
Variables $x^g_t$, $u^g_t$, and $v^g_t$ are continuous, \fred{whereas} variable $y^g_t$ is binary.
We assume that the values of $y^g_{-\max\{L^g,\ell^g\}+1},y^g_{-\max\{L^g,\ell^g\}+2},\ldots,y^g_{-1},y^g_0$, and $x^g_{0}$ (for all $g\in\Ge$) are given as initial conditions.
The UC problem is formulated as follows:
{\small \begin{subeqnarray}
\mbox{\hskip-25pt Problem 1:\ }&\mbox{min\ } & \textstyle \sum_{g\in\Ge}\sum_{t=1}^T(u^g_t+v^g_t+c^gy^g_t+f^g(x^g_t)) \slabel{prob1:obj} \\[-1pt]
&\mbox{s.t.\ } & -\!y_{t-1}^g\!+\!y_t^g\!-\!y_k^g\le 0,\ \forall t\in[-L^g\!+\!2,T]_{\Z},\,\forall k\in[t,\min\{T,t\!+\!L^g\!-\!1\}]_\Z,\,\forall g\in\Ge, \slabel{prob1:min-up} \\[-1pt]
&& y_{t-1}^g-y_t^g+y_k^g\le 1,\ \forall t\in[-\ell^g\!+\!2,T]_\Z,\ \forall k\in[t,\min\{T,t\!+\!\ell^g\!-\!1\}]_\Z,\,\forall g\in\Ge, \slabel{prob1:min-down} \\[-1pt]
&& -x_{t}^{g}+\Clower^{g}y_{t}^{g}\leq0,\ \forall t\in[1,T]_\Z,\ \forall g\in\Ge, \slabel{prob1:gen-lower-bound}\\[-1pt] 
&& x_{t}^{g}-\Cupper^{g}y_{t}^{g}\leq0,\ \forall t\in[1,T]_\Z,\ \forall g\in\Ge, \slabel{prob1:gen-upper-bound}\\[-1pt]
&& x_t^g-x_{t-1}^g\leq V^g y_{t-1}^g+\Vupper^g(1-y_{t-1}^g),\ \forall t\in[1,T]_\Z,\,\forall g\in\Ge, \slabel{prob1:ramp-up}\\[-1pt]
&& x_{t-1}^g-x_t^g\leq V^g y_t^g+\Vupper^g(1-y_t^g),\ \forall t\in[1,T]_\Z,\,\forall g\in\Ge, \slabel{prob1:ramp-down}\\[-1pt]
&& u_{t}^{g}\geq\phi^{g}(y_{t}^{g}-y_{t-1}^{g}),\ \forall t\in[1,T]_\Z,\,\forall g\in\Ge, \slabel{prob1:startup-cost}\\[-1pt]
&& v_{t}^{g}\geq\psi^{g}(y_{t-1}^{g}-y_{t}^{g}),\ \forall t\in[1,T]_\Z,\,\forall g\in\Ge, \slabel{prob1:shutdown-cost}\\[-1pt]
&& \textstyle \sum_{g\in\Ge}x_t^g=\sum_{b\in\Be}d_t^b,\ \forall t\in[1,T]_\Z, \slabel{prob1:load}\\[-1pt]
&& \textstyle \sum_{g\in\Ge}\Cupper^g y_t^g\geq(1+r_t)\sum_{b\in\Be}d_t^b,\ \forall t\in[1,T]_\Z, \slabel{prob1:spinning-reserve}\\[-1pt]
&& \textstyle -C_e\le\sum_{b\in\Be}K^b_e\bigl(\sum_{g\in\Ge_b}x_t^g-d_t^b\bigr)\le C_e,\ \forall t\in[1,T]_\Z,\,\forall e\in\Ee, \slabel{prob1:transmission-constraint}\\[-1pt]
&& y_t^g\in\{0,1\}, x_t^g\ge 0, u_t^g\ge 0, v_t^g\ge 0,\ \forall t\in[1,T]_\Z,\,\forall g\in\Ge. \slabel{prob1:nonnegativity}
\end{subeqnarray}}%

Objective function \eqref{prob1:obj} minimizes the total cost, which includes the start-up costs, shut-down costs, and fixed and variable generation costs.
Constraint \eqref{prob1:min-up} states the \rred{minimum up} requirement for generator $g$.
It requires generator $g$ to stay online in periods $[t,\min\{T,t+L^g-1\}]_\Z$ if it starts up in period $t$.
Constraint \eqref{prob1:min-down} states the \rred{minimum down} requirement for generator $g$.
It requires generator $g$ to stay offline in periods $[t,\min\{T,t+\ell^g-1\}]_\Z$ if it shuts down in period $t$.
Constraints \eqref{prob1:gen-lower-bound} and \eqref{prob1:gen-upper-bound} ensure that the generation amount of generator $g$ in period $t$ is $0$ if the generator is offline and is within the range $[\Clower^g,\Cupper^g]$ if the generator is online.
Constraints \eqref{prob1:ramp-up} and \eqref{prob1:ramp-down} guarantee that generator $g$ ramps up/down within its limit $V^g$ between two consecutive online time periods.
They also guarantee that generator $g$ ramps up by no more than $\Vupper^g$ units when it starts up and ramps down by no more than $\Vupper^g$ units when it shuts down.
Constraint \eqref{prob1:startup-cost} and objective function \eqref{prob1:obj}, together with the nonnegativity constraint of $u^g_t$, imply that the start-up cost for generator $g$ in period $t$ is $\phi^g$ if the generator starts up in period $t$, and $0$ otherwise.
Constraint \eqref{prob1:shutdown-cost} and \rred{the} objective function \eqref{prob1:obj}, together with the nonnegativity constraint of $v^g_t$, imply that the shut-down cost for generator $g$ in period $t$ is $\psi^g$ if the generator shuts down in period $t$, and $0$ otherwise.
Constraint \eqref{prob1:load} is the load balance constraint in period $t$, which requires the total generation amount to satisfy the total demand in the period.
Constraint \eqref{prob1:spinning-reserve} is the system reserve requirement, which requires the total generation capacity of all online generators to exceed the load requirement by a system reserve factor to deal with demand variations.
Constraint \eqref{prob1:transmission-constraint} states the transmission flow limit.
In the distribution process, a bus $b$ contributes a factor $K^b_e$ of its net injection $\sum_{g\in\Ge_b}x_t^g-d_t^b$ to each transmission line $e$,
and constraint \eqref{prob1:transmission-constraint} requires the absolute value of the total net injection contributed by all buses to each transmission line to stay below its capacity limit to prevent it from being overloaded; see \cite{ma1999unit}, \citet[p.~290]{shahidehpour2002market}, and \cite{xavier2021learning} for similar settings.
Constraint \eqref{prob1:nonnegativity} states the nonnegativity and binary requirements of the decision variables.
\sred{Note that Problem 1 uses only a single type of binary variables, $y_t^g$, and thus is a single-binary formulation. However, in the optimal solution, the continuous variables $u_t^g$ and $v_t^g$ have only two possible values. Hence, Problem~1 can also be formulated using two additional vectors of discrete variables. Because the polytope $\convP$ that we are analyzing in this paper is independent of $u_t^g$ and $v_t^g$ (see the definition of set $\Pe$ below), the polytope is a single-binary polytope regardless of whether $u_t^g$ and $v_t^g$ are declared as continuous or discrete.}
Note also that the objective function of Problem~1 is piecewise linear.
Following the literature \citep[see, e.g.,][]{arroyo2000optimal}, Problem~1 can be converted into an MILP.

\cite{bendotti2019complexity} consider a UC problem in which there is a linear generation cost, a minimum demand requirement, no ramp-up, ramp-down, start-up, and shut-down limits, no system reservation requirement, no transmission flow limit, and some initial conditions. 
They prove that the problem is strongly NP-hard. 
It is easy to verify that their NP-hardness proof remains valid when applied to our UC problem. 
Thus, Problem~1 is also NP-hard in the strong sense.

In Problem~1, constraints \eqref{prob1:min-up}--\eqref{prob1:ramp-down} specify the physical properties of the generators.
Constraints \eqref{prob1:startup-cost} and \eqref{prob1:shutdown-cost} determine the start-up and shut-down costs, respectively.
Once the $y_t^g$ values are determined for all $g\in\Ge$ and $t\in[1,T]_\Z$, the $u_t^g$ and $v_t^g$ values can be easily obtained by these constraints.
Constraints \eqref{prob1:load}--\eqref{prob1:transmission-constraint} are the coupling constraints, or system constraints, \fred{that} link all \fred{of the} generators.
\fred{Because of} the scale and complexity of the UC problem, one way of reducing the solving time is to decompose the problem into smaller subproblems with one subproblem corresponding to each generator \citep[see][]{knueven2020novel}.
For example, in the Lagrangian relaxation method, the coupling constraints can be integrated into the objective function through Lagrangian multipliers, and the resulting problem is decomposed into subproblems that contain only the physical constraints \citep{baldick1995generalized, takriti2000using}.
Thus, most improvements in UC models \fred{result} from studying the properties of an individual generator's feasible region \citep{knueven2018ramping}.
Moreover, strong valid inequalities for the physical constraints are valid for Problem~1 and can be used \fred{to tighten} its linear relaxation.
A tighter linear relaxation can often improve computational efficiency by reducing the amount of enumeration required to find and prove an optimal solution \citep{knueven2020mixed}.
Hence, in the mathematical analysis presented in Sections \ref{sec:two-period} and \ref{sec:multi-period}, we focus on deriving strong valid inequalities for the physical constraints for the generators in Problem~1.
Because all \fred{of} the generators have the same set of physical constraints, it suffices to concentrate on the physical constraints of a single generator, and the results obtained can be applied to all \fred{of} other generators.
Therefore, in the following analysis, the superscript $g$ in the parameters and decision variables is dropped.

Denote $\BFx=(x_1,\ldots,x_T)$ and $\BFy=(y_1,\ldots,y_T)$.
Thus, the vector $(\BFx,\BFy)\in\R_{+}^T\times\B^T$ contains the generation amount and on/off status of the generator in the $T$ time periods.
The set of $(\BFx,\BFy)$ values that satisfy the physical constraints of Problem~1 is given as
{\small \begin{subeqnarray}
&& \Pe=\big\{(\BFx,\BFy)\in\R_+^{T}\times\B^{T}:\nonumber\\[-1pt]
&& \qquad\qquad -y_{t-1}+y_t-y_k\le0,\,\forall t\in[2,T]_{\Z},\,\forall k\in[t,\min\{T,t+L-1\}]_{\Z},\slabel{eqn:p-minup}\\[-1pt]    
&& \qquad\qquad y_{t-1}-y_t+y_k \le1,\,\forall t\in[2,T]_{\Z},\,\forall k\in[t,\min\{T,t+\ell-1\}]_{\Z},\slabel{eqn:p-mindn}\\[-1pt] 
&& \qquad\qquad -x_t+\Clower y_t\le0,\,\forall t\in[1,T]_{\Z},\slabel{eqn:p-lower-bound}\\[-1pt]                                     
&& \qquad\qquad x_t-\Cupper y_t\le0,\,\forall t\in[1,T]_{\Z},\slabel{eqn:p-upper-bound} \\[-1pt]                                     
&& \qquad\qquad x_t-x_{t-1}\le Vy_{t-1}+\Vupper(1-y_{t-1}),\,\forall t\in[2,T]_{\Z},\slabel{eqn:p-ramp-up}\\[-1pt]                   
&& \qquad\qquad x_{t-1}-x_t\le Vy_t+\Vupper(1-y_t),\,\forall t\in[2,T]_{\Z}\slabel{eqn:p-ramp-down}\big\}.                     
\end{subeqnarray}}%
Here, the assumptions $\Cupper>\Clower>0$, $V>0$, $\Vupper+V\le\Cupper$, and $\Clower<\Vupper<\Clower+V$ remain valid. 
Note that inequalities \eqref{eqn:p-minup}--\eqref{eqn:p-ramp-down} in $\Pe$ are the same as inequalities \eqref{prob1:min-up}--\eqref{prob1:ramp-down} in Problem~1 for a specific generator $g$, except that $t$ is restricted to the range $[2,T]_\Z$ in \eqref{eqn:p-minup}, \eqref{eqn:p-mindn}, \eqref{eqn:p-ramp-up}, and \eqref{eqn:p-ramp-down} (i.e., constraints dependent on the initial conditions are not included in $\Pe$).

Let $\convP$ denote the convex hull of $\Pe$, and we refer to $\convP$ as the single-UC polytope.
\tred{Note that $\convP$ is full dimensional, which is shown in Appendix~\ref{apx:A-prop-1}.}
Obviously, a valid inequality for $\Pe$ is also valid for Problem~1 for any generator $g$.
Hence, the strong valid inequalities developed for $\convP$ can be used to tighten the formulation of Problem~1.
The following two lemmas provide some important properties of $\Pe$.

\begin{lemma}\label{lem:lookbackward}
Consider any point $(\BFx,\BFy)\in\Pe$ and $t\in[2,T]_{\Z}$.
(i)~If $y_t=0$, then $y_{t-j}-y_{t-j-1}\le 0$ for all $j\in[0,\min\{t-2,L-1\}]_{\Z}$.
(ii)~If $y_t=1$, then there exists at most one $j\in[0,\min\{t-2,L\}]_{\Z}$ such that $y_{t-j}-y_{t-j-1}=1$.
\end{lemma}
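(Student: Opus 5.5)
The plan is to derive both parts directly from the minimum-up inequalities \eqref{eqn:p-minup}. Rewritten, they say that for every $s\in[2,T]_\Z$ and every $k\in[s,\min\{T,s+L-1\}]_\Z$,
$$y_s-y_{s-1}\le y_k .$$
In words: a start-up in period $s$ forces $y_k=1$ throughout the window $[s,s+L-1]$. Neither the generation bounds nor the ramping constraints are needed, and the argument is purely combinatorial on the binary vector $\BFy$.

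For part~(i), I fix $j\in[0,\min\{t-2,L-1\}]_\Z$ and set $s=t-j$. Since $j\le t-2$, we have $s\ge 2$. Since $0\le j\le L-1$ and $t\le T$, the index $k=t$ lies in $[s,\min\{T,s+L-1\}]_\Z$. Applying \eqref{eqn:p-minup} with this pair $(s,k)$ gives $y_{t-j}-y_{t-j-1}\le y_t=0$, which is exactly the claim.

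For part~(ii), I argue by contradiction. Suppose there are two indices $j_1<j_2$ in $[0,\min\{t-2,L\}]_\Z$ with $y_{t-j_i}-y_{t-j_i-1}=1$ for $i=1,2$. Set $s_i=t-j_i$, so that $s_1>s_2\ge 2$. The start-up at $s_1$ gives $y_{s_1-1}=0$, and the start-up at $s_2$ gives $y_{s_2}-y_{s_2-1}=1$.

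Now take $k=s_1-1$. We have $k\ge s_2$ and $k-s_2=j_2-j_1-1\le L-1$, because $j_2\le L$ and $j_1\ge 0$. Also $k\le T$. Hence \eqref{eqn:p-minup} with pair $(s_2,k)$ applies and gives $1=y_{s_2}-y_{s_2-1}\le y_{s_1-1}=0$, a contradiction. So at most one such $j$ exists. Note that this argument does not actually use $y_t=1$; it only needs the range bound $j\le L$.

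The only delicate step is the index bookkeeping: checking that each chosen $k$ really lies in the allowed window $[s,\min\{T,s+L-1\}]_\Z$ with $s\ge2$. This is exactly where the upper limits $\min\{t-2,L-1\}$ in part~(i) and $\min\{t-2,L\}$ in part~(ii) are used. In part~(ii) the range may go up to $L$ rather than $L-1$ because the witness index is $s_1-1$ rather than $s_1$. I expect no further obstacles.
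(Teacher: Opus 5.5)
Your proposal is correct and follows essentially the same argument as the paper: you apply the minimum-up constraint with window index $k=t$ in part~(i), and with the start-up at $t-j_2$ and witness index $k=t-j_1-1$ in part~(ii), with the same index bookkeeping. The only cosmetic difference is that you state part~(i) directly instead of by contradiction. Your remark that part~(ii) never uses $y_t=1$ also holds for the paper's proof.
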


\begin{lemma}\label{lem:Pprime}
Denote $\BFx'=(x'_1,\ldots,x'_T)$ and $\BFy'=(y'_1,\ldots,y'_T)$.
Let
{\small \begin{subeqnarray*}
&& \Pe'=\big\{(\BFx',\BFy')\in\R_+^{T}\times\B^{T}:\nonumber\\[-2pt]
&& \qquad\qquad -y'_{T-t+2}+y'_{T-t+1}-y'_{T-k+1}\le0,\,\forall t\in[2,T]_{\Z},\,\forall k\in[t,\min\{T,t+L-1\}]_{\Z},\\[-2pt]
&& \qquad\qquad y'_{T-t+2}-y'_{T-t+1}+y'_{T-k+1}\le1,\,\forall t\in[2,T]_{\Z},\,\forall k\in[t,\min\{T,t+\ell-1\}]_{\Z},\\[-2pt]
&& \qquad\qquad -x'_{T-t+1}+\Clower y'_{T-t+1}\le0,\,\forall t\in[1,T]_{\Z},\\[-2pt]
&& \qquad\qquad x'_{T-t+1}-\Cupper y'_{T-t+1}\le0,\,\forall t\in[1,T]_{\Z},\\[-2pt]
&& \qquad\qquad x'_{T-t+1}-x'_{T-t+2}\le Vy'_{T-t+2}+\Vupper(1-y'_{T-t+2}),\,\forall t\in[2,T]_{\Z},\\[-2pt]
&& \qquad\qquad x'_{T-t+2}-x'_{T-t+1}\le Vy'_{T-t+1}+\Vupper(1-y'_{T-t+1}),\,\forall t\in[2,T]_{\Z}\big\}.
\end{subeqnarray*} }%
Then, $\Pe=\Pe'$.
\end{lemma}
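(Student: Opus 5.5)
The plan is to show that, for binary $\BFy$, the constraint system defining $\Pe'$ is a re-indexing of the system defining $\Pe$, up to an equivalent rewriting of the minimum up/down constraints. I would first substitute $s=T-t+1$ in every constraint of $\Pe'$, so that $t\in[2,T]_\Z$ corresponds to $s\in[1,T-1]_\Z$ and $T-t+2=s+1$. For $k\in[t,\min\{T,t+L-1\}]_\Z$, the index $s'=T-k+1$ ranges over $[\max\{1,s-L+1\},s]_\Z$.

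The bound constraints of $\Pe'$ are then literally \eqref{eqn:p-lower-bound}--\eqref{eqn:p-upper-bound}, since $T-t+1$ runs over all of $[1,T]_\Z$. The two ramping families of $\Pe'$ become
\[
x_s-x_{s+1}\le Vy_{s+1}+\Vupper(1-y_{s+1}) \quad\text{and}\quad x_{s+1}-x_s\le Vy_s+\Vupper(1-y_s), \qquad s\in[1,T-1]_\Z .
\]
Setting $t=s+1$, these are exactly \eqref{eqn:p-ramp-down} and \eqref{eqn:p-ramp-up}. So the continuous part coincides, and the substance is in the logical constraints on $\BFy$.

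After the substitution, the minimum-up family of $\Pe'$ reads $-y_{s+1}+y_s-y_{s'}\le 0$ for $s'\in[\max\{1,s-L+1\},s]_\Z$. This is a shut-down based condition: if the unit is on at $s$ and off at $s+1$, it must have been on throughout the $L$ periods ending at $s$ (truncated at period $1$). In contrast, \eqref{eqn:p-minup} is start-up based. Since $\BFy\in\B^T$, I would translate both into statements about maximal runs of consecutive ones in $\BFy$.

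\eqref{eqn:p-minup} says that every maximal run of ones that begins at some $t\ge 2$ (so $y_{t-1}=0$) either has length at least $L$ or extends to period $T$. The $\Pe'$ version says that every maximal run that ends at some $s\le T-1$ (so $y_{s+1}=0$) either has length at least $L$ or begins at period $1$. I would then check the four cases of whether the run touches period $1$ and whether it touches period $T$:
\begin{itemize}
\item A run starting at $t\ge2$ and ending at $s\le T-1$ must have length $\ge L$ under both conditions.
\item A run that starts at period $1$, ends at period $T$, or both, is unconstrained under both conditions.
\end{itemize}
Hence the two minimum-up systems have the same binary solutions. The identical argument applied to maximal runs of zeros shows that the minimum-down families of $\Pe$ and $\Pe'$ are equivalent, with $\ell$ in place of $L$.

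Combining these observations gives $\Pe=\Pe'$ as sets in $\R_+^T\times\B^T$. The main point needing care is the run characterization itself, in particular getting the truncation by $\min\{T,\cdot\}$ (respectively $\max\{1,\cdot\}$) right, since this is where the boundary cases are exempted. I would verify it directly from the inequalities:
\begin{itemize}
\item For a start-up at $t$, i.e., $y_{t-1}=0$ and $y_t=1$, \eqref{eqn:p-minup} forces $y_k=1$ for all $k\le\min\{T,t+L-1\}$.
\item Conversely, whenever $y_{t-1}=1$ or $y_t=0$, the left-hand side of \eqref{eqn:p-minup} is at most $0$ automatically, so the inequality is slack.
\end{itemize}
The same check applies to the shut-down form and to the minimum-down constraints.
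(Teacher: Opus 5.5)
Your proof is correct and rests on the same fact as the paper's. For binary $\BFy$, the start-up form \eqref{eqn:p-minup} and the shut-down form in $\Pe'$ both just forbid interior runs of ones shorter than $L$ (likewise runs of zeros shorter than $\ell$), while the bound and ramp families are pure re-indexings. The paper instead proves the two inclusions separately, inequality by inequality, by contradiction: an off period inside the window preceding a shut-down yields an intermediate start-up that violates \eqref{eqn:p-minup}. Your maximal-run characterization settles both directions at once and makes the time-reversal symmetry transparent.
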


\sred{Lemma \ref{lem:lookbackward} states a relationship among the $y_t$ variables.
This relationship, derived from the minimum up time requirement, allows us to simplify the validity proofs of the inequality families presented in Section~\ref{sec:multi-period}.
Lemma \ref{lem:Pprime} states that if variables $x_t$ and $y_t$ are replaced by $x_{T-t+1}$ and $y_{T-t+1}$, respectively, then the set $\Pe$ remains unchanged. This property enables us to show that if a given inequality family is known to be valid and facet-defining, then the corresponding “mirror image” of that inequality family is also valid and facet-defining.}

\vskip0pt

\section{The Two-period Convex Hull}\label{sec:two-period}
In this section, we investigate the properties of \rred{the} set $\Pe$ when there are only two periods.
\rred{Then, we demonstrate that the strong valid inequalities resulting from our investigation can be used not only to tighten the single-UC polytope $\convP$ but also to derive other forms of strong valid inequalities for $\convP$.}

Consider any two consecutive periods $t-1$ and $t$, where $t\in[2,T]_\Z$.
Denote
{\small \begin{subeqnarray}\label{set:P2}
&&\Pe_2=\big\{(x_{t-1},x_t,y_{t-1},y_t)\in\R^2_{+}\times\B^2:\nonumber\\[-2pt]
&&\qquad\qquad-x_i+\Clower y_i\le 0, \ \forall i\in\{t-1,t\}, \slabel{eqn:P2-a}\\[-2pt] 
&&\qquad\qquad x_i-\Cupper y_i\le 0, \ \forall i\in\{t-1,t\}, \slabel{eqn:P2-b}\\[-2pt] 
&&\qquad\qquad x_t-x_{t-1}\le Vy_{t-1}+\Vupper(1-y_{t-1}),    \slabel{eqn:P2-c}\\[-2pt] 
&&\qquad\qquad x_{t-1}-x_t\le Vy_t+\Vupper(1-y_t)\big\}.      \slabel{eqn:P2-d}   
\end{subeqnarray} }%
Note that when $t=2$, the set $\Pe_2$ is the same as the set $\Pe$ with $T=2$.
Note also that when $T=2$, inequalities \eqref{eqn:p-minup} and \eqref{eqn:p-mindn} become redundant, which significantly \sred{simplifies} the set $\Pe_2$.
Let $\convPtwo$ denote the convex hull of $\Pe_2$.
The following theorem provides a complete description of $\convPtwo$.
\begin{theorem}\label{the:convPtwo}
Denote
{\small \begin{subeqnarray}\label{eqn-q2}
&& \Qe_2 = \big\{(x_{t-1},x_t,y_{t-1},y_t)\in\R^4: \nonumber \\[-0.5pt]
&& \qquad\qquad y_i\le 1, \ \forall i\in\{t-1,t\},                          \slabel{eqn-q2:y-t} \\[-0.5pt]        
&& \qquad\qquad \Clower y_i\le x_i\le \Cupper y_i, \ \forall i\in\{t-1,t\}, \slabel{eqn-q2:capacity} \\[-0.5pt]   
&& \qquad\qquad x_{t-1}\le\Vupper y_{t-1}+(\Cupper-\Vupper)y_t,             \slabel{eqn-q2:x1-ub} \\[-0.5pt]      
&& \qquad\qquad x_t\le(\Cupper-\Vupper)y_{t-1}+\Vupper y_t,                 \slabel{eqn-q2:x2-ub} \\[-0.5pt]      
&& \qquad\qquad x_t-x_{t-1}\le(\Clower+V)y_t-\Clower y_{t-1},               \slabel{eqn-q2:x2-x1-ub-1} \\[-0.5pt] 
&& \qquad\qquad x_t-x_{t-1}\le\Vupper y_t-(\Vupper-V)y_{t-1},               \slabel{eqn-q2:x2-x1-ub-2} \\[-0.5pt] 
&& \qquad\qquad x_{t-1}-x_t\le(\Clower+V)y_{t-1}-\Clower y_t,               \slabel{eqn-q2:x1-x2-ub-1} \\[-0.5pt] 
&& \qquad\qquad x_{t-1}-x_t\le\Vupper y_{t-1}-(\Vupper-V) y_t\big\}.        \slabel{eqn-q2:x1-x2-ub-2}    
\end{subeqnarray} }%
Then, $\Qe_2=\convPtwo$.
\end{theorem}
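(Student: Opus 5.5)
We establish the two inclusions $\convPtwo\subseteq\Qe_2$ and $\Qe_2\subseteq\convPtwo$ separately.

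\emph{Validity.} Since $\Qe_2$ is convex, it suffices to show $\Pe_2\subseteq\Qe_2$. We check each of the four binary patterns $(y_{t-1},y_t)\in\{(0,0),(1,0),(0,1),(1,1)\}$.
\begin{itemize}
\item For $(0,0)$, constraints \eqref{eqn:P2-a}--\eqref{eqn:P2-b} force $x_{t-1}=x_t=0$, and every inequality of $\Qe_2$ reduces to $0\le 0$ or to a nonnegativity statement.
\item For $(1,0)$, we have $x_t=0$, and \eqref{eqn:P2-d} gives $x_{t-1}\le\Vupper$. Then \eqref{eqn-q2:x1-ub} reads $x_{t-1}\le\Vupper$, and \eqref{eqn-q2:x1-x2-ub-1}--\eqref{eqn-q2:x1-x2-ub-2} read $x_{t-1}\le\Clower+V$ and $x_{t-1}\le\Vupper$. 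Both hold because $\Vupper<\Clower+V$. Constraint \eqref{eqn-q2:x2-ub} gives $0\le\Cupper-\Vupper$, and \eqref{eqn-q2:x2-x1-ub-1}--\eqref{eqn-q2:x2-x1-ub-2} give $-x_{t-1}\le-\Clower$ and $-x_{t-1}\le V-\Vupper$. The latter holds since $x_{t-1}\ge\Clower>\Vupper-V$.
\item The case $(0,1)$ is symmetric to $(1,0)$.
\item For $(1,1)$, the cuts reduce to $x_{t-1},x_t\le\Cupper$ and $|x_t-x_{t-1}|\le V$, which are the original constraints.
\end{itemize}

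\emph{Completeness.} I would use the standard face-characterization argument. Take an arbitrary objective $(a_{t-1},a_t,b_{t-1},b_t)$ with $\max\{a^\top x+b^\top y:(x,y)\in\Qe_2\}$ finite. I would show that this maximum is attained at a point of $\Pe_2$. Equivalently, every vertex of $\Qe_2$ has binary $y$.

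The vertex approach is cleaner. Suppose $(\hat x,\hat y)$ is an extreme point of $\Qe_2$ with some $\hat y_i$ fractional. Note that $y_i\ge0$ is implied by $\Clower y_i\le x_i\le\Cupper y_i$, and $y_i\le1$ is explicit. I would case on whether one or both of $\hat y_{t-1},\hat y_t$ lie in $(0,1)$.

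In each case, the goal is to write $(\hat x,\hat y)$ as a convex combination of points of $\Qe_2$, typically points in the slices $y=(1,1)$, $(1,0)$, $(0,1)$, and $(0,0)$. The inequalities of $\Qe_2$ are homogeneous in $(x,y)$ except for $y_i\le1$. Hence on the region with $y_{t-1}\ge y_t$, one can decompose $\hat y=\hat y_t(1,1)+(\hat y_{t-1}-\hat y_t)(1,0)+(1-\hat y_{t-1})(0,0)$. We then need to split $\hat x$ accordingly into $\hat y_t x^{(11)}+(\hat y_{t-1}-\hat y_t)x^{(10)}$ with $x^{(11)}\in\Pe_2|_{(1,1)}$ and $x^{(10)}\in\Pe_2|_{(1,0)}$. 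This is a two-dimensional feasibility question, and I would settle it by Fourier--Motzkin elimination of the split variables. Eliminating $x^{(11)},x^{(10)}$ from the system "$\hat x=\hat y_t x^{(11)}+(\hat y_{t-1}-\hat y_t)x^{(10)}$, both pieces feasible in their slices" should yield exactly the inequalities of $\Qe_2$ specialized to $y_{t-1}\ge y_t$. In particular, \eqref{eqn-q2:x1-ub}, \eqref{eqn-q2:x2-ub}, \eqref{eqn-q2:x1-x2-ub-1}, and \eqref{eqn-q2:x1-x2-ub-2} arise from combining the upper bound on $x^{(10)}_{t-1}\le\Vupper$ with $x^{(11)}$'s bounds. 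The region $y_t\ge y_{t-1}$ follows by the same argument with the roles of $t-1$ and $t$ swapped, which is justified by the symmetry used in Lemma~\ref{lem:Pprime}.

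The main obstacle is this elimination step: verifying that the projected system produces no inequality beyond those listed, using the assumptions $\Clower<\Vupper<\Clower+V$ and $\Vupper+V\le\Cupper$ to discard dominated combinations. To keep it manageable, I would do a disjunctive (Balas) projection and enumerate the pairwise sums of constraints. Each generated inequality should be checked to be either one of \eqref{eqn-q2:capacity}--\eqref{eqn-q2:x1-x2-ub-2} or implied by them under the parameter assumptions.
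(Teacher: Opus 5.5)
Your proposal is correct and is essentially the paper's proof. Validity is checked over the four binary patterns of $(y_{t-1},y_t)$. For $\Qe_2\subseteq\convPtwo$ the paper uses your staircase weights: $\lambda_1=\min\{\bar y_{t-1},\bar y_t\}$ on the $(1,1)$ slice, the excess on the $(1,0)$ or $(0,1)$ slice, and the remainder at the origin. Only one generation level is then free, and the paper performs your elimination explicitly by setting it to the smallest of its upper bounds, e.g.\ $\min\bigl\{\frac{\bar x_{t-1}-\Clower\bar y_t}{\bar y_{t-1}-\bar y_t},\frac{\bar x_{t-1}-\bar x_t+V\bar y_t}{\bar y_{t-1}-\bar y_t},\Vupper\bigr\}$ when $\bar y_{t-1}>\bar y_t>0$, and then checking the lower bounds.

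One bookkeeping correction concerns the region $y_{t-1}\ge y_t$. There the nontrivial inequalities produced by this one-variable elimination are \eqref{eqn-q2:x1-ub} and \eqref{eqn-q2:x1-x2-ub-2} (from $x^{(10)}_{t-1}\le\Vupper$), and \eqref{eqn-q2:x2-x1-ub-1} together with $\bar x_{t-1}\ge\Clower\bar y_{t-1}$ (from $x^{(10)}_{t-1}\ge\Clower$). The inequalities \eqref{eqn-q2:x2-ub} and \eqref{eqn-q2:x1-x2-ub-1} belong to the mirror region. Nothing needs to be discarded using parameter assumptions beyond $\Clower\le\Vupper\le\Cupper$, so the elimination step you flag as the main obstacle is in fact routine.
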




\sred{Note that for every} $t\in[2,T]_\Z$, any inequality in \eqref{eqn-q2:y-t}--\eqref{eqn-q2:x1-x2-ub-2} is valid for $\convP$.
Note also that inequalities \eqref{eqn-q2:x1-ub}--\eqref{eqn-q2:x1-x2-ub-2} do not exist in the description of $\Pe$.
Hence, they can be added to the constraint set of $\Pe$ to tighten the linear relaxation of $\Pe$.
In particular, because $\Vupper y_t-(\Vupper-V)y_{t-1}\le Vy_{t-1}+\Vupper(1-y_{t-1})$ for any $y_t\le 1$, 
the right-hand side of \eqref{eqn-q2:x2-x1-ub-2} is no greater than the right-hand side of \eqref{eqn:p-ramp-up}, 
and thus inequality \eqref{eqn-q2:x2-x1-ub-2} dominates inequality \eqref{eqn:p-ramp-up} and can effectively tighten the linear relaxation of $\Pe$.
Similarly, inequality \eqref{eqn-q2:x1-x2-ub-2} dominates inequality \eqref{eqn:p-ramp-down} and can effectively tighten the linear relaxation of $\Pe$.
Therefore, the following inequality families can be used as valid \fred{inequalities} for $\convP$:
\vskip-25pt
{\small
\begin{align}
&x_t\le\Vupper y_t+(\Cupper-\Vupper)y_{t+1},\ \forall t\in[1,T-1]_\Z; \label{multi:eqn-q2:x1-ub}\\[-2pt]
&x_t\le(\Cupper-\Vupper)y_{t-1}+\Vupper y_t,\ \forall t\in[2,T]_\Z;   \label{multi:eqn-q2:x2-ub}\\[-2pt]
&x_t-x_{t-1}\le(\Clower+V)y_t-\Clower y_{t-1},\ \forall t\in[2,T]_\Z; \label{multi:eqn-q2:x2-x1-ub-1}\\[-2pt]
&x_t-x_{t-1}\le\Vupper y_t-(\Vupper-V)y_{t-1},\ \forall t\in[2,T]_\Z; \label{multi:eqn-q2:x2-x1-ub-2}\\[-2pt]
&x_t-x_{t+1}\le(\Clower+V)y_t-\Clower y_{t+1},\ \forall t\in[1,T-1]_\Z; \label{multi:eqn-q2:x1-x2-ub-1}\\[-2pt]
&x_t-x_{t+1}\le\Vupper y_t-(\Vupper-V)y_{t+1},\ \forall t\in[1,T-1]_\Z. \label{multi:eqn-q2:x1-x2-ub-2}
\end{align}}%
These valid inequalities provide 
upper bounds on the generation amount $x_t$ for each time period $t$,
upper bounds on $x_t-x_{t-1}$ for each pair of consecutive time periods $t$ and $t-1$, and
upper bounds on $x_t-x_{t+1}$ for each pair of consecutive time periods $t$ and $t+1$.

Inequalities \eqref{multi:eqn-q2:x1-ub}--\eqref{multi:eqn-q2:x1-x2-ub-2} also enable us to develop other strong valid inequalities for $\convP$.
We demonstrate this by presenting a strong valid inequality derived from \eqref{multi:eqn-q2:x2-x1-ub-1}.
Consider any point $(\BFx,\BFy)\in\Pe$.
For any $k\in[1,T-1]_\Z$ and any $t\in[k+1,T]_\Z$, because inequality \eqref{multi:eqn-q2:x2-x1-ub-1} is valid for $\convP$, we have
{\small $$ \sum_{\tau=t-k+1}^t(x_\tau-x_{\tau-1})
\le\sum_{\tau=t-k+1}^t\big[(\Clower+V)y_\tau-\Clower y_{\tau-1}\big]
=V\sum_{\tau=t-k+1}^ty_\tau+\Clower\sum_{\tau=t-k+1}^t(y_\tau-y_{\tau-1}),$$}%
which implies that
{\small \begin{equation}\label{multi:eqn:two_var-k-ub-2}
  x_t-x_{t-k}\le V\sum_{\tau=t-k+1}^t y_{\tau}+\Clower y_t-\Clower y_{t-k}.
\end{equation}}%
If $y_t=1$, then $\sum_{\tau=t-k+1}^t y_{\tau}\le ky_t$, and by \eqref{multi:eqn:two_var-k-ub-2}, $x_t-x_{t-k}\le(\Clower+kV)y_t-\Clower y_{t-k}$.
If $y_t=0$, then by \eqref{eqn:p-lower-bound} and \eqref{eqn:p-upper-bound}, $-x_{t-k}\le -\Clower y_{t-k}$ and $x_t=0$, 
which also imply that $x_t-x_{t-k}\le(\Clower+kV)y_t-\Clower y_{t-k}$.
Thus, in both cases,
{\small \begin{equation}\label{multi:eqn:two_var-k-ub-3}
  x_t-x_{t-k}\le (\Clower+kV)y_t-\Clower y_{t-k}.
\end{equation} }%
Hence, \eqref{multi:eqn:two_var-k-ub-3} is a valid inequality for $\convP$ for any $k\in[1,T-1]_\Z$ and $t\in[k+1,T]_\Z$.
It is worth noting that inequality \eqref{eqn-T:kth-ramp-2-1} presented in Proposition~\ref{prop-T:kth-ramp-2} in Section~\ref{subsec:multi-period-two} is reduced to inequality \eqref{multi:eqn:two_var-k-ub-3} when $m=0$ and $\Se=\emptyset$.
Therefore, inequality \eqref{multi:eqn:two_var-k-ub-3} is a special case of the facet-defining valid inequality \eqref{eqn-T:kth-ramp-2-1}.
\vskip0pt


\section{Multi-period Strong Valid Inequalities} \label{sec:multi-period}
\rred{When there are more than two periods, the set $\Pe$ is significantly more complex than the set $\Pe_2$ because it involves not only more time periods but also minimum up/down constraints.}
In this section, we present a collection of strong valid inequalities that can effectively enhance the tightness of Problem~1.
We provide the validity proofs for these inequalities, and we identify the conditions under which these inequalities are facet-defining for $\convP$.
For each family of valid inequalities, we also show that for any given point $(\BFx,\BFy)\in\R_+^{2T}$, an efficient separation algorithm exists for determining \rred{a} most violated inequality.
\vspace{-2mm}

\subsection{Valid Inequalities with a Single Continuous Variable}\label{subsec:multi-period-single}

In this subsection, we present strong valid inequalities that provide upper bounds on the generation amount $x_t$ for each time period $t$.
Families of such inequalities appear in pairs.
The first family consists of inequalities \fred{for which} the upper bound on $x_t$ depends mainly on the values of $y_{t-s}-y_{t-s-1}$ for some $s\ge 0$, 
\fred{and} the second family consists of inequalities \fred{for which} the upper bound on $x_t$ depends mainly on the values of $y_{t+s}-y_{t+s+1}$ for some $s\ge 0$.
The following proposition presents a pair of such inequality families.

\begin{proposition} \label{prop-T:x_t-ub-1}
Consider any $\Se\subseteq[0, \min\{L-1, T-2, \lfloor (\Cupper - \Vupper)/V \rfloor\}]_{\Z}$.
For any $t\in[1,T]_\Z$ such that $t\ge s+2$ for all $s\in\Se$, the inequality
{\small \begin{equation}\label{eqn-T:x_t-ub-1-1}
x_t \leq \Cupper y_t - \sum_{s\in\Se} (\Cupper - \Vupper - sV) (y_{t-s} - y_{t-s-1})
\end{equation}}%
is valid and facet-defining for $\convP$.
For any $t\in[1,T]_\Z$ such that $t\le T-s-1$ for all $s\in\Se$, the inequality
{\small \begin{equation}\label{eqn-T:x_t-ub-1-2}
x_t \leq \Cupper y_t - \sum_{s\in\Se} (\Cupper - \Vupper - sV) (y_{t+s} - y_{t+s+1})
\end{equation} }%
is valid and facet-defining for $\convP$.
\end{proposition}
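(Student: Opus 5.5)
We prove the first inequality \eqref{eqn-T:x_t-ub-1-1} directly. The second inequality \eqref{eqn-T:x_t-ub-1-2} then follows from Lemma~\ref{lem:Pprime}. Under the index reversal $t\mapsto T-t+1$, the set $\Pe$ is unchanged and \eqref{eqn-T:x_t-ub-1-1} is mapped onto \eqref{eqn-T:x_t-ub-1-2}. Validity and the facet property are preserved by this linear bijection of $\R^{2T}$.

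\emph{Validity.} Take any $(\BFx,\BFy)\in\Pe$. Every coefficient $\Cupper-\Vupper-sV$ is nonnegative because $s\le\lfloor(\Cupper-\Vupper)/V\rfloor$. We split on $y_t$.
\begin{itemize}
\item If $y_t=0$, then $x_t=0$. Since $\Se\subseteq[0,\min\{L-1,t-2\}]_\Z$, Lemma~\ref{lem:lookbackward}(i) gives $y_{t-s}-y_{t-s-1}\le0$ for all $s\in\Se$. Hence the right-hand side is at least $0$.
\item If $y_t=1$, Lemma~\ref{lem:lookbackward}(ii) gives at most one $s\in\Se$ with $y_{t-s}-y_{t-s-1}=1$. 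All other terms $-(\Cupper-\Vupper-sV)(y_{t-s}-y_{t-s-1})$ are nonnegative.
\begin{itemize}
\item If no such $s$ exists, the right-hand side is at least $\Cupper\ge x_t$.
\item If such an $s$ exists, the right-hand side is at least $\Vupper+sV$. The generator starts up at $t-s$, so $x_{t-s}\le\Vupper$ by \eqref{eqn:p-ramp-up} with $y_{t-s-1}=0$. Because $s\le L-1$, the minimum-up constraints \eqref{eqn:p-minup} keep the unit online on $[t-s,t]_\Z$. Applying \eqref{eqn:p-ramp-up} $s$ times then gives $x_t\le\Vupper+sV$.
\end{itemize}
\end{itemize}

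\emph{Facet.} Since $\convP$ is full dimensional, it suffices to exhibit $2T$ affinely independent points of $\Pe$ satisfying \eqref{eqn-T:x_t-ub-1-1} at equality. Equivalently, we can take an arbitrary valid equation $\alpha^\top\BFx+\beta^\top\BFy=\gamma$ holding on the face and show it is a multiple of \eqref{eqn-T:x_t-ub-1-1}. We would do the latter, using the following families of tight points.
\begin{enumerate}
\item[(a)] The all-off point $(\BFx,\BFy)=(\mathbf 0,\mathbf 0)$. It gives $\gamma=0$.
\item[(b)] Points where the unit is online exactly on a block $[i,T]_\Z$ with $i>t$, or on a block $[1,i]_\Z$ with $i<t-\max\Se-1$ or otherwise positioned so that no $s\in\Se$ sees a start-up or shut-down. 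At these points $y_t=0$ and all differences in the sum vanish, so they are tight. Perturbing a single $x_j$ within $[\Clower,\Cupper]$, which the ramp limits allow via $\Vupper>\Clower$, yields $\alpha_j=0$ for $j\ne t$. Comparing blocks that differ in one endpoint then gives $\beta_j=0$ for $j$ away from $t$ and from the positions $t-s,\,t-s-1$.
\item[(c)] The always-on point with $x_t=\Cupper$, feasible since $\Vupper+V\le\Cupper$. Together with perturbations of the other $x_j$, it relates $\beta_t$ and $\alpha_t$: $\alpha_t\Cupper+\beta_t=0$ after the other $\beta$'s are eliminated.
\item[(d)] For each $s\in\Se$, a point with start-up at $t-s$, full ramp $x_{t-s}=\Vupper,\dots,x_t=\Vupper+sV$, and the unit staying on long enough to satisfy $L$ and $\ell$. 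Compared against the always-on point and against a point with start-up at $t-s-1$ or earlier, these give $\beta_{t-s}$ and $\beta_{t-s-1}$ in terms of $\alpha_t$. The result matches the coefficients $\pm(\Cupper-\Vupper-sV)$.
\end{enumerate}

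The main obstacle is step (d) together with the bookkeeping in (b). The tight points must respect the minimum up/down times, which is why the hypotheses $s\le L-1$ and $t\ge s+2$ matter. We also need to handle the boundary cases where $s+1\in\Se$ or $s-1\in\Se$, so that the indices $t-s$ and $t-s-1$ are shared between consecutive terms. We would first try the simplest tight points: start-up at $t-s$ with the unit online until $T$. We then fix the $\beta$ coefficients by telescoping over $s$ from the largest element of $\Se$ downward.
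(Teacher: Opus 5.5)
Your validity argument and the transfer to \eqref{eqn-T:x_t-ub-1-2} via Lemma~\ref{lem:Pprime} are the same as the paper's. The facet part takes a different route. The paper uses the direct method: it lists $2T-1$ nonzero tight points and proves their linear independence by Gaussian elimination to a triangular matrix. You use the indirect method, showing that every equation $\alpha^\top\BFx+\beta^\top\BFy=\gamma$ valid on the face is a multiple of \eqref{eqn-T:x_t-ub-1-1}.

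Your tight points are essentially the paper's:
\begin{itemize}
\item the all-on point at $\Cupper$ and its one-coordinate perturbations;
\item blocks $[r,T]_\Z$ with $r>t$;
\item blocks $[1,r]_\Z$ with $t-r-1\notin\Se$;
\item ramp points starting up at $t-s$ for $s\in\Se$.
\end{itemize}
So the two proofs differ only in the linear algebra, and your version is arguably cleaner. Proceed as follows:
\begin{itemize}
\item Get $\alpha_q=0$ for all $q\neq t$ from the perturbations of the all-on point. The block perturbations in (b) do not reach $q=t-1$ when $0\in\Se$.
\item Put $B(i)=\sum_{q\le i}\beta_q$. The blocks give $\beta_q=0$ for $q>t$, and $B(i)=0$ for $i<t$ with $t-i-1\notin\Se$.
\item The all-on point gives $B(t)=-\alpha_t\Cupper$.
\item The ramp point for $s$, minus the all-on point, gives $B(t-s-1)=-\alpha_t(\Cupper-\Vupper-sV)$.
\end{itemize}
This determines every $\beta_i=B(i)-B(i-1)$. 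The shared indices for consecutive elements of $\Se$, which you flag as the main obstacle, then take care of themselves. In exchange, the paper's route gives an explicit certificate of $2T$ affinely independent points.

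Fix a few slips when you write it out:
\begin{itemize}
\item In (c), $\alpha_t\Cupper+\beta_t=0$ holds only if $0\notin\Se$. If $0\in\Se$, then $B(t-1)=-\alpha_t(\Cupper-\Vupper)$ and hence $\beta_t=-\alpha_t\Vupper$.
\item A point with a start-up at $t-s-1$ is in general not tight when $s+1\notin\Se$. Its right-hand side is $\Cupper$, while $x_t\le\Vupper+(s+1)V$. Compare only against the all-on point.
\item Perturbations must have size at most $V$ (e.g., $\epsilon=\Vupper-\Clower<V$), not be arbitrary within $[\Clower,\Cupper]$.
\item The all-on point is feasible without invoking $\Vupper+V\le\Cupper$.
\end{itemize}
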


In Proposition \ref{prop-T:x_t-ub-1}, inequalities \eqref{eqn-T:x_t-ub-1-1} and \eqref{eqn-T:x_t-ub-1-2} provide 
upper bounds on the generation amount $x_t$.
These upper bounds can be explained as follows.
Let $s_{\max}$ denote the largest element of $\Se$.
The condition ``$\Se\subseteq[0,\min\{L-1,T-2,\lfloor(\Cupper-\Vupper)/V\rfloor\}]_\Z$" implies that $s_{\max}\le L-1$,
which in turn implies that there is at most one startup and at most one shutdown during the time interval $[t-s_{\max},t]$, 
and that there is at most one shutdown and at most one startup during the time interval $[t+1,t+s_{\max}+1]$.
Consider the \rred{situation in which} a generator starts up in period $t-s_1$, stays online until period $t+s_2$, and shuts down in period $t+s_2+1$, 
where $s_1,s_2\in[0,s_{\max}]_\Z$, $t-s_1\ge 2$, and $t+s_2+1\le T$.
Then, $y_{t-s_1-1}=0$, $y_{t-s_1}=y_{t-s_1+1}=\cdots=y_{t+s_2}=1$, and $y_{t+s_2+1}=0$.
If $s_1\in\Se$ and none of the time periods in $\{t-s\ge 2:s\in\Se\}$ is a shut-down period, 
then the right-hand side of inequality \eqref{eqn-T:x_t-ub-1-1} becomes $\Cupper-(\Cupper-\Vupper-s_1V)$.
This upper bound limits the value of $x_t$ to be no more than $\Vupper+s_1V$, which is smaller than the generation upper bound \rred{$\Cupper$.}
Similarly, if $s_2\in\Se$ and none of the periods in $\{t+s+1\le T:s\in\Se\}$ is a start-up period, 
then the right-hand side of inequality \eqref{eqn-T:x_t-ub-1-2} becomes $\Cupper-(\Cupper-\Vupper-s_2V)$.
This upper bound limits the value of $x_t$ to be no more than $\Vupper+s_2V$, which is smaller than the generation upper bound \rred{$\Cupper$.}

In Proposition~\ref{prop-T:x_t-ub-1}, the set $\Se$ only contains elements that are less than $L$.
The following proposition states that under certain conditions, inequalities \eqref{eqn-T:x_t-ub-1-1} and \eqref{eqn-T:x_t-ub-1-2} remain valid
and facet-defining when $\Se$ contains some elements that are greater than or equal to $L$.

\begin{proposition} \label{prop-T:x_t-ub-2}
Consider any integers $\alpha$, $\beta$, and $s_{\max}$ such that
(a)~$L\le s_{\max}\le\min\{T-2,\lfloor(\Cupper-\Vupper)/V\rfloor\}$,
(b)~$0\le\alpha<\beta\le s_{\max}$, and
(c)~$\beta=\alpha+1$ or $s_{\max}\le L+\alpha$.
Let $\Se=[0,\alpha]_{\Z}\cup[\beta,s_{\max}]_{\Z}$.
For any $t\in[s_{\max}+2,T]_{\Z}$, inequality \eqref{eqn-T:x_t-ub-1-1} is valid and facet-defining for $\convP$.
For any $t\in[1,T-s_{\max}-1]_{\Z}$, inequality \eqref{eqn-T:x_t-ub-1-2} is valid and facet-defining for $\convP$.
\end{proposition}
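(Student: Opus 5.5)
By Lemma~\ref{lem:Pprime}, the mirror map $t\mapsto T-t+1$ sends \eqref{eqn-T:x_t-ub-1-1} to \eqref{eqn-T:x_t-ub-1-2}. So I prove validity and facet-definingness of \eqref{eqn-T:x_t-ub-1-1} for $t\in[s_{\max}+2,T]_\Z$, and the second statement follows.

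\emph{Validity.} Take $(\BFx,\BFy)\in\Pe$ and write $\Delta_s=y_{t-s}-y_{t-s-1}$. If $y_t=0$, then $x_t=0$. By Lemma~\ref{lem:lookbackward}(i), $\Delta_s\le 0$ for $s\le L-1$, and each coefficient $\Cupper-\Vupper-sV$ is nonnegative because $s\le\lfloor(\Cupper-\Vupper)/V\rfloor$. The difficulty is that $\Se$ now contains indices $s\ge L$. There the window $[t-s_{\max},t]$ may contain several start-ups, so some $\Delta_s=+1$ with $s\ge L$ can make the right-hand side negative. I will pair each such start-up at $t-s$ with the shutdown that precedes $y_t=0$; that shutdown occurs at some $t-s'$ with $s'<s$ and contributes $\Delta_{s'}=-1$. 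Hypothesis (c) is meant to make this pairing work. If $\beta=\alpha+1$, then $\Se=[0,s_{\max}]_\Z$ and the telescoping sum gives $\sum_{s\in\Se}\Delta_s=y_t-y_{t-s_{\max}-1}$. The weights are decreasing in $s$, so an alternating $\pm1$ sequence read from small $s$ to large $s$, starting with $-1$, gives a nonpositive weighted sum. Otherwise $s_{\max}\le L+\alpha$. Any start-up at index $s\in[\beta,s_{\max}]$ keeps the unit online for $L$ periods, so its matching shutdown has index $s'\le s-L\le\alpha$, and hence $s'\in\Se$ with a larger weight. The general case $y_t=1$ is handled the same way, with the most recent start-up $t-s_1$ included. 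If $s_1\le s_{\max}$, then $x_t\le\Vupper+s_1V$ by repeated use of \eqref{eqn:p-ramp-up}. The term $-(\Cupper-\Vupper-s_1V)$ turns $\Cupper$ into exactly this bound when $s_1\in\Se$. If $s_1\notin\Se$, or $s_1>s_{\max}$, then $s_1\in(\alpha,\beta)$ or the unit has been on for longer than the window. In both cases I bound the remaining terms by the same pairing argument (earlier on/off cycles inside the window), and use $x_t\le\Cupper$ together with $x_t\le\Vupper+s_1V\le\Vupper+(\beta-1)V$. This last step is where I expect the real obstacle: I must show that the pairs (start-up at $s$, shutdown at $s'<s$) give a net contribution $(\Cupper-\Vupper-s'V)-(\Cupper-\Vupper-sV)\ge0$. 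The failure case is a start-up index in $\Se$ whose partner shutdown index lies in the gap $(\alpha,\beta)$, and I will show that (c) excludes it using the minimum-up time $L$.

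\emph{Facet.} Since $\convP$ is full dimensional, I will exhibit $2T$ affinely independent points of $\Pe$ satisfying \eqref{eqn-T:x_t-ub-1-1} at equality. I build them incrementally, in the style of the Proposition~\ref{prop-T:x_t-ub-1} argument:
\begin{itemize}
\item The base point has the generator on in all periods with $x_t=\Cupper$, reached by ramping up and down at rate $V$ (possible since $\Vupper+V\le\Cupper$). Here all $\Delta_s=0$.
\item For each $s\in\Se$, a point with a start-up at $t-s$ and $x_t=\Vupper+sV$ is tight.
\item For $s\notin\Se$, $s\le s_{\max}$, a point with a start-up at $t-s$ and $x_t=\Cupper$ is feasible only if $\Vupper+sV\ge\Cupper$, so I will instead use points that differ in an $x$ coordinate away from $t$.
\item Points with shutdown/start-up pairs far from $t$, or after $t$, which do not affect the inequality.
\item Points perturbing $x_\tau$ by $\epsilon$ for $\tau\ne t$.
\item Points with $y_t=0$ and equality, for example off throughout the window.
\end{itemize}
To check affine independence I will order these points so that the difference matrix is triangular. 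Alternatively, I will use the standard argument: suppose $\pi^\top(\BFx,\BFy)=\pi_0$ holds on the face, and show that $\pi$ is a multiple of the coefficient vector of \eqref{eqn-T:x_t-ub-1-1}.
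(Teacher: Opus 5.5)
Your plan is essentially the paper's proof. The mirror step uses Lemma~\ref{lem:Pprime}. Validity pairs each start-up in $\Se$ (other than the most recent one when $y_t=1$) with the next shutdown: the minimum-up time bounds every such partner index by $s_{\max}-L$, and condition~(c) then places it in $\Se$. This already settles the $y_t=1$ obstacle you anticipated, because no start-up in $\Se$ can have its partner in the gap $(\alpha,\beta)$. Facet-definingness uses the same explicit tight points as in Proposition~\ref{prop-T:x_t-ub-1}.

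Those points carry over verbatim, because they use only $s_{\max}\le\lfloor(\Cupper-\Vupper)/V\rfloor$ and $t\ge s_{\max}+2$, never $\Se\subseteq[0,L-1]_\Z$. One correction to your third bullet: for an index $s\notin\Se$, the needed $y$-direction comes from the point that is on at $\Clower$ on $[1,t-s-1]_\Z$ and off afterward, which is tight since $y_t=0$. An $x$-perturbation cannot play this role, since it contributes no new $y$-direction.
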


\begin{example}\label{examp:prop-T:x_t-ub-1}
Let $T=16$, $\Cupper=80$, $\Clower=8$, $L=\ell=5$, $\Vupper=15$, and $V=10$.
Then, $\lfloor(\Cupper-\Vupper)/V\rfloor=6$.
By Proposition~\ref{prop-T:x_t-ub-1}, we obtain the following pair of valid inequalities if we set $\Se=\{0,2,4\}$ and $t=8$:
{\small \begin{align*}
\left\{
\begin{array}{ll}
x_8\le 25y_3-25y_4+45y_5-45y_6+65y_7+15y_8;\\[2pt]
x_8\le 15y_8+65y_9-45y_{10}+45y_{11}-25y_{12}+25y_{13}.
\end{array}
\right.
\end{align*} }%
By Proposition~\ref{prop-T:x_t-ub-2}, we obtain the following pair of valid inequalities if we set $\Se=\{0,1,2,5,6\}$ (i.e., $\alpha=2$, $\beta=5$, and $s_{\max}=6$) and $t=8$:
{\small
\begin{align*}
\left\{
\begin{array}{ll}
x_8\le 5y_1+10y_2-15y_3+45y_5+10y_6+10y_7+15y_8;\\[2pt]
x_8\le 15y_8+10y_9+10y_{10}+45y_{11}-15y_{13}+10y_{14}+5y_{15}.
\end{array}
\right.
\end{align*} }%
\end{example}

The next proposition extends Proposition~\ref{prop-T:x_t-ub-1} and presents another two families of strong valid inequalities.

\begin{proposition}\label{prop-T:x_t-ub-3}
Consider any set $\Se\subseteq[0,\min\{L-1,T-3,\lfloor(\Cupper-\Vupper)/V\rfloor\}]_{\Z}$ and any real number $\eta$ such that $0\le\eta\le\min\{L-1,(\Cupper-\Vupper)/V\}$.
For any $t\in[1,T-1]_\Z$ such that $t\ge s+2$ for all $s\in\Se$, the inequality
{\small \begin{equation}\label{eqn-T:x_t-ub-3-1}
    x_t \leq (\Cupper - \eta V) y_t + \eta V y_{t+1} - \sum_{s\in\Se} (\Cupper - \Vupper - sV) (y_{t-s} - y_{t-s-1})
\end{equation} }%
is valid for $\convP$.
For any $t\in[2,T]_\Z$ such that $t\le T-s-1$ for all $s\in\Se$, the inequality
{\small \begin{equation}\label{eqn-T:x_t-ub-3-2}
    x_t \leq (\Cupper - \eta V) y_t + \eta V y_{t-1} - \sum_{s\in\Se} (\Cupper - \Vupper - sV) (y_{t+s} - y_{t+s+1})
\end{equation} }%
is valid for $\convP$.
Furthermore, inequalities \eqref{eqn-T:x_t-ub-3-1} and \eqref{eqn-T:x_t-ub-3-2} are facet-defining for $\convP$ when $\eta\in\{0, (\Cupper-\Vupper)/V\}$ or $\eta=L-1\in\Se$.
\end{proposition}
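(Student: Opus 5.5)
Validity of \eqref{eqn-T:x_t-ub-3-1} comes first; \eqref{eqn-T:x_t-ub-3-2} then follows from Lemma~\ref{lem:Pprime} by time reversal, and the same applies to facet-definingness. Fix $(\BFx,\BFy)\in\Pe$ and split on $(y_t,y_{t+1})$.

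If $y_t=0$, then $x_t=0$. Lemma~\ref{lem:lookbackward}(i) applies because every $s\in\Se$ satisfies $s\le L-1$ and $t-s\ge 2$, so it gives $y_{t-s}-y_{t-s-1}\le 0$ for all $s\in\Se$. Since each coefficient $\Cupper-\Vupper-sV$ is nonnegative, the sum term is nonpositive. The right-hand side is then $\eta Vy_{t+1}$ minus a nonpositive sum, hence $\ge 0$.

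If $y_t=1$ and $y_{t+1}=1$, the right-hand side equals the right-hand side of \eqref{eqn-T:x_t-ub-1-1}, which is valid by Proposition~\ref{prop-T:x_t-ub-1}.

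If $y_t=1$ and $y_{t+1}=0$, the generator shuts down after period $t$, so \eqref{eqn:p-ramp-down} gives $x_t\le\Vupper$. The right-hand side is $\Cupper-\eta V-\sum_{s\in\Se}(\Cupper-\Vupper-sV)(y_{t-s}-y_{t-s-1})$. By Lemma~\ref{lem:lookbackward}(ii), at most one $s\in\Se$ has $y_{t-s}-y_{t-s-1}=1$. Moreover, since $y_t=1$ and $s\le L-1$, no $s\in\Se$ is a shutdown point with $y_{t-s}-y_{t-s-1}=-1$: a shutdown followed by the startup needed to reach $y_t=1$ would violate minimum down/up timing. I would verify this carefully using \eqref{eqn:p-minup}; a $-1$ term only raises the bound in any case.

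If no startup lies in $\Se$, the bound is $\Cupper-\eta V\ge\Vupper$, using $\eta\le(\Cupper-\Vupper)/V$. If the startup occurs at $t-s_1$ with $s_1\in\Se$, the bound becomes $\Vupper+s_1V-\eta V$. Here the key combinatorial point enters. The unit starts up at $t-s_1$ and shuts down after $t$, so by the minimum up time it is on for $s_1+1\ge L$ periods, i.e.\ $s_1\ge L-1\ge\eta$. Hence the bound is $\ge\Vupper\ge x_t$. This case is the main obstacle. I must also check the boundary situation: if $t+1\le T$ and $y_{t+1}=0$, constraint \eqref{eqn:p-minup} with that startup forces $s_1\ge L-1$, and I would confirm this indexing carefully.

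For facets, $\convP$ is full-dimensional, so I will exhibit $2T$ affinely independent points of $\Pe$ satisfying \eqref{eqn-T:x_t-ub-3-1} at equality. I would reuse the tight points from the proof of Proposition~\ref{prop-T:x_t-ub-1} that have $y_{t+1}=1$ or $y_t=0$; these stay tight because in those cases the new inequality coincides with, or is at least as tight as, the old one. I would then add points with $y_t=1$, $y_{t+1}=0$ and $x_t=\Vupper$. Tightness of such points requires $\Cupper-\eta V=\Vupper$, i.e.\ $\eta=(\Cupper-\Vupper)/V$ with no startup in $\Se$, or $s_1=\eta=L-1\in\Se$. For $\eta=0$ the inequality is dominated, but points of the form $x_t=\Cupper$ with $y_{t+1}=0$ cannot occur, so I would use the fact that at $\eta=0$, \eqref{eqn-T:x_t-ub-3-1} coincides with \eqref{eqn-T:x_t-ub-1-1} (when $t\le T-1$), which is facet-defining. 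The remaining cases need an explicit point list: vary $x_\tau$ by $\epsilon$ in periods not adjacent to $t$, and use schedules with shutdown right after $t$ combined with startups at $t-(L-1)$, or with no startups in $\Se$. Writing this list and checking affine independence through a triangular difference matrix is the tedious part.
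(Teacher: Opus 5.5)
Your validity argument is correct and is essentially the paper's. The paper splits first on whether some $s\in\Se$ is a startup and then on $y_{t+1}$, while you split on $(y_t,y_{t+1})$ and hand the case $y_t=y_{t+1}=1$ to Proposition~\ref{prop-T:x_t-ub-1}; the paper does exactly this in Proposition~\ref{prop-T:x_t-ub-4}. The key step $s_1\ge L-1\ge\eta$, obtained from \eqref{eqn:p-minup} and $y_{t+1}=0$, is the same. So are the reduction of $\eta=0$ to Proposition~\ref{prop-T:x_t-ub-1} and the time reversal via Lemma~\ref{lem:Pprime}.

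The gap is in the facet part for $\eta=(\Cupper-\Vupper)/V$ and $\eta=L-1\in\Se$, and your point-reuse rule is wrong. The right-hand side of \eqref{eqn-T:x_t-ub-3-1} minus that of \eqref{eqn-T:x_t-ub-1-1} is $\eta V(y_{t+1}-y_t)$. This vanishes only when $y_t=y_{t+1}$, and it equals $+\eta V$ when $y_t=0$, $y_{t+1}=1$; there the new inequality is looser, not tighter.

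Concretely, take the point of group (A4) in the proof of Proposition~\ref{prop-T:x_t-ub-1} with $r=t+1$, which has $y_q=1$ and $x_q=\Clower$ exactly for $q\ge t+1$. It has $x_t=0$ but right-hand side $\eta V>0$, so it is not on the face. All $2T-1$ nonzero points of that proof satisfy your criterion, so adding further points would overshoot anyway. The inequality is homogeneous, so its tight points lie in a hyperplane through the origin, and at most $2T-1$ of them are linearly independent.

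The fix is a one-for-one swap. Keep every other point from Proposition~\ref{prop-T:x_t-ub-1}, including the $\epsilon$-perturbations of the all-on, all-$\Cupper$ schedule at every $r\ne t$. This includes $r=t\pm1$, since those points have $y_t=y_{t+1}=1$, so there is no need to avoid periods adjacent to $t$. Drop the single bad point and add one point with $y_t=1$, $y_{t+1}=0$, $x_t=\Vupper$:
\begin{itemize}
\item when $\eta=(\Cupper-\Vupper)/V$, take $y_q=1$, $x_q=\Vupper$ for $q\in[1,t]_\Z$ and zeros afterwards;
\item when $\eta=L-1\in\Se$, take $y_q=1$, $x_q=\Vupper$ for $q\in[t-L+1,t]_\Z$ and zeros elsewhere. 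This is feasible because $t\ge L+1$, and tight because the startup term for $s=L-1$ lowers the right-hand side to exactly $\Vupper$.
\end{itemize}
In the elimination, the all-$\Cupper$ point (minus the (A4) point with $r=t+2$ when $t\le T-2$) gives a $y$-row ending at position $t+1$. The new point supplies the $y$-row ending at position $t$ that the dropped point used to provide, so the triangular argument goes through as before. Without this explicit point set, your proposal leaves the facet claim unproved for $\eta>0$.
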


When $t\ne T$, inequality \eqref{eqn-T:x_t-ub-3-1} is a generalization of inequality \eqref{eqn-T:x_t-ub-1-1}.
Specifically, the right-hand side of \eqref{eqn-T:x_t-ub-3-1} differs from the right-hand side of \eqref{eqn-T:x_t-ub-1-1} by $\eta Vy_{t+1}-\eta Vy_t$, 
and this difference is zero if $\eta=0$.
Similarly, when $t\ne 1$, inequality \eqref{eqn-T:x_t-ub-3-2} is a generalization of inequality \eqref{eqn-T:x_t-ub-1-2},
and the right-hand side of \eqref{eqn-T:x_t-ub-3-2} differs from the right-hand side of \eqref{eqn-T:x_t-ub-1-2} by $\eta Vy_{t-1}-\eta Vy_t$.
In Proposition~\ref{prop-T:x_t-ub-3}, the set $\Se$ only contains elements that are less than $L$.
The following proposition, which extends Proposition~\ref{prop-T:x_t-ub-2}, states that under certain conditions, 
inequalities \eqref{eqn-T:x_t-ub-3-1} and \eqref{eqn-T:x_t-ub-3-2} remain valid and facet-defining when $\Se$ contains some elements that are greater than or equal to $L$.

\begin{proposition} \label{prop-T:x_t-ub-4}
Consider any real number $\eta$ such that $0\le\eta\le\min\{L-1,(\Cupper-\Vupper)/V\}$ and any integers $\alpha$, $\beta$, and $s_{\max}$ such that
(a)~$L\le s_{\max}\le\min\{T-3,\lfloor(\Cupper-\Vupper)/V\rfloor\}$,
(b)~$0\le\alpha<\beta\le s_{\max}$, and
(c)~$\beta=\alpha+1$ or $s_{\max}\le L+\alpha$.
Let $\Se=[0,\alpha]_{\Z}\cup[\beta,s_{\max}]_{\Z}$.
For any $t\in[s_{\max}+2,T-1]_\Z$, inequality \eqref{eqn-T:x_t-ub-3-1} is valid for $\convP$.
For any $t\in[2,T-s_{\max}-1]_\Z$, inequality \eqref{eqn-T:x_t-ub-3-2} is valid for $\convP$.
Furthermore, \eqref{eqn-T:x_t-ub-3-1} and \eqref{eqn-T:x_t-ub-3-2} are facet-defining for $\convP$ when $\eta\in\{0,(\Cupper-\Vupper)/V\}$ or $\eta=L-1\in\Se$.
\end{proposition}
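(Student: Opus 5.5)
We prove the claims for \eqref{eqn-T:x_t-ub-3-1} only. The claims for \eqref{eqn-T:x_t-ub-3-2} then follow from Lemma~\ref{lem:Pprime}: the time-reversal map $t\mapsto T-t+1$ preserves $\Pe$ and carries one inequality onto the other, and the range $[s_{\max}+2,T-1]_\Z$ maps onto $[2,T-s_{\max}-1]_\Z$.

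\emph{Validity.} Fix $(\BFx,\BFy)\in\Pe$. If $y_t=0$, then $x_t=0$. The right-hand side is $\eta Vy_{t+1}-\sum_{s\in\Se}(\Cupper-\Vupper-sV)(y_{t-s}-y_{t-s-1})$, and we must show it is nonnegative. Lemma~\ref{lem:lookbackward}(i) gives $y_{t-s}-y_{t-s-1}\le0$ for $s\le L-1$, and every coefficient $\Cupper-\Vupper-sV$ is nonnegative. For $s\ge L$, the structure $[0,\alpha]_\Z\cup[\beta,s_{\max}]_\Z$ with condition (c) is what is needed. Reusing the argument from Proposition~\ref{prop-T:x_t-ub-2} (whose right-hand side is the same when $y_{t+1}$ is dropped), the sum over $\Se$ is nonpositive for $y_t=0$. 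The extra term $\eta Vy_{t+1}\ge0$ only helps.

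If $y_t=1$, we split on $y_{t+1}$.
\begin{itemize}
\item If $y_{t+1}=1$, the right-hand side equals that of \eqref{eqn-T:x_t-ub-1-1}, and validity is Proposition~\ref{prop-T:x_t-ub-2}.
\item If $y_{t+1}=0$, the generator shuts down after $t$. Backward ramping gives $x_t\le\Vupper$. If the most recent start-up before $t$ occurred at $t-s_1$, then also $x_t\le\Vupper+s_1V$.
\end{itemize}
In the second case we must show $\Cupper-\eta V-\sum_{s\in\Se}(\cdots)\ge\Vupper$ or a matching bound. We go through the patterns of $(y_{t-s})_{s\le s_{\max}+1}$ allowed by minimum up/down times and by (c). One subcase has no start-up in the window. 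If $\beta=\alpha+1$, at most one start-up and one shut-down can occur in the window. If $s_{\max}\le L+\alpha$, a shut-down at some $t-s$ with $s\ge\beta$ forces the start-up in the window to lie in $[0,\alpha]$ or in a telescoped pair. In each case the sum telescopes to at most $\Cupper-\Vupper-s_1V$ when $s_1\in\Se$, and to at most $0$ otherwise. Combined with $\eta\le\min\{L-1,(\Cupper-\Vupper)/V\}$, this gives the bound. The key point is that $\eta V\le \Cupper-\Vupper$ and, when a start-up $s_1\in\Se$ is counted, $\eta\le L-1$ together with minimum up time lets us bound $x_t$ by $\min\{\Vupper,\Vupper+s_1V\}$, which is at most $\Cupper-\eta V-(\Cupper-\Vupper-s_1V)$ since $s_1\ge L-1\ge\eta$... or otherwise by the first bound. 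We need to check carefully which of these two bounds applies, and this forces the case distinctions on $\eta$ below.

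\emph{Facets.} $\convP$ is full dimensional, so we exhibit $2T$ affinely independent points of $\Pe$ that satisfy \eqref{eqn-T:x_t-ub-3-1} at equality. The base family is the $2T$ tight points from the proof of Proposition~\ref{prop-T:x_t-ub-2}, i.e., tight points for \eqref{eqn-T:x_t-ub-1-1}. Those among them with $y_t=y_{t+1}$, or with $y_t=0$ and $y_{t+1}=0$, stay tight here. For $\eta=0$ the statement is exactly Proposition~\ref{prop-T:x_t-ub-2}. For $\eta=(\Cupper-\Vupper)/V$, the points with $y_t=1$, $y_{t+1}=0$ are tight when $x_t=\Vupper$ (shut-down right after $t$). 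For $\eta=L-1\in\Se$, the tight points are start-up at $t-L+1$, shut-down after $t$, and $x_t=\Vupper+(L-1)V$; this schedule is feasible exactly when $\Vupper+(L-1)V$ is ramp-down compatible. We replace each base point that violates tightness by the analogous point with $y_{t+1}$ modified. Affine independence then follows by the usual triangular-difference argument: subtract a base point and perturb each $x_\tau$ by $\varepsilon$ on points where $x_\tau$ is strictly inside its bounds.

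\emph{Main obstacle.} The hardest step is validity when $y_t=1$, $y_{t+1}=0$ and $\Se$ contains elements $\ge L$, so that a shut-down and a second start-up may appear within the window. This is where condition (c) is used. First we will try to show that, under (c), the terms indexed by $[\beta,s_{\max}]$ telescope to a nonpositive quantity whenever an earlier start-up is present, which reduces the problem to the case $\Se\subseteq[0,L-1]$ (Proposition~\ref{prop-T:x_t-ub-3}).
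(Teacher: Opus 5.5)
Your reductions are sound. The right-hand side of \eqref{eqn-T:x_t-ub-3-1} differs from that of \eqref{eqn-T:x_t-ub-1-1} by $\eta V(y_{t+1}-y_t)$, so every case except $y_t=1$, $y_{t+1}=0$ follows from Proposition~\ref{prop-T:x_t-ub-2}, and \eqref{eqn-T:x_t-ub-3-2} then follows via Lemma~\ref{lem:Pprime}. The remaining case, however, carries all the content, and there you only assert that $\sum_{s\in\Se}(\Cupper-\Vupper-sV)(y_{t-s}-y_{t-s-1})\le\Cupper-\Vupper-\sigma_1V$ for some start-up index $\sigma_1\ge L-1\ge\eta$. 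Your supporting claim that, for $\beta=\alpha+1$, at most one start-up and one shut-down occur in the window is false. The plan in your last paragraph also fails, because the block $[\beta,s_{\max}]_\Z$ alone need not be nonpositive. Take $L=2$, $\ell=1$, $\alpha=3$, $\beta=s_{\max}=5$ (so (c) holds), $(\Cupper-\Vupper)/V=6$, and $y=(0,1,1,1,0,1,1,0)$ on periods $t-6,\ldots,t+1$. This pattern is feasible. Yet the block $\{5\}$ contributes $+(\Cupper-\Vupper-5V)=V>0$ even though a start-up at $s=1$ is present, and the shut-down that offsets it sits at $s=2\in[0,\alpha]_\Z$. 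With $\alpha=4$, $\beta=5$, the same pattern has two start-ups in the window. Moreover, $[0,\alpha]_\Z=[0,3]_\Z\not\subseteq[0,L-1]_\Z$, so Proposition~\ref{prop-T:x_t-ub-3} could not be invoked after discarding the block anyway.

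The missing idea is a pairing that crosses both blocks, as in the paper. Let $\sigma_1<\cdots<\sigma_v$ be the $s\in\Se$ with $y_{t-s}-y_{t-s-1}=1$. For each $j\ge2$ there is a shut-down index $\sigma'_j\in(\sigma_{j-1},\sigma_j)$, and minimum up time after $t-\sigma_v$ forces $\sigma'_j\le\sigma_v-L\le s_{\max}-L$. By (c), either $\Se=[0,s_{\max}]_\Z$ or $\sigma'_j\le\alpha$, so $\sigma'_j\in\Se$, and each pair contributes $(\sigma'_j-\sigma_j)V<0$. Minimum up time after $t-\sigma_1$, together with $y_{t+1}=0$, gives $\sigma_1\ge L-1\ge\eta$. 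Hence the right-hand side is at least $\Vupper+(\sigma_1-\eta)V\ge\Vupper\ge x_t$. If no start-up lies in $\Se$, the right-hand side is at least $\Cupper-\eta V\ge\Vupper$. No case split on $\eta$ is needed.

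If you prefer your black-box style, you can get the same bound by applying Proposition~\ref{prop-T:x_t-ub-2} at $t$ to a modified feasible point. Keep the $y$-values up to $t$ and set them to $1$ afterwards. Keep $x$ before $t-\sigma_1$, where $t-\sigma_1$ is the last start-up and $\sigma_1\le s_{\max}$; otherwise the sum is $0$. Ramp $x$ by $V$ per period from $\Vupper$ at $t-\sigma_1$ to $\Vupper+\sigma_1V\le\Cupper$ at $t$, and hold it constant after $t$.

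Your facet construction also contains an error. For $\eta=L-1\in\Se$ you propose a tight point with $y_{t+1}=0$ and $x_t=\Vupper+(L-1)V$, but the shut-down ramp forces $x_t\le\Vupper$. At such a point the right-hand side equals $\Cupper-(L-1)V-(\Cupper-\Vupper-(L-1)V)=\Vupper$. The correct tight point therefore has $y=1$ exactly on $[t-L+1,t]_\Z$ and $x=\Vupper$ on that interval. It replaces the only base point that stops being tight when $\eta>0$, namely the one that starts up at $t+1$.
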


\begin{example}[continuation of example \ref{examp:prop-T:x_t-ub-1}]\label{examp-prop-T:x_t-ub-2}
In Example~\ref{examp:prop-T:x_t-ub-1}, if we set $\eta=2.5$, $\Se=\{0,2,4\}$, and $t=8$,
then by Proposition~\ref{prop-T:x_t-ub-3}, we obtain the following pair of valid inequalities:
{\small \begin{align*}
\left\{
\begin{array}{ll}
x_8\le 25y_3-25y_4+45y_5-45y_6+65y_7-10y_8+25y_9;\\[2pt]
x_8\le 25y_7-10y_8+65y_9-45y_{10}+45y_{11}-25y_{12}+25y_{13}.
\end{array}
\right.
\end{align*} }%
Note that the right-hand sides of the first and second inequalities differ from those in the first pair of inequalities in Example~\ref{examp:prop-T:x_t-ub-1}
by $\eta Vy_{t+1}-\eta Vy_t$ (i.e., $25y_9-25y_8$) and $\eta Vy_{t-1}-\eta Vy_t$ (i.e., $25y_7-25y_8$), respectively.
If we set $\eta=2.5$, $\Se=\{0,1,2,5,6\}$ (i.e., $\alpha=2$, $\beta=5$, and $s_{\max}=6$), and $t=8$, 
then by Proposition~\ref{prop-T:x_t-ub-4}, we obtain the following pair of valid inequalities:
{\small \begin{align*}
\left\{
\begin{array}{ll}
x_8\le 5y_1+10y_2-15y_3+45y_5+10y_6+10y_7-10y_8+25y_9;\\[2pt]
x_8\le 25y_7-10y_8+10y_9+10y_{10}+45y_{11}-15y_{13}+10y_{14}+5y_{15}.
\end{array}
\right.
\end{align*} }%
Similarly, the right-hand sides of the first and second inequalities differ from those in the second pair of inequalities in Example~\ref{examp:prop-T:x_t-ub-1}
by $\eta Vy_{t+1}-\eta Vy_t$ (i.e., $25y_9-25y_8$) and $\eta Vy_{t-1}-\eta Vy_t$ (i.e., $25y_7-25y_8$), respectively.
\end{example}

The next proposition also extends Proposition~\ref{prop-T:x_t-ub-1} and presents another two families of strong valid inequalities.

\begin{proposition} \label{prop-T:x_t-ub-5}
Consider any $\Se\subseteq[1,\min\{L,T-2,\lfloor(\Cupper-\Vupper)/V\rfloor\}]_{\Z}$ and any real number $\eta$ such that $0\le\eta\le\min\{L,(\Cupper-\Vupper)/V\}$.
For any $t\in[2,T]_\Z$ such that $t\ge s+2$ for all $s\in\Se$, the  inequality
{\small \begin{equation}\label{eqn-T:x_t-ub-5-1}
    x_t\le(\Vupper+\eta V)y_t+(\Cupper-\Vupper-\eta V)y_{t-1}-\sum_{s\in\Se}(\Cupper-\Vupper-sV)(y_{t-s}-y_{t-s-1})
\end{equation} }%
is valid for $\convP$.
For any $t\in[1,T-1]_\Z$ such that $t\le T-s-1$ for all $s\in\Se$, the inequality
{\small \begin{equation}\label{eqn-T:x_t-ub-5-2}
    x_t\le(\Vupper+\eta V)y_t+(\Cupper-\Vupper-\eta V)y_{t+1}-\sum_{s\in\Se}(\Cupper-\Vupper-sV)(y_{t+s}-y_{t+s+1})
\end{equation} }%
is valid for $\convP$.
Furthermore, inequalities \eqref{eqn-T:x_t-ub-5-1} and \eqref{eqn-T:x_t-ub-5-2} are facet-defining for $\convP$ when $\eta\in\{0,(\Cupper-\Vupper)/V\}$ or $\eta=L\in\Se$.
\end{proposition}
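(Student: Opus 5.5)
We prove validity of \eqref{eqn-T:x_t-ub-5-1} directly on integer points of $\Pe$. The mirror inequality \eqref{eqn-T:x_t-ub-5-2} then follows from Lemma~\ref{lem:Pprime} by the substitution $t\mapsto T-t+1$.

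Fix $(\BFx,\BFy)\in\Pe$ and split on $(y_{t-1},y_t)$. Since every $s\in\Se$ satisfies $s\le L$, Lemma~\ref{lem:lookbackward} is the main tool for controlling the sum $\Sigma:=\sum_{s\in\Se}(\Cupper-\Vupper-sV)(y_{t-s}-y_{t-s-1})$. Each coefficient of $\Sigma$ is nonnegative because $s\le(\Cupper-\Vupper)/V$.

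\textbf{Case $y_t=0$.} Then $x_t=0$. By Lemma~\ref{lem:lookbackward}(i), $y_{t-j}-y_{t-j-1}\le 0$ for $j\le L-1$. The only index in $\Se$ not covered is $s=L$. If $y_t=0$ and there is a start-up at $t-L$, minimum up gives $y_{t-L},\dots,y_{t-1}=1$, so $y_{t-1}=1$. The right-hand side is then at least $(\Cupper-\Vupper-\eta V)y_{t-1}-(\Cupper-\Vupper-LV)[\text{startup at }t-L]$. When $y_{t-1}=1$ this is $\ge (L-\eta)V\ge0$ since $\eta\le L$; when $y_{t-1}=0$ it is $\ge 0$.

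\textbf{Case $y_t=1$, $y_{t-1}=0$.} This is a start-up at $t$, so $x_t\le\Vupper$. The terms with $s\ge1$ satisfy $y_{t-s}-y_{t-s-1}\le 0$ (only the start-up at $t$ is allowed within the window, by Lemma~\ref{lem:lookbackward}(ii) with $j\le L$). The right-hand side is therefore $\ge\Vupper+\eta V\ge x_t$.

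\textbf{Case $y_t=y_{t-1}=1$.} The right-hand side equals $\Cupper-\Sigma$. By Lemma~\ref{lem:lookbackward}(ii), at most one $j\in[0,L]$ has a start-up, and negative terms only help. If the start-up is at $t-s_1$ with $s_1\in\Se$, then $\Sigma$ is at most $\Cupper-\Vupper-s_1V$ (a negative shutdown term can only decrease it). The ramp-up constraints give $x_t\le\Vupper+s_1V$, as in the explanation after Proposition~\ref{prop-T:x_t-ub-1}. Otherwise $\Sigma\le0$ and $x_t\le\Cupper$.

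\textbf{Facets.} For the cases $\eta\in\{0,(\Cupper-\Vupper)/V\}$ or $\eta=L\in\Se$, $\convP$ is full dimensional, so we exhibit $2T$ affinely independent points of $\Pe$ satisfying the inequality at equality. The standard approach is to write the tight points as a base point plus perturbations. These include schedules of the form "on for all periods at maximal ramping trajectories", "start-up at $t-s$ for each $s\in\Se$ with $x_t=\Vupper+sV$", "start-up at $t$ with $x_t=\Vupper+\eta V$", and schedules that are off at $t$. For each period $r\neq t$ we add small $\epsilon$-perturbations of $x_r$ that keep $x_t$ tight, and we switch $y$ in far periods to obtain unit vectors. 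Equivalently, we can show that any valid inequality $\pi x+\mu y\le\pi_0$ tight on this face is a multiple of \eqref{eqn-T:x_t-ub-5-1} by comparing pairs of tight points.

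The main obstacle is this facet construction. In particular, the case $\eta=L\in\Se$ needs a point with a start-up exactly $L$ periods before $t$ and a shutdown at $t$ (so $y_{t-1}=1$, $y_t=0$), which makes the $y_{t-1}$ and $s=L$ terms balance to zero. The periods near the boundary, and the assumptions $\Clower<\Vupper<\Clower+V$ and $\Vupper+V\le\Cupper$, will be needed to keep the perturbed points feasible.
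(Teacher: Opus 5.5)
Your validity argument is correct and essentially matches the paper's.
\begin{itemize}
\item For $y_t=0$ you use Lemma~\ref{lem:lookbackward}(i), plus the minimum-up argument for the single uncovered index $s=L$, which gives the bound $(L-\eta)V\ge 0$.
\item For $y_t=1$ you use Lemma~\ref{lem:lookbackward}(ii) to isolate at most one start-up, and the telescoped ramp-up bound $x_t\le\Vupper+s_1V$.
\item \eqref{eqn-T:x_t-ub-5-2} follows from \eqref{eqn-T:x_t-ub-5-1} via Lemma~\ref{lem:Pprime}.
\end{itemize}
Splitting on $(y_{t-1},y_t)$ rather than on whether a start-up occurs at some $t-s$ with $s\in\Se$ is only a reorganization. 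In the last case you should say explicitly that minimum up with $s_1\le L$ keeps $y_{t-s_1},\dots,y_{t-1}$ equal to $1$; that is what makes the telescoped sum equal $\Vupper+s_1V$.

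The genuine gap is the facet-defining claim. It is part of the proposition, but you only sketch it, and the sketch contains an infeasible point. A start-up at $t$ forces $x_{t-1}=0$ and hence $x_t\le\Vupper$, so a ``start-up at $t$ with $x_t=\Vupper+\eta V$'' does not exist when $\eta>0$. In fact, for $\eta>0$ no point of $\Pe$ with $y_{t-1}=0$, $y_t=1$ lies on the face at all: there the right-hand side is at least $\Vupper+\eta V>x_t$. The tight point that accounts for the $y_{t-1}$ coordinate must therefore be chosen differently for each admissible $\eta$. You identify the right one only for $\eta=L\in\Se$, and the case $\eta=(\Cupper-\Vupper)/V$ is not addressed.

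What is missing is an explicit family of $2T-1$ nonzero tight points together with an independence check. The paper takes:
\begin{itemize}
\item for each $r\neq t$, the all-on schedule at $\Cupper$ with $x_r$ lowered by $\epsilon=\Vupper-\Clower<V$;
\item for each $r\in[1,t-2]_\Z$, either the schedule on at $\Clower$ on $[1,r]_\Z$ and off afterwards (if $t-r-1\notin\Se$), or the schedule off on $[1,r]_\Z$ that starts up at $r+1$, ramps at rate $V$ to $\Vupper+(t-r-1)V\le\Cupper$ at period $t$, and stays constant afterwards (if $t-r-1\in\Se$);
\item one $\eta$-dependent point:
  \begin{itemize}
  \item for $\eta=0$, off on $[1,t-1]_\Z$ and on at $\Vupper$ from $t$;
  \item for $\eta=(\Cupper-\Vupper)/V$, on at $\Clower$ on $[1,t-1]_\Z$ and shut down at $t$, which is tight precisely because the coefficient $\Cupper-\Vupper-\eta V$ of $y_{t-1}$ vanishes;
  \item for $\eta=L\in\Se$, on at $\Vupper$ exactly on $[t-L,t-1]_\Z$;
  \end{itemize}
\item the all-on schedule at $\Cupper$;
\item for each $r\in[t+1,T]_\Z$, the schedule off on $[1,r-1]_\Z$ and on at $\Clower$ from $r$.
\end{itemize}
Order the columns as $x_1,\dots,x_T,y_1,\dots,y_T$. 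Subtract the all-on point where appropriate and take consecutive differences within the last group. The resulting rows have last nonzero entries in the $2T-1$ distinct coordinates $x_r$ ($r\neq t$) and $y_1,\dots,y_T$. Together with the origin, this gives $2T$ affinely independent tight points.

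Until you supply such a construction, the facet part of the statement remains unproved. The alternative you mention, showing that every valid inequality tight on the face is a multiple of \eqref{eqn-T:x_t-ub-5-1}, would also suffice, but you would have to carry it out.
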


In Proposition~\ref{prop-T:x_t-ub-5}, the set $\Se$ only contains elements that are less than or equal to $L$.
The following proposition states that under certain conditions, inequalities \eqref{eqn-T:x_t-ub-5-1} and \eqref{eqn-T:x_t-ub-5-2} remain valid and facet-defining when $\Se$ contains some elements that are greater than $L$.

\begin{proposition}\label{prop-T:x_t-ub-6}
Consider any integers $\alpha$, $\beta$, and $s_{\max}$ such that
(a)~$L+1\le s_{\max}\le\min\{T-2,\lfloor(\Cupper-\Vupper)/V\rfloor\}$,
(b)~$1\le\alpha<\beta\le s_{\max}$, and
(c)~$\beta=\alpha+1$ or $s_{\max}\le L+\alpha$.
Let $\Se=[1,\alpha]_{\Z}\cup[\beta,s_{\max}]_{\Z}$.
For any $t\in[s_{\max}+2,T]_{\Z}$, inequality \eqref{eqn-T:x_t-ub-5-1} is valid for $\convP$.
For any $t\in[1,T-s_{\max}-1]_{\Z}$, inequality \eqref{eqn-T:x_t-ub-5-2} is valid for $\convP$.
Furthermore, \eqref{eqn-T:x_t-ub-5-1} and \eqref{eqn-T:x_t-ub-5-2} are facet-defining for $\convP$ when $\eta\in\{0, (\Cupper-\Vupper)/V\}$ or $\eta=L\in\Se$.
\end{proposition}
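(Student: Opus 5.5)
The plan is to prove validity of \eqref{eqn-T:x_t-ub-5-1} by a case analysis on the most recent start-up before $t$, then obtain \eqref{eqn-T:x_t-ub-5-2} for free from Lemma~\ref{lem:Pprime}, and finally prove the facet claims by exhibiting $2T$ affinely independent points of $\Pe$ on the face. Throughout I write $w_s=\Cupper-\Vupper-sV$. Since $s_{\max}\le\lfloor(\Cupper-\Vupper)/V\rfloor$, we have $w_s\ge 0$ and $w_s$ is strictly decreasing in $s$ on $\Se$. I also read $\eta$ in the same range as in Proposition~\ref{prop-T:x_t-ub-5}, i.e.\ $0\le\eta\le\min\{L,(\Cupper-\Vupper)/V\}$. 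For the mirror family, the map $x_\tau\mapsto x_{T-\tau+1}$, $y_\tau\mapsto y_{T-\tau+1}$ sends \eqref{eqn-T:x_t-ub-5-1} at $t$ to \eqref{eqn-T:x_t-ub-5-2} at $T-t+1$. It also sends the range $[s_{\max}+2,T]_\Z$ onto $[1,T-s_{\max}-1]_\Z$, so Lemma~\ref{lem:Pprime} transfers both validity and facetness.

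\textbf{Validity of \eqref{eqn-T:x_t-ub-5-1}.} Fix $(\BFx,\BFy)\in\Pe$ and $t\in[s_{\max}+2,T]_\Z$. The term $-w_s(y_{t-s}-y_{t-s-1})$ contributes $-w_s$ when $t-s$ is a start-up period and $+w_s$ when $t-s$ is a shut-down period. The window $[t-s_{\max}-1,t]$ now has length $s_{\max}+2\ge L+3$, so unlike in Proposition~\ref{prop-T:x_t-ub-5}, Lemma~\ref{lem:lookbackward} no longer forbids a full earlier on-block inside the window. I split into three cases.

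\emph{Case $y_t=0$.} Then $x_t=0$. The sign pattern of $y_{t-s}-y_{t-s-1}$ over the window alternates (shut-down, start-up, shut-down, \dots as $s$ increases). Because $w_s$ decreases in $s$, pairing each start-up with the nearer shut-down preceding it in the index order shows the $\Se$-sum is nonpositive. If a shut-down index falls in the gap $(\alpha,\beta)$ while its partner start-up lies in $\Se$, I use (c): either there is no gap, or $s_{\max}\le L+\alpha$. In the latter case the start-up index lies within $L$ of the shut-down index. This contradicts the minimum up time unless the start-up is itself the block's first period, and the pairing is adjusted accordingly.

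\emph{Case $y_t=1$, $y_{t-1}=0$.} Then $x_t\le\Vupper$, and the right-hand side is $\Vupper+\eta V$ minus the $\Se$-sum over $s\ge1$. The $s=0$ start-up is not in $\Se$, and the remaining sum is handled by the same pairing argument.

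\emph{Case $y_t=y_{t-1}=1$.} Let $t-r$ be the last start-up, so $x_t\le\min\{\Cupper,\Vupper+rV\}$. If $r>s_{\max}$, then the window is all ones, the sum vanishes, and the right-hand side is $\Cupper$. If $r\in\Se$, the term $-w_r$ gives right-hand side $\Vupper+rV$ plus the contributions of earlier blocks. If $r$ lies in the gap, then $r>\alpha$ and, by (c), $s_{\max}\le L+\alpha<L+r$. Hence any earlier shut-down lies at index $>r$, and the corresponding earlier start-up lies at index $\ge r+\ell+L>s_{\max}$, outside the window. The earlier block therefore contributes a lone $+w_{s'}$ with $s'\ge r+1$, and I must check that $\Cupper-\text{(sum)}\ge\Vupper+rV$. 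This holds because $w_{s'}\ge 0$ enters with a favourable sign.

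\textbf{Main obstacle.} The delicate part is the general bookkeeping of earlier on-blocks when $r\in\Se$ and an earlier full block (length $\ge L$, preceded by an off-gap $\ge\ell$) fits inside the window. Each such block contributes $w_{\text{shut}}-w_{\text{start}}=(\text{start}-\text{shut})V\ge LV>0$ only if both indices lie in $\Se$. Otherwise the contribution is a single signed $w$, and (c) is exactly what rules out the harmful configuration: a start-up index in $\Se$ whose shut-down partner sits in the gap. I will first treat $\beta=\alpha+1$, where summation by parts gives the explicit form
\[
\textstyle \Vupper y_t+(1-\eta)Vy_{t-1}+\eta V y_t+V\sum_{s=2}^{s_{\max}}y_{t-s}+w_{s_{\max}}y_{t-s_{\max}-1}.
\]
Validity then follows from $x_t\le\Vupper+rV$ by counting the ones among $y_{t-1},\dots,y_{t-r}$. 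I will then reduce the gap case to this one using $s_{\max}\le L+\alpha$.

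\textbf{Facets.} For $\eta\in\{0,(\Cupper-\Vupper)/V\}$ and for $\eta=L\in\Se$, I will reuse the affinely independent tight points from Proposition~\ref{prop-T:x_t-ub-5}. These are schedules starting up at $t-s$ for $s\in\Se$ with $x_t=\Vupper+sV$, schedules off at $t$, and perturbations of each $x_\tau$, $\tau\ne t$. I add schedules with start-up at $t-s$ for $s$ in the gap, at which $x_t=\Vupper+sV$ is strict but the point is tight after also switching on an earlier block. The resulting $2T$ points are then checked for affine independence by a triangular ordering.
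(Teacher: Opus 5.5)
Your architecture is the paper's. You pair each start-up index in $\Se$ with the shut-down index of the same on-block, and use (c) plus the minimum-up time to force that shut-down index to be at most $s_{\max}-L\le\alpha$, hence in $\Se$. You then transfer to \eqref{eqn-T:x_t-ub-5-2} via Lemma~\ref{lem:Pprime} and reuse the facet points of Proposition~\ref{prop-T:x_t-ub-5}.

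However, the case $y_t=0$ contains a step that fails. You assert $\sum_{s\in\Se}w_s(y_{t-s}-y_{t-s-1})\le 0$. When $y_{t-1}=1$, the on-block ending at $t-1$ shuts down at index $0$, and $0\notin\Se$ because here $\Se\subseteq[1,s_{\max}]$. Its start-up index $\sigma_1$ is therefore unpaired and may lie in $\Se$. For example, with $L=2$, $\Se=[1,3]$ and $y$ equal to $1$ exactly on $\{t-2,t-1\}$, the sum is $w_2>0$.

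What rescues the inequality is the term $(\Cupper-\Vupper-\eta V)y_{t-1}=w_\eta$, which must dominate $w_{\sigma_1}$. This needs $\eta\le\sigma_1$, which follows from $\eta\le L$ and $\sigma_1\ge L$ (the minimum-up time for the start-up at $t-\sigma_1$). Your argument never uses $\eta\le L$, yet the inequality is false without it: if $L\in\Se$ and $\eta>L$, the schedule that is on exactly on $[t-L,t-1]$ gives right-hand side $(L-\eta)V<0=x_t$. This is exactly the extra bookkeeping in the paper's Case~2, which bounds the right-hand side below by $(\sigma'_1-\eta)Vy_{t-1}+(\sigma_1-\sigma'_1)V\ge 0$.

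The same omission appears in your summation-by-parts formula for $\beta=\alpha+1$. For $y_t=0$, $y_{t-1}=1$, the coefficient $(1-\eta)V$ is negative when $\eta>1$. To reach $(L-\eta)V\ge 0$ you need the $L-1$ ones among $y_{t-2},\ldots,y_{t-L}$ forced by the minimum-up time. Your ``counting'' step only treats $y_t=1$.

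In the facet part, drop the added schedules with a start-up at $t-s$ for $s\in(\alpha,\beta)$, because they never lie on the face. With $y_{t-1}=y_t=1$ and a start-up at $t-s$, the preceding off-block has length at least $\ell$ and the on-block before it has length at least $L$. So any earlier start-up has index at least $s+\ell+L>\alpha+L\ge s_{\max}$. The earlier block then contributes only a nonnegative shut-down term, and the right-hand side is at least $\Cupper>\Vupper+sV\ge x_t$.

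These points are also unnecessary. For a gap index, the point of Proposition~\ref{prop-T:x_t-ub-5} with $y=1$ exactly on $[1,t-s-1]$ is off at $t$ and tight. The $2T$ points of that proof remain tight and affinely independent for the new $\Se$, which is what the paper verifies.
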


\begin{example}[continuation of example \ref{examp:prop-T:x_t-ub-1}]\label{examp-prop-T:x_t-ub-3}
In Example~\ref{examp:prop-T:x_t-ub-1}, if we set $\eta=2.5$, $\Se=\{1,3,5\}$, and $t=8$, 
then by Proposition~\ref{prop-T:x_t-ub-5}, we obtain the following pair of valid inequalities:
{\small \begin{align*}
\left\{
\begin{array}{ll}
x_8\le 15y_2-15y_3+35y_4-35y_5+55y_6-15y_7+40y_8;\\[2pt]
x_8\le 40y_8-15y_9+55y_{10}-35y_{11}+35y_{12}-15y_{13}+15y_{14}.
\end{array}
\right.
\end{align*} }%
If we set $\eta=2.5$, $\Se=\{1,2,5,6\}$ (i.e., $\alpha=2$, $\beta=5$, and $s_{\max}=6$), and $t=8$,
then by Proposition~\ref{prop-T:x_t-ub-6}, we obtain the following pair of valid inequalities:
{\small \begin{align*}
\left\{
\begin{array}{ll}
x_8\le 5y_1+10y_2-15y_3+45y_5+10y_6-15y_7+40y_8;\\[2pt]
x_8\le 40y_8-15y_9+10y_{10}+45y_{11}-15y_{13}+10y_{14}+5y_{15}.
\end{array}
\right.
\end{align*} }%
\end{example}
\vskip0pt

Propositions \ref{prop-T:x_t-ub-1}--\ref{prop-T:x_t-ub-6} present different families of valid inequalities.
For each family of valid inequalities and any given point $(\BFx,\BFy)$ with non-binary $y$ values, 
it is important to have an efficient separation algorithm that can identify a most violated inequality in the family, if such a violated inequality exists.
\rred{In Propositions \ref{prop-T:x_t-ub-1}, \ref{prop-T:x_t-ub-3}, and \ref{prop-T:x_t-ub-5}, the number of combinations of $\Se$ is exponential in $T$.
Furthermore, in Propositions \ref{prop-T:x_t-ub-3} and \ref{prop-T:x_t-ub-5}, $\eta$ is a real value.
However, the next proposition states that given any point $(\BFx,\BFy)$ with non-binary $y$ values, 
a most violated inequality in each of the inequality families stated in Propositions \ref{prop-T:x_t-ub-1}, \ref{prop-T:x_t-ub-3}, and \ref{prop-T:x_t-ub-5} can be determined in linear time.}

\begin{proposition}\label{prop:separation-A}
\rred{
For any given point $(\BFx,\BFy)\in\R_{+}^{2T}$, \tred{the} most violated inequalities 
\eqref{eqn-T:x_t-ub-1-1}--\eqref{eqn-T:x_t-ub-1-2}, \eqref{eqn-T:x_t-ub-3-1}--\eqref{eqn-T:x_t-ub-3-2}, and \eqref{eqn-T:x_t-ub-5-1}--\eqref{eqn-T:x_t-ub-5-2}
in Propositions \ref{prop-T:x_t-ub-1}, \ref{prop-T:x_t-ub-3}, and \ref{prop-T:x_t-ub-5}, respectively,
can be determined in $O(T)$ time if such violated inequalities exist.}
\end{proposition}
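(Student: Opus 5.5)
Fix a point $(\BFx,\BFy)$ and a period $t$. Maximizing the violation of \eqref{eqn-T:x_t-ub-1-1} means maximizing
$$x_t-\Cupper y_t+\sum_{s\in\Se}(\Cupper-\Vupper-sV)(y_{t-s}-y_{t-s-1})$$
over admissible $\Se$. The key observation is that the objective is \emph{separable in $s$}. Write $w_s=(\Cupper-\Vupper-sV)(y_{t-s}-y_{t-s-1})$. The constraint on $\Se$ is only that $\Se\subseteq[0,\bar s_t]_\Z$, where $\bar s_t=\min\{L-1,T-2,\lfloor(\Cupper-\Vupper)/V\rfloor,t-2\}$. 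Hence a maximizing set is $\Se^*_t=\{s\in[0,\bar s_t]_\Z: w_s>0\}$. Because $\Cupper-\Vupper-sV\ge 0$ on this range, this amounts to taking every $s$ with $y_{t-s}>y_{t-s-1}$. The same reasoning applies to \eqref{eqn-T:x_t-ub-1-2}, using $y_{t+s}-y_{t+s+1}$, or one can invoke Lemma~\ref{lem:Pprime} to reduce it to the first family by reversing time.

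Computing $\Se^*_t$ separately for each $t$ would cost $O(LT)$, so the next step is to show that all $T$ values can be obtained in $O(T)$ total. Let $\Delta_j=\max\{y_j-y_{j-1},0\}$. Then
$$\sum_{s\in\Se^*_t}w_s=\sum_{s=0}^{\bar s_t}(\Cupper-\Vupper-sV)\Delta_{t-s}=\sum_{j=t-\bar s_t}^{t}\bigl(\Cupper-\Vupper-(t-j)V\bigr)\Delta_j.$$
This equals $(\Cupper-\Vupper-tV)A_t+VB_t$, where $A_t=\sum_j\Delta_j$ and $B_t=\sum_j j\Delta_j$ are both taken over the window $j\in[t-\bar s_t,t]_\Z$. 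Every window has right endpoint $t$, and its left endpoint $t-\bar s_t$ is nondecreasing in $t$. (Since $\bar s_t=\min\{c,t-2\}$ for a constant $c$, the left endpoint stays at $2$ and then increases by one per step.) Prefix sums of $\Delta_j$ and $j\Delta_j$, computed once in $O(T)$, therefore give every $A_t$ and $B_t$ in $O(1)$. A linear scan over $t$ then finds the most violated inequality, which is reported if its violation is positive.

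For \eqref{eqn-T:x_t-ub-3-1}--\eqref{eqn-T:x_t-ub-3-2} and \eqref{eqn-T:x_t-ub-5-1}--\eqref{eqn-T:x_t-ub-5-2}, the parameter $\eta$ enters the right-hand side only through a term linear in $\eta$. For \eqref{eqn-T:x_t-ub-3-1} this term is $\eta V(y_{t+1}-y_t)$, and for \eqref{eqn-T:x_t-ub-5-1} it is $\eta V(y_t-y_{t-1})$. It is also decoupled from the choice of $\Se$. For fixed $t$ the violation is thus linear in $\eta$ over an interval, so it is maximized at an endpoint. For Proposition~\ref{prop-T:x_t-ub-3} the candidates are $\eta=0$ and $\eta=\min\{L-1,(\Cupper-\Vupper)/V\}$, selected by the sign of $y_{t+1}-y_t$; for Proposition~\ref{prop-T:x_t-ub-5} the upper endpoint is $\min\{L,(\Cupper-\Vupper)/V\}$. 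Evaluating both endpoints costs $O(1)$ per $t$. The $\Se$-part is maximized exactly as above, using the appropriate index range ($[1,\bar s_t]$ with upper limit $L$ in Proposition~\ref{prop-T:x_t-ub-5}) and the same sliding-window prefix sums.

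The main obstacle is making the sliding-window bookkeeping precise. The admissible upper limit $\bar s_t$ depends on $t$ through $t-2$ (or $T-t-1$ in the mirrored families). The reduction to prefix sums is valid only because the window is always a contiguous interval ending at $t$, or starting at $t$ in the mirrored case. I would also need to check carefully that truncating the range where $\Cupper-\Vupper-sV$ could become negative never matters. This holds because $s\le\lfloor(\Cupper-\Vupper)/V\rfloor$ keeps every coefficient nonnegative, so the greedy rule of including exactly the $s$ with positive $y$-increments is indeed optimal.
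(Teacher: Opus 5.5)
Your proposal is correct and follows essentially the paper's argument. The shared steps are: the greedy choice of $\Se$ (include every $s$ with $y_{t-s}>y_{t-s-1}$, justified by the nonnegative coefficients $\Cupper-\Vupper-sV$); prefix sums of the positive increments $\max\{y_j-y_{j-1},0\}$ to obtain the best violation for every $t$ in $O(T)$ total; restriction of $\eta$ to the two interval endpoints by linearity; and time reversal via Lemma~\ref{lem:Pprime} for the mirrored families. The only difference is cosmetic: you evaluate each $t$ in closed form from two prefix sums ($\Delta_j$ and $j\Delta_j$), whereas the paper uses the single prefix sum $\theta(t)$ in a recursion expressing $v(\eta,t)$ through $v(\eta,t-1)$, and both run in $O(T)$.
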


In Propositions \ref{prop-T:x_t-ub-2}, \ref{prop-T:x_t-ub-4}, and \ref{prop-T:x_t-ub-6}, the number of combinations of $\alpha$, $\beta$, $s_{\max}$, and $t$ is $O(T^4)$.
Furthermore, in Propositions \ref{prop-T:x_t-ub-4} and \ref{prop-T:x_t-ub-6}, $\eta$ is a real value.
However, the next proposition states that given any point $(\BFx,\BFy)$ with non-binary $y$ values, 
a most violated inequality in each of the inequality families stated in Propositions \ref{prop-T:x_t-ub-2}, \ref{prop-T:x_t-ub-4}, and \ref{prop-T:x_t-ub-6} can be determined in $O(T^3)$ time.

\begin{proposition}\label{prop:separation-B}
For any given point $(\BFx,\BFy)\in\R_{+}^{2T}$, \tred{the} most violated inequalities 
\eqref{eqn-T:x_t-ub-1-1}--\eqref{eqn-T:x_t-ub-1-2}, \eqref{eqn-T:x_t-ub-3-1}--\eqref{eqn-T:x_t-ub-3-2}, and \eqref{eqn-T:x_t-ub-5-1}--\eqref{eqn-T:x_t-ub-5-2}
in Propositions \ref{prop-T:x_t-ub-2}, \ref{prop-T:x_t-ub-4}, and \ref{prop-T:x_t-ub-6}, respectively,
can be determined in $O(T^3)$ time if such violated inequalities exist.
\end{proposition}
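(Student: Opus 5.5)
The plan is to reduce separation to a small number of enumerations over $(t,\alpha,\beta,s_{\max})$. For each such tuple the best choice of $\eta$ is decided in $O(1)$ time, and we aim to evaluate each candidate right-hand side in amortized $O(1)$ time. We treat family \eqref{eqn-T:x_t-ub-1-1} in detail. Family \eqref{eqn-T:x_t-ub-1-2} then follows by the mirror symmetry of Lemma~\ref{lem:Pprime}: apply the same algorithm to the reversed point $(x_{T-t+1},y_{T-t+1})_t$. The families of Propositions~\ref{prop-T:x_t-ub-4} and \ref{prop-T:x_t-ub-6} differ only in the $\eta$-terms and in the index range of $\Se$, so the same scheme handles them.

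For a fixed point $(\BFx,\BFy)$, define for each $t$ and $s$
$$w_t(s)=(\Cupper-\Vupper-sV)(y_{t-s}-y_{t-s-1}),$$
and define prefix sums $W_t(k)=\sum_{s=0}^{k}w_t(s)$. For fixed $t$, all prefix sums $W_t(0),\dots,W_t(\min\{t-2,\lfloor(\Cupper-\Vupper)/V\rfloor\})$ can be computed in $O(T)$ time, so over all $t$ the total cost is $O(T^2)$. For $\Se=[0,\alpha]_\Z\cup[\beta,s_{\max}]_\Z$ we have
$$\sum_{s\in\Se}w_t(s)=W_t(\alpha)+W_t(s_{\max})-W_t(\beta-1).$$
The violation of \eqref{eqn-T:x_t-ub-1-1} is therefore
$$x_t-\Cupper y_t+W_t(\alpha)+W_t(s_{\max})-W_t(\beta-1),$$
which can be evaluated in $O(1)$ time once the prefix sums are available. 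Enumerating all admissible $t$ and triples $(\alpha,\beta,s_{\max})$ satisfying (a)--(c) gives $O(T^4)$ time, which is one factor of $T$ too slow.

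To save this factor, fix $t$ and $s_{\max}$. The remaining quantity to maximize is $W_t(\alpha)-W_t(\beta-1)$ over pairs with $0\le\alpha<\beta\le s_{\max}$ and either $\beta=\alpha+1$ or $s_{\max}\le L+\alpha$. The case $\beta=\alpha+1$ makes this term zero, i.e.\ $\Se=[0,s_{\max}]_\Z$. Otherwise $\alpha\ge s_{\max}-L$, and we maximize $W_t(\alpha)-W_t(\beta-1)$ over $\max\{0,s_{\max}-L\}\le\alpha<\beta\le s_{\max}$. This is a maximum-difference problem on a one-dimensional array, solvable in a single $O(T)$ scan that keeps a running maximum of $W_t(\alpha)$. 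The $(t,s_{\max})$ pairs number $O(T^2)$, so the total time is $O(T^3)$. A sharper implementation that updates the scan incrementally as $s_{\max}$ grows might reach $O(T^2)$, but that is not required.

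For \eqref{eqn-T:x_t-ub-3-1} in Proposition~\ref{prop-T:x_t-ub-4}, the $\eta$-dependent part of the right-hand side is $\eta V(y_{t+1}-y_t)$, which is linear in $\eta$. For fixed $t$, the most violated choice is therefore $\eta=0$ when $y_{t+1}\ge y_t$ and $\eta=\min\{L-1,(\Cupper-\Vupper)/V\}$ otherwise. This choice is independent of $\Se$, so it adds only $O(1)$ work per $t$. The same argument applies to \eqref{eqn-T:x_t-ub-5-1} in Proposition~\ref{prop-T:x_t-ub-6}. There the $\eta$-term is $\eta V(y_t-y_{t-1})$ plus constants, so we take $\eta\in\{0,\min\{L,(\Cupper-\Vupper)/V\}\}$. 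In that family $\Se$ starts at $1$, so we use prefix sums starting from $s=1$, and the constraint becomes $\alpha\ge\max\{1,s_{\max}-L\}$.

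The main obstacle is handling condition (c) correctly while keeping the constrained maximization over $(\alpha,\beta)$ within $O(T)$ per $(t,s_{\max})$. I would first verify that the feasible region is exactly the union of the diagonal $\beta=\alpha+1$ and the triangle $\{\alpha\ge s_{\max}-L,\ \alpha<\beta\le s_{\max}\}$. I would also confirm that the index ranges of $t$ stated in the propositions are respected, namely $t\ge s_{\max}+2$ and $t\le T-1$ where applicable. Finally, I would check that the minimal violation value is compared against zero, so that "no violated inequality exists" is detected.
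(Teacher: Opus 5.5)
Your proposal is correct and is essentially the paper's argument. The paper likewise fixes $\eta$ at an endpoint of its range by linearity. It splits the $\Se$-sum into a prefix part $v_1(s_{\max},t,\alpha)$ and a tail part $v_2(s_{\max},t,\beta)$, and eliminates $\beta$ with a suffix maximum $\hat v_2(s_{\max},t,\alpha+1)$, which is the mirror image of your running maximum over $\alpha$. It then maximizes over $\alpha\in[s_{\max}-L,s_{\max}-1]_\Z$, treats the case $\Se=[\check{s},s_{\max}]_\Z$ separately, uses $O(T)$ work per pair $(t,s_{\max})$, and obtains the second families by the time reversal $x'_t=x_{T-t+1}$, $y'_t=y_{T-t+1}$.
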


\subsection{Valid Inequalities with Two Continuous Variables}\label{subsec:multi-period-two}
In this subsection, we present strong valid inequalities that provide upper bounds on $x_t-x_{t-k}$ (respectively $x_t-x_{t+k}$) for each pair of time periods $t$ and $t-k$ (respectively $t$ and $t+k$).
The following proposition presents a pair of such inequality families.

\begin{proposition} \label{prop-T:kth-ramp-2}
Consider any $k\in[1,T-1]_\Z$ such that $\Cupper-\Clower-kV>0$, any $m\in[0,k-1]_\Z$, and any $\Se\subseteq[0,\min\{k-1,L-m-1\}]_\Z$.
For any $t\in[k+1,T-m]_\Z$, the inequality
{\small \begin{align}\label{eqn-T:kth-ramp-2-1}
x_t-x_{t-k}\le (\Clower+(k-m)V)y_t+V\sum_{i=1}^m y_{t+i}-\Clower y_{t-k}-\sum_{s\in\Se}(\Clower+(k-s)V-\Vupper)(y_{t-s}-y_{t-s-1})
\end{align} }%
is valid for $\convP$.
For any $t\in[m+1,T-k]_\Z$, the inequality
{\small \begin{align}\label{eqn-T:kth-ramp-2-2}
x_t-x_{t+k}\le (\Clower+(k-m)V)y_t+V\sum_{i=1}^m y_{t-i}-\Clower y_{t+k}-\sum_{s\in\Se}(\Clower+(k-s)V-\Vupper)(y_{t+s}-y_{t+s+1})
\end{align} }%
is valid for $\convP$.
Furthermore, \eqref{eqn-T:kth-ramp-2-1} and \eqref{eqn-T:kth-ramp-2-2} are facet-defining for $\convP$ when $m=0$ and $s\ge\min\{k-1,1\}$ for all $s\in\Se$.
\end{proposition}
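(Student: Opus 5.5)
Validity of \eqref{eqn-T:kth-ramp-2-1} will be checked pointwise on $\Pe$, with a case split on $y_t$ and $y_{t-k}$. The second family \eqref{eqn-T:kth-ramp-2-2} then follows from the first by the time-reversal symmetry of Lemma~\ref{lem:Pprime}. Write $\Delta_s=y_{t-s}-y_{t-s-1}$. Since $\Se\subseteq[0,L-m-1]_\Z$ with $L-m-1\le L-1$, Lemma~\ref{lem:lookbackward} gives two facts. If $y_t=0$, then $\Delta_s\le 0$ for all $s\in\Se$. If $y_t=1$, then at most one $s\in\Se$ has $\Delta_s=1$; moreover, all $\Delta_s\ge 0$ in that window, because a shutdown inside a window of length less than $L$ ending at an online period would force an earlier startup, which is impossible within $L$ periods. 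All coefficients $\Clower+(k-s)V-\Vupper$ are positive, since $s\le k-1$ and $\Vupper<\Clower+V$.

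\emph{Case $y_t=0$.} Here $x_t=0$, and the subtracted sum is nonpositive, so it adds a nonnegative amount to the right-hand side. It then suffices that $-x_{t-k}\le -\Clower y_{t-k}$, which holds, and that the $V\sum_i y_{t+i}$ terms are nonnegative.

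\emph{Case $y_t=1$ with no startup in $\{t-s:s\in\Se\}$.} All $\Delta_s=0$ for $s\in\Se$. If $y_{t-k}=1$, the right-hand side is at least $(k-m)V+V\sum_{i\le m}y_{t+i}$, and I need $x_t-x_{t-k}\le$ this amount. By \eqref{multi:eqn:two_var-k-ub-3}, $x_t-x_{t-k}\le kV$ whenever the unit stays on. The harder subcase is when $y_{t+i}=0$ for some $i\le m$: then $x_t$ is limited by the shutdown ramp, $x_t\le \Vupper+(j-1)V$ where $t+j$ is the first off period. Using this together with $x_{t-k}\ge\Clower$, $\Vupper<\Clower+V$, and the lower-bound ramp chain $x_{t-k}\ge x_t-\cdots$, I expect to recover the bound $(k-m+\#\{i:y_{t+i}=1\})V$. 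If the unit was off at some point between $t-k$ and $t$, then a startup occurred at some $t-s'$ with $s'\notin\Se$. In that case $x_t\le\Vupper+s'V\le \Clower+(k-m)V+V\sum_i y_{t+i}$ must be verified, combining the startup and shutdown ramp limits.

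\emph{Case $y_t=1$ with a startup at $t-s$, $s\in\Se$.} Then $y_{t-k}=0$ (since $s<k$ and the window is shorter than $L$), so $x_{t-k}=0$. The right-hand side becomes $\Vupper+(s-m)V+V\sum_i y_{t+i}$. Meanwhile $x_t\le\Vupper+sV$ from startup ramping, and also $x_t\le \Vupper+(j-1)V$ if a shutdown occurs at $t+j$, $j\le m$. Since $s\le L-m-1$, the unit stays on through $t+m$ by minimum up time, so $\sum_i y_{t+i}=m$ and the bound is tight. This is exactly why $\Se$ is capped at $L-m-1$.

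For facetness, with $m=0$ and $s\ge\min\{k-1,1\}$, I will use full dimensionality (Appendix~\ref{apx:A-prop-1}) and exhibit $2T$ affinely independent points of $\Pe$ satisfying \eqref{eqn-T:kth-ramp-2-1} at equality. Tight points include: the all-off point; schedules online on $[t-k,t]$ ramping at full rate $V$ from $\Clower$; schedules starting up at $t-s$ for each $s\in\Se$ and ramping $\Vupper,\Vupper+V,\dots$; and schedules with $y_t=0$, $y_{t-k}=1$, $x_{t-k}=\Clower$. Each of these is then perturbed by small $\epsilon$ in variables $x_\tau$, $y_\tau$ for $\tau$ outside the support, or by shifting on-blocks far from $[t-k,t]$.

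The main obstacle I expect is the facet argument rather than validity. I would instead use the standard alternative: assume $\pi x+\mu y=\pi_0$ holds on all tight points and derive that it is a multiple of \eqref{eqn-T:kth-ramp-2-1}. This goes variable by variable, comparing pairs of tight points differing in one coordinate. The delicate coefficients are $\mu_{t-s}$ and $\mu_{t-s-1}$ for $s\in\Se$, and the role of the condition $s\ge 1$ (when $k\ge2$). It excludes $s=0$, where a startup at $t$ would force $x_t\le\Vupper$ and break the needed tight points with $y_{t-1}=0$.
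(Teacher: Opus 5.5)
Your validity argument is essentially the paper's case analysis: $y_t=0$; $y_t=1$ with a startup at $t-s$, $s\in\Se$; and $y_t=1$ without one, closed by startup/shutdown ramp chains or \eqref{multi:eqn:two_var-k-ub-3} together with $k-m\ge1$ and $\Vupper<\Clower+V$. One step is false, though. In the startup case you assert $y_{t-k}=0$ and hence $x_{t-k}=0$. Nothing bounds $k$ by $L$ (only $\Se$ is capped). In any case, a unit online at $t-k$ may shut down and restart at $t-s$, subject only to the minimum down time. For example, with $\ell=1$, $k=3$, $\Se=\{0\}$, the schedule online on $[1,t-2]_\Z$, offline at $t-1$ and online from $t$ on lies in $\Pe$ and has $y_{t-k}=1$. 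The repair is what the paper does: keep $x_{t-k}\ge\Clower y_{t-k}$, so that $x_t-x_{t-k}\le\Vupper+sV-\Clower y_{t-k}$. This is at most the right-hand side, because the startup at $t-s$ keeps the unit on through $t+m$ and the other indices in $\Se$ contribute nonnegatively. Your remark that all $\Delta_s\ge0$ when $y_t=1$ is also false (a shutdown at $t-s$ followed by a restart before $t$ is again limited only by $\ell$). It is harmless, since you only need $\Delta_s\le 0$ for the non-startup indices.

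The substantive gap is the facet claim, which you do not prove. You list a few tight schedules, propose perturbing them ``by small $\epsilon$'' (impossible in the binary $y_\tau$), and then defer to an unexecuted indirect argument. The paper's proof consists of exactly the missing construction. Besides the origin, it gives $2T-1$ explicit tight points with $\epsilon=\min\{\Vupper-\Clower,\Cupper-\Clower-kV\}$:
\begin{itemize}
\item for $r\in[1,t-k-1]_\Z$, the unit online on $[1,r]_\Z$ at $\Clower$ with $x_r=\Clower+\epsilon$;
\item for $r\in[t-k+1,t-1]_\Z$, the unit online on $[1,t-1]_\Z$ at $\Clower$ with $x_r=\Clower+\epsilon$, shutting down at $t$;
\item the always-online point ramping at rate $V$ from $\Clower+\epsilon$ at $t-k$ to $\Clower+kV+\epsilon$ at $t$;
\item for $r\ge t+1$, a startup at $r$ with $x_r=\Clower+\epsilon$;
\item the unperturbed counterparts, including the startups at $t-s$, $s\in\Se$, ramping up from $\Vupper$.
\end{itemize}
These are then triangularized. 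The ramping point is where the hypothesis $\Cupper-\Clower-kV>0$ is needed, and your sketch never uses it. The shutdown-at-$t$ points are where $s\ge\min\{k-1,1\}$ enters: if $0\in\Se$, they leave slack $\Clower+kV-\Vupper>0$. So your explanation of that condition is off. Startups at $t$ remain tight when $0\in\Se$; what is lost are the shutdowns at $t$, which are needed to pin down the coordinates $x_{t-k+1},\dots,x_{t-1}$. In fact, for $k\ge2$ and $0\in\Se$, \eqref{eqn-T:kth-ramp-2-1} with $m=0$ is the sum of two distinct valid inequalities, so it cannot define a facet. These are \eqref{multi:eqn-q2:x2-x1-ub-2} at $t$, and \eqref{eqn-T:kth-ramp-2-1} for the pair $(k-1,t-1)$ with set $\{s-1:s\in\Se\setminus\{0\}\}$.
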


In Proposition \ref{prop-T:kth-ramp-2}, the number of combinations of $\Se$, $t$, $k$, and $m$ is exponential in $T$.
Thus, the sizes of the inequality families \eqref{eqn-T:kth-ramp-2-1} and \eqref{eqn-T:kth-ramp-2-2} are exponential in $T$.
However, the next proposition states that given any point $(\BFx,\BFy)$ with non-binary $y$ values, 
\tred{the} most violated inequalities \eqref{eqn-T:kth-ramp-2-1} and \eqref{eqn-T:kth-ramp-2-2} can be determined in polynomial time.

\begin{proposition} \label{prop-T:kth-ramp-2-separation}
For any given point $(\BFx,\BFy)\in\R^{2T}_+$, \tred{the} most violated inequalities \eqref{eqn-T:kth-ramp-2-1} and \eqref{eqn-T:kth-ramp-2-2} can be determined in $O(T^3)$ time if such violated inequalities exist.
\end{proposition}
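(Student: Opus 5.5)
For fixed $(t,k,m)$, the optimal set $\Se$ can be read off coordinate by coordinate. The remaining parameters $(t,k)$ are then enumerated in $O(T^2)$, and all admissible $m$ are handled in $O(T)$ total time per pair using prefix sums. The mirror family \eqref{eqn-T:kth-ramp-2-2} follows from Lemma~\ref{lem:Pprime}.

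Fix a point $(\BFx,\BFy)\in\R^{2T}_+$ and admissible $(t,k,m)$, meaning $\Cupper-\Clower-kV>0$, $m\in[0,k-1]_\Z$ and $t\in[k+1,T-m]_\Z$. Write the violation of \eqref{eqn-T:kth-ramp-2-1} as
$$\Delta(t,k,m,\Se)=x_t-x_{t-k}-(\Clower+(k-m)V)y_t-V\sum_{i=1}^m y_{t+i}+\Clower y_{t-k}+\sum_{s\in\Se}\gamma_{k,s}(y_{t-s}-y_{t-s-1}),$$
where $\gamma_{k,s}=\Clower+(k-s)V-\Vupper$.

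Since $s\le k-1$, we have $\gamma_{k,s}\ge \Clower+V-\Vupper>0$ by the standing assumption $\Vupper<\Clower+V$. The sum over $\Se$ is separable in $s$, so $\Delta$ is maximized by taking
$$\Se^*(t,k,m)=\{s\in[0,\min\{k-1,L-m-1\}]_\Z:\ y_{t-s}-y_{t-s-1}>0\}.$$
All indices are legal, because $t-s-1\ge t-k\ge 1$. A most violated inequality therefore exists if and only if $\max_{t,k,m}\Delta(t,k,m,\Se^*)>0$, and it is attained at the maximizing triple.

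For the running time, the key step (and the only real obstacle) is avoiding the naive $O(T)$ cost per triple, which would give $O(T^4)$. Fix the pair $(t,k)$; there are $O(T^2)$ such pairs. In $O(k)$ time, precompute the prefix sums
$$G(r)=\sum_{s=0}^{r}\gamma_{k,s}\max\{0,y_{t-s}-y_{t-s-1}\},\quad r\in[-1,k-1]_\Z,$$
with $G(-1)=0$, and also the prefix sums $Y(m)=\sum_{i=1}^m y_{t+i}$. Then for each $m\in[0,\min\{k-1,T-t\}]_\Z$,
$$\Delta(t,k,m,\Se^*)=x_t-x_{t-k}-(\Clower+(k-m)V)y_t-V\,Y(m)+\Clower y_{t-k}+G(\min\{k-1,L-m-1\}),$$
which is evaluated in $O(1)$. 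When $L-m-1<0$ the range is empty and $\Se^*=\emptyset$, matching $G(-1)=0$. Each pair $(t,k)$ thus costs $O(T)$, the total is $O(T^3)$, and we record the maximizer together with its $\Se^*$.

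For \eqref{eqn-T:kth-ramp-2-2}, apply the same procedure to the reversed point $x'_\tau=x_{T-\tau+1}$, $y'_\tau=y_{T-\tau+1}$. By Lemma~\ref{lem:Pprime}, inequality \eqref{eqn-T:kth-ramp-2-2} for $(\BFx,\BFy)$ is exactly \eqref{eqn-T:kth-ramp-2-1} for the reversed point at index $T-t+1$, and the index ranges correspond. This again costs $O(T^3)$.
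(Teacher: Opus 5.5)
Your argument is correct and attains $O(T^3)$, but it organizes the computation differently from the paper. The paper fixes the pair $(k,m)$ and computes the maximal violation $v_{km}(t)$ for all $t\in[k+1,T-m]_\Z$ by a recursion $v_{km}(t)=v_{km}(t-1)+\cdots$. In that recursion, the weighted sum $\sum_{s}(\Clower+(k-s)V-\Vupper)\max\{y_{t-s}-y_{t-s-1},0\}$ is updated as its window slides by one period, using the global prefix function $\theta(t)=\sum_{\tau=2}^t\max\{y_\tau-y_{\tau-1},0\}$. This gives $O(T)$ work per pair $(k,m)$. You instead fix $(t,k)$ and observe that $m$ enters the violation only in two places: the terms $-(\Clower+(k-m)V)y_t-V\sum_{i=1}^m y_{t+i}$, and the truncation point $\min\{k-1,L-m-1\}$ of the $s$-range. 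One-dimensional prefix sums in $s$ and in $i$ then evaluate each $m$ in $O(1)$. Your route is more elementary and easier to check, because it avoids the sliding-window bookkeeping, where the index shifts inside the $s$-weighted sum must be tracked carefully. The paper's route reuses the template of its other separation algorithms (Propositions~\ref{prop:separation-A} and~\ref{prop:separation-B}), where a recursion in $t$ is the natural tool. Two small remarks. First, $L-m-1$ can be smaller than $-1$, so you should set $G(r)=0$ for all $r<0$, or clamp the argument at $-1$. Second, the mirror step for \eqref{eqn-T:kth-ramp-2-2} is a pure relabeling of coordinates, so Lemma~\ref{lem:Pprime} is not needed there; that lemma concerns invariance of $\Pe$, which matters for validity, not separation.
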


\begin{proposition} \label{prop-T:kth-ramp-3}
Consider any $k\in[1,T-1]_\Z$ such that $\Cupper-\Clower-kV>0$, any $m\in[0,k-1]_\Z$, and any $\Se\subseteq[0,\min\{k-1,L-m-2\}]_\Z$.
For any $t\in[k+1,T-m-1]_\Z$, the inequality
{\small \begin{align}\label{eqn-T:kth-ramp-3-1}
x_t-x_{t-k}&\le (\Clower+(k-m)V-\Vupper)y_{t+m+1}+V\sum_{i=1}^m y_{t+i}+\Vupper y_t-\Clower y_{t-k}\nonumber\\
           &\quad -\sum_{s\in\Se}(\Clower+(k-s)V-\Vupper)(y_{t-s}-y_{t-s-1})
\end{align} }%
is valid and facet-defining for $\convP$.
For any $t\in[m+2,T-k]_\Z$, the inequality
{\small \begin{align}\label{eqn-T:kth-ramp-3-2}
x_t-x_{t+k}&\le (\Clower+(k-m)V-\Vupper)y_{t-m-1}+V\sum_{i=1}^m y_{t-i}+\Vupper y_t-\Clower y_{t+k}\nonumber\\
           &\quad -\sum_{s\in\Se}(\Clower+(k-s)V-\Vupper)(y_{t+s}-y_{t+s+1})
\end{align} }%
is valid and facet-defining for $\convP$.
\end{proposition}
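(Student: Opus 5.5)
Only \eqref{eqn-T:kth-ramp-3-1} needs a direct proof. By Lemma~\ref{lem:Pprime}, relabelling periods $\tau\mapsto T-\tau+1$ sends $\Pe$ to itself and maps \eqref{eqn-T:kth-ramp-3-1} at index $t$ onto \eqref{eqn-T:kth-ramp-3-2} at index $T-t+1$, with the ranges $t\in[k+1,T-m-1]_\Z$ and $[m+2,T-k]_\Z$ corresponding. Hence validity and facet-definingness transfer to the mirror family.

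\emph{Validity.} Since $\convP$ is generated by $\Pe$, it suffices to check \eqref{eqn-T:kth-ramp-3-1} at every $(\BFx,\BFy)\in\Pe$. I split into cases on $y_t$ and on the start-up time of the run of ones containing $t$.

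\emph{Case $y_t=0$.} Then $x_t=0$ and the left side is at most $-\Clower y_{t-k}$. By Lemma~\ref{lem:lookbackward}(i), $y_{t-s}-y_{t-s-1}\le0$ for $s\le \min\{t-2,L-1\}$, which covers all $s\in\Se$. Each coefficient $\Clower+(k-s)V-\Vupper$ is positive, because $s\le k-1$ and $\Vupper<\Clower+V$. So the $\Se$-sum contributes a nonnegative term. The forward terms $(\Clower+(k-m)V-\Vupper)y_{t+m+1}$ and $V\sum_{i=1}^m y_{t+i}$ are also nonnegative, since $m\le k-1$ gives a positive coefficient. This case therefore closes.

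\emph{Case $y_t=1$.} Let $t-s^*$ be the start-up period of the current run, with $s^*$ possibly large or the run starting at period 1. Lemma~\ref{lem:lookbackward}(ii) shows at most one $s\in\Se$ has $y_{t-s}-y_{t-s-1}=1$, since $\Se\subseteq[0,L-m-2]$. Other indices in $\Se$ can give $-1$ only in a shutdown pattern, which then forces $s^*$ smaller.

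Next I bound $x_t-x_{t-k}$ using the ramp constraints along the run.
\begin{itemize}
\item If $s^*\le k-1$, then $y_{t-k}$ may be $0$ or $1$. When $y_{t-k}=0$, $x_{t-k}=0$ and $x_t\le \Vupper+s^*V$. When $y_{t-k}=1$ (the unit was on, went off, and came back), the minimum-down logic and $x_{t-k}\ge\Clower$ give $x_t-x_{t-k}\le \Vupper+s^*V-\Clower$.
\item If $s^*\ge k$, the ramping chain gives $x_t-x_{t-k}\le kV$.
\end{itemize}
Independently, the future shutdown limits $x_t$. If the unit shuts down at $t+j+1$ with $j\le m$, then $x_t\le \Vupper+jV$. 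This is exactly what the terms $\Vupper y_t+V\sum_{i=1}^m y_{t+i}+(\Clower+(k-m)V-\Vupper)y_{t+m+1}$ encode: when all of $y_t,\dots,y_{t+m+1}$ equal $1$ they sum to $\Clower+kV$, and when $y_{t+j+1}=0$ they sum to $\Vupper+jV$.

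I then compare term by term in the four combinations (start-up indicator $s^*\in\Se$ or not, and shutdown within $m+1$ periods or not). The constraint $s\le L-m-2$ guarantees that a start-up at $t-s$ cannot be followed by a shutdown before $t+m+1$ unless the minimum-up time permits it. This is the key interaction, and I expect it to be the main bookkeeping obstacle: handling both a start-up in $\Se$ and an early shutdown, and checking the minimum of the two bounds, requires $s+m+2\le L$.

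\emph{Facet.} Since $\convP$ is full dimensional, I will exhibit $2T$ affinely independent points of $\Pe$ satisfying \eqref{eqn-T:kth-ramp-3-1} at equality. I build them by perturbation from base points:
\begin{itemize}
\item the all-zero point;
\item schedules online throughout a window with $x$ increasing at rate $V$ from $x_{t-k}=\Clower$ (tight via $\Clower+kV$);
\item start-up at $t-s$ for each $s\in\Se$, and start-up at non-$\Se$ indices with appropriate $x$;
\item shutdowns at $t+j+1$ for $j\in[0,m]$ with $x_t=\Vupper+jV$;
\item points varying $y_\tau$ for $\tau$ far from the support (on/off runs respecting $L,\ell$), each with $x_\tau$ perturbed by $\varepsilon$ where slack allows.
\end{itemize}
Taking differences of consecutive points isolates unit vectors $e_{x_\tau}$ and combinations $e_{y_\tau}+c\,e_{x_\tau}$, which yields affine independence. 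Constructing the points for $\tau$ in the window $[t-k,t+m+1]$ without violating the minimum up/down constraints is the delicate part. I would first handle $\tau\notin$ window, then walk $\tau$ through the window in order.
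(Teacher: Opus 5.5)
Your reduction of \eqref{eqn-T:kth-ramp-3-2} to \eqref{eqn-T:kth-ramp-3-1} through Lemma~\ref{lem:Pprime} is correct, and so is your validity argument; both follow the paper's case analysis. One clarification on validity. You single out as the main obstacle a start-up at $t-s$ with $s\in\Se$ combined with a shutdown before $t+m+1$, and suggest checking a ``minimum of two bounds''. No such check is needed. Because $s\le L-m-2$, constraint \eqref{eqn:p-minup} forces $y_\tau=1$ for all $\tau\in[t-s,t+m+1]_\Z$, so that combination is infeasible. In every remaining case the $\Se$-sum is nonnegative, and one of three arguments closes it:
\begin{itemize}
\item take the first zero in $[t+1,t+m+1]_\Z$ and apply ramp-down;
\item take the last zero in $[t-k,t-1]_\Z$ and use $\Vupper+(k-1)V<\Clower+kV$;
\item if $y\equiv 1$ on $[t-k,t+m+1]_\Z$, ramp-up gives $x_t-x_{t-k}\le kV$.
\end{itemize}

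The genuine gap is the facet claim, which you only sketch, and the sketch would not go through as written. There are three problems.
\begin{enumerate}
\item The inequality has coefficient $+1$ on $x_t$ and $-1$ on $x_{t-k}$, so no difference of tight points equals $e_{x_t}$ or $e_{x_{t-k}}$ alone. The missing $x$-direction must be a joint one, for example shifting the whole ramp $x_q=\Clower+(q-t+k)V$, $q\in[t-k,t]_\Z$, of the all-on schedule up by $\epsilon$. This shift, and the perturbations of $x_r$ for $r>t$ at that same point, require $\Clower+kV+\epsilon\le\Cupper$. That is exactly where the hypothesis $\Cupper-\Clower-kV>0$ enters, and your proposal never uses it.
\item Your ``start-up at non-$\Se$ indices'' points are not tight. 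Take a start-up at $t-s$ with $s\in[0,k-1]_\Z\setminus\Se$ that stays online through $t+m+1$. The right-hand side is at least $\Clower+kV-\Clower y_{t-k}$, while $x_t-x_{t-k}\le\Vupper+sV-\Clower y_{t-k}$, which is strictly smaller.
\item Toggling individual $y_\tau$ inside the window generally violates the minimum up/down constraints.
\end{enumerate}

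The paper avoids these problems by using only prefix schedules ($y=1$ exactly on $[1,r]_\Z$) and suffix schedules ($y=1$ exactly on $[r,T]_\Z$). These satisfy \eqref{eqn:p-minup}--\eqref{eqn:p-mindn} for every $L$ and $\ell$. The points are as follows.
\begin{itemize}
\item For $r\in[1,t-1]_\Z$ with $t-r-1\notin\Se$: the prefix point with $x=\Clower$.
\item For $r\in[1,t-1]_\Z$ with $t-r-1\in\Se$: the suffix point started at $r+1$, with output rising from $\Vupper$ at rate $V$ until $t$ and constant afterwards.
\item For $r\in[t,t+m]_\Z$: a prefix point that is at $\Clower$ early, climbs to $\Vupper+(r-t)V$ at $t$, and descends to $\Vupper$ at $r$.
\item The all-on full-ramp point.
\item For $r\in[t+m+2,T]_\Z$: the suffix point with $x=\Clower$.
\item Perturbations by $\epsilon=\min\{\Vupper-\Clower,\Cupper-\Clower-kV\}$ of the prefix point on $[1,t]_\Z$ with $x_t=\Vupper$, one for each $x_r$ with $r\in[1,t-1]_\Z\setminus\{t-k\}$.
\item Perturbations by the same $\epsilon$ of the full-ramp point: the joint shift on $[t-k,t]_\Z$, and $x_r$ for each $r>t$.
\end{itemize}
After subtracting suitable pairs, the rows have distinct last nonzero positions, with $y$-parts $\mathbf{1}_{[1,r]}$ for $r\le t+m+1$ and $e_{y_r}$ for $r\ge t+m+2$. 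This yields $2T-1$ linearly independent tight points plus the origin, which is the content your proposal is missing.
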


In Proposition \ref{prop-T:kth-ramp-3}, the number of combinations of $\Se$, $t$, $k$, and $m$ is exponential in $T$.
Thus, the sizes of the inequality families \eqref{eqn-T:kth-ramp-3-1} and \eqref{eqn-T:kth-ramp-3-2} are exponential in $T$.
However, the next proposition states that given any point $(\BFx,\BFy)$ with non-binary $y$ values, 
\tred{the} most violated inequalities \eqref{eqn-T:kth-ramp-3-1} and \eqref{eqn-T:kth-ramp-3-2} can be determined in polynomial time.

\begin{proposition}\label{prop-T:kth-ramp-3-separation}
For any given point $(\BFx,\BFy)\in\R^{2T}_+$, \tred{the} most violated inequalities \eqref{eqn-T:kth-ramp-3-1} and \eqref{eqn-T:kth-ramp-3-2} can be determined in $O(T^3)$ time if such violated inequalities exist.
\end{proposition}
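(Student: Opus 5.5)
For fixed $(k,m,t)$, the separation of \eqref{eqn-T:kth-ramp-3-1} reduces to choosing $\Se$. I would therefore first optimize $\Se$ in closed form and then enumerate $(k,m,t)$. Only the mirror family \eqref{eqn-T:kth-ramp-3-2} needs an extra remark at the end.

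\textbf{Step 1 (reduction to choosing $\Se$).} Write the violation of \eqref{eqn-T:kth-ramp-3-1} at a given $(\BFx,\BFy)$ as
\[
\Delta(k,m,t,\Se)=x_t-x_{t-k}-A(k,m,t)+\sum_{s\in\Se}\bigl(\Clower+(k-s)V-\Vupper\bigr)(y_{t-s}-y_{t-s-1}),
\]
where
\[
A(k,m,t)=(\Clower+(k-m)V-\Vupper)y_{t+m+1}+V\sum_{i=1}^m y_{t+i}+\Vupper y_t-\Clower y_{t-k}
\]
does not depend on $\Se$. The terms indexed by $s$ are independent of one another. Hence, for fixed $(k,m,t)$, the most violated choice is
\[
\Se^*=\bigl\{s\in[0,\min\{k-1,L-m-2\}]_\Z:\ (\Clower+(k-s)V-\Vupper)(y_{t-s}-y_{t-s-1})>0\bigr\}.
\]
The maximum violation is then $x_t-x_{t-k}-A(k,m,t)+\sum_{s=0}^{\min\{k-1,L-m-2\}}\bigl[(\Clower+(k-s)V-\Vupper)(y_{t-s}-y_{t-s-1})\bigr]^+$.

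\textbf{Step 2 (cost of evaluating each triple).} For $s\le k-1$ we have $k-s\ge1$, so $\Clower+(k-s)V-\Vupper\ge \Clower+V-\Vupper>0$ by the standing assumption $\Vupper<\Clower+V$. The coefficient is therefore positive, and the positive part reduces to $(\Clower+(k-s)V-\Vupper)\,[y_{t-s}-y_{t-s-1}]^+$. Define $w_\tau=[y_\tau-y_{\tau-1}]^+$ for $\tau\in[2,T]_\Z$. With the upper limit $r=\min\{k-1,L-m-2\}$, the sum becomes
\[
(\Clower+kV-\Vupper)\sum_{s=0}^{r}w_{t-s}-V\sum_{s=0}^{r}s\,w_{t-s}.
\]
Let $W_\tau=\sum_{j\le\tau}w_j$ and $U_\tau=\sum_{j\le\tau}j\,w_j$. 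Since $\sum_{s=0}^r s\,w_{t-s}=t\sum_{s=0}^{r}w_{t-s}-\sum_{s=0}^{r}(t-s)w_{t-s}$, both sums are obtained from $W$ and $U$ by range differences in $O(1)$ time. Likewise, with the prefix sums $Y_\tau=\sum_{j\le\tau}y_j$, the term $V\sum_{i=1}^m y_{t+i}$ costs $O(1)$. After $O(T)$ preprocessing, each triple $(k,m,t)$ is therefore evaluated in $O(1)$ time.

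\textbf{Step 3 (enumeration and mirror family).} There are $O(T^3)$ admissible triples: $k\in[1,T-1]_\Z$ with $\Cupper-\Clower-kV>0$, $m\in[0,k-1]_\Z$ (with $\Se$ allowed to be empty when $L-m-2<0$), and $t\in[k+1,T-m-1]_\Z$. Enumerating them and keeping the largest positive violation, together with its $\Se^*$, gives $O(T^3)$ total time. If no triple has positive violation, no inequality in the family is violated. For \eqref{eqn-T:kth-ramp-3-2}, I would invoke Lemma~\ref{lem:Pprime}: applying the index reversal $\tau\mapsto T-\tau+1$ to the given point maps family \eqref{eqn-T:kth-ramp-3-2} onto \eqref{eqn-T:kth-ramp-3-1}. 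Running the same procedure on the reversed point then gives the same $O(T^3)$ bound.

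\textbf{Main obstacle.} The step needing the most care is Step 2. The $O(T^3)$ bound relies on the coefficient of $y_{t-s}-y_{t-s-1}$ being positive throughout the allowed range of $s$, so that the optimal $\Se$ is a simple sign test on $y_{t-s}-y_{t-s-1}$ and can be summed via prefix sums. If that sign claim failed, the per-triple cost would be $O(T)$ and the bound would degrade to $O(T^4)$. Even in that case a fallback exists: for fixed $(k,t)$, increasing $m$ by one shrinks the range of $s$ by exactly one element, so the sums over decreasing $m$ could be maintained incrementally in $O(1)$ per step, which still gives $O(T^3)$.
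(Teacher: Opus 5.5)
Your proposal is correct and follows essentially the same route as the paper. In both, positivity of $\Clower+(k-s)V-\Vupper$ for $s\le k-1$ makes the optimal $\Se$ the set of $s$ with $y_{t-s}-y_{t-s-1}>0$, prefix sums of the positive parts then handle each triple $(k,m,t)$ in amortized $O(1)$ time, and the mirror family follows by index reversal. The only difference is bookkeeping: the paper obtains $v_{km}(t)$ from $v_{km}(t-1)$ by an $O(1)$ recursion after an $O(T)$ initialization for each $(k,m)$, whereas your weighted prefix sum $U_\tau$ evaluates each triple directly in $O(1)$.
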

\vskip0pt

\section{Computational Experiments} \label{sec:comp-exper}
We conduct a computational study to evaluate the effectiveness of our strong valid inequalities in tightening the proposed \rred{single-binary} MILP formulation for the UC problem.
In Section~\ref{subsec:instance-settings}, we describe the problem instances that we use in this computational study.
In Section~\ref{subsec:results}, we present the computational results.

All \fred{of the} computational experiments are performed on a computer node with Intel(R) Xeon(R) CPU E5-2699 v3 at 2.30GHz and 16 cores.
The addressable memory is 32GB.
IBM ILOG CPLEX 22.1 is used as the MILP solver to run all \fred{of the} experiments.
The MILP solver is called through its Python application programming interface under \fred{the} default settings.
Note that the performance of a formulation is affected by the inherent random component of the heuristic process used in solvers \citep{tejada2020unit}.
Thus, to accurately evaluate the effectiveness of our strong valid inequalities, ``traditional branch-and-cut" is chosen to be the search strategy.

\subsection{Test Instances}\label{subsec:instance-settings}

We conduct three computational experiments.
\fred{These experiments} are based on a network-constrained UC problem.
Recall that in Sections \ref{sec:two-period} and \ref{sec:multi-period}, 
the superscript $g$ was omitted when we focused on deriving strong valid inequalities for the polytope $\convP$ that consists of a single generator.
In the test instances of these three experiments, we reinstate the superscript $g$ in the strong valid inequalities that are \fred{used} to tighten the UC formulations.
Thus, when a family of strong valid inequalities is added to a UC formulation in these three experiments, it will be added to each generator with its corresponding parameters at the same time.
In all three experiments, the non-decreasing convex piecewise cost function is obtained by approximating the given quadratic cost function $a^gx^2+b^gx$.
We apply the method developed by \cite{frangioni2009tighter} to perform this piecewise linear approximation, using nine line segments with the $x$-coordinates of the breakpoints spread evenly between the lower bound $\Clower$ and the upper bound $\Cupper$.

In the first experiment, we use the data obtained from \cite{ostrowski2012tight} and \cite{pan2016polyhedral}.
\fred{Because of} the absence of transmission flow data in this data set, the transmission constraint \eqref{prob1:transmission-constraint} is not considered in this experiment.
The removal of the transmission constraint \fred{does not have a} major impact on our computational study \fred{because} we focus primarily on evaluating the effectiveness of the strong valid inequalities in tightening the \rred{single-binary} formulation.

The system contains eight types of generators.
Table~\ref{tab:8-gen-data} contains the data of these eight generator types.
The generation cost function for generator $g$ is $a^gx^2+b^gx$, where the values of $a^g$ and $b^g$ are provided in the 10th and 11th columns, respectively, of the table.
The data set comprises 20 test instances, as shown in Table~\ref{tab:ost-inst-comb}.
For each instance, the operation horizon is set equal to 24 hours, i.e., $T=24$, and the system reserve factor is set equal to 3\% for all periods, i.e., $r_t=0.03$ for all $t\in[1,T]_\Z$.
The system load $\sum_{b\in\Be}d^b_t$ in each period $t$ is shown in Table~\ref{tab:ost-load}, and it is expressed as a percentage of the total generation capacity $\sum_{g\in\Ge}\Cupper^g$.

\begin{table}[h]
  \centering
  \footnotesize
  \renewcommand{\arraystretch}{1}
  \caption{Generator Data \citep{ostrowski2012tight,pan2016polyhedral}}
    \begin{tabular}{|*{12}{c|}}
    \hline
    \multirow{2}{*}{Generator Type} & $\Clower^g$ & $\Cupper^g$ & $L^g$ & $\ell^g$ & $V^g$ & $\Vupper^g$ & $\phi^g$ & $\psi^g$ & $a^g$ & $b^g$ & $c^g$ \\
    & (MW) &(MW) & (h) & (h) & (MW/h) & (MW/h) & (\verb|$|/h) & (\verb|$|/h) & (\verb|$|/M$W^{2}$h) & (\verb|$|/MWh) & (\verb|$|/h) \\
    \hline
    1     & 150   & 455   & 8   & 8   & 91    & 180   & 2000  & 2000  & 0.00048   & 16.19   & 1000 \\
    2     & 150   & 455   & 8   & 8   & 91    & 180   & 2000  & 2000  & 0.00031   & 17.26   & 970 \\
    3     & 20    & 130   & 5   & 5   & 26    & 35    & 500   & 500   & 0.00200   & 16.6    & 700 \\
    4     & 20    & 130   & 5   & 5   & 26    & 35    & 500   & 500   & 0.00211   & 16.5    & 680 \\
    5     & 25    & 162   & 6   & 6   & 32.4  & 40    & 700   & 700   & 0.00398   & 19.7    & 450 \\
    6     & 20    & 80    & 3   & 3   & 16    & 28    & 150   & 150   & 0.00712   & 22.26   & 370 \\
    7     & 25    & 85    & 3   & 3   & 17    & 33    & 200   & 200   & 0.00079   & 27.74   & 480 \\
    8     & 10    & 55    & 1   & 1   & 11    & 15    & 60    & 60    & 0.00413   & 25.92   & 660 \\
    \hline
    \end{tabular}
  \label{tab:8-gen-data}
  \vspace{-2mm}
\end{table}

\begin{table}[h]
    \centering
    \footnotesize
    \caption{Problem Instances \citep{ostrowski2012tight,pan2016polyhedral}}
    \renewcommand{\arraystretch}{1}
    \setlength\tabcolsep{4pt}
    \begin{tabular}{|c|*{8}{c}|c|}
    \hline
    \multirow{2}{*}{Instance} & \multicolumn{8}{c|}{Number of generators} & \multirow{2}{*}{\parbox{48pt}{\centering Total no.~of generators}} \\
    \cline{2-9}
    & Type 1 & Type 2 & Type 3 & Type 4 & Type 5 & Type 6 & Type 7 & Type 8 & \\ \hline
    1   & 12  & 11  & 0   & 0   & 1   & 4   & 0   & 0   & 28 \\
    2   & 13  & 15  & 2   & 0   & 4   & 0   & 0   & 1   & 35 \\
    3   & 15  & 13  & 2   & 6   & 3   & 1   & 1   & 3   & 44 \\
    4   & 15  & 11  & 0   & 1   & 4   & 5   & 6   & 3   & 45 \\
    5   & 15  & 13  & 3   & 7   & 5   & 3   & 2   & 1   & 49 \\
    6   & 10  & 10  & 2   & 5   & 7   & 5   & 6   & 5   & 50 \\
    7   & 17  & 16  & 1   & 3   & 1   & 7   & 2   & 4   & 51 \\
    8   & 17  & 10  & 6   & 5   & 2   & 1   & 3   & 7   & 51 \\
    9   & 12  & 17  & 4   & 7   & 5   & 2   & 0   & 5   & 52 \\
    10  & 13  & 12  & 5   & 7   & 2   & 5   & 4   & 6   & 54 \\
    11  & 46  & 45  & 8   & 0   & 5   & 0   & 12  & 16  & 132 \\
    12  & 40  & 54  & 14  & 8   & 3   & 15  & 9   & 13  & 156 \\
    13  & 50  & 41  & 19  & 11  & 4   & 4   & 12  & 15  & 156 \\
    14  & 51  & 58  & 17  & 19  & 16  & 1   & 2   & 1   & 165 \\
    15  & 43  & 46  & 17  & 15  & 13  & 15  & 6   & 12  & 167 \\
    16  & 50  & 59  & 8   & 15  & 1   & 18  & 4   & 17  & 172 \\
    17  & 53  & 50  & 17  & 15  & 16  & 5   & 14  & 12  & 182 \\
    18  & 45  & 57  & 19  & 7   & 19  & 19  & 5   & 11  & 182 \\
    19  & 58  & 50  & 15  & 7   & 16  & 18  & 7   & 12  & 183 \\
    20  & 55  & 48  & 18  & 5   & 18  & 17  & 15  & 11  & 187 \\
    \hline
    \end{tabular}
  \label{tab:ost-inst-comb}
  \vspace{-4mm}
\end{table}

\begin{table}[h]
    \centering
    \footnotesize
    \renewcommand{\arraystretch}{1.2}
    \setlength\tabcolsep{7.5pt}
    \caption{System Load---Percentage of Total Generation Capacity \citep{ostrowski2012tight,pan2016polyhedral}}
    \begin{tabular}{|c|*{12}{c}|}
    \hline
    Period      & 1    & 2    & 3    & 4    & 5    & 6    & 7    & 8    & 9    & 10   & 11   & 12   \\
    \hline
    System Load & 71\% & 65\% & 62\% & 60\% & 58\% & 58\% & 60\% & 64\% & 73\% & 80\% & 82\% & 83\% \\
    \hline\hline
    Period      & 13   & 14   & 15   & 16   & 17   & 18   & 19   & 20   & 21   & 22   & 23   & 24   \\
    \hline
    System Load & 82\% & 80\% & 79\% & 79\% & 83\% & 91\% & 90\% & 88\% & 85\% & 84\% & 79\% & 74\% \\
    \hline
    \end{tabular}
  \label{tab:ost-load}
  \vspace{-4mm}
\end{table}

In this experiment, we compare the following two formulations:
{\small \begin{subeqnarray*}
  \mbox{F1: \ } & \mbox{minimize \ } & \mbox{objective function \eqref{prob1:obj}}\\[-4pt]
  & \mbox{subject to \ } & \mbox{constraints \eqref{prob1:min-up}--\eqref{prob1:spinning-reserve}, \eqref{prob1:nonnegativity}.}\\[-32pt]
\end{subeqnarray*}
\begin{subeqnarray*}
  \mbox{F1-X: \ } & \mbox{minimize \ } & \mbox{objective function \eqref{prob1:obj}}\\[-4pt]
  & \mbox{subject to \ } & \mbox{constraints \eqref{prob1:min-up}--\eqref{prob1:spinning-reserve}, \eqref{prob1:nonnegativity};}\\[-4pt]
  && \mbox{constraints \eqref{multi:eqn-q2:x1-ub}--\eqref{multi:eqn-q2:x1-x2-ub-2};}\\[-4pt]
  && \mbox{\rred{cutting planes} \eqref{eqn-T:x_t-ub-1-1}--\eqref{eqn-T:kth-ramp-3-2}.}
\end{subeqnarray*} }%
Formulation F1 is the original formulation of Problem~1 with the transmission constraint \eqref{prob1:transmission-constraint} removed.
In \mbox{F1-X}, the strong valid inequality families \eqref{multi:eqn-q2:x1-ub}--\eqref{multi:eqn-q2:x1-x2-ub-2} obtained from the two-period single-UC polytope are added to the formulation as constraints, \fred{and} the multi-period strong valid inequality families \rred{\eqref{eqn-T:x_t-ub-1-1}--\eqref{eqn-T:kth-ramp-3-2}} derived in Section~\ref{sec:multi-period} are added to the user cut pool of the CPLEX optimizer and are applied at any stage of the optimization.
Note that each of the inequality families \eqref{eqn-T:x_t-ub-1-1}--\eqref{eqn-T:kth-ramp-3-2} contains a large number of inequalities.
Thus, for each of these inequality families, only some of the inequalities are added to \mbox{F1-X} as user cuts
\sred{, because the use of separation algorithms through the callbacks of CPLEX will slow down the solution process.}
\tred{This is attributable to the frequent invocation of the separation algorithms during the solution process.}
Specifically, for each of \fred{the} inequality families \eqref{eqn-T:x_t-ub-1-1}--\eqref{eqn-T:kth-ramp-3-2}, $\Se$ is restricted to the empty set and the set that contains all \fred{of the} elements in its range, \fred{and} the other parameters such as $t$, $k$, and $m$ are allowed to take any values in their respective ranges such that the inequality obtained is facet-defining for $\convP$.
For example, for inequality family \eqref{eqn-T:kth-ramp-2-1}, 
we consider each $k\in[1,T-1]_\Z$, \rred{$m=0$, $\Se=\{\emptyset,[0,\min\{k-1,L-1\}]_\Z\}$}, and $t\in[k+1,T]_\Z$ such that $s\ge\min\{k-1,1\}$ for all $s\in\Se$. 

In the second experiment, we use the same data as in the first experiment.
We compare the effectiveness of our strong valid inequalities with that of the valid inequalities in \cite{pan2016polyhedral} in tightening \citeauthor{pan2016polyhedral}'s formulation.
We also test the effectiveness of our strong valid inequalities when they are combined with the valid inequalities in \cite{pan2016polyhedral}.
To do so, we solve the following four formulations:
{\small \begin{subeqnarray*}
  \mbox{F2: \ } & \mbox{minimize \ } & \mbox{objective function (38a) in \cite{pan2016polyhedral}}\\[-4pt]
  & \mbox{subject to \ } & \mbox{constraints (38b)--(38i), (38k) in \cite{pan2016polyhedral}.}\\[-32pt]
\end{subeqnarray*}
\begin{subeqnarray*}
  \mbox{F2-X: \ } & \mbox{minimize \ } & \mbox{objective function (38a) in \cite{pan2016polyhedral}}\\[-4pt]
  & \mbox{subject to \ } & \mbox{constraints (38b)--(38i), (38k) in \cite{pan2016polyhedral};}\\[-4pt]
  && \mbox{constraints \eqref{multi:eqn-q2:x1-ub}--\eqref{multi:eqn-q2:x1-x2-ub-2} in this paper;}\\[-4pt]
  && \mbox{\rred{cutting planes} \eqref{eqn-T:x_t-ub-1-1}--\eqref{eqn-T:kth-ramp-3-2} in this paper.}\\[-32pt]
\end{subeqnarray*}
\begin{subeqnarray*}
  \mbox{F2-Y: \ } & \mbox{minimize \ } & \mbox{objective function (38a) in \cite{pan2016polyhedral}}\\[-4pt]
  & \mbox{subject to \ } & \mbox{constraints (38b)--(38i), (38k) in \cite{pan2016polyhedral};}\\[-4pt]
  && \mbox{constraints (2d)--(2g) in \cite{pan2016polyhedral};}\\[-4pt]
  && \mbox{\rred{cutting planes} (4)--(7), (10)--(13), (24d), (24h)--(24i),}\\[-4pt]
  && \quad\mbox{(24o)--(24r), (28)--(36) in \cite{pan2016polyhedral}.}\\[-50pt]
\end{subeqnarray*}
\rred{\begin{subeqnarray*}
  \mbox{F2-Z: \ } & \mbox{minimize \ } & \mbox{objective function (38a) in \cite{pan2016polyhedral}}\\[-4pt]
  & \mbox{subject to \ } & \mbox{constraints (38b)--(38i), (38k) in \cite{pan2016polyhedral};}\\[-4pt]
  && \mbox{constraints (2d)--(2g) in \cite{pan2016polyhedral};}\\[-4pt]
  && \mbox{constraints \eqref{multi:eqn-q2:x1-ub}--\eqref{multi:eqn-q2:x1-x2-ub-2} in this paper;}\\[-4pt]
  && \mbox{cutting planes (4)--(7), (10)--(13), (24d), (24h)--(24i),}\\[-4pt]
  && \quad\mbox{(24o)--(24r), (28)--(36) in \cite{pan2016polyhedral}.}\\[-4pt]
  && \mbox{cutting planes \eqref{eqn-T:x_t-ub-1-1}--\eqref{eqn-T:kth-ramp-3-2} in this paper.}\\[-28pt]
\end{subeqnarray*} }%
}Formulation F2 is the two-binary UC formulation in \cite{pan2016polyhedral}, \fred{except} that the transmission constraint (38j) has been excluded.
In formulation \mbox{F2-X}, the valid inequalities \eqref{multi:eqn-q2:x1-ub}--\eqref{multi:eqn-q2:x1-x2-ub-2} are added as constraints, 
and the valid inequalities \eqref{eqn-T:x_t-ub-1-1}--\eqref{eqn-T:kth-ramp-3-2} are added as user cuts in the same way as in the first experiment.
In formulation F2-Y, \fred{the} strong valid inequalities in \cite{pan2016polyhedral} are used the same way as in Pan and Guan's computational study to tighten formulation F2.
Specifically, valid inequalities in the two-period convex hull, (2d)--(2g), are added as constraints, \fred{and} other valid inequalities are added as user cuts.
For inequality families that have an exponential size, the $\Se$ set is restricted to the empty set and the set that contains all \fred{of the} elements in its range.
The other parameters, such as $t$, $m$, and $n$, are allowed to take any values in their respective ranges such that the inequality obtained is facet-defining for $\convP$.
\rred{Formulation F2-Z contains all the valid inequalities in formulations F2-X and F2-Y.}

The third experiment examines a network-constrained UC problem based on the modified IEEE 118-bus system.
The system comprises 54 thermal generators, 118 buses, and 186 transmission lines.
System data such as $\Cupper^g$, $\Clower^g$, $L^g$, $\ell^g$, $a^g$, $b^g$, $c^g$, etc., as well as the load amount of each load bus, are obtained from \mbox{\url{http://motor.ece.iit.edu/data/SCUC_118/}}.
Each instance has a 24-hour operation horizon, i.e., $T=24$.
The system reserve factor of each time period is set equal to 3\%, as in the first experiment.
The maximum hourly load of the system is randomly generated from a uniform distribution on $[0.5\sum_{g\in\Ge}\Cupper^g,\sum_{g\in\Ge}\Cupper^g]$.
The maximum hourly load of each load bus is then obtained by allocating the maximum hourly load of the system to each load bus in proportion to their load amounts.
For each load bus, the loads in different time periods are then obtained by following a daily load profile such that the maximum load of the day is equal to the maximum hourly load.
This daily load profile is obtained from \mbox{\url{https://www.pjm.com/markets-and-operations/data-dictionary}}, which was generated based on the average values of the actual hourly electricity demand over 30 days in the western market.
Twenty instances with randomly generated loads are created using this method.
Each instance is solved using the following formulations:
{\small \begin{subeqnarray*}
  \mbox{\rred{\F1^+}: \ } & \mbox{minimize \ } & \mbox{objective function \eqref{prob1:obj}}\\[-4pt]
  & \mbox{subject to \ } & \mbox{constraints \eqref{prob1:min-up}--\eqref{prob1:nonnegativity}.}\\[-35pt]
\end{subeqnarray*}
\begin{subeqnarray*}
  \mbox{\rred{\F1^+-X}: \ } & \mbox{minimize \ } & \mbox{objective function \eqref{prob1:obj}}\\[-4pt]
  & \mbox{subject to \ } & \mbox{constraints \eqref{prob1:min-up}--\eqref{prob1:nonnegativity};}\\[-4pt]
  && \mbox{constraints \eqref{multi:eqn-q2:x1-ub}--\eqref{multi:eqn-q2:x1-x2-ub-2};}\\[-4pt]
  && \mbox{\rred{cutting planes} \eqref{eqn-T:x_t-ub-1-1}--\eqref{eqn-T:kth-ramp-3-2}.}
\end{subeqnarray*} }%
Formulations \rred{\F1^+} and \rred{\mbox{\F1^+-X}} resemble formulations F1 and \mbox{F1-X}, respectively, with the transmission constraint \eqref{prob1:transmission-constraint} reinstated.
In \rred{\mbox{\F1^+-X}}, \fred{the} strong valid inequalities \eqref{multi:eqn-q2:x1-ub}--\eqref{multi:eqn-q2:x1-x2-ub-2} and \eqref{eqn-T:x_t-ub-1-1}--\eqref{eqn-T:kth-ramp-3-2} are added as constraints and user cuts, respectively, in the same way as in the first experiment.

\subsection{Computational Results} \label{subsec:results}

In this subsection, we report the computational results of the three experiments described in Section~\ref{subsec:instance-settings}.
In these experiments, each test instance is executed once using each of the formulations in the experiment, and the time limit for each execution is set to one hour.
Tables \ref{tab:ucresult-1}--\ref{tab:ucresult-3} summarize the computational results.
In these tables, the ``\# var," ``\# bin var," and ``\# cstr" columns report the total number of decision variables, the number of binary decision variables, and the number of constraints (excluding nonnegativity and binary constraints), respectively, in the formulation.
The ``IGap" columns report the root node integrality gaps of \fred{the} different formulations, where IGap is given as 
$|Z^*-Z_{\text{LP}}|/Z^*\times 100\%$, where
$Z^*$ is the best objective function value obtained by solving the formulations in the experiment and
$Z_{\text{LP}}$ is the optimal objective function value of the LP relaxation of the formulation concerned.
Note that for each strong formulation, the objective function value of its LP relaxation (i.e., $Z_{\text{LP}}$) is obtained such that there is no violation of all valid inequality families.
For example, to obtain $Z_{\text{LP}}$ for F1-X, we solve its LP relaxation with a subset of valid inequalities \eqref{eqn-T:x_t-ub-1-1}--\eqref{eqn-T:kth-ramp-3-2} added as constraints first.
When an optimal solution is obtained, the separation algorithms are invoked to check for any violation of valid inequalities \eqref{eqn-T:x_t-ub-1-1}--\eqref{eqn-T:kth-ramp-3-2}.
If there exists a violation, the most violated inequality will be added to the LP problem as constraints and the new problem is solved again to obtain an optimal solution.
This process is repeated until there is no violation of valid inequalities \eqref{eqn-T:x_t-ub-1-1}--\eqref{eqn-T:kth-ramp-3-2}.
Thus, the obtained solution satisfies valid inequalities \eqref{eqn-T:x_t-ub-1-1}--\eqref{eqn-T:kth-ramp-3-2}, and its objective function value $Z_{\text{LP}}$ is used to calculate the integrality gap.
This integrality gap measures the tightness of the formulation.
To evaluate the effectiveness of the strong valid inequalities in tightening the formulation,
we report the percentage reduction in integrality gap in the ``Pct.~reduction" columns, where $\mbox{Pct.~reduction}=\frac{\text{IGap}_{\text{without cuts}}-\text{IGap}_{\text{with \rred{{cuts}}}}}{\text{IGap}_{\text{\rred{without cuts}}}}\times 100\%,$
$\text{IGap}_{\text{\rred{without cuts}}}$ is the integrality gap of the formulation \rred{without our derived strong valid inequalities}, and
$\text{IGap}_{\text{with \rred{cuts}}}$ is the integrality gap of the current formulation.
The ``CPU time [TGap]" columns report the computational time (in seconds) required to solve the instance to optimality (with a default optimality gap of $0.01\%$). 
Instances that could not be solved to optimality within one hour are marked with ``**," and the terminating gaps of those instances are reported (enclosed in square brackets).
The ``\# nodes" columns report the number of branch-and-cut nodes explored.
The ``\# user cuts" columns report the number of user cuts added to each formulation.

\begin{table}[h!]
  \renewcommand{\arraystretch}{1}
  \centering
  \caption{\tred{Performance of MIP Formulations in the First Experiment}}
  \setlength\tabcolsep{5pt}
  \fontsize{8}{12}\selectfont
  \begin{tabular}{|*{10}{c|}}
    \hline
    \multirow{2}{*}{Instance} & \multirow{2}{*}{\# var} & \multirow{2}{*}{\# bin var} & \multicolumn{2}{c|}{\# cstr} & \multicolumn{2}{c|}{CPU time [TGap]} 
    & \multicolumn{2}{c|}{\# nodes} & \# user cuts \\
    \cline{4-10}
       &       &      & F1     & F1-X   & F1          & F1-X        & F1      & F1-X    & F1-X \\
    \hline
     1 & 3360  & 672  & 19422  & 23286  & 666.8 &        38.6 &  289339 &   11563 &  211 \\
     2 & 4200  & 840  & 24514  & 29344  & ** [0.06\%] &       301.8 & 1009424 &  137851 &  367 \\
     3 & 5280  & 1056 & 29556  & 35628  & ** [0.03\%] &       231.2 &  596629 &   53267 &  554 \\
     4 & 5400  & 1080 & 29304  & 35514  & ** [0.01\%] &      1335.1 & 1584979 &  494770 &  299 \\
     5 & 5880  & 1176 & 32812  & 39574  & ** [0.06\%] &       703.7 &  481348 &  149614 &  525 \\
     6 & 6000  & 1200 & 31562  & 38462  & ** [0.07\%] &       190.5 &  604379 &   94790 &  520 \\
     7 & 6120  & 1224 & 33610  & 40648  & ** [0.05\%] &      3194.2 &  463539 &  903454 &  445 \\
     8 & 6120  & 1224 & 32932  & 39970  & ** [0.09\%] &       662.4 &  504831 &  171291 &  518 \\
     9 & 6240  & 1248 & 34346  & 41522  & ** [0.09\%] &      1630.6 &  456636 &  425148 &  556 \\
    10 & 6480  & 1296 & 34356  & 41808  & ** [0.09\%] &      2119.2 &  410349 &  433936 &  697 \\
    11 & 15840 & 3168 & 87526  & 105742 & ** [0.12\%] & ** [0.04\%] &  274195 &  255185 & 1006 \\
    12 & 18720 & 3744 & 102148 & 123676 & ** [0.09\%] & ** [0.02\%] &  212690 &  279909 & 1923 \\
    13 & 18720 & 3744 & 102174 & 123702 & ** [0.10\%] &       927.5 &  172698 &   59414 & 1457 \\
    14 & 19800 & 3960 & 113314 & 136084 & ** [0.12\%] & ** [0.01\%] &   88150 &  123673 & 2899 \\
    15 & 20040 & 4008 & 109194 & 132240 & ** [0.17\%] & ** [0.01\%] &  114417 &  150664 & 1566 \\
    16 & 20640 & 4128 & 112994 & 136730 & ** [0.11\%] & ** [0.01\%] &  240391 &  234442 & 2214 \\
    17 & 21840 & 4368 & 120156 & 145272 & ** [0.13\%] &       660.0 &  177862 &   15758 & 1208 \\
    18 & 21840 & 4368 & 119936 & 145052 & ** [0.13\%] &      2880.9 &  207181 &  130559 & 1533 \\
    19 & 21960 & 4392 & 120816 & 146070 & ** [0.09\%] &      1958.5 &  208972 &   54786 & 2312 \\
    20 & 22440 & 4488 & 122468 & 148274 & ** [0.12\%] &       920.3 &  257848 &   19072 & 1198 \\
    \hline
  \end{tabular}
  \label{tab:ucresult-1}
  \vspace{-4mm}
\end{table}

\begin{table}[h!]
  \renewcommand{\arraystretch}{1}
  \centering
  \caption{\tred{The Strength of LP Relaxations of MIP Formulations in the First Experiment}}
  \setlength\tabcolsep{5pt}
  \fontsize{8}{12}\selectfont
  \begin{tabular}{|c|c|*{10}{c}|}
    \hline
    \multirow{6}{*}{IGap} & Instance & 1 & 2 & 3 & 4 & 5 & 6 & 7 & 8 & 9 & 10\\
    \cline{2-12}
     & F1 & 0.45\% & 0.39\% & 0.41\% & 0.36\% & 0.51\% & 0.61\% & 0.34\% & 0.55\% & 0.51\% & 0.64\% \\
     & F1-X & 0.20\% & 0.14\% & 0.08\% & 0.06\% & 0.05\% & 0.04\% & 0.07\% & 0.06\% & 0.06\% & 0.04\% \\
    \cline{2-12}
     & Instance & 11 & 12 & 13 & 14 & 15 & 16 & 17 & 18 & 19 & 20\\
    \cline{2-12}
     & F1 & 0.32\% & 0.32\% & 0.40\% & 0.38\% & 0.50\% & 0.29\% & 0.45\% & 0.42\% & 0.37\% & 0.43\% \\
     & F1-X & 0.06\% & 0.02\% & 0.02\% & 0.03\% & 0.02\% & 0.02\% & 0.02\% & 0.02\% & 0.02\% & 0.02\% \\
    \hline
    \multirow{4}{*}{Pct. reduction} & Instance & 1 & 2 & 3 & 4 & 5 & 6 & 7 & 8 & 9 & 10\\
    \cline{2-12}
    & F1-X & 55.7\% & 63.8\% & 80.4\% & 82.3\% & 90.0\% & 93.3\% & 78.7\% & 89.4\% & 89.1\% & 93.1\% \\
    \cline{2-12}
    & Instance & 11 & 12 & 13 & 14 & 15 & 16 & 17 & 18 & 19 & 20\\
    \cline{2-12}
    & F1-X & 82.6\% & 94.0\% & 95.2\% & 90.9\% & 96.4\% & 94.1\% & 96.1\% & 95.9\% & 94.1\% & 96.1\%\\
    \hline
  \end{tabular}
  \label{tab:ucresult-1-2}
  \vspace{-4mm}
\end{table}

The computational results of the first experiment are presented in Tables~\ref{tab:ucresult-1} and \ref{tab:ucresult-1-2}.
The integrality gaps generated by formulation \mbox{F1-X} are considerably smaller than those generated by formulation F1,
particularly for large instances (i.e., instances 11--20, where the total number of generators exceeds 100 and the number of binary decision variables exceeds 2000).
This suggests that formulation \mbox{F1-X} is tighter than formulation F1.
Using formulation F1, CPLEX is able to solve only one of the 20 test instances to optimality within one hour.
In contrast, using formulation \mbox{F1-X}, CPLEX is able to solve 15 instances to optimality within the same time limit.
For instances that cannot be solved to optimality using formulation \mbox{F1-X}, the terminating gaps are all within $0.05\%$ and \fred{are} much smaller than those using formulation F1.
Formulation \mbox{F1-X} tends to explore fewer nodes than formulation F1, and
the number of user cuts added by \mbox{F1-X} in the solution process is small compared \fred{with} the total number of constraints in formulations F1 and \rred{\mbox{F1-X}}.
These results demonstrate that our proposed strong valid inequalities can significantly tighten the single-binary formulation of the network-constrained UC problem and thus speed up the solution process.

Tables~\ref{tab:ucresult-2} and \ref{tab:ucresult-2-2} present the computational results of the second experiment, 
in which \rred{four} formulations, namely F2, \mbox{F2-X}, \rred{\mbox{F2-Y}, and \mbox{F2-Z}}, are used \fred{to solve} the network-constrained UC problem.
Using formulation F2, CPLEX solves only two of the 20 instances within the one-hour time limit.
Using formulation \mbox{F2-X}, which includes our proposed valid inequalities, CPLEX is able to solve 16 instances to optimality.
Using formulation \mbox{F2-Y}, which includes the valid inequalities developed by \cite{pan2016polyhedral}, CPLEX is also able to solve 16 instances to optimality.
The integrality gaps and CPU time of formulations \mbox{F2-X} and \mbox{F2-Y} are significantly smaller than those of formulation F2, \fred{whereas} the integrality gaps and CPU time of formulation \mbox{F2-X} are comparable to those of formulation F2-Y.

\begin{table}[h!]
  \renewcommand{\arraystretch}{1.1}
  \centering
  \caption{\tred{Performance of MIP Formulations in the Second Experiment}}
  \setlength\tabcolsep{2.5pt}
  \fontsize{6.5}{10}\selectfont
  \begin{tabular}{|*{18}{c|}}
    \hline
    \multirow{2}{*}{Instance} & \multirow{2}{*}{\# var} & \multirow{2}{*}{\# bin var} & \multicolumn{4}{c|}{\# cstr} & \multicolumn{4}{c|}{CPU time [TGap]} 
    & \multicolumn{4}{c|}{\# nodes} & \multicolumn{3}{c|}{\# user cuts} \\
    \cline{4-18}
       &       &       & F2     & F2-X   & F2-Y   & F2-Z  & F2     & F2-X   & F2-Y   & F2-Z    & F2     & F2-X    & F2-Y  & F2-Z   & F2-X   & F2-Y   & F2-Z\\
    \hline
     1 & 3304  & 1960  & 12300  & 16164  & 14876  & 18096 & 710.8  & 26.0  &  33.8  & 17.3  & 365652  & 12926  & 25641  & 11508  & 168  & 98  & 203\\
     2 & 4130  & 2450  & 15350  & 20180  & 18570  & 22595 & ** [0.07\%] & 819.0  & 316.2  & 292.0  & 900272 &  577704 &  321040  & 213721  & 294  & 126 & 307\\
     3 & 5192  & 3080  & 19354  & 25426  & 13402  & 28462 & ** [0.02\%] & 134.2  & 104.5  & 234.3  & 666918 & 57024  & 44507  & 86655  & 494  & 252 & 439\\
     4 & 5310  & 3150  & 19842  & 26052  & 23982  & 29157 & ** [0.01\%] & 222.7  & 1675.7  & 497.4  & 760760 &  260255 & 1732340  & 505512  & 203  & 130  & 234\\
     5 & 5782  & 3430  & 21556  & 28318  & 26064  & 31699  & ** [0.05\%] & 127.1  & 458.9  & 406.6  & 516797  & 65305  & 309318  & 244711  & 448  & 243 & 463\\
     6 & 5900  & 3500  & 22098  & 28998  & 26698  & 32448  & 1908.6  & 219.9  & 147.1  & 151.2  & 375616 &  268065 &  213464  & 151584  & 515  & 202  & 475\\
     7 & 6018  & 3570  & 22458  & 29496  & 27150  & 33015  & ** [0.04\%] & ** [0.01\%] & 417.0  & 564.4  & 618447 & 3089892 & 357940  & 456912  & 291  & 132 & 317\\
     8 & 6018  & 3570  & 22496  & 29534  & 27188  & 33053  & ** [0.05\%] & 419.0  & 238.9  & 72.5  & 526007 & 240221 &  147181 & 27169 & 407  & 208 & 344\\
     9 & 6136  & 3640  & 22896  & 30072  & 27680  & 33660 & ** [0.05\%] & 930.8  & 1034.6  & 985.1  & 434524 &  440662 &  495795  &  397759  & 535  & 261 & 517\\
    10 & 6372  & 3780  & 23846  & 31298  & 28814  &  35024  & ** [0.08\%] & 1252.6  & ** [0.02\%]  & 2206.9  & 310975 &  455369 & 1538372  & 787578  & 449  & 233 & 507\\
    11 & 15576 & 9240  & 58012  & 76228  & 70156  & 85336  & ** [0.12\%] & ** [0.03\%] & ** [0.03\%]  & 1036.6  & 180807 &  256996 &  570824  & 184432  & 806  & 353 & 836\\
    12 & 18408 & 10920 & 68630  & 90158  & 82982  & 100922  & ** [0.08\%] & ** [0.01\%] & 1739.3 & **[0.01\%]  & 108631 &  285196 &  276194  & 382696  & 1484 & 815 & 1763\\
    13 & 18408 & 10920 & 68630  & 90158  & 82982  & 100922  & ** [0.09\%] & 733.4 & 887.2 & 1205.6  & 84351 & 44990 &   92311  & 125922  & 1420 & 810 & 1603\\
    14 & 19470 & 11550 & 72312  & 95082  & 87492  & 106467  & ** [0.11\%] & ** [0.01\%] & ** [0.01\%] & 1187.2  &  62170 &  179443 &  412241  & 69854  & 2285 & 1570 & 2619\\
    15 & 19706 & 11690 & 73482  & 96528  & 88846  & 108051  & ** [0.13\%] & 539.7  & 180.6 & 438.4  &  50839 &   15365 &    6643  & 26615  & 1410 & 919 & 1717\\
    16 & 20296 & 12040 & 75640  & 99376  & 91464  & 111244  & ** [0.10\%] & 2790.1 & ** [0.01\%] & 1498.7  & 142997 &  191482 &  249999  & 157638  & 1690 & 762 & 1630\\
    17 & 21476 & 12740 & 80014  & 105130 & 96758  & 117688  & ** [0.10\%] & 444.3 & 180.4 & 244.3  & 70436  &   14938 & 7444  & 7710  & 1294 & 757 & 1753\\
    18 & 21476 & 12740 & 80026  & 105142 & 96770  & 117700  & ** [0.10\%] & 1686.3 & 239.2 & 225.3  & 41514 &  148584 & 24456  & 7093  & 1515 & 1067 & 1923\\
    19 & 21594 & 12810 & 80450  & 105704 & 97286  & 118331  & ** [0.09\%] & 1685.8 & 242.5 & 632.2  & 93893 &  136577 &   19084  & 51614  & 1703 & 1050 & 1977\\
    20 & 22066 & 13090 & 82264  & 108070 & 99468  & 120973  & ** [0.12\%] & 293.2 & 245.7 & 187.7  &  62970 &   19072 &   10870  & 4069  & 1299 & 970 & 1746\\
    \hline
  \end{tabular}
  \label{tab:ucresult-2}
  \vspace{-4mm}
\end{table}

This demonstrates that strong valid inequalities developed for a single-binary formulation can be used for a formulation with more than a single type of binary variables and can achieve comparable effectiveness.
Comparing the results presented in \tred{Tables~\ref{tab:ucresult-2} and \ref{tab:ucresult-2-2}} with the results presented in \tred{Tables~\ref{tab:ucresult-1} and \ref{tab:ucresult-1-2}},
we observe that formulations \mbox{F1-X}, \mbox{F2-X}, and \mbox{F2-Y} have similar performance.
This shows that a single-binary formulation has a similar performance as a formulation that uses more than a single type of binary variables when strong valid inequalities are added to these formulations.
It also shows that the strong valid inequalities obtained from our single-binary formulation have an effectiveness similar to those strong valid inequalities obtained from a formulation that uses more than a single type of binary variables.
However, formulation F2-Z outperforms formulations F1-X, F2-X, and F2-Y, as CPLEX can solve 19 instances to optimality with F2-Z, and the instance that is not solved optimally has a TGap of only 0.01\%.
This indicates that our strong valid inequalities derived from a single-binary formulation can be used in conjunction with those valid inequalities obtained from a formulation with more than a single type of binary variables to achieve better performance.

\vspace{-2mm}
\begin{table}[h]
  \renewcommand{\arraystretch}{1}
  \centering
  \caption{\tred{The Strength of LP Relaxations of MIP Formulations in the Second Experiment}}
  \setlength\tabcolsep{5pt}
  \fontsize{8}{12}\selectfont
  \begin{tabular}{|c|c|*{10}{c}|}
    \hline
    \multirow{10}{*}{IGap} & Instance & 1 & 2 & 3 & 4 & 5 & 6 & 7 & 8 & 9 & 10\\
    \cline{2-12}
    & F2 & 0.45\% & 0.39\% & 0.38\% & 0.35\% & 0.47\% & 0.60\% & 0.34\% & 0.52\% & 0.48\% & 0.63\% \\
    & F2-X & 0.20\% & 0.14\% & 0.07\% & 0.06\% & 0.05\% & 0.04\% & 0.07\% & 0.06\% & 0.06\% & 0.04\% \\
    & F2-Y & 0.20\% & 0.14\% & 0.08\% & 0.05\% & 0.05\% & 0.05\% & 0.08\% & 0.06\% & 0.05\% & 0.05\% \\
    & F2-Z & 0.20\% & 0.14\% & 0.07\% & 0.05\% & 0.05\% & 0.04\% & 0.07\% & 0.06\% & 0.05\% & 0.04\%\\
    \cline{2-12}
    & Instance & 11 & 12 & 13 & 14 & 15 & 16 & 17 & 18 & 19 & 20\\
    \cline{2-12}
    & F2 & 0.32\% & 0.31\% & 0.36\% & 0.34\% & 0.47\% & 0.29\% & 0.41\% & 0.40\% & 0.34\% & 0.41\% \\
    & F2-X & 0.05\% & 0.02\% & 0.02\% & 0.02\% & 0.02\% & 0.02\% & 0.02\% & 0.02\% & 0.02\% & 0.02\% \\
    & F2-Y & 0.06\% & 0.02\% & 0.02\% & 0.02\% & 0.02\% & 0.02\% & 0.02\% & 0.01\% & 0.02\% & 0.01\%\\
    & F2-Z & 0.05\% & 0.02\% & 0.02\% & 0.02\% & 0.02\% & 0.02\% & 0.02\% & 0.01\% & 0.02\% & 0.01\%\\
    \hline
    \multirow{8}{*}{Pct. reduction} & Instance & 1 & 2 & 3 & 4 & 5 & 6 & 7 & 8 & 9 & 10\\
    \cline{2-12}
    & F2-X & 55.7\% & 64.0\% & 81.2\% & 82.2\% & 89.3\% & 93.2\% & 79.4\% & 88.8\% & 88.3\% & 93.0\% \\
    & F2-Y & 55.7\% & 63.9\% & 80.3\% & 84.3\% & 90.3\% & 92.0\% & 77.5\% & 88.1\% & 89.0\% & 92.4\%\\
    & F2-Z & 55.7\% & 64.0\% & 81.2\% & 84.3\% & 90.3\% & 93.5\% & 79.4\% & 89.0\% & 89.0\% & 93.0\%\\
    \cline{2-12}
    & Instance & 11 & 12 & 13 & 14 & 15 & 16 & 17 & 18 & 19 & 20\\
    \cline{2-12}
    & F2-X & 84.0\% & 93.7\% & 94.7\% & 94.6\% & 96.1\% & 94.5\% & 95.6\% & 95.7\% & 93.2\% & 95.7\%\\
    & F2-Y & 82.5\% & 92.1\% & 94.5\% & 94.1\% & 96.4\% & 92.0\% & 96.2\% & 96.7\% & 94.4\% & 96.3\%\\
    & F2-Z & 84.0\% & 93.9\% & 94.8\% & 94.7\% & 96.5\% & 94.5\% & 96.2\% & 96.7\% & 94.4\% & 96.4\%\\
    \hline
  \end{tabular}
  \label{tab:ucresult-2-2}
  \vspace{-4mm}
\end{table}

\tred{Tables~\ref{tab:ucresult-3} and \ref{tab:ucresult-3-2} present} the computational results of the third experiment,
in which formulations \rred{\F1^+} and \mbox{\F1^+-X} are used \fred{to solve} the network-constrained UC problem based on the modified IEEE 118-bus system.
The integrality gaps generated by formulation \rred{\mbox{\F1^+-X}} are 44.1\% to 66.3\% smaller than those generated by formulation \rred{\F1^+}.
Using formulation \F1^+, CPLEX is able to solve only one of the 20 instances to optimality within one hour.
In contrast, using formulation \mbox{\F1^+-X}, CPLEX is able to solve 17 instances to optimality within the same time limit.
Formulation \mbox{\F1^+-X} explores fewer nodes than formulation \F1^+, and
the number of user cuts added by \mbox{\F1^+-X} in the solution process is quite small.
These results demonstrate the effectiveness of the strong formulation \mbox{\F1^+-X}.
Some additional computational results on formulations \mbox{\F1^+} and \mbox{\F1^+-X} using a more congested demand setting and a less congested demand setting are presented in Online Appendix B, which show that formulation \mbox{\F1^+-X} is more effective when the demand is more congested.

\begin{table}[htbp]
  \renewcommand{\arraystretch}{1}
  \centering
  \caption{\tred{Performance of MIP Formulations in the Third Experiment}}
  \fontsize{8}{12}\selectfont
  \setlength\tabcolsep{5pt}
  \begin{tabular}{|*{10}{c|}}
    \hline
    \multirow{2}{*}{Instance} & \multirow{2}{*}{\# var} & \multirow{2}{*}{\# bin var} & \multicolumn{2}{c|}{\# cstr} & \multicolumn{2}{c|}{CPU time [TGap]} 
    & \multicolumn{2}{c|}{\# nodes} & \# user cuts \\
    \cline{4-10}
    &&& \F1^+ & \F1^+-X & \F1^+ & \F1^+-X & \F1^+ & \F1^+-X & \F1^+-X \\
    \hline
     1 & \multirow{20}{*}{6372} & \multirow{20}{*}{1296} & \multirow{20}{*}{36124} & \multirow{20}{*}{43576} & ** [0.15\%] &       571.7 &  109080 &   14069 &     316 \\
     2 &&&&& ** [0.06\%] &      2324.7 &   97424 &   84034 &     474 \\
     3 &&&&& ** [0.02\%] &      2251.6 &  158482 &   31631 &     445 \\
     4 &&&&& ** [0.11\%] &      2803.9 &  131942 &   43108 &     544 \\
     5 &&&&& ** [0.01\%] &       288.5 &  175810 &    7220 &     394 \\
     6 &&&&& ** [0.17\%] &      1653.5 &  116598 &   24813 &     441 \\
     7 &&&&& ** [0.27\%] &      1800.0 &  122958 &   43624 &     545 \\
     8 &&&&& ** [0.11\%] &      1799.3 &  180499 &   22310 &     591 \\
     9 &&&&& ** [0.13\%] &       851.4 &  250053 &   19961 &     464 \\
    10 &&&&&      2938.4 &       850.7 &  181549 &   18889 &     449 \\
    11 &&&&& ** [0.08\%] &      2425.3 &  196002 &   59158 &     463 \\
    12 &&&&& ** [0.15\%] & ** [0.05\%] &  287079 &   44675 &     619 \\
    13 &&&&& ** [0.14\%] &      3351.3 &  164549 &   82738 &     563 \\
    14 &&&&& ** [0.17\%] &      1969.8 &  235344 &   37421 &     486 \\
    15 &&&&& ** [0.30\%] & ** [0.03\%] &  266093 &  111169 &     696 \\
    16 &&&&& ** [0.10\%] &      1666.2 &  112045 &  42146  &     438 \\
    17 &&&&& ** [0.13\%] &       461.9 &  139242 &  13578  &     448 \\
    18 &&&&& ** [0.14\%] &      2433.3 &  139375 &  61670  &     531 \\
    19 &&&&& ** [0.15\%] & ** [0.02\%] &  169572 &  133348 &     742 \\
    20 &&&&& ** [0.21\%] &      3298.6 &  158536 &  50471  &     615 \\
    \hline
  \end{tabular}
  \label{tab:ucresult-3}
  \vspace{-4mm}
\end{table}

\begin{table}[htbp]
  \renewcommand{\arraystretch}{1}
  \centering
  \caption{\tred{The Strength of LP Relaxations of MIP Formulations in the Third Experiment}}
  \setlength\tabcolsep{5pt}
  \fontsize{8}{12}\selectfont
  \begin{tabular}{|c|c|*{10}{c}|}
    \hline
    \multirow{6}{*}{IGap} & Instance & 1 & 2 & 3 & 4 & 5 & 6 & 7 & 8 & 9 & 10\\
    \cline{2-12}
     & \F1^+ & 0.85\% & 1.03\% & 0.76\% & 0.93\% & 0.86\% & 0.88\% & 1.04\% & 0.92\% & 0.91\% & 0.80\% \\
     & \F1^+-X & 0.36\% & 0.39\% & 0.39\% & 0.52\% & 0.33\% & 0.36\% & 0.35\% & 0.42\% & 0.38\% & 0.40\% \\
    \cline{2-12}
     & Instance & 11 & 12 & 13 & 14 & 15 & 16 & 17 & 18 & 19 & 20\\
    \cline{2-12}
     & \F1^+ & 0.90\% & 1.17\% & 0.87\% & 0.91\% & 1.02\% & 0.89\% & 0.84\% & 0.85\% & 0.95\% & 0.89\% \\
     & \F1^+-X & 0.40\% & 0.47\% & 0.37\% & 0.43\% & 0.44\% & 0.40\% & 0.40\% & 0.44\% & 0.39\% & 0.40\% \\
    \hline
    \multirow{4}{*}{Pct. reduction} & Instance & 1 & 2 & 3 & 4 & 5 & 6 & 7 & 8 & 9 & 10\\
    \cline{2-12}
    & \F1^+-X & 57.4\% & 62.2\% & 49.0\% & 44.1\% & 61.9\% & 59.7\% & 66.3\% & 54.8\% & 58.8\% & 52.2\% \\
    \cline{2-12}
    & Instance & 11 & 12 & 13 & 14 & 15 & 16 & 17 & 18 & 19 & 20\\
    \cline{2-12}
    & \F1^+-X & 55.9\% & 60.0\% & 57.5\% & 52.2\% & 57.4\% & 55.6\% & 53.0\% & 48.4\% & 59.0\% & 55.6\%\\
    \hline
  \end{tabular}
  \label{tab:ucresult-3-2}
  \vspace{-4mm}
\end{table}

\section{Conclusions}\label{Conclusion}

This paper considers a UC formulation with a single type of binary variables.
By analyzing the physical constraints of a single generator, we obtain the convex hull description of the two-period single-UC polytope, which can be \fred{used} to tighten the original MILP formulation and derive other strong valid inequalities.
For the multi-period single-UC polytope, we derive strong valid inequalities with one and two continuous variables.
Conditions under which these valid inequalities are facet-defining for the multi-period single-UC polytope are provided.
Because the number of inequalities in each valid inequality family is very large, efficient separation algorithms are provided to identify the most violated inequalities.
The effectiveness of the proposed strong valid inequalities is demonstrated in solving network-constrained UC problems.
Computational results show that our valid inequalities can speed up the solution process significantly.
Moreover, these strong valid inequalities exhibit effectiveness comparable to two-binary valid inequalities and thus can be used to tighten two/three-binary formulations.

\fred{Various} intriguing research directions can be pursued following this \fred{line of work}.
First, it would be interesting to investigate the complete convex hull descriptions of the multi-period single-UC polytopes, such as the three-period polytope, and derive strong valid inequalities with more than two continuous variables to further tighten Problem~1.
\fred{In addition}, the discussion \fred{of} the single-UC polytope can be extended to different parameter settings, such as the case where $\Vupper \geq \Clower+V$ or the case where $\Vupper+V>\Cupper$.
Second, it would be appealing to incorporate different start-up/shut-down trajectories of generators into the physical constraints to accurately represent the operation of units and \fred{to} conduct a polyhedral study on the obtained single-UC polytope to derive strong valid inequalities.
Third, considering the demand and electricity price fluctuations that often occur in practice when dealing with UC problems, it would be interesting to formulate the corresponding stochastic UC problems to better reflect real-world scenarios.
Fourth, given that different types of electrical generators (e.g., pumped storage hydro units) may have different physical constraints in addition to those considered in this paper, it would be interesting to derive strong valid inequalities for the UC problems with these specific generators.
\rred{For example, consider the hydro UC problem described in \cite{cheng2016hydro} in which there are constraints that impose restrictions on the power output in the safe operating zones. By employing a similar approach to the derivation of inequality \eqref{multi:eqn:two_var-k-ub-3}, we can obtain a valid inequality by deriving an upper bound for ``$p_{i,t}-p_{i,t-k}$," where $p_{i,t}$ and $p_{i,t-k}$ are the power output of unit $i$ in periods $t$ and $t-k$, respectively.}
Fifth, as mentioned in Section~\ref{sec:introduction}, some studies have utilized LR-based methods to tackle complex UC problems by decomposing the multiple-generator problem into independent single-generator subproblems. 
The strong valid inequalities derived in this paper can be applied to these subproblems, thus enhancing their effectiveness. 
It would be useful to develop mathematical techniques that can integrate them into the decomposition procedure to improve its efficiency.
Sixth, it would be intriguing to make a theoretical comparison between our strong valid inequalities and those derived in prior studies, such as \cite{pan2016polyhedral} and \cite{damci2016polyhedral}, to reveal the relationships and distinctions between them.
We leave these issues for future research.


{\bibliographystyle{apalike}
\SingleSpacedXI
\setlength\bibsep{5pt}
\bibliography{duc}}

\newpage
\renewcommand{\thepage}{ec.\arabic{page}}
\setcounter{page}{1}
\renewcommand{\theequation}{EC.\arabic{equation}}
\setcounter{equation}{0}
\renewcommand{\thetable}{EC.\arabic{table}}
\setcounter{table}{0}
\noindent
\textbf{\footnotesize Online Supplement for ``A Polyhedral Study on Unit Commitment with a Single Type of Binary Variables''}

\begin{APPENDICES}{}

\SingleSpacedXI
\small

\section{Supplement to Section~\ref{sec:model}}\label{apx:A}


\subsection{Proof of Lemma \ref{lem:lookbackward}}\label{apx:lem:lookbackward}

\noindent{\bf Lemma \ref{lem:lookbackward}.} {\it
Consider any point $(\BFx,\BFy)\in\Pe$ and $t\in[2,T]_{\Z}$.
(i)~If $y_t=0$, then $y_{t-j}-y_{t-j-1}\le 0$ for all $j\in[0,\min\{t-2,L-1\}]_{\Z}$.
(ii)~If $y_t=1$, then there exists at most one $j\in[0,\min\{t-2,L\}]_{\Z}$ such that $y_{t-j}-y_{t-j-1}=1$.}
\vskip8pt

\noindent{\bf Proof.}
(i)~Consider any $(\BFx,\BFy)\in\Pe$ and any $t\in[2,T]_{\Z}$ such that $y_t=0$.
Suppose, to the contrary, that there exists $j\in[0,\min\{t-2,L-1\}]_{\Z}$ such that $y_{t-j}-y_{t-j-1}=1$.
Then, $t-j\in[2,T]_{\Z}$.
Thus, by \eqref{eqn:p-minup}, $y_k=1$ for all $k\in[t-j,\min\{T,t-j+L-1\}]_{\Z}$.
It is easy to check that $t\in[t-j,\min\{T,t-j+L-1\}]_{\Z}$.
Hence, $y_t=1$, which is a contradiction.
Therefore, $y_{t-j}-y_{t-j-1}\le0$ for all $j\in[0,\min\{t-2,L-1\}]_{\Z}$.

(ii)~Consider any $(\BFx,\BFy)\in\Pe$ and any $t\in[2,T]_{\Z}$ such that $y_t=1$.
Suppose, to the contrary, that there exist $j_1,j_2\in[0,\rred{\min}\{t-2,L\}]_{\Z}$ such that $j_1<j_2$ and $y_{t-j_1}-y_{t-j_1-1}=y_{t-j_2}-y_{t-j_2-1}=1$.
Because $t-j_2\in[2,T]_{\Z}$, and $y_{t-j_2}-y_{t-j_2-1}=1$, by \eqref{eqn:p-minup}, $y_k=1$ for all $k\in[t-j_2,\min\{T,t-j_2+L-1\}]_{\Z}$.
Note that $t-j_1-1\ge t-j_2$, $t-j_1-1\le T$, and $t-j_1-1\le t-1\le t-j_2+L-1$.
Thus, $t-j_1-1\in[t-j_2,\min\{T,t-j_2+L-1\}]_{\Z}$.
Hence, $y_{t-j_1-1}=1$, which contradicts that $y_{t-j_1}-y_{t-j_1-1}=1$.
Therefore, there exists at most one $j\in[0,\min\{t-2,L\}]_{\Z}$ such that $y_{t-j}-y_{t-j-1}=1$.
\Halmos


\subsection{Proof of Lemma \ref{lem:Pprime}}\label{apx:lem:Pprime}

\noindent{\bf Lemma \ref{lem:Pprime}.} {\it
Denote $\BFx'=(x'_1,\ldots,x'_T)$ and $\BFy'=(y'_1,\ldots,y'_T)$.
Let
\begin{subeqnarray}
&& \Pe'=\big\{(\BFx',\BFy')\in\R_+^{T}\times\B^{T}:\nonumber\\
&& \qquad\qquad -y'_{T-t+2}+y'_{T-t+1}-y'_{T-k+1}\le0,\,\forall t\in[2,T]_{\Z},\,\forall k\in[t,\min\{T,t+L-1\}]_{\Z},\slabel{eqn:pprime-minup}\\
&& \qquad\qquad y'_{T-t+2}-y'_{T-t+1}+y'_{T-k+1}\le1,\,\forall t\in[2,T]_{\Z},\,\forall k\in[t,\min\{T,t+\ell-1\}]_{\Z},\slabel{eqn:pprime-mindn}\\
&& \qquad\qquad -x'_{T-t+1}+\Clower y'_{T-t+1}\le0,\,\forall t\in[1,T]_{\Z},\slabel{eqn:pprime-lower-bound}\\
&& \qquad\qquad x'_{T-t+1}-\Cupper y'_{T-t+1}\le0,\,\forall t\in[1,T]_{\Z},\slabel{eqn:pprime-upper-bound}\\
&& \qquad\qquad x'_{T-t+1}-x'_{T-t+2}\le Vy'_{T-t+2}+\Vupper(1-y'_{T-t+2}),\,\forall t\in[2,T]_{\Z},\slabel{eqn:pprime-ramp-up}\\
&& \qquad\qquad x'_{T-t+2}-x'_{T-t+1}\le Vy'_{T-t+1}+\Vupper(1-y'_{T-t+1}),\,\forall t\in[2,T]_{\Z}\slabel{eqn:pprime-ramp-down}\big\}.
\end{subeqnarray}
Then, $\Pe=\Pe'$.}
\vskip8pt

\noindent{\bf Proof.}
First, consider any element $(\BFx,\BFy)$ of $\Pe$, and we show that $(\BFx,\BFy)\in\Pe'$.

Consider inequality \eqref{eqn:pprime-minup} and any $t\in[2,T]_{\Z}$.
Obviously, $(\BFx,\BFy)$ satisfies \eqref{eqn:pprime-minup} if $-y_{T-t+2}+y_{T-t+1}\le 0$.
Consider the case where $-y_{T-t+2}+y_{T-t+1}>0$ (i.e., $y_{T-t+2}=0$ and $y_{T-t+1}=1$).
Suppose, to the contrary, that $y_{T-k+1}=0$ for some $k\in[t,\min\{T,t+L-1\}]_{\Z}$.
Then, because $y_{T-k+1}=0$ and $y_{T-t+1}=1$, there exists $p\in[t,k-1]_\Z$ such that $y_{T-p}=0$ and $y_{T-p+1}=1$.
This implies that $-y_{T-p}+y_{T-p+1}-y_{T-t+2}=1$.
Note that $T-p+1\in[2,T]_\Z$ and $T-t+2\in[T-p+1,\min\{T,(T-p+1)+L-1\}]_{\Z}$.
Thus, $(\BFx,\BFy)$ violates inequality \eqref{eqn:p-minup}, which contradicts that $(\BFx,\BFy)\in\Pe$.
Hence, $y_{T-k+1}=1$ for all $k\in[t,\min\{T,t+L-1\}]_{\Z}$.
Thus, $(\BFx,\BFy)$ satisfies inequality \eqref{eqn:pprime-minup}.

Consider inequality \eqref{eqn:pprime-mindn} and any $t\in[2,T]_{\Z}$.
Obviously, $(\BFx,\BFy)$ satisfies \eqref{eqn:pprime-mindn} if $y_{T-t+2}-y_{T-t+1}\le 0$.
Consider the case where $y_{T-t+2}-y_{T-t+1}>0$ (i.e., $y_{T-t+2}=1$ and $y_{T-t+1}=0$).
Suppose, to the contrary, that $y_{T-k+1}=1$ for some $k\in[t,\min\{T,t+\ell-1\}]_{\Z}$.
Then, because $y_{T-k+1}=1$ and $y_{T-t+1}=0$, there exists $p\in[t,k-1]_\Z$ such that $y_{T-p}=1$ and $y_{T-p+1}=0$.
This implies that $y_{T-p}-y_{T-p+1}+y_{T-t+2}=2$.
Note that $T-p+1\in[2,T]_\Z$ and $T-t+2\in[T-p+1,\min\{T,(T-p+1)+\ell-1\}]_{\Z}$.
Thus, $(\BFx,\BFy)$ violates inequality \eqref{eqn:p-mindn}, which contradicts that $(\BFx,\BFy)\in\Pe$.
Hence, $y_{T-k+1}=0$ for all $k\in[t,\min\{T,t+\ell-1\}]_{\Z}$.
Thus, $(\BFx,\BFy)$ satisfies inequality \eqref{eqn:pprime-mindn}.

Consider inequalities \eqref{eqn:pprime-lower-bound} and \eqref{eqn:pprime-upper-bound}.
For any $t\in[1,T]_{\Z}$, 
because $(\BFx,\BFy)$ satisfies inequalities \eqref{eqn:p-lower-bound} and \eqref{eqn:p-upper-bound},
$(\BFx,\BFy)$ also satisfies inequalities \eqref{eqn:pprime-lower-bound} and \eqref{eqn:pprime-upper-bound}.

Consider inequality \eqref{eqn:pprime-ramp-up} and any $t\in[2,T]_{\Z}$.
Because $T-t+2\in[2,T]_{\Z}$, by \eqref{eqn:p-ramp-down}, $x_{T-t+1}-x_{T-t+2}\le Vy_{T-t+2}+\Vupper(1-y_{T-t+2})$.
Thus, $(\BFx,\BFy)$ satisfies inequality \eqref{eqn:pprime-ramp-up}.

Consider inequality \eqref{eqn:pprime-ramp-down} and any $t\in[2,T]_{\Z}$.
Because $T-t+2\in[2,T]_{\Z}$, by \eqref{eqn:p-ramp-up}, $x_{T-t+2}-x_{T-t+1}\le Vy_{T-t+1}+\Vupper(1-y_{T-t+1})$.
Thus, $(\BFx,\BFy)$ satisfies inequality \eqref{eqn:pprime-ramp-down}.

Summarizing the above analysis, we conclude that $(\BFx,\BFy)\in\Pe'$.

Next, consider any element $(\BFx',\BFy')$ of $\Pe'$, and we show that $(\BFx',\BFy')\in\Pe$.

Consider inequality \eqref{eqn:p-minup} and any $t\in[2,T]_{\Z}$.
Obviously, $(\BFx',\BFy')$ satisfies \eqref{eqn:p-minup} if $-y'_{t-1}+y'_t\le 0$.
Consider the case where $-y'_{t-1}+y'_t>0$ (i.e., $y'_{t-1}=0$ and $y'_t=1$).
Suppose, to the contrary, that $y'_k=0$ for some $k\in[t,\min\{T,t+L-1\}]_{\Z}$.
Then, because $y'_t=1$ and $y'_k=0$, there exists $p\in[t,k-1]_\Z$ such that $y'_p=1$ and $y'_{p+1}=0$.
This implies that $-y'_{T-(T-p+1)+2}+y'_{T-(T-p+1)+1}-y'_{T-(T-t+2)+1}=1$.
Note that $T-p+1\in[2,T]_\Z$ and $T-t+2\in[T-p+1,\min\{T,(T-p+1)+L-1\}]_{\Z}$.
Thus, $(\BFx',\BFy')$ violates inequality \eqref{eqn:pprime-minup}, which contradicts that $(\BFx',\BFy')\in\Pe'$.
Hence, $y'_k=1$ for all $k\in[t,\min\{T,t+L-1\}]_{\Z}$.
Thus, $(\BFx',\BFy')$ satisfies inequality \eqref{eqn:p-minup}.

Consider inequality \eqref{eqn:p-mindn} and any $t\in[2,T]_{\Z}$.
Obviously, $(\BFx',\BFy')$ satisfies \eqref{eqn:p-mindn} if $y'_{t-1}-y'_t\le 0$.
Consider the case where $y'_{t-1}-y'_t>0$ (i.e., $y'_{t-1}=1$ and $y'_t=0$).
Suppose, to the contrary, that $y'_k=1$ for some $k\in[t,\min\{T,t+\ell-1\}]_{\Z}$.
Then, because $y'_t=0$ and $y'_k=1$, there exists $p\in[t,k-1]_\Z$ such that $y'_p=0$ and $y'_{p+1}=1$.
This implies that $y'_{T-(T-p+1)+2}-y'_{T-(T-p+1)+1}+y'_{T-(T-t+2)+1}=2$.
Note that $T-p+1\in[2,T]_\Z$ and $T-t+2\in[T-p+1,\min\{T,(T-p+1)+\ell-1\}]_{\Z}$.
Thus, $(\BFx',\BFy')$ violates inequality \eqref{eqn:pprime-mindn}, which contradicts that $(\BFx',\BFy')\in\Pe'$.
Hence, $y'_k=0$ for all $k\in[t,\min\{T,t+\ell-1\}]_{\Z}$.
Thus, $(\BFx',\BFy')$ satisfies inequality \eqref{eqn:p-mindn}.

Consider inequalities \eqref{eqn:p-lower-bound} and \eqref{eqn:p-upper-bound}.
For any $t\in[1,T]_{\Z}$, 
because $(\BFx',\BFy')$ satisfies inequalities \eqref{eqn:pprime-lower-bound} and \eqref{eqn:pprime-upper-bound},
$(\BFx',\BFy')$ also satisfies inequalities \eqref{eqn:p-lower-bound} and \eqref{eqn:p-upper-bound}.

Consider inequality \eqref{eqn:p-ramp-up} and any $t\in[2,T]_{\Z}$.
Because $T-t+2\in[2,T]_{\Z}$, by \eqref{eqn:pprime-ramp-down}, 
$x'_{T-(T-t+2)+2}-x'_{T-(T-t+2)+1}\le Vy'_{T-(T-t+2)+1}+\Vupper(1-y'_{T-(T-t+2)+1})$.
Hence, $x'_t-x'_{t-1}\le Vy'_{t-1}+\Vupper(1-y'_{t-1})$.
Thus, $(\BFx',\BFy')$ satisfies inequality \eqref{eqn:p-ramp-up}.

Consider inequality \eqref{eqn:p-ramp-down} and any $t\in[2,T]_{\Z}$.
Because $T-t+2\in[2,T]_{\Z}$, by \eqref{eqn:pprime-ramp-up}, 
$x'_{T-(T-t+2)+1}-x'_{T-(T-t+2)+2}\le Vy'_{T-(T-t+2)+2}+\Vupper(1-y'_{T-(T-t+2)+2})$.
Hence, $x'_{t-1}-x'_t\le Vy'_t+\Vupper(1-y'_t)$.
Thus, $(\BFx',\BFy')$ satisfies inequality \eqref{eqn:p-ramp-down}.

Summarizing the above analysis, we conclude that $(\BFx',\BFy')\in\Pe$.
Therefore, $\Pe=\Pe'$.
\Halmos

\section{Supplement to Section~\ref{sec:two-period}}
\subsection{Proof of Theorem \ref{the:convPtwo}}\label{apx:the:convPtwo}

\noindent{\bf Theorem \ref{the:convPtwo}.} {\it
Denote
\begin{subequations}
\begin{align}
& \Qe_2 = \big\{(x_{t-1},x_t,y_{t-1},y_t)\in\R^4: \nonumber \\
& \qquad\qquad y_i\le 1, \ \forall i\in\{t-1,t\},                          \tag{\ref{eqn-q2:y-t}} \\        
& \qquad\qquad \Clower y_i\le x_i\le \Cupper y_i, \ \forall i\in\{t-1,t\}, \tag{\ref{eqn-q2:capacity}} \\   
& \qquad\qquad x_{t-1}\le\Vupper y_{t-1}+(\Cupper-\Vupper)y_t,             \tag{\ref{eqn-q2:x1-ub}} \\      
& \qquad\qquad x_t\le(\Cupper-\Vupper)y_{t-1}+\Vupper y_t,                 \tag{\ref{eqn-q2:x2-ub}} \\      
& \qquad\qquad x_t-x_{t-1}\le(\Clower+V)y_t-\Clower y_{t-1},               \tag{\ref{eqn-q2:x2-x1-ub-1}} \\ 
& \qquad\qquad x_t-x_{t-1}\le\Vupper y_t-(\Vupper-V)y_{t-1},               \tag{\ref{eqn-q2:x2-x1-ub-2}} \\ 
& \qquad\qquad x_{t-1}-x_t\le(\Clower+V)y_{t-1}-\Clower y_t,               \tag{\ref{eqn-q2:x1-x2-ub-1}} \\ 
& \qquad\qquad x_{t-1}-x_t\le\Vupper y_{t-1}-(\Vupper-V) y_t\big\}.        \tag{\ref{eqn-q2:x1-x2-ub-2}}    
\end{align}
\end{subequations}
Then, $\Qe_2=\convPtwo$.}
\vskip8pt

\noindent{\bf Proof.}
We divide the proof into two parts. For the sake of simplicity, we let $t=2$ in $\Pe_2$ and $Q_2$.
\vskip8pt

\noindent Part I: $\convPtwo \subseteq \Qe_2$.

We prove that the linear inequalities \eqref{eqn-q2:y-t}--\eqref{eqn-q2:x1-x2-ub-2} are valid for $\convPtwo$. 
To do so, it suffices to show that \eqref{eqn-q2:y-t}--\eqref{eqn-q2:x1-x2-ub-2} are valid for $\Pe_2$. 
Clearly, inequalities \eqref{eqn-q2:y-t} and \eqref{eqn-q2:capacity} hold for any element of $\Pe_2$. 
In the following, we show that \eqref{eqn-q2:x1-ub}--\eqref{eqn-q2:x1-x2-ub-2} hold for any element of $\Pe_2$.

For inequality \eqref{eqn-q2:x1-ub}, consider any element $(x_1,x_2,y_1,y_2)$ of $\Pe_2$. 
We consider three different cases. 
Case~1: $y_1=0$.
In this case, by \eqref{eqn:P2-b}, $x_1=0$.
Thus, $(x_1,x_2,y_1,y_2)$ satisfies inequality \eqref{eqn-q2:x1-ub}.
Case~2: $y_1=1$ and $y_2=0$.
In this case, by \eqref{eqn:P2-b}, $x_2=0$.
Then, by \eqref{eqn:P2-d}, $x_1\le\Vupper$.
Thus, $(x_1,x_2,y_1,y_2)$ satisfies inequality \eqref{eqn-q2:x1-ub}.
Case~3: $y_1=1$ and $y_2=1$.
In this case, by \eqref{eqn:P2-b}, $x_1\le\Cupper$.
Thus, $(x_1,x_2,y_1,y_2)$ satisfies inequality \eqref{eqn-q2:x1-ub}.

For inequality \eqref{eqn-q2:x2-ub}, consider any element $(x_1,x_2,y_1,y_2)$ of $\Pe_2$.
We consider three different cases.
Case~1: $y_2=0$.
In this case, by \eqref{eqn:P2-b}, $x_2=0$.
Thus, $(x_1,x_2,y_1,y_2)$ satisfies inequality \eqref{eqn-q2:x2-ub}.
Case~2: $y_2=1$ and $y_1=0$.
In this case, by \eqref{eqn:P2-b}, $x_1=0$.
Then, by \eqref{eqn:P2-c}, $x_2\le\Vupper$.
Thus, $(x_1,x_2,y_1,y_2)$ satisfies inequality \eqref{eqn-q2:x2-ub}.
Case~3: $y_2=1$ and $y_1=1$.
In this case, by \eqref{eqn:P2-b}, $x_2\le\Cupper$.
Thus, $(x_1,x_2,y_1,y_2)$ satisfies inequality \eqref{eqn-q2:x2-ub}.

For inequality \eqref{eqn-q2:x2-x1-ub-1}, consider any element $(x_1,x_2,y_1,y_2)$ of $\Pe_2$.
We consider four different cases.
Case~1: $y_1 = y_2 = 0$.
In this case, by \eqref{eqn:P2-b}, $x_1 = x_2 = 0$.
Thus, $(x_1,x_2,y_1,y_2)$ satisfies inequality \eqref{eqn-q2:x2-x1-ub-1}.
Case~2: $y_1 = y_2 = 1$.
In this case, by \eqref{eqn:P2-c}, $x_2-x_1 \le V$.
Thus, $(x_1,x_2,y_1,y_2)$ satisfies inequality \eqref{eqn-q2:x2-x1-ub-1}.
Case~3: $y_1 = 1$ and $y_2 = 0$.
In this case, by \eqref{eqn:P2-b}, $x_2 = 0$.
By \eqref{eqn:P2-a}, $\Clower \le x_1$.
Thus, $(x_1,x_2,y_1,y_2)$ satisfies inequality \eqref{eqn-q2:x2-x1-ub-1}.
Case~4: $y_1 = 0$ and $y_2 = 1$.
In this case, by \eqref{eqn:P2-b}, $x_1 = 0$.
By \eqref{eqn:P2-c}, $x_2\le\Vupper<\Clower+V$.
Thus, $(x_1,x_2,y_1,y_2)$ satisfies inequality \eqref{eqn-q2:x2-x1-ub-1}.

For inequality \eqref{eqn-q2:x2-x1-ub-2}, consider any element $(x_1,x_2,y_1,y_2)$ of $\Pe_2$.
We consider four different cases.
Case~1: $y_1=y_2=0$.
In this case, by \eqref{eqn:P2-b}, $x_1 = x_2 = 0$.
Thus, $(x_1,x_2,y_1,y_2)$ satisfies inequality \eqref{eqn-q2:x2-x1-ub-2}.
Case~2: $y_1=y_2=1$.
In this case, by \eqref{eqn:P2-c}, $x_2-x_1 \le V$.
Thus, $(x_1,x_2,y_1,y_2)$ satisfies inequality \eqref{eqn-q2:x2-x1-ub-2}.
Case~3: $y_1 = 1$ and $y_2 = 0$.
In this case, by \eqref{eqn:P2-b}, $x_2 = 0$.
By \eqref{eqn:P2-a}, $\Clower \le x_1$, which implies that
$-x_1 < V - \Vupper$ (because $\Vupper<\Clower+V$).
Thus, $(x_1,x_2,y_1,y_2)$ satisfies inequality \eqref{eqn-q2:x2-x1-ub-2}.
Case~4: $y_1=0$ and $y_2=1$.
In this case, by \eqref{eqn:P2-b}, $x_1=0$.
Then, by \eqref{eqn:P2-c}, $x_2 \le \Vupper$.
Thus, $(x_1,x_2,y_1,y_2)$ satisfies inequality \eqref{eqn-q2:x2-x1-ub-2}.

For inequality \eqref{eqn-q2:x1-x2-ub-1}, consider any element $(x_1,x_2,y_1,y_2)$ of $\Pe_2$.
We consider four different cases.
Case~1: $y_1=y_2=0$.
In this case, by \eqref{eqn:P2-b}, $x_1=x_2=0$.
Thus, $(x_1,x_2,y_1,y_2)$ satisfies inequality \eqref{eqn-q2:x1-x2-ub-1}.
Case~2: $y_1=y_2=1$.
In this case, by \eqref{eqn:P2-d}, $x_1-x_2 \le V$.
Thus, $(x_1,x_2,y_1,y_2)$ satisfies inequality \eqref{eqn-q2:x1-x2-ub-1}.
Case~3: $y_1=1$ and $y_2=0$.
In this case, by \eqref{eqn:P2-b}, $x_2=0$.
Then, by \eqref{eqn:P2-d}, $x_1\le\Vupper<\Clower+V$.
Thus, $(x_1,x_2,y_1,y_2)$ satisfies inequality \eqref{eqn-q2:x1-x2-ub-1}.
Case~4: $y_1=0$ and $y_2=1$.
In this case, by \eqref{eqn:P2-b}, $x_1=0$.
By \eqref{eqn:P2-a}, $\Clower \le x_2$.
Thus, $(x_1,x_2,y_1,y_2)$ satisfies inequality \eqref{eqn-q2:x1-x2-ub-1}.

For inequality \eqref{eqn-q2:x1-x2-ub-2}, consider any element $(x_1,x_2,y_1,y_2)$ of $\Pe_2$.
We consider four different cases.
Case~1: $y_1=y_2=0$.
In this case, by \eqref{eqn:P2-b}, $x_1=x_2=0$.
Thus, $(x_1,x_2,y_1,y_2)$ satisfies inequality \eqref{eqn-q2:x1-x2-ub-2}.
Case~2: $y_1=y_2=1$.
In this case, by \eqref{eqn:P2-d}, $x_1-x_2 \le V$.
Thus, $(x_1,x_2,y_1,y_2)$ satisfies inequality \eqref{eqn-q2:x1-x2-ub-2}.
Case~3: $y_1=1$ and $y_2=0$.
In this case, by \eqref{eqn:P2-b}, $x_2=0$.
Then, by \eqref{eqn:P2-d}, $x_1\le \Vupper$.
Thus, $(x_1,x_2,y_1,y_2)$ satisfies inequality \eqref{eqn-q2:x1-x2-ub-2}.
Case~4: $y_1=0$ and $y_2=1$.
In this case, by \eqref{eqn:P2-b}, $x_1=0$.
By \eqref{eqn:P2-a}, $\Clower \le x_2$, which implies that
$-x_2 < V - \Vupper$ (because $\Vupper<\Clower+V$).
Thus, $(x_1,x_2,y_1,y_2)$ satisfies inequality \eqref{eqn-q2:x1-x2-ub-2}.
\vskip10pt

\noindent Part II: $\Qe_2 \subseteq \convPtwo$.

Consider any given $(\bar{x}_1,\bar{x}_2,\bar{y}_1,\bar{y}_2)\in\Qe_2$.
We have
\begin{align}
&\bar{y}_1\le 1; \label{xbarybar:e1}\\
&\bar{y}_2\le 1; \label{xbarybar:e2}\allowdisplaybreaks\\
&\Clower\bar{y}_1\le\bar{x}_1\le\Cupper\bar{y}_1; \label{xbarybar:e3}\allowdisplaybreaks\\
&\Clower\bar{y}_2\le\bar{x}_2\le\Cupper\bar{y}_2; \label{xbarybar:e4}\allowdisplaybreaks\\
&\bar{x}_1\le\Vupper\bar{y}_1+(\Cupper-\Vupper)\bar{y}_2; \label{xbarybar:e5}\allowdisplaybreaks\\
&\bar{x}_2\le(\Cupper-\Vupper)\bar{y}_1+\Vupper\bar{y}_2; \label{xbarybar:e6}\allowdisplaybreaks\\
&\bar{x}_2-\bar{x}_1\le(\Clower+V)\bar{y}_2-\Clower\bar{y}_1; \label{xbarybar:e7}\allowdisplaybreaks\\
&\bar{x}_2-\bar{x}_1\le\Vupper\bar{y}_2-(\Vupper-V)\bar{y}_1; \label{xbarybar:e8}\allowdisplaybreaks\\
&\bar{x}_1-\bar{x}_2\le(\Clower+V)\bar{y}_1-\Clower\bar{y}_2; \label{xbarybar:e9}\\
&\bar{x}_1-\bar{x}_2\le\Vupper\bar{y}_1-(\Vupper-V)\bar{y}_2. \label{xbarybar:e10}
\end{align}
We prove that $(\bar{x}_1,\bar{x}_2,\bar{y}_1,\bar{y}_2)$ can be expressed as a convex combination of some elements of $\Pe_2$.
Specifically, we prove that there exist real numbers $\rho_1,\rho_2,\rho_3,\rho_4,\lambda_1,\lambda_2,\lambda_3,\lambda_4\ge 0$ such that
\begin{equation}\label{rhoconvcomb}
(\bar{x}_1,\bar{x}_2,\bar{y}_1,\bar{y}_2)
=\lambda_1(\rho_1,\rho_2,1,1)+\lambda_2(\rho_3,0,1,0)+\lambda_3(0,\rho_4,0,1)+\lambda_4(0,0,0,0),
\end{equation}
$(\rho_1,\rho_2,1,1),(\rho_3,0,1,0),(0,\rho_4,0,1),(0,0,0,0)\in\Pe_2$, and $\lambda_1+\lambda_2+\lambda_3+\lambda_4=1$.
We set 
\begin{align*}
&\lambda_1=\min\{\bar{y}_1,\bar{y}_2\};\allowdisplaybreaks\\
&\lambda_2=\bar{y}_1-\lambda_1;\allowdisplaybreaks\\
&\lambda_3=\bar{y}_2-\lambda_1;\allowdisplaybreaks\\
&\lambda_4=1-\bar{y}_1-\bar{y}_2+\lambda_1.
\end{align*}
Clearly, $\lambda_1+\lambda_2+\lambda_3+\lambda_4=1$.
By \eqref{xbarybar:e1}--\eqref{xbarybar:e4}, we have $0\le\bar{y}_1\le 1$ and $0\le\bar{y}_2\le 1$.
It is easy to verify that $\lambda_1,\lambda_2,\lambda_3,\lambda_4\ge 0$.
We consider five different cases.

Case~1: $\bar{y}_1=0$.
In this case, $\lambda_1=0$, $\lambda_2=0$, $\lambda_3=\bar{y}_2$, and $\lambda_4=1-\bar{y}_2$.
We set $\rho_1=\rho_2=\rho_3=\Clower$ and 
\begin{align*}
\rho_4=
\left\{
\begin{array}{ll}
\bar{x}_2/\bar{y}_2,& \hbox{ if $\bar{y}_2>0$;} \\[2pt]
\Clower,& \hbox{ if $\bar{y}_2=0$.}
\end{array}
\right.
\end{align*}
By \eqref{xbarybar:e3}, $\bar{x}_1=0$.
It is easy to verify that equation \eqref{rhoconvcomb} holds and that $(\rho_1,\rho_2,1,1),(\rho_3,0,1,0),(0,0,0,0)\in\Pe_2$.
Therefore, it suffices to show that $(0,\rho_4,0,1)\in\Pe_2$.
Clearly, $(0,\rho_4,0,1)$ satisfies \eqref{eqn:P2-d}.
By \eqref{xbarybar:e4}, $\bar{x}_2\ge\Clower\bar{y}_2$, which implies that $\rho_4\ge\Clower$.
Thus, $(0,\rho_4,0,1)$ satisfies \eqref{eqn:P2-a}.
By \eqref{xbarybar:e4}, $\bar{x}_2\le\Cupper\bar{y}_2$, which implies that $\rho_4\le\Cupper$.
Thus, $(0,\rho_4,0,1)$ satisfies \eqref{eqn:P2-b}.
By \eqref{xbarybar:e8}, $\bar{x}_2\le\Vupper\bar{y}_2$, which implies that $\rho_4\le\Vupper$.
Thus, $(0,\rho_4,0,1)$ satisfies \eqref{eqn:P2-c}.
Therefore, $(0,\rho_4,0,1)\in\Pe_2$.

Case~2: $\bar{y}_2=0$.
In this case, $\lambda_1=0$, $\lambda_2=\bar{y}_1$, $\lambda_3=0$, and $\lambda_4=1-\bar{y}_1$.
We set $\rho_1=\rho_2=\rho_4=\Clower$ and 
\begin{align*}
\rho_3=
\left\{
\begin{array}{ll}
\bar{x}_1/\bar{y}_1,& \hbox{if $\bar{y}_1>0$;} \\[2pt]
\Clower,& \hbox{if $\bar{y}_1=0$.}
\end{array}
\right.
\end{align*}
By \eqref{xbarybar:e4}, $\bar{x}_2=0$.
It is easy to verify that equation \eqref{rhoconvcomb} holds and that $(\rho_1,\rho_2,1,1),(0,\rho_4,0,1),(0,0,0,0)\in\Pe_2$.
Therefore, it suffices to show that $(\rho_3,0,1,0)\in\Pe_2$.
Clearly, $(\rho_3,0,1,0)$ satisfies \eqref{eqn:P2-c}.
By \eqref{xbarybar:e3}, $\bar{x}_1\ge\Clower\bar{y}_1$, which implies that $\rho_3\ge\Clower$.
Thus, $(\rho_3,0,1,0)$ satisfies \eqref{eqn:P2-a}.
By \eqref{xbarybar:e3}, $\bar{x}_1\le\Cupper\bar{y}_1$, which implies that $\rho_3\le\Cupper$.
Thus, $(\rho_3,0,1,0)$ satisfies \eqref{eqn:P2-b}.
By \eqref{xbarybar:e10}, $\bar{x}_1\le\Vupper\bar{y}_1$, which implies that $\rho_3\le\Vupper$.
Thus, $(\rho_3,0,1,0)$ satisfies \eqref{eqn:P2-d}.
Therefore, $(\rho_3,0,1,0)\in\Pe_2$.

Case~3: $0<\bar{y}_1<\bar{y}_2$.
In this case, $\lambda_1=\bar{y}_1$, $\lambda_2=0$, $\lambda_3=\bar{y}_2-\bar{y}_1$, and $\lambda_4=1-\bar{y}_2$.
We set
\begin{align*}
&\rho_1=\frac{\bar{x}_1}{\bar{y}_1};\allowdisplaybreaks\\
&\rho_2=\frac{1}{\bar{y}_1}[\bar{x}_2-(\bar{y}_2-\bar{y}_1)\rho_4];\allowdisplaybreaks\\
&\rho_3=\Clower;\allowdisplaybreaks\\
&\rho_4=\min\bigg\{\frac{\bar{x}_2-\Clower\bar{y}_1}{\bar{y}_2-\bar{y}_1},\frac{(\bar{x}_2-\bar{x}_1)+V\bar{y}_1}{\bar{y}_2-\bar{y}_1},\Vupper\bigg\}.
\end{align*}
By \eqref{xbarybar:e3}, $\rho_1\ge\Clower$.
Note that $\rho_4\le\frac{\bar{x}_2-\Clower\bar{y}_1}{\bar{y}_2-\bar{y}_1}$,
which implies that $\rho_2\ge\frac{1}{\bar{y}_1}[\bar{x}_2-(\bar{y}_2-\bar{y}_1)\frac{\bar{x}_2-\Clower\bar{y}_1}{\bar{y}_2-\bar{y}_1}]=\Clower$.
By \eqref{xbarybar:e4}, $\bar{x}_2\ge\Clower\bar{y}_2$, which implies that $\frac{\bar{x}_2-\Clower\bar{y}_1}{\bar{y}_2-\bar{y}_1}\ge\Clower$.
By \eqref{xbarybar:e9}, $\frac{(\bar{x}_2-\bar{x}_1)+V\bar{y}_1}{\bar{y}_2-\bar{y}_1}\ge\Clower$.
Thus, $\min\{\frac{\bar{x}_2-\Clower\bar{y}_1}{\bar{y}_2-\bar{y}_1},\frac{(\bar{x}_2-\bar{x}_1)+V\bar{y}_1}{\bar{y}_2-\bar{y}_1},\Vupper\}\ge\Clower$; that is, $\rho_4\ge\Clower$.
Hence, $\rho_1,\rho_2,\rho_3,\rho_4\ge\Clower$.
It is easy to verify that equation \eqref{rhoconvcomb} holds and that $(\rho_3,0,1,0),(0,0,0,0)\in\Pe_2$.
Therefore, it suffices to show that $(\rho_1,\rho_2,1,1),(0,\rho_4,0,1)\in\Pe_2$.

To show that $(\rho_1,\rho_2,1,1)\in\Pe_2$, we first note that $(\rho_1,\rho_2,1,1)$ satisfies \eqref{eqn:P2-a} (because $\rho_1,\rho_2\ge\Clower$).
By \eqref{xbarybar:e3}, $\rho_1\le\Cupper$.
Note that
\begin{align*}
\rho_2
&=\frac{1}{\bar{y}_1}\bigg[\bar{x}_2-(\bar{y}_2-\bar{y}_1)\min\bigg\{\frac{\bar{x}_2-\Clower\bar{y}_1}{\bar{y}_2-\bar{y}_1},\frac{(\bar{x}_2-\bar{x}_1)+V\bar{y}_1}{\bar{y}_2-\bar{y}_1},\Vupper\bigg\}\bigg]\allowdisplaybreaks\\
&=\frac{1}{\bar{y}_1}\big[\bar{x}_2-\min\big\{\bar{x}_2-\Clower\bar{y}_1,(\bar{x}_2-\bar{x}_1)+V\bar{y}_1,(\bar{y}_2-\bar{y}_1)\Vupper\big\}\big]\allowdisplaybreaks\\
&=\frac{1}{\bar{y}_1}\max\big\{\Clower\bar{y}_1,\bar{x}_1-V\bar{y}_1,\bar{x}_2-(\bar{y}_2-\bar{y}_1)\Vupper\big\}\allowdisplaybreaks\\
&\le\frac{1}{\bar{y}_1}\max\big\{\Cupper\bar{y}_1,\bar{x}_1,\bar{x}_2-(\bar{y}_2-\bar{y}_1)\Vupper\big\}\allowdisplaybreaks\\
&=\max\bigg\{\Cupper,\frac{\bar{x}_1}{\bar{y}_1},\frac{\bar{x}_2-(\bar{y}_2-\bar{y}_1)\Vupper}{\bar{y}_1}\bigg\}\\
&\le\Cupper,
\end{align*}
where the last inequality follows from \eqref{xbarybar:e3} and \eqref{xbarybar:e6}.
Hence, $(\rho_1,\rho_2,1,1)$ satisfies \eqref{eqn:P2-b}.
Note that
\begin{align*}
\rho_2-\rho_1
&=\frac{1}{\bar{y}_1}\bigg[(\bar{x}_2-\bar{x}_1)-(\bar{y}_2-\bar{y}_1)\min\bigg\{\frac{\bar{x}_2-\Clower\bar{y}_1}{\bar{y}_2-\bar{y}_1},\frac{(\bar{x}_2-\bar{x}_1)+V\bar{y}_1}{\bar{y}_2-\bar{y}_1},\Vupper\bigg\}\bigg]\allowdisplaybreaks\\
&=\frac{1}{\bar{y}_1}\bigg[(\bar{x}_2-\bar{x}_1)-\min\bigg\{\bar{x}_2-\Clower\bar{y}_1,(\bar{x}_2-\bar{x}_1)+V\bar{y}_1,(\bar{y}_2-\bar{y}_1)\Vupper\bigg\}\bigg]\allowdisplaybreaks\\
&=\frac{1}{\bar{y}_1}\max\big\{\Clower\bar{y}_1-\bar{x}_1,-V\bar{y}_1,(\bar{x}_2-\bar{x}_1)-(\bar{y}_2-\bar{y}_1)\Vupper\big\}\\
&=\max\bigg\{\Clower-\frac{\bar{x}_1}{\bar{y}_1},-V,\frac{(\bar{x}_2-\bar{x}_1)-(\bar{y}_2-\bar{y}_1)\Vupper}{\bar{y}_1}\bigg\}.
\end{align*}
By \eqref{xbarybar:e3}, $\Clower-\frac{\bar{x}_1}{\bar{y}_1}\le 0$.
By \eqref{xbarybar:e8}, $\frac{(\bar{x}_2-\bar{x}_1)-(\bar{y}_2-\bar{y}_1)\Vupper}{\bar{y}_1}\le V$.
Thus, $\rho_2-\rho_1\le V$.
Hence, $(\rho_1,\rho_2,1,1)$ satisfies \eqref{eqn:P2-c}.
Because
$$\rho_1-\rho_2
=\frac{1}{\bar{y}_1}\big[-(\bar{x}_2-\bar{x}_1)+(\bar{y}_2-\bar{y}_1)\rho_4\big]
\le\frac{1}{\bar{y}_1}\bigg[-(\bar{x}_2-\bar{x}_1)+(\bar{y}_2-\bar{y}_1)\cdot\frac{(\bar{x}_2-\bar{x}_1)+V\bar{y}_1}{\bar{y}_2-\bar{y}_1}\bigg]
=V,$$
$(\rho_1,\rho_2,1,1)$ satisfies \eqref{eqn:P2-d}.
Therefore, $(\rho_1,\rho_2,1,1)\in\Pe_2$.

To show that $(0,\rho_4,0,1)\in\Pe_2$, we note that $(0,\rho_4,0,1)$ satisfies \eqref{eqn:P2-a} and \eqref{eqn:P2-d} (because $\rho_4\ge\Clower$).
Because $\rho_4\le\Vupper\le\Cupper$, $(0,\rho_4,0,1)$ satisfies \eqref{eqn:P2-b} and \eqref{eqn:P2-c}.
Therefore, $(0,\rho_4,0,1)\in\Pe_2$.

Case~4: $0<\bar{y}_2<\bar{y}_1$.
In this case, $\lambda_1=\bar{y}_2$, $\lambda_2=\bar{y}_1-\bar{y}_2$, $\lambda_3=0$, and $\lambda_4=1-\bar{y}_1$.
We set
\begin{align*}
&\rho_1=\frac{1}{\bar{y}_2}[\bar{x}_1-(\bar{y}_1-\bar{y}_2)\rho_3];\allowdisplaybreaks\\
&\rho_2=\frac{\bar{x}_2}{\bar{y}_2};\allowdisplaybreaks\\
&\rho_3=\min\bigg\{\frac{\bar{x}_1-\Clower\bar{y}_2}{\bar{y}_1-\bar{y}_2},\frac{(\bar{x}_1-\bar{x}_2)+V\bar{y}_2}{\bar{y}_1-\bar{y}_2},\Vupper\bigg\};\allowdisplaybreaks\\
&\rho_4=\Clower.
\end{align*}
Note that $\rho_3\le\frac{\bar{x}_1-\Clower\bar{y}_2}{\bar{y}_1-\bar{y}_2}$,
which implies that $\rho_1\ge\frac{1}{\bar{y}_2}[\bar{x}_1-(\bar{y}_1-\bar{y}_2)\frac{\bar{x}_1-\Clower\bar{y}_2}{\bar{y}_1-\bar{y}_2}]=\Clower$.
By \eqref{xbarybar:e4}, $\rho_2\ge\Clower$.
By \eqref{xbarybar:e3}, $\bar{x}_1\ge\Clower\bar{y}_1$, which implies that $\frac{\bar{x}_1-\Clower\bar{y}_2}{\bar{y}_1-\bar{y}_2}\ge\Clower$.
By \eqref{xbarybar:e7}, $\frac{(\bar{x}_1-\bar{x}_2)+V\bar{y}_2}{\bar{y}_1-\bar{y}_2}\ge\Clower$.
Thus, $\min\{\frac{\bar{x}_1-\Clower\bar{y}_2}{\bar{y}_1-\bar{y}_2},\frac{(\bar{x}_1-\bar{x}_2)+V\bar{y}_2}{\bar{y}_1-\bar{y}_2},\Vupper\}\ge\Clower$; that is, $\rho_3\ge\Clower$.
Hence, $\rho_1,\rho_2,\rho_3,\rho_4\ge\Clower$.
It is easy to verify that equation \eqref{rhoconvcomb} holds and that $(0,\rho_4,0,1),(0,0,0,0)\in\Pe_2$.
Therefore, it suffices to show that $(\rho_1,\rho_2,1,1),(\rho_3,0,1,0)\in\Pe_2$.

To show that $(\rho_1,\rho_2,1,1)\in\Pe_2$, we first note that $(\rho_1,\rho_2,1,1)$ satisfies \eqref{eqn:P2-a} (because $\rho_1,\rho_2\ge\Clower$).
Note that
\begin{align*}
\rho_1
&=\frac{1}{\bar{y}_2}\bigg[\bar{x}_1-(\bar{y}_1-\bar{y}_2)\min\bigg\{\frac{\bar{x}_1-\Clower\bar{y}_2}{\bar{y}_1-\bar{y}_2},\frac{(\bar{x}_1-\bar{x}_2)+V\bar{y}_2}{\bar{y}_1-\bar{y}_2},\Vupper\bigg\}\bigg]\allowdisplaybreaks\\
&=\frac{1}{\bar{y}_2}\big[\bar{x}_1-\min\big\{\bar{x}_1-\Clower\bar{y}_2,(\bar{x}_1-\bar{x}_2)+V\bar{y}_2,(\bar{y}_1-\bar{y}_2)\Vupper\big\}\big]\allowdisplaybreaks\\
&=\frac{1}{\bar{y}_2}\max\big\{\Clower\bar{y}_2,\bar{x}_2-V\bar{y}_2,\bar{x}_1-(\bar{y}_1-\bar{y}_2)\Vupper\big\}\allowdisplaybreaks\\
&\le\frac{1}{\bar{y}_2}\max\big\{\Cupper\bar{y}_2,\bar{x}_2,\bar{x}_1-(\bar{y}_1-\bar{y}_2)\Vupper\big\}\allowdisplaybreaks\\
&=\max\bigg\{\Cupper,\frac{\bar{x}_2}{\bar{y}_2},\frac{\bar{x}_1-(\bar{y}_1-\bar{y}_2)\Vupper}{\bar{y}_2}\bigg\}\\
&\le\Cupper,
\end{align*}
where the last inequality follows from \eqref{xbarybar:e4} and \eqref{xbarybar:e5}.
By \eqref{xbarybar:e4}, $\rho_2\le\Cupper$.
Hence, $(\rho_1,\rho_2,1,1)$ satisfies \eqref{eqn:P2-b}.
Because
$$\rho_2-\rho_1
=\frac{1}{\bar{y}_2}\big[-(\bar{x}_1-\bar{x}_2)+(\bar{y}_1-\bar{y}_2)\rho_3\big]
\le\frac{1}{\bar{y}_2}\bigg[-(\bar{x}_1-\bar{x}_2)+(\bar{y}_1-\bar{y}_2)\cdot\frac{(\bar{x}_1-\bar{x}_2)+V\bar{y}_2}{\bar{y}_1-\bar{y}_2}\bigg]
=V,$$
$(\rho_1,\rho_2,1,1)$ satisfies \eqref{eqn:P2-c}.
Note that
\begin{align*}
\rho_1-\rho_2
&=\frac{1}{\bar{y}_2}\bigg[(\bar{x}_1-\bar{x}_2)-(\bar{y}_1-\bar{y}_2)\min\bigg\{\frac{\bar{x}_1-\Clower\bar{y}_2}{\bar{y}_1-\bar{y}_2},\frac{(\bar{x}_1-\bar{x}_2)+V\bar{y}_2}{\bar{y}_1-\bar{y}_2},\Vupper\bigg\}\bigg]\allowdisplaybreaks\\
&=\frac{1}{\bar{y}_2}\bigg[(\bar{x}_1-\bar{x}_2)-\min\bigg\{\bar{x}_1-\Clower\bar{y}_2,(\bar{x}_1-\bar{x}_2)+V\bar{y}_2,(\bar{y}_1-\bar{y}_2)\Vupper\bigg\}\bigg]\allowdisplaybreaks\\
&=\frac{1}{\bar{y}_2}\max\big\{\Clower\bar{y}_2-\bar{x}_2,-V\bar{y}_2,(\bar{x}_1-\bar{x}_2)-(\bar{y}_1-\bar{y}_2)\Vupper\big\}\\
&=\max\bigg\{\Clower-\frac{\bar{x}_2}{\bar{y}_2},-V,\frac{(\bar{x}_1-\bar{x}_2)-(\bar{y}_1-\bar{y}_2)\Vupper}{\bar{y}_2}\bigg\}.
\end{align*}
By \eqref{xbarybar:e4}, $\Clower-\frac{\bar{x}_2}{\bar{y}_2}\le 0$.
By \eqref{xbarybar:e10}, $\frac{(\bar{x}_1-\bar{x}_2)-(\bar{y}_1-\bar{y}_2)\Vupper}{\bar{y}_2}\le V$.
Thus, $\rho_1-\rho_2\le V$.
Hence, $(\rho_1,\rho_2,1,1)$ satisfies \eqref{eqn:P2-d}.
Therefore, $(\rho_1,\rho_2,1,1)\in\Pe_2$.

To show that $(\rho_3,0,1,0)\in\Pe_2$, we note that $(\rho_3,0,1,0)$ satisfies \eqref{eqn:P2-a} and \eqref{eqn:P2-c} (because $\rho_3\ge\Clower$).
Because $\rho_3\le\Vupper\le\Cupper$,  $(\rho_3,0,1,0)$ satisfies \eqref{eqn:P2-b} and \eqref{eqn:P2-d}.
Therefore, $(\rho_3,0,1,0)\in\Pe_2$.

Case~5: $0<\bar{y}_1=\bar{y}_2$.
In this case, $\lambda_1=\bar{y}_1=\bar{y}_2$, $\lambda_2=0$, $\lambda_3=0$, and $\lambda_4=1-\bar{y}_1=1-\bar{y}_2$.
We set
$$\rho_1=\frac{\bar{x}_1}{\bar{y}_1};\
\rho_2=\frac{\bar{x}_2}{\bar{y}_2};\
\rho_3=\Clower;\
\rho_4=\Clower.$$
Clearly, $\rho_1,\rho_2,\rho_3,\rho_4\ge 0$.
It is easy to verify that equation \eqref{rhoconvcomb} holds and that $(\rho_3,0,1,0),(0,\rho_4,0,1),(0,0,0,0)\in\Pe_2$.
Therefore, it suffices to show that $(\rho_1,\rho_2,1,1)\in\Pe_2$.
By \eqref{xbarybar:e3} and \eqref{xbarybar:e4}, $\rho_1,\rho_2\ge\Clower$ and $\rho_1,\rho_2\le\Cupper$.
Thus, $(\rho_1,\rho_2,1,1)$ satisfies \eqref{eqn:P2-a} and \eqref{eqn:P2-b}.
By \eqref{xbarybar:e7}, $\bar{x}_2-\bar{x}_1\le V\bar{y}_1$, which implies that $\rho_2-\rho_1\le V$.
Thus, $(\rho_1,\rho_2,1,1)$ satisfies \eqref{eqn:P2-c}.
By \eqref{xbarybar:e9}, $\bar{x}_1-\bar{x}_2\le V\bar{y}_1$, which implies that $\rho_1-\rho_2\le V$.
Thus, $(\rho_1,\rho_2,1,1)$ satisfies \eqref{eqn:P2-d}.
Therefore, $(\rho_1,\rho_2,1,1)\in\Pe_2$.

Combining Cases 1--5, we conclude that there exist $\rho_1,\rho_2,\rho_3,\rho_4,\lambda_1,\lambda_2,\lambda_3,\lambda_4\ge 0$ that satisfy equation \eqref{rhoconvcomb}.
Hence, $\Qe_2\subseteq\convPtwo$.

Note that \cite{damci2016polyhedral} have also derived the two-period convex hulls for the two-period ramp-up and ramp-down polytopes \textit{separately}.
In contrast, our Theorem~\ref{the:convPtwo} provides the convex hull for the two-period UC polytope containing both the ramp-up and ramp-down polytopes.
\Halmos
\vskip20pt


\section{Supplement to Section~\ref{sec:multi-period}}
\subsection{Proof of Proposition \ref{prop-T:x_t-ub-1}}\label{apx:A-prop-1}

\noindent{\bf Proposition \ref{prop-T:x_t-ub-1}.} {\it
Consider any $\Se\subseteq[0, \min\{L-1, T-2, \lfloor (\Cupper - \Vupper)/V \rfloor\}]_{\Z}$.
For any $t\in[1,T]_\Z$ such that $t\ge s+2$ for all $s\in\Se$, the inequality
\begin{equation}
x_t \leq \Cupper y_t - \sum_{s\in\Se} (\Cupper - \Vupper - sV) (y_{t-s} - y_{t-s-1})\tag{\ref{eqn-T:x_t-ub-1-1}}
\end{equation}
is valid and facet-defining for $\convP$.
For any $t\in[1,T]_\Z$ such that $t\le T-s-1$ for all $s\in\Se$, the inequality
\begin{equation}
x_t \leq \Cupper y_t - \sum_{s\in\Se} (\Cupper - \Vupper - sV) (y_{t+s} - y_{t+s+1})\tag{\ref{eqn-T:x_t-ub-1-2}}
\end{equation}
is valid and facet-defining for $\convP$.}
\vskip8pt

\noindent{\bf Proof.}
We first prove that inequality \eqref{eqn-T:x_t-ub-1-1} is valid and facet-defining for $\convP$.
Note that the proof of facet-defining of \eqref{eqn-T:x_t-ub-1-1} here can also be used to prove the facet-defining of \eqref{eqn-T:x_t-ub-1-1} in Proposition~\ref{prop-T:x_t-ub-2}.
For notational convenience, we define $s_{\max}=\max\{s:s\in\Se\}$ if $\Se\ne\emptyset$, and $s_{\max}=-1$ if $\Se=\emptyset$.

Consider any $t\in[s_{\max}+2,T]_\Z$ (i.e., $t\in[1,T]_\Z$ such that $t\ge s+2$ for all $s\in\Se$).
To prove that linear inequalities \eqref{eqn-T:x_t-ub-1-1} is valid for $\convP$, it suffices to show that they are valid for $\Pe$. 
Consider any element $(\BFx,\BFy)$ of $\Pe$. 
We show that $(\BFx,\BFy)$ satisfies \eqref{eqn-T:x_t-ub-1-1}.

Case~1: $y_t=0$.
By Lemma \ref{lem:lookbackward}(i), $y_{t-j}-y_{t-j-1}\le0$ for all $j\in[0,\min\{t-2,L-1\}]_\Z$.
Because $\Se\subseteq[0,L-1]_{\Z}$ and $s_{\max}\le t-2$, we have $\Se\subseteq[0,\min\{t-2,L-1\}]_{\Z}$.
Thus, $y_{t-s}-y_{t-s-1}\le0$ for all $s\in\Se$.
Because $\Se\subseteq[0,\lfloor(\Cupper-\Vupper)/V\rfloor]_\Z$, we have $\Cupper-\Vupper-sV\ge 0$ for all $s\in\Se$.
Thus, the right-hand side of inequality \eqref{eqn-T:x_t-ub-1-1} is nonnegative.
Because $y_t=0$, by \eqref{eqn:p-upper-bound}, $x_t=0$.
Therefore, in this case, $(\BFx,\BFy)$ satisfies \eqref{eqn-T:x_t-ub-1-1}.

Case~2: $y_t=1$ and $y_{t-s'}-y_{t-s'-1}=1$ for some $s'\in\Se$. 
By Lemma \ref{lem:lookbackward}(ii), there exists at most one $j\in[0,\min\{t-2,L\}]_{\Z}$ such that $y_{t-j}-y_{t-j-1}=1$.
Because $\Se\subseteq[0,L]_{\Z}$ and $s_{\max}\le t-2$, we have $\Se\subseteq[0,\min\{t-2,L\}]_{\Z}$.
This implies that $y_{t-s}-y_{t-s-1}\le0$, for all for all $s\in\Se\setminus\{s'\}$.
For any $s\in\Se$, because $s\le\lfloor(\Cupper-\Vupper)/V\rfloor$, we have $\Cupper-\Vupper-sV\ge0$.
Thus, for any $s\in\Se\setminus\{s'\}$, $(\Cupper-\Vupper-sV)(y_{t-s}-y_{t-s-1})$ is either zero or negative.
Hence, $\sum_{s\in\Se}(\Cupper-\Vupper-sV)(y_{t-s}-y_{t-s-1})\le\Cupper-\Vupper-s'V$.
Thus, the right-hand side of inequality \eqref{eqn-T:x_t-ub-1-1} is at least $s'V+\Vupper$.
By \eqref{eqn:p-ramp-up}, 
$\sum_{\tau=t-s'}^{t}(x_{\tau}-x_{\tau-1})\le\sum_{\tau=t-s'}^{t}Vy_{\tau-1}+\sum_{\tau=t-s_j}^{t}\Vupper(1-y_{\tau-1})$, 
which implies that $x_t-x_{t-s'-1}\le s'V+\Vupper$. 
Because $y_{t-s'}-y_{t-s'-1}=1$, we have $y_{t-s'-1}=0$.
By \eqref{eqn:p-upper-bound}, $x_{t-s'-1}=0$. 
Hence, $x_t\le s'V+\Vupper$.
Therefore, in this case, $(\BFx,\BFy)$ satisfies \eqref{eqn-T:x_t-ub-1-1}.

Case~3: $y_t=1$ and $y_{t-s}-y_{t-s-1}\ne 1$ for all $s\in\Se$.
In this case, $y_{t-s}-y_{t-s-1}\le 0$ for all $s\in\Se$.
For any $s\in\Se$, because $s\le\lfloor(\Cupper-\Vupper)/V\rfloor$, we have $\Cupper-\Vupper-sV\ge 0$.
Thus, the right-hand side of inequality \eqref{eqn-T:x_t-ub-1-1} is at least $\Cupper$. 
By \eqref{eqn:p-upper-bound}, $x_t\le\Cupper$.
Therefore, in this case, $(\BFx,\BFy)$ satisfies \eqref{eqn-T:x_t-ub-1-1}.

Summarizing Cases 1--3, we conclude that $(\BFx,\BFy)$ satisfies \eqref{eqn-T:x_t-ub-1-1}. 
Hence, \eqref{eqn-T:x_t-ub-1-1} is valid for $\convP$.

Consider any $t\in[s_{\max}+2,T]_\Z$.
To prove that inequality \eqref{eqn-T:x_t-ub-1-1} is facet-defining for $\convP$, it suffices to show that there exist $2T$ affinely independent points in $\convP$ that satisfy \eqref{eqn-T:x_t-ub-1-1} at equality.
Because $\BFzero\in\convP$ and $\BFzero$ satisfies \eqref{eqn-T:x_t-ub-1-1} at equality, it suffices to create the remaining $2T-1$ nonzero linearly independent points.
We denote these $2T-1$ points as $(\bar{\BFx}^r,\bar{\BFy}^r)$ for $r\in[1,T]_{\Z}\setminus\{t\}$ and $(\hat{\BFx}^r,\hat{\BFy}^r)$ for $r\in[1,T]_{\Z}$, and denote the $q$th component of $\bar{\BFx}^r$, $\bar{\BFy}^r$, $\hat{\BFx}^r$, and $\hat{\BFy}^r$ as $\bar{x}^r_q$, $\bar{y}^r_q$, $\hat{x}^r_q$, and $\hat{y}^r_q$, respectively.
Let $\epsilon=\Vupper-\Clower>0$.
We divide these $2T-1$ points into the following \rred{four} groups:

\begin{enumerate}[label={(A\arabic*)},ref={(A\arabic*)}]
\item\label{points:prop1-eq1-A1} For each $r\in[1,T]_\Z\setminus\{t\}$, we create a point $(\bar{\BFx}^r,\bar{\BFy}^r)$ as follows:
\begin{equation*}
\bar{x}^r_q=\left\{
   \begin{array}{ll}
   \Cupper,          &\hbox{for $q\in [1,T]_\Z\setminus\{r\}$};\\
   \Cupper-\epsilon, &\hbox{for $q=r$};
   \end{array}\right.
\end{equation*}
and $\bar{y}^r_q=1$ for all $q\in [1,T]_\Z$.
It is easy to verify that $(\bar{\BFx}^r,\bar{\BFy}^r)$ satisfies \eqref{eqn:p-minup}--\eqref{eqn:p-ramp-down}.
Thus, $(\bar{\BFx}^r,\bar{\BFy}^r)\in\convP$.
It is also easy to verify that $(\bar{\BFx}^r,\bar{\BFy}^r)$ satisfies \eqref{eqn-T:x_t-ub-1-1} at equality.

\item\label{points:prop1-eq1-A2} For each \rred{$r\in[1,t-1]_\Z$}, we create a point $(\hat{\BFx}^r,\hat{\BFy}^r)$ as follows:
If $t-r-1\notin\Se$, then
\begin{equation*}
\hat{x}^r_q=\left\{
   \begin{array}{ll}
   \Clower, &\hbox{for $q\in[1,r]_\Z$};\\
   0,       &\hbox{for $q\in[r+1,T]_\Z$};
   \end{array}\right.
\end{equation*}
and
\begin{equation*}
\hat{y}^r_q=\left\{
   \begin{array}{ll}
   1, &\hbox{for $q\in[1,r]_\Z$};\\
   0, &\hbox{for $q\in[r+1,T]_\Z$}.
   \end{array}\right.
\end{equation*}
If $t-r-1\in\Se$, then
\begin{equation*}
\hat{x}^r_q=\left\{
   \begin{array}{ll}
   0, &\hbox{for $q\in[1,r]_\Z$};\\
   \Vupper+(q-r-1)V, &\hbox{for $q\in[r+1,t]_\Z$};\\
   \Vupper+(t-r-1)V, &\hbox{for $q\in[t+1,T]_\Z$};
   \end{array}\right.
\end{equation*}
and
\begin{equation*}
\hat{y}^r_q=\left\{
   \begin{array}{ll}
   0, &\hbox{for $q\in[1,r]_\Z$};\\
   1, &\hbox{for $q\in[r+1,T]_\Z$}.
   \end{array}\right.
\end{equation*}
We first consider the case where $t-r-1\notin\Se$.
In this case, it is easy to verify that $(\hat{\BFx}^r,\hat{\BFy}^r)$ satisfies \eqref{eqn:p-minup}--\eqref{eqn:p-ramp-down} and is therefore in $\convP$.
Note that in this case $\hat{x}^r_t=\hat{y}^r_t=0$, and $t-s-1\ne r$ for all $s\in\Se$, which implies that $\hat{y}^r_{t-s}-\hat{y}^r_{t-s-1}=0$ for all $s\in\Se$.
Thus, $(\hat{\BFx}^r,\hat{\BFy}^r)$ satisfies \eqref{eqn-T:x_t-ub-1-1} at equality.
Next, we consider the case where $t-r-1\in\Se$.
In this case, it is easy to verify that $(\hat{\BFx}^r,\hat{\BFy}^r)$ satisfies \eqref{eqn:p-minup} and \eqref{eqn:p-mindn}.
For each $q\in[1,r]_\Z$, we have $\hat{x}^r_q=\hat{y}^r_q=0$.
For each $q\in[r+1,T]_\Z$, because $t-r-1\in\Se$, we have $t-r-1\le\lfloor(\Cupper-\Vupper)/V\rfloor$, which implies that $\Vupper+(t-r-1)V\le\Cupper$, 
which in turn implies that $\Clower\le\hat{x}^r_q\le\Cupper$.
Hence, $(\hat{\BFx}^r,\hat{\BFy}^r)$ satisfies \eqref{eqn:p-lower-bound} and \eqref{eqn:p-upper-bound}.
Note that $\hat{x}^r_q-\hat{x}^r_{q-1}=0$ when $q\in[2,r]_\Z$, $\hat{x}^r_q-\hat{x}^r_{q-1}=\Vupper$ when $q=r+1$, and $0\le\hat{x}^r_q-\hat{x}^r_{q-1}\le V$ when $q\in[r+2,T]_\Z$.
Thus, $-V\hat{y}^r_q-\Vupper(1-\hat{y}^r_q)\le\hat{x}^r_q-\hat{x}^r_{q-1}\le V\hat{y}^r_{q-1}+\Vupper(1-\hat{y}^r_{q-1})$ for all $q\in[2,T]_\Z$.
Hence, $(\hat{\BFx}^r,\hat{\BFy}^r)$ satisfies \eqref{eqn:p-ramp-up} and \eqref{eqn:p-ramp-down}.
Therefore, $(\hat{\BFx}^r,\hat{\BFy}^r)\in\convP$.
Note that in this case $\hat{x}^r_t=\Vupper+(t-r-1)V$, $\hat{y}^r_t=1$, $\hat{y}^r_{t-s}-\hat{y}^r_{t-s-1}=1$ when $s=t-r-1$, and $\hat{y}^{\rred{r}}_{t-s}-\hat{y}^r_{t-s-1}=0$ when $s\neq t-r-1$.
Thus, $(\hat{\BFx}^r,\hat{\BFy}^r)$ satisfies \eqref{eqn-T:x_t-ub-1-1} at equality.

\item\label{points:prop1-eq1-A3} We create a point $(\hat{\BFx}^t,\hat{\BFy}^t)$ by setting $\hat{x}^t_q=\Cupper$ and $\hat{y}^t_q=1$ for $q\in[1,T]_\Z$.
It is easy to verify that $(\hat{\BFx}^t,\hat{\BFy}^t)$ satisfies \eqref{eqn:p-minup}--\eqref{eqn:p-ramp-down}.
Thus, $(\hat{\BFx}^t,\hat{\BFy}^t)\in\convP$.
It is also easy to verify that $(\hat{\BFx}^t,\hat{\BFy}^t)$ satisfies \eqref{eqn-T:x_t-ub-1-1} at equality.

\item\label{points:prop1-eq1-A4} For each $r\in[t+1,T]_\Z$, we create a point $(\hat{\BFx}^r,\hat{\BFy}^r)$ as follows:
\begin{equation*}
\hat{x}^r_q=\left\{
   \begin{array}{ll}
   0,       &\hbox{for $q\in[1,r-1]_\Z$};\\
   \Clower, &\hbox{for $q\in[r,T]_\Z$};
   \end{array}\right.
\end{equation*}
and
\begin{equation*}
\hat{y}^r_q=\left\{
   \begin{array}{ll}
   0, &\hbox{for $q\in[1,r-1]_\Z$};\\
   1, &\hbox{for $q\in[r,T]_\Z$}.
   \end{array}\right.
\end{equation*}
It is easy to verify that $(\rred{\hat{\BFx}^r,\hat{\BFy}^r})$ satisfies \eqref{eqn:p-minup}--\eqref{eqn:p-ramp-down}.
Thus, $(\rred{\hat{\BFx}^r,\hat{\BFy}^r})\in\convP$.
It is also easy to verify that $(\rred{\hat{\BFx}^r,\hat{\BFy}^r})$ satisfies \eqref{eqn-T:x_t-ub-1-1} at equality.
\end{enumerate}

Table~\ref{tab:eqn-T:x_t-ub-1-1-facet-matrix-1} shows a matrix with $2T-1$ rows, where each row represents a point created by this process.
This matrix can be transformed into the matrix in Table~\ref{tab:eqn-T:x_t-ub-1-1-facet-matrix-2} via the following Gaussian elimination process:

\begin{enumerate}[label=(\roman*)]
\item For each $r\in[1,T]_\Z\setminus\{t\}$, the point with index $r$ in group (B1), denoted $(\underline{\bar{\BFx}}^r,\underline{\bar{\BFy}}^r)$, 
is obtained by setting $(\underline{\bar{\BFx}}^r,\underline{\bar{\BFy}}^r)=(\bar{\BFx}^r,\bar{\BFy}^r)-(\hat{\BFx}^t,\hat{\BFy}^t)$. 
Here, $(\bar{\BFx}^r,\bar{\BFy}^r)$ is the point with index $r$ in group \ref{points:prop1-eq1-A1}, and $(\hat{\BFx}^t,\hat{\BFy}^t)$ is the point in group \ref{points:prop1-eq1-A3}.
\item For each \rred{$r\in[1,t-1]_\Z$}, the point with index $r$ in group \rred{(B2)}, denoted $(\underline{\hat{\BFx}}^r,\underline{\hat{\BFy}}^r)$, 
is obtained by setting $(\underline{\hat{\BFx}}^r,\underline{\hat{\BFy}}^r)=(\hat{\BFx}^r,\hat{\BFy}^r)$ if $t-r-1\notin\Se$, 
and setting $(\underline{\hat{\BFx}}^r,\underline{\hat{\BFy}}^r)=\rred{(\hat{\BFx}^t,\hat{\BFy}^t)-(\hat{\BFx}^r,\hat{\BFy}^r)}$ if $t-r-1\in\Se$.
Here, $(\hat{\BFx}^r, \hat{\BFy}^r)$ is the point with index $r$ in group \ref{points:prop1-eq1-A2}, and $(\hat{\BFx}^t,\hat{\BFy}^t)$ is the point in group \ref{points:prop1-eq1-A3}.
\item The point in group \rred{(B3)}, denoted $(\underline{\hat{\BFx}}^t,\underline{\hat{\BFy}}^t)$, 
is obtained by setting $(\underline{\hat{\BFx}}^t,\underline{\hat{\BFy}}^t)=(\hat{\BFx}^t,\hat{\BFy}^t)-(\hat{\BFx}^{t+1},\hat{\BFy}^{t+1})$.
Here, $(\hat{\BFx}^t,\hat{\BFy}^t)$ is the point in group \ref{points:prop1-eq1-A3}, and $(\hat{\BFx}^{t+1},\hat{\BFy}^{t+1})$ is the point with index $t+1$ in group \ref{points:prop1-eq1-A4}.
\item For each $r\in[t+1,T]_\Z$, the point with index $r$ in group \rred{(B4)}, denoted $(\underline{\hat{\BFx}}^r,\underline{\hat{\BFy}}^r)$, 
is obtained by setting $(\underline{\hat{\BFx}}^r,\underline{\hat{\BFy}}^r)=(\hat{\BFx}^r,\hat{\BFy}^r)-(\hat{\BFx}^{r+1},\hat{\BFy}^{r+1})$ if $r\ne T$, 
and setting $(\underline{\hat{\BFx}}^r,\underline{\hat{\BFy}}^r)=(\hat{\BFx}^r,\hat{\BFy}^r)$ if $r=T$.
Here, $(\hat{\BFx}^r,\hat{\BFy}^r)$ and $(\hat{\BFx}^{r+1},\hat{\BFy}^{r+1})$ are the points with indices $r$ and $r+1$, respectively, in group \ref{points:prop1-eq1-A4}.
\end{enumerate}

The matrix shown in Table~\ref{tab:eqn-T:x_t-ub-1-1-facet-matrix-2} is lower triangular; 
that is, the position of the last nonzero component of a row of the matrix is greater than the position of the last nonzero component of the previous row.
This implies that the $2T-1$ points in groups \ref{points:prop1-eq1-A1}--\ref{points:prop1-eq1-A4} are linearly independent.
Therefore, inequality \eqref{eqn-T:x_t-ub-1-1} is facet-defining for $\convP$.

\afterpage{
\begin{landscape}
\begin{table}
    \renewcommand{\arraystretch}{2}
    \centering
    \caption{A matrix with the rows representing $2T-1$ points in $\convP$ that satisfy inequality \eqref{eqn-T:x_t-ub-1-1} at equality}
    \rule{0pt}{4ex}
    \setlength\tabcolsep{6.5pt}
    \scriptsize
    \begin{tabular}{|c|c|c|*{7}{c}|*{7}{c}|}
        \hline
         \multirow{2}{*}{Group} & \multirow{2}{*}{Point} & \multirow{2}{*}{Index $r$} & \multicolumn{7}{c|}{$\BFx$} & \multicolumn{7}{c|}{$\BFy$} \\
         \cline{4-9}\cline{10-17}
         &&& $1$ & $\cdots$ & $t-1$ & $t$ & $t+1$ & $\cdots$ & $T$ 
           & $\ \ 1\ \ $ & $\cdots$ & $t-1$ & $t$ & $t+1$ & $\cdots$ & $\ \ T\ \ $ \\
         \hline
         \multirow{6}{*}{(A1)} & \multirow{6}{*}{$(\bar{\BFx}^r,\bar{\BFy}^r)$} 
         & $1$ 
         & $\Cupper-\epsilon$ & $\cdots$ & $\Cupper$ & $\Cupper$ & $\Cupper$ & $\cdots$ & $\Cupper$
         & $1$ & $\cdots$ & $1$ & $1$ & $1$ & $\cdots$ & $1$ \\ 
         && $\vdots$ 
         & $\vdots$ & \rotatebox{0}{$\ddots$} & $\vdots$ & $\vdots$ & $\vdots$ && $\vdots$ 
         & $\vdots$ && $\vdots$ & $\vdots$ & $\vdots$ && $\vdots$ \\        
         && $t-1$
         & $\Cupper$ & $\cdots$ & $\Cupper-\epsilon$ & $\Cupper$ & $\Cupper$ & $\cdots$ & $\Cupper$
         & $1$ & $\cdots$ & $1$ & $1$ & $1$ & $\cdots$ & $1$ \\  
         && $t+1$
         & $\Cupper$ & $\cdots$ & $\Cupper$ & $\Cupper$ & $\Cupper-\epsilon$ & $\cdots$ & $\Cupper$ 
         & $1$ & $\cdots$ & $1$ & $1$ & $1$ & $\cdots$ & $1$ \\
         && $\vdots$ 
         & $\vdots$ && $\vdots$ & $\vdots$ & $\vdots$ & \rotatebox{0}{$\ddots$} & $\vdots$ 
         & $\vdots$ && $\vdots$ & $\vdots$ & $\vdots$ && $\vdots$ \\        
         && $T$
         & $\Cupper$ & $\cdots$ & $\Cupper$ & $\Cupper$ & $\Cupper$ & $\cdots$ & $\Cupper-\epsilon$ 
         & $1$ & $\cdots$ & $1$ & $1$ & $1$ & $\cdots$ & $1$ \\ 
         
         \hline
         \multirow{3}{*}{\rred{(A2)}} &\multirow{7}{*}{$(\hat{\BFx}^r,\hat{\BFy}^r)$}
         & $1$ & \multicolumn{7}{c|}{\multirow{3}{*}{(See Note \ref{tab:eqn-T:x_t-ub-1-1-facet-matrix-1}-1)}} & \multicolumn{7}{c|}{\multirow{3}{*}{(See Note \ref{tab:eqn-T:x_t-ub-1-1-facet-matrix-1}-1)}}\\
         && $\rred{\vdots}$ & \multicolumn{7}{c|}{} & \multicolumn{7}{c|}{}\\    
         && $t-1$ & \multicolumn{7}{c|}{} & \multicolumn{7}{c|}{}\\
         
         \cline{1-1} \cline{3-17}
          \rred{(A3)} &
         & $t$
         & $\Cupper$ & $\cdots$ & $\Cupper$ & $\Cupper$ & $\Cupper$ & $\cdots$ & $\Cupper$
         & $1$ & $\cdots$ & $1$ & $1$ & $1$ & $\cdots$ & $1$ \\
         
         \cline{1-1} \cline{3-17}
         \multirow{3}{*}{\rred{(A4)}} &  
         & $t+1$
         & $0$ & $\cdots$ & $0$ & $0$ & $\Clower$ & $\cdots$ & $\Clower$
         & $0$ & $\cdots$ & $0$ & $0$ & $1$ & $\cdots$ & $1$ \\
         && $\vdots$ 
         & $\vdots$ && $\vdots$ & $\vdots$ & $\vdots$ & \rotatebox{0}{$\ddots$} & $\vdots$ 
         & $\vdots$ && $\vdots$ & $\vdots$ & $\vdots$ & \rotatebox{0}{$\ddots$} & $\vdots$ \\                 
         && $T$
         & $0$ & $\cdots$ & $0$ & $0$ & $0$ & $\cdots$ & $\Clower$
         & $0$ & $\cdots$ & $0$ & $0$ & $0$ & $\cdots$ & $1$ \\
        \hline
        \multicolumn{17}{l}{Note \ref{tab:eqn-T:x_t-ub-1-1-facet-matrix-1}-1: For $r\in\rred{[1,t-1]_\Z}$, the $\BFx$ and $\BFy$ vectors in group \rred{(A2)} are given as follows:}\\
        \multicolumn{17}{l}{
        $\hat{\BFx}^r=(\underbrace{\Clower,\ldots,\Clower}_{r\ {\rm terms}},\underbrace{0,\ldots,0}_{T-r\ {\rm terms}\!\!\!\!\!})$ and $\hat{\BFy}^r=(\underbrace{1,\ldots,1}_{r\ {\rm terms}}, \underbrace{0,\ldots,0}_{T-r\ {\rm terms}\!\!\!\!\!})$ if $t-r-1\notin\Se$;}\\
        \multicolumn{17}{l}{
        $\hat{\BFx}^r=(\underbrace{0,\ldots,0}_{r\ {\rm terms}}, \underbrace{\Vupper,\Vupper+V,\Vupper+2V,\ldots,\Vupper+(t-r-1)V}_{t-r\ {\rm terms}}, \underbrace{\Vupper+(t-r-1)V,\ldots,\Vupper+(t-r-1)V}_{T-t\ {\rm terms}\!\!\!\!\!})$
        and
        $\hat{\BFy}^r=(\underbrace{0,\ldots,0}_{r\ {\rm terms}},\underbrace{1,\ldots,1}_{T-r\ {\rm terms}\!\!\!\!\!})$
        if $t-r-1\in\Se$.}
    \end{tabular}
    \label{tab:eqn-T:x_t-ub-1-1-facet-matrix-1}
\end{table}
\end{landscape}

\begin{landscape}
\begin{table}
    \renewcommand{\arraystretch}{2}
    \centering
    \caption{Lower triangular matrix obtained from Table~\ref{tab:eqn-T:x_t-ub-1-1-facet-matrix-1} via Gaussian elimination}
    \rule{0pt}{4ex}
    \setlength\tabcolsep{6.5pt}
    \scriptsize
    \begin{tabular}{|c|c|c|*{7}{c}|*{7}{c}|}
        \hline
         \multirow{2}{*}{Group} & \multirow{2}{*}{Point} & \multirow{2}{*}{Index $r$} & \multicolumn{7}{c|}{$\BFx$} & \multicolumn{7}{c|}{$\BFy$} \\
         \cline{4-9}\cline{10-17}
         &&& $1$ & $\cdots$ & $t-1$ & $\ t\ $ & $t+1$ & $\cdots$ & $T$ 
           & $\ \ 1\ \ $ & $\cdots$ & $t-1$ & $\ t\ $ & $t+1$ & $\cdots$ & $\ \ T\ \ $ \\
         \hline
         \multirow{6}{*}{(B1)} & \multirow{6}{*}{$(\underline{\bar{\BFx}}^r,\underline{\bar{\BFy}}^r)$}
         & $1$ 
         & $-\epsilon$ & $\cdots$ & $0$ & $0$ & $0$ & $\cdots$ & $0$
         & $0$ & $\cdots$ & $0$ & $0$ & $0$ & $\cdots$ & $0$ \\ 
         && $\vdots$ 
         & $\vdots$ & \rotatebox{0}{$\ddots$} & $\vdots$ & $\vdots$ & $\vdots$ && $\vdots$ 
         & $\vdots$ && $\vdots$ & $\vdots$ & $\vdots$ && $\vdots$ \\        
         && $t-1$
         & $0$ & $\cdots$ & $-\epsilon$ & $0$ & $0$ & $\cdots$ & $0$
         & $0$ & $\cdots$ & $0$ & $0$ & $0$ & $\cdots$ & $0$ \\
         && $t+1$
         & $0$ & $\cdots$ & $0$ & $0$ & $-\epsilon$ & $\cdots$ & $0$ 
         & $0$ & $\cdots$ & $0$ & $0$ & $0$ & $\cdots$ & $0$ \\
         && $\vdots$ 
         & $\vdots$ && $\vdots$ & $\vdots$ & $\vdots$ & \rotatebox{0}{$\ddots$} & $\vdots$ 
         & $\vdots$ && $\vdots$ & $\vdots$ & $\vdots$ && $\vdots$ \\        
         && $T$
         & $0$ & $\cdots$ & $0$ & $0$ & $0$ & $\cdots$ & $-\epsilon$ 
         & $0$ & $\cdots$ & $0$ & $0$ & $0$ & $\cdots$ & $0$ \\

         \hline
         \multirow{3}{*}{\rred{(B2)}} & \multirow{7}{*}{$(\underline{\hat{\BFx}}^r,\underline{\hat{\BFy}}^r)$}
         & $1$
         &\multicolumn{7}{c|}{}
         & $\rred 1$ & $\rred \cdots$ & $\rred 0$ & $\rred 0$ & $\rred 0$ & $\rred \cdots$ & $\rred 0$ \\
         && $\vdots$ 
         & \multicolumn{7}{c|}{(Omitted)} 
         & $\rred \vdots$ & \rotatebox{0}{$\rred \ddots$} & $\rred \vdots$ & $\rred \vdots$ & $\rred \vdots$ && $\rred \vdots$ \\
         && $t-1$
         & \multicolumn{7}{c|}{}
         & $\rred 1$ & $\rred \cdots$ & $\rred 1$ & $\rred 0$ & $\rred 0$ & $\rred \cdots$ & $\rred 0$ \\
         
         \cline{1-1} \cline{3-17}
         \rred{(B3)} &  
         & $t$
         & \multicolumn{7}{c|}{(Omitted)}
         & $1$ & $\cdots$ & $1$ & $1$ & $0$ & $\cdots$ & $0$ \\
         \cline{1-1} \cline{3-17}
         
         \multirow{3}{*}{\rred{(B4)}} & 
         & $t+1$
         &\multicolumn{7}{c|}{}
         & $0$ & $\cdots$ & $0$ & $0$ & $1$ & $\cdots$ & $0$ \\
         && $\vdots$ 
         & \multicolumn{7}{c|}{(Omitted)} 
         & $\vdots$ && $\vdots$ & $\vdots$ & $\vdots$ & \rotatebox{0}{$\ddots$} & $\vdots$ \\ 
         && $T$
         & \multicolumn{7}{c|}{}
         & $0$ & $\cdots$ & $0$ & $0$ & $0$ & $\cdots$ & $1$ \\
        \hline
    \end{tabular}
    \label{tab:eqn-T:x_t-ub-1-1-facet-matrix-2}
\end{table}
\end{landscape}
}

Next, we show that inequality \eqref{eqn-T:x_t-ub-1-2} is valid and facet-defining for $\convP$.
Note that this proof can also be used to prove the validity and facet-defining of inequality \eqref{eqn-T:x_t-ub-1-2} in Proposition~\ref{prop-T:x_t-ub-2}.
Denote $x'_t=x_{T-t+1}$ and $y'_t=y_{T-t+1}$ for $t\in[1,T]_\Z$.
Because inequality \eqref{eqn-T:x_t-ub-1-1} is valid and facet-defining for $\convP$ for any $t\in[s_{\max}+2,T]_\Z$, the inequality
$$x'_{T-t+1}\le\Cupper y'_{T-t+1}-\sum_{s\in\Se}(\Cupper-\Vupper-sV)(y'_{T-t+s+1}-y'_{T-t+s+2})$$
is valid and facet-defining for $\convPprime$ for any $t\in[s_{\max}+2,T]_\Z$.
Let $t'=T-t+1$.
Then, the inequality
$$x'_{t'}\le\Cupper y'_{t'}-\sum_{s\in\Se}(\Cupper-\Vupper-sV)(y'_{t'+s}-y'_{t'+s+1})$$
is valid and facet-defining for $\convPprime$ for any $t'\in[1,T-s_{\max}-1]_\Z$.
Hence, by Lemma~\ref{lem:Pprime}, inequality \eqref{eqn-T:x_t-ub-1-2} is valid and facet-defining for $\convP$ for any $t\in[1,T-s_{\max}-1]_\Z$.
\Halmos

\subsection{Proof of Proposition \ref{prop-T:x_t-ub-2}}

\noindent{\bf Proposition \ref{prop-T:x_t-ub-2}.} {\it
Consider any integers $\alpha$, $\beta$, and $s_{\max}$ such that
(a)~$L\le s_{\max}\le\min\{T-2,\lfloor(\Cupper-\Vupper)/V\rfloor\}$,
(b)~$0\le\alpha<\beta\le s_{\max}$, and
(c)~$\beta=\alpha+1$ or $s_{\max}\le L+\alpha$.
Let $\Se=[0,\alpha]_{\Z}\cup[\beta,s_{\max}]_{\Z}$.
For any $t\in[s_{\max}+2,T]_{\Z}$, inequality \eqref{eqn-T:x_t-ub-1-1} is valid and facet-defining for $\convP$.
For any $t\in[1,T-s_{\max}-1]_{\Z}$, inequality \eqref{eqn-T:x_t-ub-1-2} is valid and facet-defining for $\convP$.}
\vskip8pt

\noindent{\bf Proof.}
Consider any $t\in[s_{\max}+2,T]_\Z$.
To prove that the linear inequality \eqref{eqn-T:x_t-ub-1-1} is valid for $\convP$ when $\Se=[0,\alpha]_\Z\cup[\beta,s_{\max}]_\Z$, it suffices to show that \eqref{eqn-T:x_t-ub-1-1} is valid for $\Pe$ when $\Se=[0,\alpha]_\Z\cup[\beta,s_{\max}]_\Z$.
Consider any element $(\BFx,\BFy)$ of $\Pe$.
We show that $(\BFx,\BFy)$ satisfies \eqref{eqn-T:x_t-ub-1-1} when $\Se=[0,\alpha]_\Z\cup[\beta,s_{\max}]_\Z$.
We divide the analysis into four cases.

Case~1: $y_t=0$ and $y_{t-s}-y_{t-s-1}\le0$ for all $s\in\Se$.
Because $\Se\subseteq[0,\lfloor(\Cupper-\Vupper)/V\rfloor]_\Z$, we have $\Cupper-\Vupper-sV\ge0$ for all $s\in\Se$.
Thus, in this case, the right-hand side of inequality \eqref{eqn-T:x_t-ub-1-1} is \rred{nonnegative}.
Because $y_t=0$, by \eqref{eqn:p-upper-bound}, $x_t=0$.
Therefore, in this case, $(\BFx,\BFy)$ satisfies \eqref{eqn-T:x_t-ub-1-1}.

Case~2: $y_t=0$ and $y_{t-s}-y_{t-s-1}>0$ for some $s\in\Se$.
Let $\tilde{\Se}=\{\sigma\in\Se:y_{t-\sigma}-y_{t-\sigma-1}>0\}$ and $v=|\tilde{\Se}|$.
Then, $v\ge1$.
Denote $\tilde{\Se}=\{\sigma_1,\sigma_2,\ldots,\sigma_v\}$, where $\sigma_1<\sigma_2<\cdots<\sigma_v$.
Note that $y_{t-\sigma_j-1}=0$ and $y_{t-\sigma_j}=1$ for $j=1,\ldots,v$.
Denote $\sigma_0=-1$.
Then for each $j=1,\ldots,v$, there exists $\sigma'_j\in[\sigma_{j-1}+1,\sigma_j-1]_{\Z}$ such that $y_{t-\sigma'_j-1}=1$ and $y_{t-\sigma'_j}=0$.
Thus,
$$0\le\sigma'_1<\sigma_1<\sigma'_2<\sigma_2\cdots<\sigma'_v<\sigma_v\le s_{\max}.$$
Because $y_{t-\sigma_v}-y_{t-\sigma_v-1}=1$ and $t-\sigma_v\in[2,T]_\Z$, by \eqref{eqn:p-minup}, $y_k=1$ for all $k\in[t-\sigma_v,\min\{T,t-\sigma_v+L-1\}]_{\Z}$, which implies that $t-\sigma'_j\ge t-\sigma_v+L$ for $j=1,\ldots,v$.
Hence, for $j=1,\ldots,v$, we have $\sigma'_j\le \sigma_v-L$, which implies that 
\begin{equation}\label{eq:Prop3_eqA}
\sigma'_j\le s_{\max}-L.
\end{equation}
If $\beta=\alpha+1$, then $\Se=[0,s_{\max}]_{\Z}$, which implies that $\sigma'_j\in\Se$ for $j=1,\ldots,v$.
If $\beta\neq\alpha+1$, then condition (c) of Proposition \ref{prop-T:x_t-ub-2} implies that $s_{\max}\le L+\alpha$, which, by \eqref{eq:Prop3_eqA}, implies that $\sigma'_j\le\alpha$ for $j=1,\ldots,v$.
Thus, in both cases, $\sigma'_j\in\Se$ for $j=1,\ldots,v$.
Because $y_t=0$, by \eqref{eqn:p-upper-bound}, $x_t=0$.
Hence, the left-hand side of inequality \eqref{eqn-T:x_t-ub-1-1} is $0$.
Because $\Se\subseteq[0,\lfloor(\Cupper-\Vupper)/V\rfloor]_\Z$, we have $\Cupper-\Vupper-sV\ge0$ for all $s\in\Se$.
Note that $\{\sigma'_1,\ldots,\sigma'_v\}\subseteq\Se\setminus\tilde{\Se}$ and $y_{t-s}-y_{t-s-1}\le0$ for all $s\in\Se\setminus\tilde{\Se}$.
Thus, $\sum_{s\in\Se\setminus\tilde{\Se}}(\Cupper-\Vupper-sV)(y_{t-s}-y_{t-s-1})\le\sum_{j=1}^v(\Cupper-\Vupper-\sigma'_jV)(y_{t-\sigma'_j}-y_{t-\sigma'_j-1})$.
Hence, the right-hand side of inequality \eqref{eqn-T:x_t-ub-1-1} is
\begin{align*}
    &\Cupper y_t-\sum_{s\in\Se}(\Cupper-\Vupper-sV)(y_{t-s}-y_{t-s-1})\\
    &= -\sum_{s\in\tilde{\Se}}(\Cupper-\Vupper-sV)(y_{t-s}-y_{t-s-1})-\sum_{s\in\Se\setminus\tilde{\Se}}(\Cupper-\Vupper-sV)(y_{t-s}-y_{t-s-1})\\
    &\ge -\sum_{j=1}^v(\Cupper-\Vupper-\sigma_jV)(y_{t-\sigma_j}-y_{t-\sigma_j-1})-\sum_{j=1}^v(\Cupper-\Vupper-\sigma'_jV)(y_{t-\sigma_j'}-y_{t-\sigma'_j-1})\\
    &= -\sum_{j=1}^v(\Cupper-\Vupper-\sigma_jV)+\sum_{j=1}^v(\Cupper-\Vupper-\sigma'_jV)\\
    &=\sum_{j=1}^v(\sigma_j-\sigma'_j)V\\
    &>0.
\end{align*}
Therefore, in this case, $(\BFx,\BFy)$ satisfies \eqref{eqn-T:x_t-ub-1-1}.

Case~3: $y_t=1$ and $y_{t-s}-y_{t-s-1}\le0$ for all $s\in\Se$.
Because $\Se\subseteq[0,\lfloor(\Cupper-\Vupper)/V\rfloor]_{\Z}$, we have $\Cupper-\Vupper-sV\ge0$ for all $s\in\Se$.
Thus, in this case, the right-hand side of inequality \eqref{eqn-T:x_t-ub-1-1} is at least $\Cupper$.
By \eqref{eqn:p-upper-bound}, $x_t\le\Cupper$.
Therefore, in this case, $(\BFx,\BFy)$ satisfies \eqref{eqn-T:x_t-ub-1-1}.

Case~4: $y_t=1$ and $y_{t-s}-y_{t-s-1}>0$ for some $s\in\Se$.
Let $\tilde{\Se}=\{\sigma\in\Se:y_{t-\sigma}-y_{t-\sigma-1}>0\}$ and $v=|\tilde{\Se}|$.
Then, $v\ge1$.
Denote $\tilde{\Se}=\{\sigma_1,\sigma_2,\ldots,\sigma_v\}$, where $\sigma_1<\sigma_2<\cdots<\sigma_v$.
Note that $y_{t-\sigma_j-1}=0$ and $y_{t-\sigma_j}=1$ for $j=1,\ldots,v$.
Then, for each $j=2,\ldots,v$, there exists $\sigma'_j\in[\sigma_{j-1}+1,\sigma_j-1]_{\Z}$ such that $y_{t-\sigma'_j-1}=1$ and $y_{t-\sigma'_j}=0$.
Thus,
$$0\le\sigma_1<\sigma'_2<\sigma_2<\cdots<\sigma'_v<\sigma_v\le s_{\max}.$$
In addition, $y_k=1$ for all $k\in[t-\sigma_1,t]_{\Z}$.
Because $y_{t-\sigma_v}-y_{t-\sigma_v-1}=1$ and $t-\sigma_v\in[2,T]_\Z$, by \eqref{eqn:p-minup}, $y_k=1$ for all $k\in[t-\sigma_v,\min\{T,t-\sigma_v+L-1\}]_{\Z}$, which implies that $t-\sigma'_j\ge t-\sigma_v+L$ for $j=2,\ldots,v$.
Hence, for $j=2,\ldots,v$, we have $\sigma'_j\le\sigma_v-L$, which implies that
\begin{equation}\label{eq:Prop3_eqB}
    \sigma'_j\le s_{\max}-L.
\end{equation}
If $\beta=\alpha+1$, then $\Se=[0,s_{\max}]_{\Z}$, which implies that $\sigma'_j\in\Se$ for $j=2,\ldots,v$.
If $\beta\neq\alpha+1$, then condition (c) of Proposition \ref{prop-T:x_t-ub-2} implies that $s_{\max}\le L+\alpha$, which, by \eqref{eq:Prop3_eqB} implies that $\sigma'_j\le\alpha$ for all $j=2,\ldots,v$.
Thus, in both cases, $\sigma'_j\in\Se$ for $j=2,\ldots,v$.
By \eqref{eqn:p-ramp-up},
$$\sum_{\tau=t-\sigma_1}^t(x_{\tau}-x_{\tau-1})\le\sum_{\tau=t-\sigma_1}^t Vy_{\tau-1}+\sum_{\tau=t-\sigma_1}^t\Vupper(1-y_{\tau-1}),$$
which implies that
$$x_t-x_{t-\sigma_1-1}\le\sum_{\tau=t-\sigma_1}^tVy_{\tau-1}+\sum_{\tau=t-\sigma_1}^t\Vupper(1-y_{\tau-1})=\sigma_1V+\Vupper.$$
Because $y_{t-\sigma_1-1}=0$, by \eqref{eqn:p-upper-bound}, $x_{t-\sigma_1-1}=0$.
Hence, $x_t\le\sigma_1V+\Vupper$;
that is, the left-hand side of inequality \eqref{eqn-T:x_t-ub-1-1} is at most $\sigma_1V+\Vupper$.
Because $\Se\subseteq[0,\lfloor(\Cupper-\Vupper)/V\rfloor]_\Z$, we have $\Cupper-\Vupper-sV\ge0$ for all $s\in\Se$.
Note that $\{\sigma'_2,\ldots,\sigma'_v\}\subseteq\Se\setminus\tilde{\Se}$ and $y_{t-s}-y_{t-s-1}\le0$ for all $s\in\Se\setminus\tilde{\Se}$.
Thus, $\sum_{s\in\Se\setminus\tilde{\Se}}(\Cupper-\Vupper-sV)(y_{t-s}-y_{t-s-1})\le\sum_{j=2}^v(\Cupper-\Vupper-\sigma'_jV)(y_{t-\sigma'_j}-y_{t-\sigma'_j-1})$.
Hence, the right-hand side of inequality \eqref{eqn-T:x_t-ub-1-1} is 
\begin{align*}
    &\Cupper y_t-\sum_{s\in\Se}(\Cupper-\Vupper-sV)(y_{t-s}-y_{t-s-1})\\
    &=\Cupper-\sum_{s\in\tilde{\Se}}(\Cupper-\Vupper-sV)(y_{t-s}-y_{t-s-1})-\sum_{s\in\Se\setminus\tilde{\Se}}(\Cupper-\Vupper-sV)(y_{t-s}-y_{t-s-1})\\
    &\ge\Cupper-\sum_{j=1}^v(\Cupper-\Vupper-\sigma_jV)(y_{t-\sigma_j}-y_{t-\sigma_j-1})-\sum_{j=2}^v(\Cupper-\Vupper-\sigma'_jV)(y_{t-\sigma'_j}-y_{t-\sigma'_j-1})\\
    &=\Cupper-\sum_{j=1}^v(\Cupper-\Vupper-\sigma_jV)+\sum_{j=2}^v(\Cupper-\Vupper-\sigma'_jV)\\
    &=\sigma_1V+\Vupper+\sum_{j=2}^v(\sigma_j-\sigma'_j)V\\
    &\ge\sigma_1V+\Vupper.
\end{align*}
Therefore, in this case, $(\BFx,\BFy)$ satisfies \eqref{eqn-T:x_t-ub-1-1}.

Summarizing Cases 1--4, we conclude that $(\BFx,\BFy)$ satisfies \eqref{eqn-T:x_t-ub-1-1}. 
Hence, \eqref{eqn-T:x_t-ub-1-1} is valid for $\convP$ when $\Se=[0,\alpha]_\Z\cup[\beta,s_{\max}]_\Z$.

It is easy to verify that the proof of facet-defining of inequality \eqref{eqn-T:x_t-ub-1-1} in the proof of Proposition~\ref{prop-T:x_t-ub-1} remains valid when $\Se=[0,\alpha]_\Z\cup[\beta,s_{\max}]_\Z$.
Therefore, inequality \eqref{eqn-T:x_t-ub-1-1} is facet-defining for $\convP$ under the conditions stated in Proposition \ref{prop-T:x_t-ub-2}.

It is also easy to verify that the proof of validity and facet-defining of inequality \eqref{eqn-T:x_t-ub-1-2} in the proof of Proposition~\ref{prop-T:x_t-ub-1} remains valid when $\Se=[0,\alpha]_\Z\cup[\beta,s_{\max}]_\Z$.
Therefore, inequality \eqref{eqn-T:x_t-ub-1-2} is valid and facet-defining for $\convP$ under the conditions stated in Proposition \ref{prop-T:x_t-ub-2}.
\Halmos


\subsection{Proof of Proposition \ref{prop-T:x_t-ub-3}}

\noindent{\bf Proposition \ref{prop-T:x_t-ub-3}.} {\it
Consider any set $\Se\subseteq[0,\min\{L-1,T-3,\lfloor(\Cupper-\Vupper)/V\rfloor\}]_{\Z}$ and any real number $\eta$ such that $0\le\eta\le\min\{L-1,(\Cupper-\Vupper)/V\}$.
For any $t\in[1,T-1]_\Z$ such that $t\ge s+2$ for all $s\in\Se$, the inequality
\begin{equation}
    x_t \leq (\Cupper - \eta V) y_t + \eta V y_{t+1} - \sum_{s\in\Se} (\Cupper - \Vupper - sV) (y_{t-s} - y_{t-s-1})\tag{\ref{eqn-T:x_t-ub-3-1}}
\end{equation}
is valid for $\convP$.
For any $t\in[2,T]_\Z$ such that $t\le T-s-1$ for all $s\in\Se$, the inequality
\begin{equation}
    x_t \leq (\Cupper - \eta V) y_t + \eta V y_{t-1} - \sum_{s\in\Se} (\Cupper - \Vupper - sV) (y_{t+s} - y_{t+s+1})\tag{\ref{eqn-T:x_t-ub-3-2}}
\end{equation}
is valid for $\convP$.
Furthermore, inequalities \eqref{eqn-T:x_t-ub-3-1} and \eqref{eqn-T:x_t-ub-3-2} are facet-defining for $\convP$ when $\eta\in\{0, (\Cupper-\Vupper)/V\}$ or $\eta=L-1\in\Se$.}
\vskip8pt

\noindent{\bf Proof.}
We first prove that inequality \eqref{eqn-T:x_t-ub-3-1} is valid and facet-defining for $\convP$. 
Note that the proof of facet-defining of \eqref{eqn-T:x_t-ub-3-1} here can also be used to prove the facet-defining of \eqref{eqn-T:x_t-ub-3-1} in Proposition~\ref{prop-T:x_t-ub-4}.
For notational convenience, we define $s_{\max}=\max\{s:s\in\Se\}$ if $\Se\ne\emptyset$, and $s_{\max}=-1$ if $\Se=\emptyset$.

Consider any $t\in[s_{\max}+2,T-1]_\Z$ (i.e., $t\in[1,T-1]_\Z$ such that $t\ge s+2$ for all $s\in\Se$).
To prove that the linear inequality \eqref{eqn-T:x_t-ub-3-1} is valid for $\convP$, it suffices to show that it is valid for $\Pe$.
Consider any element $(\BFx,\BFy)$ of $\Pe$.
We show that $(\BFx,\BFy)$ satisfies \eqref{eqn-T:x_t-ub-3-1}.
We divide the analysis into three cases.

Case~1: $y_t=0$.
By Lemma \ref{lem:lookbackward}(i), $y_{t-j}-y_{t-j-1}\le0$ for all $j\in[0,\min\{t-2,L-1\}]_{\Z}$.
Because $\Se\subseteq[0,L-1]_\Z$ and $s_{\max}\le t-2$, we have $\Se\subseteq[0,\min\{t-2,L-1\}]_{\Z}$.
Thus, $y_{t-s}-y_{t-s-1}\le0$ for all $s\in\Se$.
Because $\Se\subseteq[0,\lfloor(\Cupper-\Vupper)/V\rfloor]_\Z$, we have $\Cupper-\Vupper-sV\ge0$ for all $s\in\Se$.
Thus, the right-hand side of \eqref{eqn-T:x_t-ub-3-1} is at least $\eta Vy_{t+1}\ge 0$.
Because $y_t=0$, by \eqref{eqn:p-upper-bound}, $x_t=0$.
Therefore, in this case, $(\BFx,\BFy)$ satisfies \eqref{eqn-T:x_t-ub-3-1}.

Case~2: $y_t=1$ and $y_{t-s'}-y_{t-s'-1}=1$ for some $s'\in\Se$.
In this case, $y_{t-s'}=1$ and $y_{t-s'-1}=0$.
Because $s'\le s_{\max}\le t-2$, we have $t-s'\in[2,T]_\Z$.
By \eqref{eqn:p-minup}, $y_k=1$ for all $k\in[t-s',\min\{T,t-s'+L-1\}]_\Z$.
Because $\Se\subseteq[0,L-1]_\Z$, we have $s'\le L-1$, or equivalently $t-s'+L-1\ge t$, and thus $y_{t-s}=1$ for all $s\in[0,s']_\Z$,
which implies that $y_{t-s}-y_{t-s-1}=0$ for all $s\in[0,s'-1]_\Z$.
Because $s'\le t-2$, either $s'=t-2$ or $s'\le t-3$.
If $s'=t-2$, then it does not exist any $s\in\Se$ such that $s>s'$.
If $s'\le t-3$, then $t-s'-1\in[2,T]_\Z$, and by Lemma~\ref{lem:lookbackward}(i), $y_{t-s'-j-1}-y_{t-s'-j-2}\le 0$ for all $j\in[0,\min\{t-s'-3,L-1\}]_\Z$, 
which implies that $y_{t-s}-y_{t-s-1}\le 0$ for all $s\in[s'+1,\min\{t-2,L+s'\}]_\Z$,
which in turn implies that $y_{t-s}-y_{t-s-1}\le 0$ for all $s\in\Se$ such that $s>s'$.
Hence, $y_{t-s}-y_{t-s-1}\le 0$ for all $s\in\Se\setminus\{s'\}$.
Because $\Se\subseteq[0,\lfloor(\Cupper-\Vupper)/V\rfloor]_\Z$, we have $\Cupper-\Vupper-sV\ge0$ for all $s\in\Se$.
Thus,
\begin{equation}\label{eq1:apx:prop-T:x_t-ub-3}
-\sum_{s\in\Se}(\Cupper-\Vupper-sV)(y_{t-s}-y_{t-s-1})\ge-(\Cupper-\Vupper-s'V).
\end{equation}
Because $y_t=1$, by \eqref{eq1:apx:prop-T:x_t-ub-3}, the right-hand side of \eqref{eqn-T:x_t-ub-3-1} is at least $s'V+\Vupper$ when $y_{t+1}=1$ and is at least $\Vupper+(s'-\eta)V$ when $y_{t+1}=0$.
By \eqref{eqn:p-upper-bound}, $x_{t-s'-1}=0$.
By \eqref{eqn:p-ramp-up}, $\sum_{\tau=t-s'}^t(x_{\tau}-x_{\tau-1})\le\sum_{\tau=t-s'}^tVy_{\tau-1}+\sum_{\tau=t-s'}^t\Vupper(1-y_{\tau-1})$,
which implies that $x_t\le s'V+\Vupper$.
If $y_{t+1}=0$, then by \eqref{eqn:p-upper-bound} and \eqref{eqn:p-ramp-down}, $x_{t+1}=0$ and $x_t-x_{t+1}\le Vy_{t+1}+\Vupper(1-y_{t+1})$,
implying that $x_t\le\Vupper$.
In addition, if $y_{t+1}=0$, then because $y_k=1$ for all $k\in[t-s',\min\{T,t-s'+L-1\}]_\Z$, we have $t+1\ge t-s'+L$, 
which implies that $s'\ge L-1\ge\eta$.
Thus, if $y_{t+1}=0$, then $x_t\le\Vupper+(s'-\eta)V$.
Hence, $x_t$ is at most $s'V+\Vupper$, and it is at most $\Vupper+(s'-\eta)V$ when $y_{t+1}=0$.
Therefore, in this case, $(\BFx,\BFy)$ satisfies \eqref{eqn-T:x_t-ub-3-1}.

Case~3: $y_t=1$ and $y_{t-s}-y_{t-s-1}\neq1$ for all $s\in\Se$.
In this case, $y_{t-s}-y_{t-s-1}\le0$ for all $s\in\Se$.
Because $\Se\subseteq[0,\lfloor(\Cupper-\Vupper)/V\rfloor]_{\Z}$, we have $\Cupper-\Vupper-sV\ge0$ for all $s\in\Se$.
Thus, $\sum_{s\in\Se}(\Cupper-\Vupper-sV)(y_{t-s}-y_{t-s-1})\le0$.
Because $y_t=1$, the right-hand side of \eqref{eqn-T:x_t-ub-3-1} is at least $\Cupper$ when $y_{t+1}=1$ and is at least $\Cupper-\eta V\ge\Vupper$ when $y_{t+1}=0$ (as $\eta\le(\Cupper-\Vupper)/V$).
By \eqref{eqn:p-upper-bound}, $x_t\le\Cupper$.
If $y_{t+1}=0$, then by \eqref{eqn:p-upper-bound} and \eqref{eqn:p-ramp-down}, $x_{t+1}=0$ and $x_t-x_{t+1}\le Vy_{t+1}+\Vupper(1-y_{t+1})$, 
which imply that $x_t\le\Vupper$.
Hence, $x_t$ is at most $\Cupper$, and it is at most $\Vupper$ when $y_{t+1}=0$.
Therefore, in this case, $(\BFx,\BFy)$ satisfies \eqref{eqn-T:x_t-ub-3-1}.

Summarizing Cases 1--3, we conclude that $(\BFx,\BFy)$ satisfies \eqref{eqn-T:x_t-ub-3-1}.
Hence, \eqref{eqn-T:x_t-ub-3-1} is valid for $\convP$.

We first show that $\convP$ is full dimensional.
As the $\convP$ contains $2T$ decision variables, to show $\dimP=2T$, we need to find $2T+1$ affinely independent points in $\convP$.
Because $\BFzero\in\convP$, it suffices to create the remaining $2T$ nonzero linearly independent points.
We denote these $2T$ points as $(\bar{\BFx}^r, \bar{\BFy}^r)$ and $(\hat{\BFx}^r, \hat{\BFy}^r)$, and denote the $q$th component of $\bar{\BFx}^r$, $\bar{\BFy}^r$, $\hat{\BFx}^r$, and $\hat{\BFy}^r$ as $\bar{x}^r_q, \bar{y}^r_q, \hat{x}^r_q$ and $\hat{y}^r_q$, respectively.
Let $\epsilon=\Vupper- \Clower>0$.
\begin{enumerate}[label={(A\arabic*)}]
\item For each $r\in[1,T]_{\Z}$, we create a point $(\bar{\BFx}^r, \bar{\BFy}^r)$ as follows:
\begin{equation*}
    \bar{x}_q^r = \left\{\begin{array}{ll}
    \Clower + \epsilon, & q \in [1, r]_{\Z}; \\
    0, & q \in [r+1, T]_{\Z};
    \end{array} \right.
\end{equation*}
and
\begin{equation*}
\bar{y}_q^r = \left\{ \begin{array}{ll}
    1, & q \in [1,r]_{\Z}; \\
    0, & q \in [r+1, T]_{\Z}.
    \end{array} \right.
\end{equation*}
It is easy to observe that $(\bar{\BFx}^r, \bar{\BFy}^r)$ satisfies \eqref{eqn:p-minup}--\eqref{eqn:p-ramp-down} and thus $(\bar{\BFx}^r, \bar{\BFy}^r)\in\convP$ for $r\in[1,T]_{\Z}$.
\item For each $r\in[1,T]_{\Z}$, we create a point $(\hat{\BFx}^r, \hat{\BFy}^r)$ as follows:
\begin{equation*}
    \hat{x}_q^r = \left\{\begin{array}{ll}
    \Clower, & q \in [1, r]_{\Z}; \\
    0, & q \in [r+1, T]_{\Z};
    \end{array} \right.
\end{equation*}
and
\begin{equation*}
    \hat{y}_q^r = \left\{ \begin{array}{ll}
    1, & q \in [1,r]_{\Z}; \\
    0, & q \in [r+1, T]_{\Z}.
    \end{array} \right.
\end{equation*}
It is easy to observe that $(\hat{\BFx}^r, \hat{\BFy}^r)$ satisfies \eqref{eqn:p-minup}--\eqref{eqn:p-ramp-down} and thus $(\hat{\BFx}^r, \hat{\BFy}^r)\in\convP$ for $r\in[1,T]_{\Z}$.
\end{enumerate}
It is also easy to observe that the above $2T$ points are linearly independent.
Therefore, the dimension of $\convP$ is $2T$, i.e., $\convP$ is full dimensional.
Consider any $t\in[s_{\max}+2,T-1]_\Z$.
To prove that inequality \eqref{eqn-T:x_t-ub-3-1} is facet-defining for $\convP$ when $\eta\in\{0,(\Cupper-\Vupper)/V\}$ or $\eta=L-1\in\Se$, 
it suffices to show that there exist $2T$ affinely independent points in $\convP$ that satisfy \eqref{eqn-T:x_t-ub-3-1} at equality when $\eta\in\{0,(\Cupper-\Vupper)/V\}$ or $\eta=L-1\in\Se$.
When $\eta=0$, inequalities \eqref{eqn-T:x_t-ub-3-1} become inequality \eqref{eqn-T:x_t-ub-1-1}, and by Proposition~\ref{prop-T:x_t-ub-1}, it is facet-defining for $\convP$.
Hence, in the following, we only consider the case where $\eta=(\Cupper-\Vupper)/V$ or $\eta=L-1\in\Se$.
Because $\BFzero\in\convP$ and $\BFzero$ satisfies \eqref{eqn-T:x_t-ub-3-1} at equality, 
it suffices to create the remaining $2T-1$ nonzero linearly independent points.
We denote these $2T-1$ points as $(\bar{\BFx}^r,\bar{\BFy}^r)$ for $r\in[1,T]_{\Z}\setminus\{t\}$ and $(\hat{\BFx}^r,\hat{\BFy}^r)$ for $r\in[1,T]_{\Z}$. 

We divide these $2T-1$ points into the following \rred{five} groups:
\begin{enumerate}[label={(A\arabic*)},ref={(A\arabic*)}]
\item\label{points:prop4-eq1-A1}
For each $r\in[1, T]_{\Z}\setminus\{t\}$, we create the same point $(\bar{\BFx}^r,\bar{\BFy}^r)$ as in group \ref{points:prop1-eq1-A1} in the proof of Proposition~\ref{prop-T:x_t-ub-1}.
Thus, $(\bar{\BFx}^r,\bar{\BFy}^r)\in\convP$.
It is easy to verify that $(\bar{\BFx}^r,\bar{\BFy}^r)$ satisfies \eqref{eqn-T:x_t-ub-3-1} at equality.

\item\label{points:prop4-eq1-A2}
For each \rred{$r\in[1,t-1]_{\Z}$}, we create the same point $(\hat{\BFx}^r,\hat{\BFy}^r)$ as in group \ref{points:prop1-eq1-A2} in the proof of Proposition~\ref{prop-T:x_t-ub-1}.
Thus, $(\hat{\BFx}^r,\hat{\BFy}^r)\in\convP$.
Consider the case where $t-r-1\notin\Se$.
In this case, $\hat{x}_t^r=\hat{y}_t^r=\hat{y}_{t+1}^r=0$.
In addition, $t-s-1\neq r$ for all $s\in\Se$, which implies that $\hat{y}^r_{t-s}-\hat{y}^r_{t-s-1}=0$ for all $s\in\Se$.
Hence, $(\hat{\BFx}^r,\hat{\BFy}^r)$ satisfies \eqref{eqn-T:x_t-ub-3-1} at equality.
Next, consider the case where $t-r-1\in\Se$.
In this case, $\hat{x}^r_t=\Vupper+(t-r-1)V$ and $\hat{y}^r_t=\hat{y}^r_{t+1}=1$.
In addition, $\hat{y}^r_{t-s}-\hat{y}^r_{t-s-1}=1$ when $s=t-r-1$, and $\hat{y}^r_{t-s}-\hat{y}^r_{t-s-1}=0$ when $s\neq t-r-1$.
Hence, $(\hat{\BFx}^r,\hat{\BFy}^r)$ satisfies \eqref{eqn-T:x_t-ub-3-1} at equality.

\item\label{points:prop4-eq1-A3}
We create a point $(\hat{\BFx}^t,\hat{\BFy}^t)$ as follows:
If $\eta=(\Cupper-\Vupper)/V$, then
\begin{equation*}
    \hat{x}_q^t=\left\{
    \begin{array}{ll}
    \Vupper, &\hbox{for $q\in[1,t]_{\Z}$};\\
    0, &\hbox{for $q\in[t+1,T]_{\Z}$};
    \end{array}\right.
\end{equation*}
and
\begin{equation*}
    \hat{y}_q^t=\left\{
    \begin{array}{ll}
    1, &\hbox{for $q\in[1,t]_{\Z}$}; \\
    0, &\hbox{for $q\in[t+1,T]_{\Z}$}.
    \end{array}\right.
\end{equation*}
If $\eta=L-1\in\Se$, then
\begin{equation*}
    \hat{x}_q^t=\left\{
    \begin{array}{ll}
    \Vupper, &\hbox{for $q\in[t-L+1,t]_\Z$};\\
    0, &\hbox{for $q\in[1,t-L]_{\Z}\cup[t+1,T]_\Z$};
    \end{array}\right.
\end{equation*}
and
\begin{equation*}
    \hat{y}_q^t=\left\{
    \begin{array}{ll}
    1, &\hbox{$q\in[t-L+1,t]_\Z$}; \\
    0, &\hbox{$q\in[1,t-L]_\Z\cup[t+1,T]_{\Z}$}.
    \end{array}\right.
\end{equation*}
We first consider the case where $\eta=(\Cupper-\Vupper)/V$.
It is easy to verify that $(\hat{\BFx}^t,\hat{\BFy}^t)$ satisfies \eqref{eqn:p-minup}--\eqref{eqn:p-ramp-down}.
Thus, $(\hat{\BFx}^t,\hat{\BFy}^t)\in\convP$.
In this case, $\hat{x}^t_t=\Vupper$, $\hat{y}^t_t=1$, and $\hat{y}^t_{t+1}=0$, and $\hat{y}^t_{t-s}-\hat{y}^t_{t-s-1}=0$ for all $s\in\Se$.
Hence, $(\hat{\BFx}^t,\hat{\BFy}^t)$ satisfies \eqref{eqn-T:x_t-ub-3-1} at equality.
Next, we consider the case where $\eta=L-1\in\Se$.
In this case, for any $q\in[2,T]_\Z$, $\hat{y}^t_q-\hat{y}^t_{q-1}\le 0$ if $q\ne t-L+1$, while $\hat{y}^t_q-\hat{y}^t_{q-1}=1$ and $\hat{y}^t_k=1$ for all $k\in[q,\min\{T,q+L-1\}]_\Z$ if $q=t-L+1$.
Thus, $(\hat{\BFx}^t,\hat{\BFy}^t)$ satisfies \eqref{eqn:p-minup}.
For any $q\in[2,T]_\Z$, $\hat{y}^t_{q-1}-\hat{y}^t_q\le 0$ if $q\ne t+1$, while $\hat{y}^t_{q-1}-\hat{y}^t_q=1$ and $\hat{y}^t_k=0$ for all $k\in[q,\min\{T,q+\ell-1\}]_\Z$ if $q=t+1$.
Thus, $(\hat{\BFx}^t,\hat{\BFy}^t)$ satisfies \eqref{eqn:p-mindn}.
It is easy to verify that $(\hat{\BFx}^t,\hat{\BFy}^t)$ satisfies \eqref{eqn:p-lower-bound}--\eqref{eqn:p-ramp-down}.
Thus, $(\hat{\BFx}^t,\hat{\BFy}^t)\in\convP$.
Note that $\hat{x}^t_t=\Vupper$, $\hat{y}^t_t=1$, $\hat{y}^t_{t+1}=0$, $\hat{y}^t_{t-s}-\hat{y}^t_{t-s-1}=0$ for all $s\in\Se\setminus\{L-1\}$, 
$\hat{y}^t_{t-L+1}-\hat{y}^t_{t-L}=1$, and $(\Cupper-\eta V)-(\Cupper-\Vupper-(L-1)V)=\Vupper$.
Thus, $(\hat{\BFx}^t,\hat{\BFy}^t)$ satisfies \eqref{eqn-T:x_t-ub-3-1} at equality.

\item\label{points:prop4-eq1-A4}
We create a point $(\hat{\BFx}^{t+1},\hat{\BFy}^{t+1})$ by setting $\hat{x}^{t+1}_q=\Cupper$ and $\hat{y}^{t+1}_q=1$ for $q\in[1,T]_\Z$.
It is easy to verify that $(\hat{\BFx}^{t+1},\hat{\BFy}^{t+1})$ satisfies \eqref{eqn:p-minup}--\eqref{eqn:p-ramp-down}.
Thus, $(\hat{\BFx}^{t+1},\hat{\BFy}^{t+1})\in\convP$.
It is also easy to verify that $(\hat{\BFx}^{t+1},\hat{\BFy}^{t+1})$ satisfies \eqref{eqn-T:x_t-ub-3-1} at equality.

\item\label{points:prop4-eq1-A5}
For each $r\in[t+2,T]_\Z$, we create the same point $(\hat{\BFx}^r,\hat{\BFy}^r)$ as in group \ref{points:prop1-eq1-A4} in the proof of Proposition \ref{prop-T:x_t-ub-1}.
Thus, $(\hat{\BFx}^r,\hat{\BFy}^r)\in\convP$.
It is easy to verify that $(\hat{\BFx}^r,\hat{\BFy}^r)$ satisfies \eqref{eqn-T:x_t-ub-3-1} at equality.
\end{enumerate}

Table \ref{tab:eqn-T:x_t-ub-3-1-facet-matrix-1} shows a matrix with $2T-1$ rows, where each row represents a point created by this process.
This matrix can be transformed into the matrix in Table \ref{tab:eqn-T:x_t-ub-3-1-facet-matrix-2} via the following Gaussian elimination process:

\begin{enumerate}[label=(\roman*)]
    \item For each $r\in[1,T]_{\Z}\setminus\{t\}$, the point with index $r$ in group (B1), denoted $(\underline{\bar{\BFx}}^r, \underline{\bar{\BFy}}^r)$, is obtained by setting $(\underline{\bar{\BFx}}^r, \underline{\bar{\BFy}}^r)=(\bar{\BFx}^r,\bar{\BFy}^r)-(\hat{\BFx}^{t+1},\hat{\BFy}^{t+1})$.
    Here, $(\bar{\BFx}^r,\bar{\BFy}^r)$ is the point with index $r$ in group \ref{points:prop4-eq1-A1}, and $(\hat{\BFx}^{t+1},\hat{\BFy}^{t+1})$ is the point in group \ref{points:prop4-eq1-A5}.

    \item For each \rred{$r\in[1,t-1]_{\Z}$}, the point with index $r$ in group \rred{(B2)}, denoted $(\underline{\hat{\BFx}}^r, \underline{\hat{\BFy}}^r)$, is obtained by setting $(\underline{\hat{\BFx}}^r, \underline{\hat{\BFy}}^r)=(\hat{\BFx}^r,\hat{\BFy}^r)$ if $t-r-1\notin\Se$, and setting $(\underline{\hat{\BFx}}^r, \underline{\hat{\BFy}}^r)=\rred{(\hat{\BFx}^{t+1},\hat{\BFy}^{t+1})-(\hat{\BFx}^r,\hat{\BFy}^r)}$ if $t-r-1\in\Se$.
    Here, $(\hat{\BFx}^r,\hat{\BFy}^r)$ is the point with index $r$ in group \ref{points:prop4-eq1-A2}, and $(\hat{\BFx}^{t+1},\hat{\BFy}^{t+1})$ is the point in group \ref{points:prop4-eq1-A4}.

    \item The point in group \rred{(B3)}, denoted $(\underline{\hat{\BFx}}^t,\underline{\hat{\BFy}}^t)$, is obtained by setting $(\underline{\hat{\BFx}}^t,\underline{\hat{\BFy}}^t)=(\hat{\BFx}^t,\hat{\BFy}^t)$.
    Here, $(\hat{\BFx}^t,\hat{\BFy}^t)$ is the point in group \ref{points:prop4-eq1-A3}.

    \item The point in group \rred{(B4)}, denoted $(\underline{\hat{\BFx}}^{t+1},\underline{\hat{\BFy}}^{t+1})$, is obtained by setting $(\underline{\hat{\BFx}}^{t+1},\underline{\hat{\BFy}}^{t+1})=(\hat{\BFx}^{t+1},\hat{\BFy}^{t+1})-(\hat{\BFx}^{t+2},\hat{\BFy}^{t+2})$.
    Here, $(\hat{\BFx}^{t+1},\hat{\BFy}^{t+1})$ is the point in group \ref{points:prop4-eq1-A4}, and $(\hat{\BFx}^{t+2},\hat{\BFy}^{t+2})$ is the point with index $t+2$ in group \ref{points:prop4-eq1-A5}.

    \item For each $r\in[t+2,T]_{\Z}$, the point with index $r$ in group \rred{(B5)}, denoted $(\underline{\hat{\BFx}}^r, \underline{\hat{\BFy}}^r)$, is obtained by setting $(\underline{\hat{\BFx}}^r, \underline{\hat{\BFy}}^r)=(\hat{\BFx}^r,\hat{\BFy}^r)-(\hat{\BFx}^{r+1},\hat{\BFy}^{r+1})$ if $r\neq T$, and setting $(\underline{\hat{\BFx}}^r, \underline{\hat{\BFy}}^r)=(\hat{\BFx}^r,\hat{\BFy}^r)$ if $r=T$.
    Here, $(\hat{\BFx}^r,\hat{\BFy}^r)$ and $(\hat{\BFx}^{r+1},\hat{\BFy}^{r+1})$ are the points with indices $r$ and $r+1$, respectively, in group \ref{points:prop4-eq1-A5}.
\end{enumerate}

    

The matrix shown in Table \ref{tab:eqn-T:x_t-ub-3-1-facet-matrix-2} is lower triangular; that is, the position of the last nonzero component of a row of the matrix is greater than the position of the last nonzero component of the previous row.
This implies that the $2T-1$ points in groups \ref{points:prop4-eq1-A1}--\ref{points:prop4-eq1-A5} are linearly independent.
Therefore, inequality \eqref{eqn-T:x_t-ub-3-1} is facet-defining for $\convP$.

Next, we show that inequality \eqref{eqn-T:x_t-ub-3-2} is valid for $\convP$ and is facet-defining for $\convP$ when $\eta\in\{0,(\Cupper-\Vupper)/V\}$ or $\eta=L-1\in\Se$.
Note that this proof can also be used to prove the validity and facet-defining of inequality \eqref{eqn-T:x_t-ub-3-2} in Proposition~\ref{prop-T:x_t-ub-4}.
Denote $x'_t=x_{T-t+1}$ and $y'_t=y_{T-t+1}$ for $t\in[1,T]_\Z$.
Because inequality \eqref{eqn-T:x_t-ub-3-1} is valid for $\convP$ and is facet-defining for $\convP$ when $\eta\in\{0,(\Cupper-\Vupper)/V\}$ or $\eta=L-1\in\Se$ for any $t\in[s_{\max}+2,T-1]_\Z$,
the inequality
$$x'_{T-t+1}\le(\Cupper-\eta V)y'_{T-t+1}+\eta Vy'_{T-t}-\sum_{s\in\Se}(\Cupper-\Vupper-sV)(y'_{T-t+s+1}-y'_{T-t+s+2})$$
is valid for $\convPprime$ and is facet-defining for $\convPprime$ when $\eta\in\{0,(\Cupper-\Vupper)/V\}$ or $\eta=L-1\in\Se$ for any $t\in[s_{\max}+2,T-1]_\Z$.
Let $t'=T-t+1$.
Then, the inequality
$$x'_{t'}\le(\Cupper-\eta V)y'_{t'}+\eta Vy'_{t'-1}-\sum_{s\in\Se}(\Cupper-\Vupper-sV)(y'_{t'+s}-y'_{t'+s+1})$$
is valid for $\convPprime$ and is facet-defining for $\convPprime$ when $\eta\in\{0,(\Cupper-\Vupper)/V\}$ or $\eta=L-1\in\Se$ for any $t'\in[2,T-s_{\max}-1]_\Z$.
Hence, by Lemma~\ref{lem:Pprime}, inequality \eqref{eqn-T:x_t-ub-3-2} is valid for $\convP$ and is facet-defining for $\convP$ when $\eta\in\{0,(\Cupper-\Vupper)/V\}$ or $\eta=L-1\in\Se$ for any $t\in[2,T-s_{\max}-1]_\Z$.\Halmos

\afterpage{
\begin{landscape}
\begin{table}
    \renewcommand{\arraystretch}{2}
    \centering
    \caption{A matrix with the rows representing $2T-1$ points in $\convP$ that satisfy inequality \eqref{eqn-T:x_t-ub-3-1} at equality}
    \rule{0pt}{0ex}
    \setlength\tabcolsep{6.5pt}
    \scriptsize
    \begin{tabular}{|c|c|c|*{8}{c}|*{8}{c}|}
        \hline
         \multirow{2}{*}{Group} & \multirow{2}{*}{Point} & \multirow{2}{*}{Index $r$} & \multicolumn{8}{c|}{$\BFx$} & \multicolumn{8}{c|}{$\BFy$} \\
         \cline{4-11}\cline{12-19}
         &&& $1$ & $\cdots$ & $t-1$ & $t$ & $t+1$ & $t+2$ & $\cdots$ & $T$ 
         & $\ \ 1\ \ $ & $\cdots$ & $t-1$ & $t$ & $t+1$ & $t+2$ & $\cdots$ & $\ \ T\ \ $ \\
         \hline
         \multirow{7}{*}{(A1)} & \multirow{7}{*}{$(\bar{\BFx}^r,\bar{\BFy}^r)$}
         & $1$ 
         & $\Cupper-\epsilon$ & $\cdots$ & $\Cupper$ & $\Cupper$ & $\Cupper$ & $\Cupper$ & $\cdots$ & $\Cupper$
         & $1$ & $\cdots$ & $1$ & $1$ & $1$ & $1$ & $\cdots$ & $1$ \\ 
         && $\vdots$ 
         & $\vdots$ & \rotatebox{0}{$\ddots$} & $\vdots$ & $\vdots$ & $\vdots$ && $\vdots$ & $\vdots$ & $\vdots$ && $\vdots$ & $\vdots$ & $\vdots$ & $\vdots$ && $\vdots$ \\        
         && $t-1$
         & $\Cupper$ & $\cdots$ & $\Cupper-\epsilon$ & $\Cupper$ & $\Cupper$ & $\Cupper$ & $\cdots$ & $\Cupper$
         & $1$ & $\cdots$ & $1$ & $1$ & $1$ & $1$ & $\cdots$ & $1$ \\  
         && $t+1$
         & $\Cupper$ & $\cdots$ & $\Cupper$ & $\Cupper$ & $\Cupper-\epsilon$ & $\Cupper$ & $\cdots$ & $\Cupper$ 
         & $1$ & $\cdots$ & $1$ & $1$ & $1$ & $1$ & $\cdots$ & $1$ \\
         && $t+2$
         & $\Cupper$ & $\cdots$ & $\Cupper$ & $\Cupper$ & $\Cupper$ & $\Cupper-\epsilon$ & $\cdots$ & $\Cupper$ 
         & $1$ & $\cdots$ & $1$ & $1$ & $1$ & $1$ & $\cdots$ & $1$ \\
         && $\vdots$ 
         & $\vdots$ && $\vdots$ & $\vdots$ & $\vdots$ & $\vdots$ & \rotatebox{0}{$\ddots$} & $\vdots$
         & $\vdots$ && $\vdots$ & $\vdots$ & $\vdots$ & $\vdots$ && $\vdots$ \\        
         && $T$
         & $\Cupper$ & $\cdots$ & $\Cupper$ & $\Cupper$ & $\Cupper$ & $\Cupper$ & $\cdots$ & $\Cupper-\epsilon$ 
         & $1$ & $\cdots$ & $1$ & $1$ & $1$ & $1$ & $\cdots$ & $1$ \\ 
         
         \hline
         \multirow{3}{*}{\rred{(A2)}} & \multirow{8}{*}{$(\hat{\BFx}^r,\hat{\BFy}^r)$}
         & $1$ & \multicolumn{8}{c|}{\multirow{3}{*}{(See Note \rred{\ref{tab:eqn-T:x_t-ub-3-1-facet-matrix-1}-1)}}} & \multicolumn{8}{c|}{\multirow{3}{*}{(See Note \rred{\ref{tab:eqn-T:x_t-ub-3-1-facet-matrix-1}-1)}}}\\
         && $\vdots$ & \multicolumn{8}{c|}{} & \multicolumn{8}{c|}{}\\
         && $t-1$ & \multicolumn{8}{c|}{} & \multicolumn{8}{c|}{}\\
         
         \cline{1-1} \cline{3-19}
         \rred{(A3)} && $t$ & \multicolumn{8}{c|}{(See Note \rred{\ref{tab:eqn-T:x_t-ub-3-1-facet-matrix-1}-2)}} & \multicolumn{8}{c|}{(See Note \rred{\ref{tab:eqn-T:x_t-ub-3-1-facet-matrix-1}-2)}}\\

         \cline{1-1} \cline{3-19}
         \rred{(A4)} && $t+1$
         & $\Cupper$ & $\cdots$ & $\Cupper$ & $\Cupper$ & $\Cupper$ & $\Cupper$ & $\cdots$ & $\Cupper$
         & $1$ & $\cdots$ & $1$ & $1$ & $1$ & $1$ & $\cdots$ & $1$ \\
         
         \cline{1-1} \cline{3-19}
         \multirow{3}{*}{\rred{(A5)}} &  
         & $t+2$
         & $0$ & $\cdots$ & $0$ & $0$ & $0$ & $\Clower$ & $\cdots$ & $\Clower$
         & $0$ & $\cdots$ & $0$ & $0$ & $0$ & $1$ & $\cdots$ & $1$ \\
         && $\vdots$ 
         & $\vdots$ && $\vdots$ & $\vdots$ & $\vdots$ & $\vdots$ & \rotatebox{0}{$\ddots$} & $\vdots$ 
         & $\vdots$ && $\vdots$ & $\vdots$ & $\vdots$ & $\vdots$ & \rotatebox{0}{$\ddots$} & $\vdots$ \\                 
         && $T$
         & $0$ & $\cdots$ & $0$ & $0$ & $0$ & $0$ & $\cdots$ & $\Clower$
         & $0$ & $\cdots$ & $0$ & $0$ & $0$ & $0$ & $\cdots$ & $1$ \\
        \hline
        
        \multicolumn{19}{l}{Note \rred{\ref{tab:eqn-T:x_t-ub-3-1-facet-matrix-1}-1}: For $r\in\rred{[1,t-1]_\Z}$, the $\BFx$ and $\BFy$ vectors in group \rred{(A2)} are given as follows:}\\
        \multicolumn{19}{l}{
        $\hat{\BFx}^r=(\underbrace{\Clower,\ldots,\Clower}_{r\ {\rm terms}},\underbrace{0,\ldots,0}_{T-r\ {\rm terms}\!\!\!\!\!})$
        and
        $\hat{\BFy}^r=(\underbrace{1,\ldots,1}_{r\ {\rm terms}},\underbrace{0,\ldots,0}_{T-r\ {\rm terms}\!\!\!\!\!})$
        if $t-r-1\notin\Se$;}\\
        \multicolumn{19}{l}{
        $\hat{\BFx}^r=(\underbrace{0,\ldots,0}_{r\ {\rm terms}},\underbrace{\Vupper,\Vupper+V,\Vupper+2V,\ldots,\Vupper+(t-r-1)V}_{t-r\ {\rm terms}},\underbrace{\Vupper+(t-r-1)V,\ldots,\Vupper+(t-r-1)V}_{T-t\ {\rm terms}\!\!\!\!\!})$
        and
        $\hat{\BFy}^r=(\underbrace{0,\ldots,0}_{r\ {\rm terms}},\underbrace{1,\ldots,1}_{T-r\ {\rm terms}\!\!\!\!\!})$
        if $t-r-1\in\Se$.}\\
        \multicolumn{19}{l}{Note \rred{\ref{tab:eqn-T:x_t-ub-3-1-facet-matrix-1}-2}: The $\BFx$ and $\BFy$ vectors in group \rred{(A3)} are given as follows:}\\
        \multicolumn{19}{l}{
        $\hat{\BFx}^t=(\underbrace{\Vupper,\ldots,\Vupper}_{t\ {\rm terms}},\underbrace{0,\ldots,0}_{T-t\ {\rm terms}\!\!\!\!\!})$
        and
        $\hat{\BFy}^t=(\underbrace{1,\ldots,1}_{t\ {\rm terms}},\underbrace{0,\ldots,0}_{T-t\ {\rm terms}\!\!\!\!\!})$
        if $\eta=(\Cupper-\Vupper)/V$;
        $\hat{\BFx}^t=(\underbrace{0,\ldots,0}_{t-L\ {\rm terms}},\underbrace{\Vupper,\ldots,\Vupper}_{L\ {\rm terms}},\underbrace{0,\ldots,0}_{T-t\ {\rm terms}\!\!\!\!\!})$
        and
        $\hat{\BFy}^t=(\underbrace{0,\ldots,0}_{t-L\ {\rm terms}},\underbrace{1,\ldots,1}_{L\ {\rm terms}},\underbrace{0,\ldots,0}_{T-t\ {\rm terms}\!\!\!\!\!})$
        if $\eta=L-1\in\Se$.}
    \end{tabular}
    \label{tab:eqn-T:x_t-ub-3-1-facet-matrix-1}
\end{table}
\end{landscape}

\begin{landscape}
\begin{table}
    \renewcommand{\arraystretch}{2}
    \centering
    \caption{Lower triangular matrix obtained from Table~\ref{tab:eqn-T:x_t-ub-3-1-facet-matrix-1} via Gaussian elimination}
    \rule{0pt}{4ex}
    \setlength\tabcolsep{6.5pt}
    \scriptsize
    \begin{tabular}{|c|c|c|*{8}{c}|*{8}{c}|}
        \hline
         \multirow{2}{*}{Group} & \multirow{2}{*}{Point} & \multirow{2}{*}{Index $r$} & \multicolumn{8}{c|}{$\BFx$} & \multicolumn{8}{c|}{$\BFy$} \\
         \cline{4-11}\cline{12-19}
         &&& $\ \ 1\ \ $ & $\cdots$ & $t-1$ & $t$ & $t+1$ & $t+2$ & $\cdots$ & $\ \ T\ \ $ 
         & $\ \ 1\ \ $ & $\cdots$ & $t-1$ & $t$ & $t+1$ & $t+2$ & $\cdots$ & $\ \ T\ \ $ \\
         \hline
         \multirow{7}{*}{(B1)} & \multirow{7}{*}{$(\underline{\bar{\BFx}}^r,\underline{\bar{\BFy}}^r)$}
         & $1$ 
         & $-\epsilon$ & $\cdots$ & $0$ & $0$ & $0$ & $0$ & $\cdots$ & $0$
         & $0$ & $\cdots$ & $0$ & $0$ & $0$ & $0$ & $\cdots$ & $0$ \\ 
         && $\vdots$ 
         & $\vdots$ & \rotatebox{0}{$\ddots$} & $\vdots$ & $\vdots$ & $\vdots$ & $\vdots$ && $\vdots$
         & $\vdots$ && $\vdots$ & $\vdots$ & $\vdots$ & $\vdots$ && $\vdots$ \\        
         && $t-1$
         & $0$ & $\cdots$ & $-\epsilon$ & $0$ & $0$ & $0$ & $\cdots$ & $0$
         & $0$ & $\cdots$ & $0$ & $0$ & $0$ & $0$ & $\cdots$ & $0$ \\  
         && $t+1$
         & $0$ & $\cdots$ & $0$ & $0$ & $-\epsilon$ & $0$ & $\cdots$ & $0$ 
         & $0$ & $\cdots$ & $0$ & $0$ & $0$ & $0$ & $\cdots$ & $0$ \\
         && $t+2$
         & $0$ & $\cdots$ & $0$ & $0$ & $0$ & $-\epsilon$ & $\cdots$ & $0$ 
         & $0$ & $\cdots$ & $0$ & $0$ & $0$ & $0$ & $\cdots$ & $0$ \\
         && $\vdots$ 
         & $\vdots$ && $\vdots$ & $\vdots$ & $\vdots$ & $\vdots$ & \rotatebox{0}{$\ddots$} & $\vdots$
         & $\vdots$ && $\vdots$ & $\vdots$ & $\vdots$ & $\vdots$ && $\vdots$ \\        
         && $T$
         & $0$ & $\cdots$ & $0$ & $0$ & $0$ & $0$ & $\cdots$ & $-\epsilon$ 
         & $0$ & $\cdots$ & $0$ & $0$ & $0$ & $0$ & $\cdots$ & $0$ \\ 
         
         \hline
         \multirow{3}{*}{\rred{(B2)}} & \multirow{8}{*}{$(\underline{\hat{\BFx}}^r,\underline{\hat{\BFy}}^r)$}
         & $1$
         & \multicolumn{8}{c|}{}
         & $\rred 1$ & $\rred \cdots$ & $\rred 0$ & $\rred 0$ & $\rred 0$ & $\rred 0$ & $\rred \cdots$ & $\rred 0$ \\
         && $\rred \vdots$ 
         & \multicolumn{8}{c|}{(Omitted)} 
         & $\rred \vdots$ & \rotatebox{0}{$\rred \ddots$} & $\rred \vdots$ & $\rred \vdots$ & $\rred \vdots$ & $\rred \vdots$ && $\rred \vdots$ \\                 
         && $t-1$
         & \multicolumn{8}{c|}{}
         & $\rred 1$ & $\rred \cdots$ & $\rred 1$ & $\rred 0$ & $\rred 0$ & $\rred 0$ & $\rred \cdots$ & $\rred 0$ \\
         
         \cline{1-1} \cline{3-19}
         \rred{(B3)} && $t$ & \multicolumn{8}{c|}{(Omitted)} & \multicolumn{8}{c|}{(See Note \rred{\ref{tab:eqn-T:x_t-ub-3-1-facet-matrix-2}-1)}}\\
         
         \cline{1-1} \cline{3-19}
         \rred{(B4)} && $t+1$
         & \multicolumn{8}{c|}{(Omitted)}
         & $1$ & $\cdots$ & $1$ & $1$ & $1$ & $0$ & $\cdots$ & $0$ \\
         
         \cline{1-1} \cline{3-19}
         \multirow{3}{*}{\rred{(B5)}} &  
         & $t+2$
         & \multicolumn{8}{c|}{}
         & $0$ & $\cdots$ & $0$ & $0$ & $0$ & $1$ & $\cdots$ & $0$ \\
         && $\vdots$ 
         & \multicolumn{8}{c|}{(Omitted)} 
         & $\vdots$ && $\vdots$ & $\vdots$ & $\vdots$ & $\vdots$ & \rotatebox{0}{$\ddots$} & $\vdots$ \\                 
         && $T$
         & \multicolumn{8}{c|}{}
         & $0$ & $\cdots$ & $0$ & $0$ & $0$ & $0$ & $\cdots$ & $1$ \\
         
        \hline
        \multicolumn{19}{l}{Note \rred{\ref{tab:eqn-T:x_t-ub-3-1-facet-matrix-2}-1}: The $\BFy$ vector in group \rred{(B3)} is given as follows:}\\
        \multicolumn{19}{l}{
        $\underline{\hat{\BFy}}^t=(\underbrace{1,\ldots,1}_{t\ {\rm terms}},\underbrace{0,\ldots,0}_{T-t\ {\rm terms}})$
        if $\eta=(\Cupper-\Vupper)/V$;
        $\underline{\hat{\BFy}}^t=(\underbrace{0,\ldots,0}_{t-L\ {\rm terms}},\underbrace{1,\ldots,1}_{L\ {\rm terms}},\underbrace{0,\ldots,0}_{T-t\ {\rm terms}})$
        if $\eta=L-1\in\Se$.}
    \end{tabular}
    \label{tab:eqn-T:x_t-ub-3-1-facet-matrix-2}
\end{table}
\end{landscape}
}

\subsection{Proof of Proposition \ref{prop-T:x_t-ub-4}}

\noindent{\bf Proposition \ref{prop-T:x_t-ub-4}.} {\it
Consider any real number $\eta$ such that $0\le\eta\le\min\{L-1,(\Cupper-\Vupper)/V\}$ and any integers $\alpha$, $\beta$, and $s_{\max}$ such that
(a)~$L\le s_{\max}\le\min\{T-3,\lfloor(\Cupper-\Vupper)/V\rfloor\}$,
(b)~$0\le\alpha<\beta\le s_{\max}$, and
(c)~$\beta=\alpha+1$ or $s_{\max}\le L+\alpha$.
Let $\Se=[0,\alpha]_{\Z}\cup[\beta,s_{\max}]_{\Z}$.
For any $t\in[s_{\max}+2,T-1]_\Z$, inequality \eqref{eqn-T:x_t-ub-3-1} is valid for $\convP$.
For any $t\in[2,T-s_{\max}-1]_\Z$, inequality \eqref{eqn-T:x_t-ub-3-2} is valid for $\convP$.
Furthermore, \eqref{eqn-T:x_t-ub-3-1} and \eqref{eqn-T:x_t-ub-3-2} are facet-defining for $\convP$ when $\eta\in\{0,(\Cupper-\Vupper)/V\}$ or $\eta=L-1\in\Se$.}
\vskip8pt

\noindent{\bf Proof.}
Consider any $t\in[s_{\max}+2,T-1]_\Z$.
To prove that the linear inequality \eqref{eqn-T:x_t-ub-3-1} is valid for $\convP$ when $\Se=[0,\alpha]_\Z\cup[\beta,s_{\max}]_\Z$, 
it suffices to show that \eqref{eqn-T:x_t-ub-3-1} is valid for $\Pe$ when $\Se=[0,\alpha]_\Z\cup[\beta,s_{\max}]_\Z$.
Consider any element $(\BFx,\BFy)$ of $\Pe$.
We show that $(\BFx,\BFy)$ satisfies \eqref{eqn-T:x_t-ub-3-1} when $\Se=[0,\alpha]_\Z\cup[\beta,s_{\max}]_\Z$.
We divide the analysis into four cases.

Case~1: $y_t=0$ and $y_{t-s}-y_{t-s-1}\le0$ for all $s\in\Se$.
Because $\Se\subseteq[0,\lfloor(\Cupper-\Vupper)/V\rfloor]_\Z$, we have $\Cupper-\Vupper-sV\ge0$ for all $s\in\Se$.
Thus, in this case, the right-hand side of inequality \eqref{eqn-T:x_t-ub-3-1} is nonnegative.
Because $y_t=0$, by \eqref{eqn:p-upper-bound}, $x_t=0$.
Therefore, $(\BFx,\BFy)$ satisfies \eqref{eqn-T:x_t-ub-3-1}.

Case~2: $y_t=0$ and $y_{t-s}-y_{t-s-1}>0$ for some $s\in\Se$.
Let $\tilde{\Se}=\{\sigma\in\Se:y_{t-\sigma}-y_{t-\sigma-1}>0\}$ and $v=|\tilde{\Se}|$.
Then, $v\ge1$.
Denote $\tilde{\Se}=\{\sigma_1,\sigma_2,\ldots,\sigma_v\}$, where $\sigma_1<\sigma_2<\cdots<\sigma_v$.
Note that $y_{t-\sigma_j-1}=0$ and $y_{t-\sigma_j}=1$ for $j=1,\ldots,v$.
Denote $\sigma_0=-1$.
Then, for each $j=1,\ldots,v$, there exists $\sigma'_j\in[\sigma_{j-1}+1,\sigma_j-1]_{\Z}$ such that $y_{t-\sigma'_j-1}=1$ and $y_{t-\sigma'_j}=0$.
Thus,
$$0\le\sigma'_1<\sigma_1<\sigma'_2<\sigma_2<\cdots<\sigma'_v<\sigma_v\le s_{\max}.$$
Because $y_{t-\sigma_v}-y_{t-\sigma_v-1}=1$ and $t-\sigma_v\in[2,T]_\Z$, by \eqref{eqn:p-minup}, $y_k=1$ for all $k\in[t-\sigma_v,\min\{T,t-\sigma_v+L-1\}]_{\Z}$, 
which implies that $t-\sigma'_j\ge t-\sigma_v+L$ for $j=1,\ldots,v$.
Hence, for $j=1,\ldots,v$, we have $\sigma'_j\le\sigma_v-L$, which implies that
\begin{equation}\label{eq:Prop7_eqA}
    \sigma'_j\le s_{\max}-L.
\end{equation}
If $\beta=\alpha+1$, then $\Se=[0,s_{\max}]_{\Z}$, which implies that $\sigma'_j\in\Se$ for $j=1,\ldots,v$.
If $\beta\neq\alpha+1$, then condition (c) of Proposition \ref{prop-T:x_t-ub-4} implies that $s_{\max}\le L+\alpha$, which, by \eqref{eq:Prop7_eqA}, implies that $\sigma'_j\le\alpha$ for $j=1,\ldots,v$.
Thus, in both cases, $\sigma'_j\in\Se$ for $j=1,\ldots,v$.
Because $y_t=0$, by \eqref{eqn:p-upper-bound}, $x_t=0$.
Hence, the left-hand side of inequality \eqref{eqn-T:x_t-ub-3-1} is $0$.
Because $\Se\subseteq[0,\lfloor(\Cupper-\Vupper)/V\rfloor]_\Z$, we have $\Cupper-\Vupper-sV\ge0$ for all $s\in\Se$.
Note that $\{\sigma'_1,\ldots,\sigma'_v\}\subseteq\Se\setminus\tilde{\Se}$ and $y_{t-s}-y_{t-s-1}\le0$ for all $s\in\Se\setminus\tilde{\Se}$.
Thus, $\sum_{s\in\Se\setminus\tilde{\Se}}(\Cupper-\Vupper-sV)(y_{t-s}-y_{t-s-1})\le\sum_{j=1}^v(\Cupper-\Vupper-\sigma'_jV)(y_{t-\sigma'_j}-y_{t-\sigma'_j-1})$.
Hence, the right-hand side of inequality \eqref{eqn-T:x_t-ub-3-1} is
\begin{align*}
    &(\Cupper-\eta V)y_t+\eta Vy_{t+1}-\sum_{s\in\Se}(\Cupper-\Vupper-sV)(y_{t-s}-y_{t-s-1})\\
    &= \eta Vy_{t+1}-\sum_{s\in\tilde{\Se}}(\Cupper-\Vupper-sV)(y_{t-s}-y_{t-s-1})-\sum_{s\in\Se\setminus\tilde{\Se}}(\Cupper-\Vupper-sV)(y_{t-s}-y_{t-s-1})\\
    &\ge \eta Vy_{t+1}-\sum_{j=1}^v(\Cupper-\Vupper-\sigma_jV)(y_{t-\sigma_j}-y_{t-\sigma_j-1})-\sum_{j=1}^v(\Cupper-\Vupper-\sigma'_jV)(y_{t-\sigma'_j}-y_{t-\sigma'_j-1})\\
    &= \eta Vy_{t+1}-\sum_{j=1}^v(\Cupper-\Vupper-\sigma_jV)+\sum_{j=1}^v(\Cupper-\Vupper-\sigma'_jV)\\
    &= \eta Vy_{t+1}+\sum_{j=1}^v(\sigma_j-\sigma'_j)V >0.
\end{align*}
Therefore, in this case, $(\BFx,\BFy)$ satisfies \eqref{eqn-T:x_t-ub-3-1}.

Case~3: $y_t=1$ and $y_{t-s}-y_{t-s-1}\le 0$ for all $s\in\Se$.
Because $\Se\subseteq[0,\lfloor(\Cupper-\Vupper)/V\rfloor]_\Z$, we have $\Cupper-\Vupper-sV\ge 0$ for all $s\in\Se$.
Thus, $\sum_{s\in\Se}(\Cupper-\Vupper-sV)(y_{t-s}-y_{t-s-1})\le0$.
Because $y_t=1$, the right-hand side of \eqref{eqn-T:x_t-ub-3-1} is at least $\Cupper$ when $y_{t+1}=1$ and is at least $\Cupper-\eta V\ge\Vupper$ when $y_{t+1}=0$ (as $\eta\le(\Cupper-\Vupper)/V$).
By \eqref{eqn:p-upper-bound}, $x_t\le\Cupper$.
If $y_{t+1}=0$, then by \eqref{eqn:p-upper-bound} and \eqref{eqn:p-ramp-down}, $x_{t+1}=0$ and $x_t-x_{t+1}\le Vy_{t+1}+\Vupper(1-y_{t+1})$, 
which imply that $x_t\le\Vupper$.
Hence, $x_t$ is at most $\Cupper$, and it is at most $\Vupper$ when $y_{t+1}=0$.
Therefore, in this case, $(\BFx,\BFy)$ satisfies \eqref{eqn-T:x_t-ub-3-1}.

Case~4: $y_t=1$ and $y_{t-s}-y_{t-s-1}>0$ for some $s\in\Se$.
If $y_{t+1}=1$, then inequality \eqref{eqn-T:x_t-ub-3-1} becomes inequality \eqref{eqn-T:x_t-ub-1-1}, and by Proposition~\ref{prop-T:x_t-ub-1}, $(\BFx,\BFy)$ satisfies the inequality.
In the following, we consider the case where $y_{t+1}=0$.
Let $\tilde{\Se}=\{\sigma\in\Se:y_{t-\sigma}-y_{t-\sigma-1}>0\}$ and $v=|\tilde{\Se}|$.
Then, $v\ge1$.
Denote $\tilde{\Se}=\{\sigma_1,\sigma_2,\ldots,\sigma_v\}$, where $\sigma_1<\sigma_2<\cdots<\sigma_v$.
Note that $y_{t-\sigma_j-1}=0$ and $y_{t-\sigma_j}=1$ for $j=1,\ldots,v$.
Then, for each $j=2,\ldots,v$, there exists $\sigma'_j\in[\sigma_{j-1}+1,\sigma_j-1]_{\Z}$ such that $y_{t-\sigma'_j-1}=1$ and $y_{t-\sigma'_j}=0$.
Thus,
$$0\le\sigma_1<\sigma'_2<\sigma_2<\cdots<\sigma'_v<\sigma_v\le s_{\max}.$$
In addition, $y_k=1$ for all $k\in[t-\sigma_1,t]_{\Z}$.
Because $y_{t-\sigma_v}-y_{t-\sigma_v-1}=1$ and $t-\sigma_v\in[2,T]_\Z$, by \eqref{eqn:p-minup}, $y_k=1$ for all $k\in[t-\sigma_v,\min\{T,t-\sigma_v+L-1\}]_{\Z}$, which implies that $t-\sigma'_j\ge t-\sigma_v+L$ for $j=2,\ldots,v$.
Hence, for $j=2,\ldots,v$, we have $\sigma'_j\le\sigma_v-L$, which implies that
\begin{equation}\label{eq:Prop7_eqB}
    \sigma'_j\le s_{\max}-L.
\end{equation}
If $\beta=\alpha+1$, then $\Se=[0,s_{\max}]_{\Z}$, which implies that $\sigma'_j\in\Se$ for $j=2,\ldots,v$.
If $\beta\neq\alpha+1$, then condition (c) of Proposition \ref{prop-T:x_t-ub-4} implies that $s_{\max}\le L+\alpha$, which, by \eqref{eq:Prop7_eqB}, implies that $\sigma'_j\le\alpha$ for $j=2,\ldots,v$.
Thus, in both cases, $\sigma'_j\in\Se$ for $j=2,\ldots,v$.
Because $y_{t+1}=0$, by \eqref{eqn:p-upper-bound} and \eqref{eqn:p-ramp-down}, $x_{t+1}=0$ and $x_t-x_{t+1}\le Vy_{t+1}+\Vupper(1-y_{t+1})$,
which imply that $x_t\le\Vupper$; that is, the left-hand side of inequality \eqref{eqn-T:x_t-ub-3-1} is at most $\Vupper$.
Because $y_{t-\sigma_1}-y_{t-\sigma_1-1}=1$ and $t-\sigma_1\in[2,T]_\Z$, by \eqref{eqn:p-minup}, $y_k=1$ for all $k\in[t-\sigma_1,\min\{T,t-\sigma_1+L-1\}]_{\Z}$.
Because $y_{t+1}=0$, this implies that $t+1\ge t-\sigma_1+L$, or equivalently, $L-1\le\sigma_1$.
Because $\eta\le L-1$, we have
\begin{equation}\label{eq:prop7:eta-and-sigma-A}
\eta\le\sigma_1.
\end{equation}
Because $\Se\subseteq[0,\lfloor(\Cupper-\Vupper)/V\rfloor]_\Z$, we have $\Cupper-\Vupper-sV\ge0$ for all $s\in\Se$.
Note that $\{\sigma'_2,\ldots,\sigma'_v\}\subseteq\Se\setminus\tilde{\Se}$ and $y_{t-s}-y_{t-s-1}\le0$ for all $s\in\Se\setminus\tilde{\Se}$.
Thus, $\sum_{s\in\Se\setminus\tilde{\Se}}(\Cupper-\Vupper-sV)(y_{t-s}-y_{t-s-1})\le\sum_{j=2}^v(\Cupper-\Vupper-\sigma'_jV)(y_{t-\sigma'_j}-y_{t-\sigma'_j-1})$.
Hence, the right-hand side of inequality \eqref{eqn-T:x_t-ub-3-1} is
\begin{align*}
    &(\Cupper-\eta V)y_t+\eta Vy_{t+1}-\sum_{s\in\Se}(\Cupper-\Vupper-sV)(y_{t-s}-y_{t-s-1})\\
    &=(\Cupper-\eta V)-\sum_{s\in\tilde{\Se}}(\Cupper-\Vupper-sV)(y_{t-s}-y_{t-s-1})-\sum_{s\in\Se\setminus\tilde{\Se}}(\Cupper-\Vupper-sV)(y_{t-s}-y_{t-s-1})\\
    &\ge(\Cupper-\eta V)-\sum_{j=1}^v(\Cupper-\Vupper-\sigma_jV)(y_{t-\sigma_j}-y_{t-\sigma_j-1})-\sum_{j=2}^v(\Cupper-\Vupper-\sigma'_jV)(y_{t-\sigma'_j}-y_{t-\sigma'_j-1})\\
    &=(\Cupper-\eta V)-\sum_{j=1}^v(\Cupper-\Vupper-\sigma_jV)+\sum_{j=2}^v(\Cupper-\Vupper-\sigma'_jV)\\
    &=\Vupper+(\sigma_1-\eta)V+\sum_{j=2}^v(\sigma_j-\sigma'_j)V\\
    &\ge\Vupper+(\sigma_1-\eta)V\\
    &\ge\Vupper,
\end{align*}
where the last inequality follows from \eqref{eq:prop7:eta-and-sigma-A}.
Therefore, in this case, $(\BFx,\BFy)$ satisfies \eqref{eqn-T:x_t-ub-3-1}.

Summarizing Cases 1--4, we conclude that $(\BFx,\BFy)$ satisfies \eqref{eqn-T:x_t-ub-3-1}.
Hence, \eqref{eqn-T:x_t-ub-3-1} is valid for $\convP$ when $\Se=[0,\alpha]_\Z\cup[\beta,s_{\max}]_\Z$.

It is easy to verify that the proof of facet-defining of inequality \eqref{eqn-T:x_t-ub-3-1} in the proof of Proposition~\ref{prop-T:x_t-ub-3} remains valid when $\Se=[0,\alpha]_\Z\cup[\beta,s_{\max}]_\Z$.
Therefore, inequality \eqref{eqn-T:x_t-ub-3-1} is facet-defining for $\convP$ under the conditions stated in Proposition~\ref{prop-T:x_t-ub-4}.

It is also easy to verify that the proof of validity and facet-defining of inequality \eqref{eqn-T:x_t-ub-3-2} in the proof of Proposition~\ref{prop-T:x_t-ub-3} remains valid when $\Se=[0,\alpha]_\Z\cup[\beta,s_{\max}]_\Z$.
Therefore, inequality \eqref{eqn-T:x_t-ub-3-2} is valid and facet-defining for $\convP$ under the conditions stated in Proposition \ref{prop-T:x_t-ub-4}.
\Halmos

\subsection{Proof of Proposition \ref{prop-T:x_t-ub-5}}

\noindent{\bf Proposition \ref{prop-T:x_t-ub-5}.} {\it
Consider any $\Se\subseteq[1,\min\{L,T-2,\lfloor(\Cupper-\Vupper)/V\rfloor\}]_{\Z}$ and any real number $\eta$ such that $0\le\eta\le\min\{L,(\Cupper-\Vupper)/V\}$.
For any $t\in[2,T]_\Z$ such that $t\ge s+2$ for all $s\in\Se$, the inequality
\begin{equation}
    x_t\le(\Vupper+\eta V)y_t+(\Cupper-\Vupper-\eta V)y_{t-1}-\sum_{s\in\Se}(\Cupper-\Vupper-sV)(y_{t-s}-y_{t-s-1})\tag{\ref{eqn-T:x_t-ub-5-1}}
\end{equation}
is valid for $\convP$.
For any $t\in[1,T-1]_\Z$ such that $t\le T-s-1$ for all $s\in\Se$, the inequality
\begin{equation}
    x_t\le(\Vupper+\eta V)y_t+(\Cupper-\Vupper-\eta V)y_{t+1}-\sum_{s\in\Se}(\Cupper-\Vupper-sV)(y_{t+s}-y_{t+s+1})\tag{\ref{eqn-T:x_t-ub-5-2}}
\end{equation}
is valid for $\convP$.
Furthermore, inequalities \eqref{eqn-T:x_t-ub-5-1} and \eqref{eqn-T:x_t-ub-5-2} are facet-defining for $\convP$ when $\eta\in\{0,(\Cupper-\Vupper)/V\}$ or $\eta=L\in\Se$.}
\vskip8pt

\noindent{\bf Proof.}
We first prove that inequality \eqref{eqn-T:x_t-ub-5-1} is valid and facet-defining for $\convP$.
Note that the proof of facet-defining of \eqref{eqn-T:x_t-ub-5-1} here can also be used to prove the facet-defining of \eqref{eqn-T:x_t-ub-5-1} in Proposition~\ref{prop-T:x_t-ub-6}.
For notational convenience, we define $s_{\max}=\max\{s:s\in\Se\}$ if $\Se\ne\emptyset$, and $s_{\max}=0$ if $\Se=\emptyset$.

Consider any $t\in[s_{\max}+2,T]_\Z$ (i.e., $t\in[2,T]_\Z$ such that $t\ge s+2$ for all $s\in\Se$).
To prove that the linear inequality \eqref{eqn-T:x_t-ub-5-1} is valid for $\convP$, it suffices to show that it is valid for $\Pe$.
Consider any element $(\BFx,\BFy)$ of $\Pe$.
We show that $(\BFx,\BFy)$ satisfies \eqref{eqn-T:x_t-ub-5-1}.
We divide the analysis into three cases.

Case~1: $y_t=0$.
In this case, by \eqref{eqn:p-upper-bound}, $x_t=0$.
Thus, the left-hand side of \eqref{eqn-T:x_t-ub-5-1} and the first term on the right-hand side of \eqref{eqn-T:x_t-ub-5-1} are $0$.
Because $y_t=0$, by Lemma \ref{lem:lookbackward}(i), $y_{t-j}-y_{t-j-1}\le0$ for all $j\in[0,\min\{t-2,L-1\}]_{\Z}$.
Because $s_{\max}\le t-2$, we have $\Se\subseteq[0,\min\{t-2,L\}]_{\Z}$.
Thus, $y_{t-s}-y_{t-s-1}\le0$ for all $s\in\Se\setminus\{L\}$.
Because $\eta\le(\Cupper-\Vupper)/V$, the coefficient ``$\Cupper-\Vupper-\eta V$" on the right-hand side of \eqref{eqn-T:x_t-ub-5-1} is nonnegative.
Because $\Se\subseteq[1,\lfloor(\Cupper-\Vupper)/V\rfloor]_{\Z}$, for any $s\in\Se$, the coefficient ``$\Cupper-\Vupper-sV$" on the right-hand side of \eqref{eqn-T:x_t-ub-5-1} is also nonnegative.
Hence, if $s_{\max}\le L-1$ or $y_{t-L}-y_{t-L-1}\le0$, then the right-hand side of \eqref{eqn-T:x_t-ub-5-1} is nonnegative.
Now, consider the situation where $s_{\max}=L$ and $y_{t-L}-y_{t-L-1}>0$.
Then, $y_{t-L}=1$ and $y_{t-L-1}=0$.
By \eqref{eqn:p-minup}, $y_{t-1}=1$.
Thus, the right-hand side of \eqref{eqn-T:x_t-ub-5-1} is at least $(\Cupper-\Vupper-\eta V)y_{t-1}-(\Cupper-\Vupper-LV)(y_{t-L}-y_{t-L-1})=(\Cupper-\Vupper-\eta V)-(\Cupper-\Vupper-LV)=(L-\eta)V\ge0$.
Therefore, in this case, $(\BFx,\BFy)$ satisfies \eqref{eqn-T:x_t-ub-5-1}.

Case~2: $y_t=1$ and $y_{t-s'}-y_{t-s'-1}=1$ for some $s'\in\Se$.
In this case, $y_{t-s'}=1$ and $y_{t-s'-1}=0$.
Because $s_{\max}\le t-2$, we have $s'\le t-2$.
If $s'=t-2$, then it does not exist any $s\in\Se$ such that $s>s'$.
If $s'\le t-3$, then $t-s'-1\in[2,T]_{\Z}$, and by Lemma \ref{lem:lookbackward}(i), $y_{t-s'-j-1}-y_{t-s'-j-2}\le0$ for all $j\in[0,\min\{t-s'-3,L-1\}]_{\Z}$, which implies that $y_{t-s}-y_{t-s-1}\le0$ for all $s\in[s'+1,\min\{t-2,L+s'\}]_{\Z}$.
Thus, $y_{t-s}-y_{t-s-1}\le0$ for all $s\in\Se$ such that $s>s'$.
Because $y_{t-s'}-y_{t-s'-1}=1$ and $t-s'\in[2,T]_{\Z}$, by \eqref{eqn:p-minup}, $y_k=1$ for all $k\in[t-s',\min\{T,t-s'+L-1\}]_{\Z}$.
This implies that $y_{t-s}-y_{t-s-1}=0$ for all $s\in[1,s'-1]_\Z$ (as $s'\le L$).
This in turn implies that $y_{t-s}-y_{t-s-1}=0$ for all $s\in\Se$ such that $s<s'$.
Hence, $y_{t-s}-y_{t-s-1}\le0$ for all $s\in\Se\setminus\{s'\}$.
Because $\Se\subseteq[1,\lfloor(\Cupper-\Vupper)/V\rfloor]_{\Z}$, we have $\Cupper-\Vupper-sV\ge0$ for all $s\in\Se$.
Thus,
\begin{equation}\label{eq1:apx:prop-T:x_t-ub-5}
-\sum_{s\in\Se}(\Cupper-\Vupper-sV)(y_{t-s}-y_{t-s-1})\ge -(\Cupper-\Vupper-s'V).
\end{equation}
Note that $t-1\in[t-s',\min\{T,t-s'+L-1\}]_{\Z}$.
Hence, $y_{t-1}=1$.
Because $y_t=1$ and $y_{t-1}=1$, by \eqref{eq1:apx:prop-T:x_t-ub-5}, the right-hand side of inequality \eqref{eqn-T:x_t-ub-5-1} is at least $s'V+\Vupper$.
By \eqref{eqn:p-ramp-up}, $\sum_{\tau=t-s'}^t(x_{\tau}-x_{\tau-1})\le\sum_{\tau=t-s'}^tVy_{\tau-1}+\sum_{\tau=t-s'}^t\Vupper(1-y_{\tau-1})$, which implies that $x_t-x_{t-s'-1}\le s'V+\Vupper$.
Because $y_{t-s'-1}=0$, we have $x_{t-s'-1}=0$.
Thus, $x_t\le s'V+\Vupper$.
Therefore, in this case, $(\BFx,\BFy)$ satisfies \eqref{eqn-T:x_t-ub-5-1}.

Case~3: $y_t=1$ and $y_{t-s}-y_{t-s-1}\neq1$ for all $s\in\Se$.
In this case, $y_{t-s}-y_{t-s-1}\le0$ for all $s\in\Se$.
Because $\Se\subseteq[1,\lfloor(\Cupper-\Vupper)/V\rfloor]_{\Z}$, we have $\Cupper-\Vupper-sV\ge0$ for all $s\in\Se$.
Thus, $\sum_{s\in\Se}(\Cupper-\Vupper-sV)(y_{t-s}-y_{t-s-1})\le0$.
The right-hand side of \eqref{eqn-T:x_t-ub-5-1} is at least $\Vupper+\eta V$ when $y_{t-1}=0$, and is at least $\Cupper$ when $y_{t-1}=1$.
If $y_{t-1}=0$, then by \eqref{eqn:p-upper-bound} and \eqref{eqn:p-ramp-up}, $x_{t-1}=0$ and $x_t-x_{t-1}\le\Vupper$, which imply that $x_t\le\Vupper$, and hence, $x_t$ is less than or equal to the right-hand side of \eqref{eqn-T:x_t-ub-5-1}.
If $y_{t-1}=1$, then by \eqref{eqn:p-upper-bound}, $x_t\le\Cupper$, and hence, $x_t$ is less than or equal to the right-hand side of \eqref{eqn-T:x_t-ub-5-1}.
Therefore, in this case, $(\BFx,\BFy)$ satisfies \eqref{eqn-T:x_t-ub-5-1}.

Summarizing Cases 1--3, we conclude that $(\BFx,\BFy)$ satisfies \eqref{eqn-T:x_t-ub-5-1}.
Hence, \eqref{eqn-T:x_t-ub-5-1} is valid for $\convP$.

Consider any $t\in[s_{\max}+2,T]_\Z$.
To prove that inequality \eqref{eqn-T:x_t-ub-5-1} is facet-defining for $\convP$ when $\eta\in\{0,(\Cupper-\Vupper)/V\}$ or $\eta=L\in\Se$, 
it suffices to show that there exist $2T$ affinely independent points in $\convP$ that satisfy \eqref{eqn-T:x_t-ub-5-1} at equality when $\eta\in\{0,(\Cupper-\Vupper)/V\}$ or $\eta=L\in\Se$.
Because $\BFzero\in\convP$ and $\BFzero$ satisfies \eqref{eqn-T:x_t-ub-5-1} at equality, it suffices to create the remaining $2T-1$ nonzero linearly independent points.
We denote these $2T-1$ points as $(\bar{\BFx}^r,\bar{\BFy}^r)$ for $r\in[1,T]_{\Z}\setminus\{t\}$ and $(\hat{\BFx}^r,\hat{\BFy}^r)$ for $r\in[1,T]_{\Z}$, 
and denote the $q$th component of $\bar{\BFx}^r$, $\bar{\BFy}^r$, $\hat{\BFx}^r$, and $\hat{\BFy}^r$ as $\bar{x}^r_q$, $\bar{y}^r_q$, $\hat{x}^r_q$, and $\hat{y}^r_q$, respectively.
Let $\epsilon=\Vupper-\Clower>0$.
We divide these $2T-1$ points into the following \rred{five} groups:
\begin{enumerate}[label=(A\arabic*)]
\item\label{points:prop9-eq1-A1} For each $r\in[1,T]_\Z\setminus\{t\}$, we create the same point $(\bar{\BFx}^r,\bar{\BFy}^r)$ as in group \ref{points:prop1-eq1-A1} in the proof of Proposition \ref{prop-T:x_t-ub-1}.
Thus, $(\bar{\BFx}^r,\bar{\BFy}^r)\in\convP$.
It is easy to verify that $(\bar{\BFx}^r,\bar{\BFy}^r)$ satisfies \eqref{eqn-T:x_t-ub-5-1} at equality.

\item\label{points:prop9-eq1-A2} For each \rred{$r\in[1,t-2]_{\Z}$}, we create the same point $(\hat{\BFx}^r,\hat{\BFy}^r)$ as in group \ref{points:prop1-eq1-A2} in the proof of Proposition \ref{prop-T:x_t-ub-1}.
Thus, $(\hat{\BFx}^r,\hat{\BFy}^r)\in\convP$.
Consider the case where $t-r-1\notin\Se$.
In this case, $\hat{x}_t^r=\hat{y}^r_t=\hat{y}^r_{t-1}=0$.
In addition, $t-s-1\neq r$ for all $s\in\Se$, which implies that $\hat{y}^r_{t-s}-\hat{y}^r_{t-s-1}=0$ for all $s\in\Se$.
Hence, $(\hat{\BFx}^r,\hat{\BFy}^r)$ satisfies \eqref{eqn-T:x_t-ub-5-1} at equality.
Next, consider the case where $t-r-1\in\Se$.
In this case, $\hat{x}^r_t=\Vupper+(t-r-1)V$ and $\hat{y}^r_t=\hat{y}^r_{t-1}=1$.
In addition, $\hat{y}^r_{t-s}-\hat{y}^r_{t-s-1}=1$ when $s=t-r-1$, and $\hat{y}^r_{t-s}-\hat{y}^r_{t-s-1}=0$ when $s\neq t-r-1$.
Hence, $(\hat{\BFx}^r,\hat{\BFy}^r)$ satisfies \eqref{eqn-T:x_t-ub-5-1} at equality.

\item\label{points:prop9-eq1-A3} We create a point $(\hat{\BFx}^{t-1},\hat{\BFy}^{t-1})$ as follows:
If $\eta=0$, then
\begin{equation*}
\rred{\hat{x}^{t-1}_q}=\left\{
   \begin{array}{ll}
   0, &\hbox{for $q\in[0,t-1]_{\Z}$;}\\
   \Vupper,&\hbox{for $q\in[t,T]_{\Z}$;}
   \end{array}\right.
\end{equation*}
and
\begin{equation*}
    \rred{\hat{y}^{t-1}_q}=\left\{
    \begin{array}{ll}
    0,&\hbox{for $q\in[0,t-1]_{\Z}$;}\\
    1, &\hbox{for $q\in[t,T]_{\Z}$.}
    \end{array}\right.
\end{equation*}
If $\eta=(\Cupper-\Vupper)/V$, then
\begin{equation*}
\rred{\hat{x}^{t-1}_q}=\left\{
    \begin{array}{ll}
    \Clower, &\hbox{for $q\in[1,t-1]_{\Z}$;}\\
    0, &\hbox{for $q\in[t,T]_{\Z}$;}
    \end{array}\right.
\end{equation*}
and
\begin{equation*}
\rred{\hat{y}^{t-1}_q}=\left\{
    \begin{array}{ll}
    1, &\hbox{for $q\in[1,t-1]_{\Z}$;}\\
    0, &\hbox{for $q\in[t,T]_{\Z}$.}
    \end{array}\right.
\end{equation*}
If $\eta=L\in\Se$, then
\begin{equation*}
\rred{\hat{x}^{t-1}_q}=\left\{
    \begin{array}{ll}
    0, &\hbox{for $q\in[1,t-L-1]_\Z\cup[t,T]_\Z$;}\\
    \Vupper, &\hbox{for $q\in[t-L,t-1]_\Z$;}
    \end{array}\right.
\end{equation*}
and
\begin{equation*}
\rred{\hat{y}^{t-1}_q}=\left\{
    \begin{array}{ll}
    0, &\hbox{for $q\in[1,t-L-1]_\Z\cup[t,T]_\Z$;}\\
    1, &\hbox{for $q\in[t-L,t-1]_\Z$.}
    \end{array}\right.
\end{equation*}
We first consider the case where $\eta=0$.
It is easy to verify that $(\hat{\BFx}^{t-1},\hat{\BFy}^{t-1})$ satisfies \eqref{eqn:p-minup}--\eqref{eqn:p-ramp-down}.
Thus, $(\hat{\BFx}^{t-1},\hat{\BFy}^{t-1})\in\convP$.
In this case, $\hat{x}_t^{t-1}=\Vupper$, $\hat{y}^{t-1}_t=1$, $\hat{y}_{t-1}^{t-1}=0$, and $\hat{y}^{t-1}_{t-s}-\hat{y}^{t-1}_{t-s-1}=0$ for all $s\in\Se$.
Hence, $(\hat{\BFx}^{t-1},\hat{\BFy}^{t-1})$ satisfies \eqref{eqn-T:x_t-ub-5-1} at equality.
Next, we consider the case where $\eta=(\Cupper-\Vupper)/V$.
It is easy to verify that $(\hat{\BFx}^{t-1},\hat{\BFy}^{t-1})$ satisfies \eqref{eqn:p-minup}--\eqref{eqn:p-ramp-down}.
Thus, $(\hat{\BFx}^{t-1},\hat{\BFy}^{t-1})\in\convP$.
In this case, $\hat{x}^{t-1}_t=\hat{y}^{t-1}_t=0$, $\Cupper-\Vupper-\eta V=0$, and $\hat{y}_{t-s}^{t-1}=\hat{y}^{t-1}_{t-s-1}=0$ for all $s\in\Se$.
Hence, $(\hat{\BFx}^{t-1},\hat{\BFy}^{t-1})$ satisfies \eqref{eqn-T:x_t-ub-5-1} at equality.
Next, we consider the case where $\eta=L\in\Se$.
In this case, for any $q\in[2,T]_{\Z}$, $\hat{y}^{t-1}_q-\hat{y}^{t-1}_{q-1}\le0$ if $q\ne t-L$, while $\hat{y}^{t-1}_q-\hat{y}^{t-1}_{q-1}=1$ and $\hat{y}^{t-1}_k=1$ for all $k\in[q,\min\{T,q+L-1\}]_{\Z}$ if $q=t-L$.
Thus, $(\hat{\BFx}^{t-1},\hat{\BFy}^{t-1})$ satisfies \eqref{eqn:p-minup}.
For any $q\in[2,T]_{\Z}$, $\hat{y}^{t-1}_{q-1}-\hat{y}^{t-1}_q\le0$ if $q\neq t$, while $\hat{y}^{t-1}_{q-1}-\hat{y}^{t-1}_q=1$ and $\hat{y}^{t-1}_q=0$ for all $k\in[q,\min\{T,q+\ell-1\}]_{\Z}$ if $q=t$.
Thus, $(\hat{\BFx}^{t-1},\hat{\BFy}^{t-1})$ satisfies \eqref{eqn:p-mindn}.
It is easy to verify that $(\hat{\BFx}^{t-1},\hat{\BFy}^{t-1})$ satisfies \eqref{eqn:p-lower-bound}--\eqref{eqn:p-ramp-down}.
Hence, $(\hat{\BFx}^{t-1},\hat{\BFy}^{t-1})\in\convP$.
Note that $\hat{x}^{t-1}_t=\hat{y}^{t-1}_t=0$, $\hat{y}^{t-1}_{t-1}=1$, $\hat{y}_{t-s}^{t-1}-\hat{y}_{t-s-1}^{t-1}=0$ for all $s\in\Se\setminus\{L\}$, $\hat{y}_{t-L}^{t-1}-\hat{y}_{t-L-1}^{t-1}=1$, and $\Cupper-\Vupper-\eta V=\Cupper-\Vupper-LV$.
Thus, $(\hat{\BFx}^{t-1},\hat{\BFy}^{t-1})$ satisfies \eqref{eqn-T:x_t-ub-5-1} at equality.
 
\item\label{points:prop9-eq1-A4} We create the same point $(\hat{\BFx}^t,\hat{\BFy}^t)$ as in group \ref{points:prop1-eq1-A3} in the proof of Proposition \ref{prop-T:x_t-ub-1}.
Thus, $(\hat{\BFx}^t,\hat{\BFy}^t)\in\convP$.
It is easy to verify that $(\hat{\BFx}^t,\hat{\BFy}^t)$ satisfies \eqref{eqn-T:x_t-ub-5-1} at equality.

\item\label{points:prop9-eq1-A5} For each $r\in[t+1,T]_{\Z}$, we create the same point $(\hat{\BFx}^r,\hat{\BFy}^r)$ as in group \ref{points:prop1-eq1-A4} in the proof of Proposition \ref{prop-T:x_t-ub-1}.
Thus, $(\hat{\BFx}^r,\hat{\BFy}^r)\in\convP$.
It is easy to verify that $(\hat{\BFx}^r,\hat{\BFy}^r)$ satisfies \eqref{eqn-T:x_t-ub-5-1} at equality.
\end{enumerate}

Table \ref{tab:eqn-T:x_t-ub-5-1-facet-matrix-1} shows a matrix with $2T-1$ rows, where each row represents a point created by this process.
This matrix can be transformed into the matrix in Table \ref{tab:eqn-T:x_t-ub-5-1-facet-matrix-2} via the following Gaussian elimination process:

\begin{enumerate}[label=(\roman*)]
    \item For each $r\in[1,T]_{\Z}\setminus\{t\}$, the point with index $r$ in group (B1), denoted $(\underline{\bar{\BFx}}^r,\underline{\bar{\BFy}}^r)$, is obtained by setting $(\underline{\bar{\BFx}}^r,\underline{\bar{\BFy}}^r)=(\bar{\BFx}^r,\bar{\BFy}^r)-(\hat{\BFx}^t,\hat{\BFy}^t)$.
    Here, $(\bar{\BFx}^r,\bar{\BFy}^r)$ is the point with index $r$ in group \ref{points:prop9-eq1-A1}, and $(\hat{\BFx}^t,\hat{\BFy}^t)$ is the point in group \ref{points:prop9-eq1-A4}.

    \item For each \rred{$r\in[1,t-2]_{\Z}$}, the point with index $r$ in group \rred{(B2)}, denoted $(\underline{\hat{\BFx}}^r,\underline{\hat{\BFy}}^r)$, is obtained by setting $(\underline{\hat{\BFx}}^r,\underline{\hat{\BFy}}^r)=(\hat{\BFx}^r,\hat{\BFy}^r)$ if $t-r-1\notin\Se$, and setting $(\underline{\hat{\BFx}}^r,\underline{\hat{\BFy}}^r)=\rred{(\hat{\BFx}^t,\hat{\BFy}^t)-(\hat{\BFx}^r,\hat{\BFy}^r)}$ if $t-r-1\in\Se$.
    Here, $(\hat{\BFx}^r,\hat{\BFy}^r)$ is the point with index $r$ in group \ref{points:prop9-eq1-A2}, and $(\hat{\BFx}^t,\hat{\BFy}^t)$ is the point in group \ref{points:prop9-eq1-A4}.

    \item The point in group \rred{(B3)}, denoted $(\underline{\hat{\BFx}}^{t-1},\underline{\hat{\BFy}}^{t-1})$, is obtained by setting $(\underline{\hat{\BFx}}^{t-1},\underline{\hat{\BFy}}^{t-1})=(\hat{\BFx}^{t-1},\hat{\BFy}^{t-1})-(\hat{\BFx}^t,\hat{\BFy}^t)$ if $\eta=0$, and setting $(\underline{\hat{\BFx}}^{t-1},\underline{\hat{\BFy}}^{t-1})=(\hat{\BFx}^{t-1},\hat{\BFy}^{t-1})$ if $\eta=(\Cupper-\Vupper)/V$ or $\eta=L\in\Se$.
    Here, $(\hat{\BFx}^{t-1},\hat{\BFy}^{t-1})$ is the point in group \ref{points:prop9-eq1-A3}, and $(\hat{\BFx}^t,\hat{\BFy}^t)$ is the point in group \ref{points:prop9-eq1-A4}.
    
    \item The point in group \rred{(B4)}, denoted $(\underline{\hat{\BFx}}^t,\underline{\hat{\BFy}}^t)$, is obtained by setting $(\underline{\hat{\BFx}}^t,\underline{\hat{\BFy}}^t)=(\hat{\BFx}^t,\hat{\BFy}^t)-(\hat{\BFx}^{t+1},\hat{\BFy}^{t+1})$.
    Here, $(\hat{\BFx}^t,\hat{\BFy}^t)$ is the point in group \ref{points:prop9-eq1-A4}, and $(\hat{\BFx}^{t+1},\hat{\BFy}^{t+1})$ is the point with index $t+1$ in group \ref{points:prop9-eq1-A5}.
    
    \item For each $r\in[t+1,T]_{\Z}$, the point with index $r$ \rred{in group (B5)}, denoted $(\underline{\hat{\BFx}}^r,\underline{\hat{\BFy}}^r)$, is obtained by setting $(\underline{\hat{\BFx}}^r,\underline{\hat{\BFy}}^r)=(\hat{\BFx}^r,\hat{\BFy}^r)-(\hat{\BFx}^{r+1},\hat{\BFy}^{r+1})$ if $r\neq T$, and setting $(\underline{\hat{\BFx}}^r,\underline{\hat{\BFy}}^r)=(\hat{\BFx}^r,\hat{\BFy}^r)$ if $r=T$.
    Here, $(\hat{\BFx}^r,\hat{\BFy}^r)$ and $(\hat{\BFx}^{r+1},\hat{\BFy}^{r+1})$ are the points with indices $r$ and $r+1$, respectively, in group \ref{points:prop9-eq1-A5}.
\end{enumerate}

\afterpage{
\begin{landscape}
\begin{table}
    \renewcommand{\arraystretch}{2}
    \centering
    \caption{A matrix with the rows representing $2T-1$ points in $\convP$ that satisfy inequality \eqref{eqn-T:x_t-ub-5-1} at equality}
    \rule{0pt}{0ex}
    \setlength\tabcolsep{7pt}
    \scriptsize
    \begin{tabular}{|c|c|c|*{8}{c}|*{8}{c}|}
        \hline
         \multirow{2}{*}{Group} & \multirow{2}{*}{Point} & \multirow{2}{*}{Index $r$} & \multicolumn{8}{c|}{$\BFx$} & \multicolumn{8}{c|}{$\BFy$} \\
         \cline{4-11}\cline{12-19}
         &&& $1$ & $\cdots$ & $t-2$ & $t-1$ & $t$ & $t+1$ & $\cdots$ & $T$ 
         & $\ \ 1\ \ $ & $\cdots$ & $t-2$ & $t-1$ & $t$ & $t+1$ & $\cdots$ & $\ \ T\ \ $ \\
         \hline
         \multirow{7}{*}{(A1)} & \multirow{7}{*}{$(\bar{\BFx}^r,\bar{\BFy}^r)$}
         & $1$ 
         & $\Cupper-\epsilon$ & $\cdots$ & $\Cupper$ & $\Cupper$ & $\Cupper$ & $\Cupper$ & $\cdots$ & $\Cupper$
         & $1$ & $\cdots$ & $1$ & $1$ & $1$ & $1$ & $\cdots$ & $1$ \\ 
         && $\vdots$ 
         & $\vdots$ & \rotatebox{0}{$\ddots$} & $\vdots$ & $\vdots$ & $\vdots$ & $\vdots$ && $\vdots$
         & $\vdots$ && $\vdots$ & $\vdots$ & $\vdots$ & $\vdots$ && $\vdots$ \\        
         && $t-2$
         & $\Cupper$ & $\cdots$ & $\Cupper-\epsilon$ & $\Cupper$ & $\Cupper$ & $\Cupper$ & $\cdots$ & $\Cupper$
         & $1$ & $\cdots$ & $1$ & $1$ & $1$ & $1$ & $\cdots$ & $1$ \\  
         && $t-1$
         & $\Cupper$ & $\cdots$ & $\Cupper$ & $\Cupper-\epsilon$ & $\Cupper$ & $\Cupper$ & $\cdots$ & $\Cupper$ 
         & $1$ & $\cdots$ & $1$ & $1$ & $1$ & $1$ & $\cdots$ & $1$ \\
         && $t+1$
         & $\Cupper$ & $\cdots$ & $\Cupper$ & $\Cupper$ & $\Cupper$ & $\Cupper-\epsilon$ & $\cdots$ & $\Cupper$ 
         & $1$ & $\cdots$ & $1$ & $1$ & $1$ & $1$ & $\cdots$ & $1$ \\
         && $\vdots$ 
         & $\vdots$ && $\vdots$ & $\vdots$ & $\vdots$ & $\vdots$ & \rotatebox{0}{$\ddots$} & $\vdots$
         & $\vdots$ && $\vdots$ & $\vdots$ & $\vdots$ & $\vdots$ && $\vdots$ \\        
         && $T$
         & $\Cupper$ & $\cdots$ & $\Cupper$ & $\Cupper$ & $\Cupper$ & $\Cupper$ & $\cdots$ & $\Cupper-\epsilon$ 
         & $1$ & $\cdots$ & $1$ & $1$ & $1$ & $1$ & $\cdots$ & $1$ \\ 
         
         \hline
         \multirow{3}{*}{\rred{(A2)}} & \multirow{8}{*}{$(\hat{\BFx}^r,\hat{\BFy}^r)$}
         & $1$ & \multicolumn{8}{c|}{\multirow{3}{*}{(See Note \rred{\ref{tab:eqn-T:x_t-ub-5-1-facet-matrix-1}-1)}}} & \multicolumn{8}{c|}{\multirow{3}{*}{(See Note \rred{\ref{tab:eqn-T:x_t-ub-5-1-facet-matrix-1}-1)}}}\\
         && $\vdots$ & \multicolumn{8}{c|}{} & \multicolumn{8}{c|}{}\\
         && $t-2$ & \multicolumn{8}{c|}{} & \multicolumn{8}{c|}{}\\
         
         \cline{1-1} \cline{3-19}
         \rred{(A3)} && $t-1$ & \multicolumn{8}{c|}{(See Note \rred{\ref{tab:eqn-T:x_t-ub-5-1-facet-matrix-1}-2)}} & \multicolumn{8}{c|}{(See Note \rred{\ref{tab:eqn-T:x_t-ub-5-1-facet-matrix-1}-2)}}\\
         
         \cline{1-1} \cline{3-19}
         \rred{(A4)} && $t$
         & $\Cupper$ & $\cdots$ & $\Cupper$ & $\Cupper$ & $\Cupper$ & $\Cupper$ & $\cdots$ & $\Cupper$
         & $1$ & $\cdots$ & $1$ & $1$ & $1$ & $1$ & $\cdots$ & $1$ \\
         
         \cline{1-1} \cline{3-19}
         \multirow{3}{*}{\rred{(A5)}} &  
         & $t+1$
         & $0$ & $\cdots$ & $0$ & $0$ & $0$ & $\Clower$ & $\cdots$ & $\Clower$
         & $0$ & $\cdots$ & $0$ & $0$ & $0$ & $1$ & $\cdots$ & $1$ \\
         && $\vdots$ 
         & $\vdots$ && $\vdots$ & $\vdots$ & $\vdots$ & $\vdots$ & \rotatebox{0}{$\ddots$} & $\vdots$ 
         & $\vdots$ && $\vdots$ & $\vdots$ & $\vdots$ & $\vdots$ & \rotatebox{0}{$\ddots$} & $\vdots$ \\                 
         && $T$
         & $0$ & $\cdots$ & $0$ & $0$ & $0$ & $0$ & $\cdots$ & $\Clower$
         & $0$ & $\cdots$ & $0$ & $0$ & $0$ & $0$ & $\cdots$ & $1$ \\
        \hline
        
        \multicolumn{19}{l}{Note \rred{\ref{tab:eqn-T:x_t-ub-5-1-facet-matrix-1}-1}:
        For $r\in\rred{[1,t-2]_\Z}$, the $\BFx$ and $\BFy$ vectors in group \rred{(A2)} are given as follows:  
        $\hat{\BFx}^r=(\underbrace{\Clower,\ldots,\Clower}_{r\ {\rm terms}},\underbrace{0,\ldots,0}_{T-r\ {\rm terms}\!\!\!\!\!})$
        and
        $\hat{\BFy}^r=(\underbrace{1,\ldots,1}_{r\ {\rm terms}},\underbrace{0,\ldots,0}_{T-r\ {\rm terms}\!\!\!\!\!})$
        if $t-r-1\notin\Se$;}\\
        \multicolumn{19}{l}{
        $\hat{\BFx}^r=(\underbrace{0,\ldots,0}_{r\ {\rm terms}},\underbrace{\Vupper,\Vupper+V,\Vupper+2V,\ldots,\Vupper+(t-r-1)V}_{t-r\ {\rm terms}},\underbrace{\Vupper+(t-r-1)V,\ldots,\Vupper+(t-r-1)V}_{T-t\ {\rm terms}\!\!\!\!\!})$
        and
        $\hat{\BFy}^r=(\underbrace{0,\ldots,0}_{r\ {\rm terms}},\underbrace{1,\ldots,1}_{T-r\ {\rm terms}\!\!\!\!\!})$
        if $t-r-1\in\Se$.}\\
        \multicolumn{19}{l}{Note \rred{\ref{tab:eqn-T:x_t-ub-5-1-facet-matrix-1}-2}: The $\BFx$ and $\BFy$ vectors in group \rred{(A3)} are given as follows:
        $\hat{\BFx}^{t-1}=(\underbrace{0,\ldots,0}_{\!\!\!\!\!t\!-\!1\ {\rm terms}},\underbrace{\Vupper,\ldots,\Vupper}_{T\!-\!t\!+\!1\ {\rm terms}\!\!\!\!\!\!\!\!})$
        and
        $\hat{\BFy}^{t-1}=(\underbrace{0,\ldots,0}_{\!\!\!\!\!\!\!\!t\!-\!1\ {\rm terms}},\underbrace{1,\ldots,1}_{T\!-\!t\!+\!1\ {\rm terms}\!\!\!\!\!\!\!\!\!\!\!\!})$
        if $\eta=0$;}\\
        \multicolumn{19}{l}{
        $\hat{\BFx}^{t-1}=(\underbrace{\Clower,\ldots,\Clower}_{\!\!\!\!\!t\!-\!1\ {\rm terms}},\underbrace{0,\ldots,0}_{T\!-\!t\!+\!1\ {\rm terms}\!\!\!\!\!\!\!\!})$
        and
        $\hat{\BFy}^{t-1}=(\underbrace{1,\ldots,1}_{\!\!\!\!\!\!t\!-\!1\ {\rm terms}},\underbrace{0,\ldots,0}_{T\!-\!t\!+\!1\ {\rm terms}\!\!\!\!\!\!\!\!})$
        if $\eta=(\Cupper-\Vupper)/V$;
        $\hat{\BFx}^{t-1}=(\underbrace{0,\ldots,0}_{\!\!\!\!\!t\!-\!L\!-\!1\ {\rm terms}},\underbrace{\Vupper,\ldots,\Vupper}_{L\ {\rm terms}},\underbrace{0,\ldots,0}_{T\!-\!t\!+\!1\ {\rm terms}\!\!\!\!\!\!\!\!})$
        and
        $\hat{\BFy}^{t-1}=(\underbrace{0,\ldots,0}_{\!\!\!\!\!t\!-\!L\!-\!1\ {\rm terms}},\underbrace{1,\ldots,1}_{L\ {\rm terms}},\underbrace{0,\ldots,0}_{T\!-\!t\!+\!1\ {\rm terms}\!\!\!\!\!\!\!\!})$
        if $\eta=L\in\Se$.}
    \end{tabular}
    \label{tab:eqn-T:x_t-ub-5-1-facet-matrix-1}
\end{table}
\end{landscape}

\begin{landscape}
\begin{table}
    \renewcommand{\arraystretch}{2}
    \centering
    \caption{Lower triangular matrix obtained from Table \ref{tab:eqn-T:x_t-ub-5-1-facet-matrix-1} via Gaussian elimination}
    \rule{0pt}{4ex}
    \setlength\tabcolsep{6.5pt}
    \scriptsize
    \begin{tabular}{|c|c|c|*{8}{c}|*{8}{c}|}
        \hline
         \multirow{2}{*}{Group} & \multirow{2}{*}{Point} & \multirow{2}{*}{Index $r$} & \multicolumn{8}{c|}{$\BFx$} & \multicolumn{8}{c|}{$\BFy$} \\
         \cline{4-11}\cline{12-19}
         &&& $\ \ 1\ \ $ & $\cdots$ & $t-2$ & $t-1$ & $t$ & $t+1$ & $\cdots$ & $\ \ T\ \ $ 
         & $\ \ 1\ \ $ & $\cdots$ & $t\!-\!2$ & $\ t\!-\!1\ $ & $\ t\ $ & $\ t\!+\!1\ $ & $\cdots$ & $\ \ T\ \ $ \\
         \hline
         \multirow{7}{*}{(B1)} & \multirow{7}{*}{$(\underline{\bar{\BFx}}^r,\underline{\bar{\BFy}}^r)$}
         & $1$ 
         & $-\epsilon$ & $\cdots$ & $0$ & $0$ & $0$ & $0$ & $\cdots$ & $0$
         & $0$ & $\cdots$ & $0$ & $0$ & $0$ & $0$ & $\cdots$ & $0$ \\ 
         && $\vdots$ 
         & $\vdots$ & \rotatebox{0}{$\ddots$} & $\vdots$ & $\vdots$ & $\vdots$ & $\vdots$ && $\vdots$
         & $\vdots$ && $\vdots$ & $\vdots$ & $\vdots$ & $\vdots$ && $\vdots$ \\        
         && $t-2$
         & $0$ & $\cdots$ & $-\epsilon$ & $0$ & $0$ & $0$ & $\cdots$ & $0$
         & $0$ & $\cdots$ & $0$ & $0$ & $0$ & $0$ & $\cdots$ & $0$ \\  
         && $t-1$
         & $0$ & $\cdots$ & $0$ & $-\epsilon$ & $0$ & $0$ & $\cdots$ & $0$ 
         & $0$ & $\cdots$ & $0$ & $0$ & $0$ & $0$ & $\cdots$ & $0$ \\
         && $t+1$
         & $0$ & $\cdots$ & $0$ & $0$ & $0$ & $-\epsilon$ & $\cdots$ & $0$ 
         & $0$ & $\cdots$ & $0$ & $0$ & $0$ & $0$ & $\cdots$ & $0$ \\
         && $\vdots$ 
         & $\vdots$ && $\vdots$ & $\vdots$ & $\vdots$ & $\vdots$ & \rotatebox{0}{$\ddots$} & $\vdots$
         & $\vdots$ && $\vdots$ & $\vdots$ & $\vdots$ & $\vdots$ && $\vdots$ \\        
         && $T$
         & $0$ & $\cdots$ & $0$ & $0$ & $0$ & $0$ & $\cdots$ & $-\epsilon$ 
         & $0$ & $\cdots$ & $0$ & $0$ & $0$ & $0$ & $\cdots$ & $0$ \\ 
         
         \hline
         \multirow{3}{*}{\rred{(B2)}} &  \multirow{8}{*}{\rred{$(\underline{\hat{\BFx}}^r,\underline{\hat{\BFy}}^r)$}}
         & $1$
         & \multicolumn{8}{c|}{}
         & $\rred 1$ & $\rred \cdots$ & $\rred 0$ & $\rred 0$ & $\rred 0$ & $\rred 0$ & $\rred \cdots$ & $\rred 0$ \\
         && $\vdots$ 
         & \multicolumn{8}{c|}{(Omitted)} 
         & $\rred \vdots$ & \rotatebox{0}{$\rred \ddots$} & $\rred \vdots$ & $\rred \vdots$ & $\rred \vdots$ & $\rred \vdots$ && $\rred \vdots$ \\                 
         && $t-2$
         & \multicolumn{8}{c|}{}
         & $\rred 1$ & $\rred \cdots$ & $\rred 1$ & $\rred 0$ & $\rred 0$ & $\rred 0$ & $\rred \cdots$ & $\rred 0$ \\
         
         \cline{1-1} \cline{3-19}
         \rred{(B3)} && $t-1$ & \multicolumn{8}{c|}{(Omitted)} & \multicolumn{8}{c|}{(See Note \rred{\ref{tab:eqn-T:x_t-ub-5-1-facet-matrix-2}-1)}}\\
         
         \cline{1-1} \cline{3-19}
         \rred{(B4)} && $t$
         & \multicolumn{8}{c|}{(Omitted)}
         & $1$ & $\cdots$ & $1$ & $1$ & $1$ & $0$ & $\cdots$ & $0$ \\
         
         \cline{1-1} \cline{3-19}
         \multirow{3}{*}{\rred{(B5)}} &  
         & $t+1$
         & \multicolumn{8}{c|}{}
         & $0$ & $\cdots$ & $0$ & $0$ & $0$ & $1$ & $\cdots$ & $0$ \\
         && $\vdots$ 
         & \multicolumn{8}{c|}{(Omitted)} 
         & $\vdots$ && $\vdots$ & $\vdots$ & $\vdots$ & $\vdots$ & \rotatebox{0}{$\ddots$} & $\vdots$ \\                 
         && $T$
         & \multicolumn{8}{c|}{}
         & $0$ & $\cdots$ & $0$ & $0$ & $0$ & $0$ & $\cdots$ & $1$ \\
        \hline
        
        \multicolumn{19}{l}{Note \rred{\ref{tab:eqn-T:x_t-ub-5-1-facet-matrix-2}-1}: The $\BFy$ vector in group \rred{(B3)} is given as follows:}\\
        \multicolumn{19}{l}{
        $\underline{\hat{\BFy}}^{t-1}=(\underbrace{-1,\ldots,-1}_{t-1\ {\rm terms}},\underbrace{0,\ldots,0}_{T\!-\!t\!+\!1\ {\rm terms}\!\!\!\!\!\!\!\!})$
        if $\eta=0$;
        $\underline{\hat{\BFy}}^{t-1}=(\underbrace{1,\ldots,1}_{\!\!\!t\!-\!1\ {\rm terms}},\underbrace{0,\ldots,0}_{T\!-\!t\!+\!1\ {\rm terms}\!\!\!\!\!\!\!\!})$
        if $\eta=(\Cupper-\Vupper)/V$;
        $\underline{\hat{\BFy}}^{t-1}=(\underbrace{0,\ldots,0}_{\!\!\!\!\!t\!-\!L\!-\!1\ {\rm terms}},\underbrace{1,\ldots,1}_{L\ {\rm terms}},\underbrace{0,\ldots,0}_{T\!-\!t\!+\!1\ {\rm terms}\!\!\!\!\!\!\!\!})$
        if $\eta=L\in\Se$.}
    \end{tabular}
    \label{tab:eqn-T:x_t-ub-5-1-facet-matrix-2}
\end{table}
\end{landscape}
}
    

The matrix shown in Table \ref{tab:eqn-T:x_t-ub-5-1-facet-matrix-2} is lower triangular; that is, the position of the last nonzero component of a row of the matrix is greater than the position of the last nonzero component of the previous row.
This implies that the $2T-1$ points in groups \ref{points:prop9-eq1-A1}--\ref{points:prop9-eq1-A5} are linearly independent.
Therefore, inequality \eqref{eqn-T:x_t-ub-5-1} is facet-defining for $\convP$.

Next, we show that inequality \eqref{eqn-T:x_t-ub-5-2} is valid for $\convP$ and is facet-defining for $\convP$ when $\eta\in\{0,(\Cupper-\Vupper)/V\}$ or $\eta=L\in\Se$.
Note that this proof can also be used to prove the validity and facet-defining of inequality \eqref{eqn-T:x_t-ub-5-2} in Proposition~\ref{prop-T:x_t-ub-6}.
Denote $x'_t=x_{T-t+1}$ and $y'_t=y_{T-t+1}$ for $t\in[1,T]_\Z$.
Because inequality \eqref{eqn-T:x_t-ub-5-1} is valid for $\convP$ and is facet-defining for $\convP$ when $\eta\in\{0,(\Cupper-\Vupper)/V\}$ or $\eta=L\in\Se$ for any $t\in[s_{\max}+2,T]_\Z$, the inequality
$$x'_{T-t+1}\le(\Vupper+\eta V)y'_{T-t+1}+(\Cupper-\Vupper-\eta V)y'_{T-t+2}-\sum_{s\in\Se}(\Cupper-\Vupper-sV)(y'_{T-t+s+1}-y'_{T-t+s+2})$$
is valid for $\convPprime$ and is facet-defining for $\convPprime$ when $\eta\in\{0,(\Cupper-\Vupper)/V\}$ or $\eta=L\in\Se$ for any $t\in[s_{\max}+2,T]_\Z$.
Let $t'=T-t+1$.
Then, the inequality
$$x'_{t'}\le(\Vupper+\eta V)y'_{t'}+(\Cupper-\Vupper-\eta V)y'_{t'+1}-\sum_{s\in\Se}(\Cupper-\Vupper-sV)(y'_{t'+s}-y'_{t'+s+1})$$
is valid for $\convPprime$ and is facet-defining for $\convPprime$ when $\eta\in\{0,(\Cupper-\Vupper)/V\}$ or $\eta=L\in\Se$ for any $t'\in[1,T-s_{\max}-1]_\Z$.
Hence, by Lemma~\ref{lem:Pprime}, inequality \eqref{eqn-T:x_t-ub-5-2} is valid for $\convP$ and is facet-defining for $\convP$ when $\eta\in\{0,(\Cupper-\Vupper)/V\}$ or $\eta=L\in\Se$ for any $t\in[1,T-s_{\max}-1]_\Z$.\Halmos

\subsection{Proof of Proposition \ref{prop-T:x_t-ub-6}}

\noindent{\bf Proposition \ref{prop-T:x_t-ub-6}.} {\it
Consider any integers $\alpha$, $\beta$, and $s_{\max}$ such that
(a)~$L+1\le s_{\max}\le\min\{T-2,\lfloor(\Cupper-\Vupper)/V\rfloor\}$,
(b)~$1\le\alpha<\beta\le s_{\max}$, and
(c)~$\beta=\alpha+1$ or $s_{\max}\le L+\alpha$.
Let $\Se=[1,\alpha]_{\Z}\cup[\beta,s_{\max}]_{\Z}$.
For any $t\in[s_{\max}+2,T]_{\Z}$, inequality \eqref{eqn-T:x_t-ub-5-1} is valid for $\convP$.
For any $t\in[1,T-s_{\max}-1]_{\Z}$, inequality \eqref{eqn-T:x_t-ub-5-2} is valid for $\convP$.
Furthermore, \eqref{eqn-T:x_t-ub-5-1} and \eqref{eqn-T:x_t-ub-5-2} are facet-defining for $\convP$ when $\eta\in\{0,(\Cupper-\Vupper)/V\}$ or $\eta=L\in\Se$.}
\vskip8pt

\noindent{\bf Proof.}
Consider any $t\in[s_{\max}+2,T]_\Z$.
To prove that the linear inequality \eqref{eqn-T:x_t-ub-5-1} is valid for $\convP$ when $\Se=[1,\alpha]_\Z\cup[\beta,s_{\max}]_\Z$, 
it suffices to show that \eqref{eqn-T:x_t-ub-5-1} is valid for $\Pe$ when $\Se=[1,\alpha]_\Z\cup[\beta,s_{\max}]_\Z$.
Consider any element $(\BFx,\BFy)$ of $\Pe$.
We show that $(\BFx,\BFy)$ satisfies \eqref{eqn-T:x_t-ub-5-1} when $\Se=[1,\alpha]_\Z\cup[\beta,s_{\max}]_\Z$.
We divide the analysis into four cases.

Case~1: $y_t=0$ and $y_{t-s}-y_{t-s-1}\le0$ for all $s\in\Se$.
Because $\Se\subseteq[1,\lfloor(\Cupper-\Vupper)/V\rfloor]_{\Z}$, we have $\Cupper-\Vupper-sV\ge0$ for all $s\in\Se$.
Because $\eta\le(\Cupper-\Vupper)/V$, we have $\Cupper-\Vupper-\eta V\ge0$.
Thus, in this case, the right-hand side of inequality \eqref{eqn-T:x_t-ub-5-1} is nonnegative.
Because $y_t=0$, by \eqref{eqn:p-upper-bound}, $x_t=0$.
Therefore, in this case, $(\BFx,\BFy)$ satisfies \eqref{eqn-T:x_t-ub-5-1}.

Case~2: $y_t=0$ and $y_{t-s}-y_{t-s-1}>0$ for some $s\in\Se$.
Let $\tilde{\Se}=\{\sigma\in\Se:y_{t-\sigma}-y_{t-\sigma-1}>0\}$ and $v=|\tilde{\Se}|$.
Then, $v\ge1$.
Denote $\tilde{\Se}=\{\sigma_1,\sigma_2,\ldots,\sigma_v\}$, where $\sigma_1<\sigma_2<\cdots<\sigma_v$.
Note that $y_{t-\sigma_j-1}=0$ and $y_{t-\sigma_j}=1$ for $j=1,\ldots,v$.
Denote $\sigma_0=-1$.
Then, for each $j=1,\ldots,v$, there exists $\sigma'_j\in[\sigma_{j-1}+1,\sigma_j-1]_{\Z}$ such that $y_{t-\sigma'_j-1}=1$ and $y_{t-\sigma'_j}=0$.
Thus,
$$0\le\sigma'_1<\sigma_1<\sigma'_2<\sigma_2<\cdots<\sigma'_v<\sigma_v\le s_{\max}.$$
Because $y_{t-\sigma_v}-y_{t-\sigma_v-1}=1$ and $t-\sigma_v\in[2,T]_{\Z}$, by \eqref{eqn:p-minup}, $y_k=1$ for all $k\in[t-\sigma_v,\min\{T,t-\sigma_v+L-1\}]_{\Z}$, which implies that $t-\sigma'_j\ge t-\sigma_v+L$ for $j=1,\ldots,v$.
Hence, for $j=1,\ldots,v$, we have $\sigma'_j\le\sigma_v-L$, which implies that
\begin{equation}\label{eq:Prop6_eqA}
    \sigma'_j\le s_{\max}-L.
\end{equation}
If $\beta=\alpha+1$, then $\Se=[1,s_{\max}]_{\Z}$, which implies that $\sigma'_j\in\Se$ for $j=2,\ldots,v$.
If $\beta\neq\alpha+1$, then condition (c) of Proposition \ref{prop-T:x_t-ub-6} implies that $s_{\max}\le L+\alpha$, which, by \eqref{eq:Prop6_eqA}, implies that $\sigma'_j\le\alpha$ for $j=1,\ldots,v$.
Because $\sigma'_2>\sigma_1\ge1$, we have $\sigma'_j\in\Se$ for $j=2,\ldots,v$.
Thus, in both cases, $\sigma'_j\in\Se$ for $j=2,\ldots,v$.
Because $y_{t-\sigma_1}-y_{t-\sigma_1-1}=1$ and $t-\sigma_1\in[2,T]_{\Z}$, by \eqref{eqn:p-minup}, $y_k=1$ for all $k\in[t-\sigma_1,\min\{T,t-\sigma_1+L-1\}]_{\Z}$.
Because $y_t=0$, , this implies that $t\ge t-\sigma_1+L$, or equivalent, $\sigma_1\ge L$.
Because $\eta\le L$, we have
\begin{equation}\label{eq:Prop6_eqB}
  \eta\le\sigma_1.
\end{equation}
Because $y_t=0$, by \eqref{eqn:p-upper-bound}, $x_t=0$.
Hence, the left-hand side of inequality \eqref{eqn-T:x_t-ub-5-1} is $0$.
Because $s_{\max}\le\lfloor(\Cupper-\Vupper)/V\rfloor$, we have $\Cupper-\Vupper-sV\ge0$ for all $s\in\Se$.
Note that if $y_{t-1}=0$, then $\sigma'_1\ge1$ and $\sigma'_1\in\Se$; if $y_{t-1}=1$, then $\sigma'_1=0$ and $\sigma'_1\notin\Se$.
Note that $\{\sigma'_2,\ldots,\sigma'_v\}\subseteq\Se\setminus\tilde{\Se}$ and $y_{t-s}-y_{t-s-1}\le0$ for all $s\in\Se\setminus\tilde{\Se}$.
Thus, $\sum_{s\in\Se\setminus\tilde{\Se}}(\Cupper-\Vupper-sV)(y_{t-s}-y_{t-s-1})\le\sum_{j=2}^v(\Cupper-\Vupper-\sigma'_jV)(y_{t-\sigma'_j}-y_{t-\sigma'_j-1})+(\Cupper-\Vupper-\sigma_1'V)(y_{t-\sigma'_1}-y_{t-\sigma'_1-1})(1-y_{t-1})$.
Hence, the right-hand side of inequality \eqref{eqn-T:x_t-ub-5-1} is
\begin{align*}
    &(\Vupper+\eta V)y_t+(\Cupper-\Vupper-\eta V)y_{t-1}-\sum_{s\in\Se}(\Cupper-\Vupper-sV)(y_{t-s}-y_{t-s-1})\\
    &= (\Cupper-\Vupper-\eta V)y_{t-1}-\sum_{s\in\tilde{\Se}}(\Cupper-\Vupper-sV)(y_{t-s}-y_{t-s-1})-\sum_{s\in\Se\setminus\tilde{\Se}}(\Cupper-\Vupper-sV)(y_{t-s}-y_{t-s-1})\\
    &\ge (\Cupper-\Vupper-\eta V)y_{t-1}-\sum_{j=1}^v(\Cupper-\Vupper-\sigma_jV)(y_{t-\sigma_j}-y_{t-\sigma_j-1})-\sum_{j=2}^v(\Cupper-\Vupper-\sigma'_jV)(y_{t-\sigma'_j}-y_{t-\sigma'_j-1})\\
    &\quad -(\Cupper-\Vupper-\sigma'_1V)(y_{t-\sigma'_1}-y_{t-\sigma'_1-1})(1-y_{t-1})\\
    &= (\Cupper-\Vupper-\eta V)y_{t-1}-(\Cupper-\Vupper-\sigma'_1V)y_{t-1}-(\Cupper-\Vupper-\sigma_1V)+(\Cupper-\Vupper-\sigma'_1V)\\
    &\quad -\sum_{j=2}^v(\Cupper-\Vupper-\sigma_jV)+\sum_{j=2}^v(\Cupper-\Vupper-\sigma'_jV)\\
    &=(\sigma'_1-\eta)Vy_{t-1}+(\sigma_1-\sigma'_1)V+\sum_{j=2}^v(\sigma_j-\sigma'_j)V\\
    & \ge (\sigma'_1-\eta)Vy_{t-1}+(\sigma_1-\sigma'_1)V\\
    &\ge 0.
\end{align*}
where the last inequality follows from $y_{t-1}\in\{0,1\}$, $\sigma_1>\sigma'_1$, and \eqref{eq:Prop6_eqB}.
Therefore, in this case, $(\BFx,\BFy)$ satisfies \eqref{eqn-T:x_t-ub-5-1}.

Case~3: $y_t=1$ and $y_{t-s}-y_{t-s-1}\le0$ for all $s\in\Se$.
Because $\Se\subseteq[0,\lfloor(\Cupper-\Vupper)/V\rfloor]_{\Z}$, we have $\Cupper-\Vupper-sV\ge0$ for all $s\in\Se$.
Thus, $\sum_{s\in\Se}(\Cupper-\Vupper-sV)(y_{t-s}-y_{t-s-1})\le0$.
Because $y_t=1$, the right-hand side of inequality \eqref{eqn-T:x_t-ub-5-1} is at least $\Cupper$ when $y_{t-1}=1$ and is at least  $\Vupper+\eta V\ge\Vupper$ when $y_{t-1}=0$ (as $\eta\ge0$).
By \eqref{eqn:p-upper-bound}, $x_t\le\Cupper$.
If $y_{t-1}=0$, then by \eqref{eqn:p-upper-bound} and \eqref{eqn:p-ramp-up}, $x_{t-1}=0$ and $x_t-x_{t-1}\le Vy_{t-1}+\Vupper(1-y_{t-1})$, which imply that $x_t\le\Vupper$.
Hence, $x_t$ is at most $\Cupper$, and is at most $\Vupper$ when $y_{t-1}=0$.
Therefore, in this case, $(\BFx,\BFy)$ satisfies \eqref{eqn-T:x_t-ub-5-1}.

Case~4: $y_t=1$ and $y_{t-s}-y_{t-s-1}>0$ for some $s\in\Se$. 
Let $\tilde{\Se}=\{\sigma\in\Se:y_{t-\sigma}-y_{t-\sigma-1}>0\}$ and $v=|\tilde{\Se}|$.
Then, $v\ge1$.
Denote $\tilde{\Se}=\{\sigma_1,\sigma_2,\ldots,\sigma_v\}$, where $\sigma_1<\sigma_2<\cdots<\sigma_v$.
Note that $y_{t-\sigma_j-1}=0$ and $y_{t-\sigma_j}=1$ for $j=1,\ldots,v$.
Then, for each $j=2,\ldots,v$, there exists $\sigma'_j\in[\sigma_{j-1}+1,\sigma_j-1]_{\Z}$ such that $y_{t-\sigma'_j-1}=1$ and $y_{t-\sigma'_j}=0$.
Thus,
$$1\le\sigma_1<\sigma'_2<\sigma_2<\cdots<\sigma'_v<\sigma_v\le s_{\max}.$$
Because $y_{t-\sigma_v}-y_{t-\sigma_v-1}=1$ and $t-\sigma_v\in[2,T]_{\Z}$, by \eqref{eqn:p-minup}, $y_k=1$ for all $k\in[t-\sigma_v,\min\{T,t-\sigma_v+L-1\}]_{\Z}$, which implies that $t-\sigma'_j\ge t-\sigma_v+L$ for $j=2,\ldots,v$.
Hence, for $j=2,\ldots,v$, we have $\sigma'_j\le\sigma_v-L$, which implies that
\begin{equation}\label{eq:Prop6_eqC}
    \sigma'_j\le s_{\max}-L.
\end{equation}
If $\beta=\alpha+1$, then $\Se=[1,s_{\max}]_{\Z}$, which implies that $\sigma'_j\in\Se$ for $j=2,\ldots,v$.
If $\beta\neq\alpha+1$, then condition (c) of Proposition \ref{prop-T:x_t-ub-6} implies that $s_{\max}\le L+\alpha$, which, by \eqref{eq:Prop6_eqC}, implies that $1<\sigma_j'\le\alpha$ for $j=2,\ldots,v$.
Thus, in both cases, $\sigma'_j\in\Se$ for $j=2,\ldots,v$.
If $y_{t-1}=0$, by \eqref{eqn:p-upper-bound} and \eqref{eqn:p-ramp-up}, then $x_{t-1}=0$ and $x_t-x_{t-1}\le Vy_{t-1}+\Vupper(1-y_{t-1})=\Vupper$, which implies that $x_t\le\Vupper$.
In addition, there exists $\sigma'_1\in[1,\sigma_1-1]_{\Z}$ such that $y_{t-\sigma'_1}=0$, $y_{t-\sigma'_1-1}=1$, and $\sigma'_1\in\Se$.
If $y_{t-1}=1$, then we have $y_k=1$ for all $k\in[t-\sigma_1, t]_{Z}$.
Because $y_{t-\sigma_1-1}=0$, by \eqref{eqn:p-upper-bound} and \eqref{eqn:p-ramp-up}, we have $x_{t-\sigma_1-1}=0$ and
$$\sum_{\tau=t-\sigma_1}^t(x_{\tau}-x_{\tau-1})\le\sum_{\tau=t-\sigma_1}^t Vy_{\tau-1}+\sum_{\tau=t-\sigma_1}^t \Vupper(1-y_{\tau-1}),$$
which implies that
$$x_t-x_{t-\sigma_1-1}\le\sum_{\tau=t-\sigma_1}^t Vy_{\tau-1}+\sum_{\tau=t-\sigma_1}^t \Vupper(1-y_{\tau-1})=\sigma_1V+\Vupper.$$
Thus, the left-hand side of inequality \eqref{eqn-T:x_t-ub-5-1} is at most $\sigma_1V+\Vupper$.
Because $\Se\subseteq[0,\lfloor(\Cupper-\Vupper)/V\rfloor]_{\Z}$, we have $\Cupper-\Vupper-sV\ge0$ for all $s\in\Se$.
Note that $\{\sigma'_2,\ldots,\sigma'_v\}\subseteq\Se\setminus\tilde{\Se}$.
Thus, $\sum_{s\in\Se\setminus\tilde{\Se}}(\Cupper-\Vupper-sV)(y_{t-s}-y_{t-s-1})\le\sum_{j=2}^v(\Cupper-\Vupper-\sigma'_jV)(y_{t-\sigma'_j}-y_{t-\sigma'_j-1})$.
Hence, when $y_{t-1}=0$, the right-hand side of inequality \eqref{eqn-T:x_t-ub-5-1} is
\begin{align*}
  &(\Vupper+\eta V)y_t+(\Cupper-\Vupper-\eta V)y_{t-1}-\sum_{s\in\Se}(\Cupper-\Vupper-sV)(y_{t-s}-y_{t-s-1})\\
  &=\Vupper+\eta V-\sum_{s\in\tilde{\Se}}(\Cupper-\Vupper-sV)(y_{t-s}-y_{t-s-1})-\sum_{s\in\Se\setminus\tilde{\Se}}(\Cupper-\Vupper-sV)(y_{t-s}-y_{t-s-1})\\
  &\ge \Vupper+\eta V-\sum_{j=1}^v(\Cupper-\Vupper-\sigma_jV)(y_{t-\sigma_j}-y_{t-\sigma_j-1})-\sum_{j=1}^v(\Cupper-\Vupper-\sigma'_jV)(y_{t-\sigma'_j}-y_{t-\sigma'_j-1})\\
  &=\Vupper+\eta V+\sum_{j=1}^v(\sigma_j-\sigma'_j)V\\
  &\ge\Vupper+\eta V\\
  &\ge \Vupper.
\end{align*}
When $y_{\rred{t-1}}=1$, the right-hand side of inequality \eqref{eqn-T:x_t-ub-5-1} is
\begin{align*}
    &(\Vupper+\eta V)y_t+(\Cupper-\Vupper-\eta V)y_{t-1}-\sum_{s\in\Se}(\Cupper-\Vupper-sV)(y_{t-s}-y_{t-s-1})\\
    &=\Cupper-\sum_{s\in\tilde{\Se}}(\Cupper-\Vupper-sV)(y_{t-s}-y_{t-s-1})-\sum_{s\in\Se\setminus\tilde{\Se}}(\Cupper-\Vupper-sV)(y_{t-s}-y_{t-s-1})\\
    &\ge \Cupper-\sum_{j=1}^v(\Cupper-\Vupper-\sigma_jV)(y_{t-\sigma_j}-y_{t-\sigma_j-1})-\sum_{j=2}^v(\Cupper-\Vupper-\sigma'_jV)(y_{t-\sigma'_j}-y_{t-\sigma'_j-1})\\
    &=\Cupper-(\Cupper-\Vupper-\sigma_1V)-\sum_{j=2}^v(\Cupper-\Vupper-\sigma_jV)+\sum_{j=2}^v(\Cupper-\Vupper-\sigma'_jV)\\
    &=\sigma_1V+\Vupper+\sum_{j=2}^v(\sigma_j-\sigma'_j)V\\
    &\ge\sigma_1V+\Vupper.
\end{align*}
Therefore, in this case, $(\BFx,\BFy)$ satisfies \eqref{eqn-T:x_t-ub-5-1}.

Summarizing Cases 1--4, we conclude that $(\BFx,\BFy)$ satisfies \eqref{eqn-T:x_t-ub-5-1}.
Hence, \eqref{eqn-T:x_t-ub-5-1} is valid for $\convP$.

It is easy to verify that the proof of facet-defining of inequality \eqref{eqn-T:x_t-ub-5-1} in the proof of Proposition~\ref{prop-T:x_t-ub-5} remains valid when $\Se=[1,\alpha]_\Z\cup[\beta,s_{\max}]_\Z$.
Therefore, inequality \eqref{eqn-T:x_t-ub-5-1} is facet-defining under the conditions stated in Proposition \ref{prop-T:x_t-ub-6}.

It is also easy to verify that the proof of validity and facet-defining of inequality \eqref{eqn-T:x_t-ub-5-2} in the proof of Proposition~\ref{prop-T:x_t-ub-5} remains valid when $\Se=[1,\alpha]_\Z\cup[\beta,s_{\max}]_\Z$.
Therefore, inequality \eqref{eqn-T:x_t-ub-5-2} is valid and facet-defining for $\convP$ under the conditions stated in Proposition \ref{prop-T:x_t-ub-6}.
\Halmos


\subsection{Proof of Proposition \ref{prop:separation-A}}

\noindent{\bf Proposition \ref{prop:separation-A}.} {\it
For any given point $(\BFx,\BFy)\in\R_{+}^{2T}$, \tred{the} most violated inequalities 
\eqref{eqn-T:x_t-ub-1-1}--\eqref{eqn-T:x_t-ub-1-2}, \eqref{eqn-T:x_t-ub-3-1}--\eqref{eqn-T:x_t-ub-3-2}, and \eqref{eqn-T:x_t-ub-5-1}--\eqref{eqn-T:x_t-ub-5-2}
in Propositions \ref{prop-T:x_t-ub-1}, \ref{prop-T:x_t-ub-3}, and \ref{prop-T:x_t-ub-5}, respectively,
can be determined in $O(T)$ time if such violated inequalities exist.}
\vskip8pt

\noindent{\bf Proof.}
Let $\hat{\eta}$ and $a_1,\ldots,a_6$ be any real numbers such that $\hat{\eta}\ge 0$.
Let $\check{s}$ and $\hat{s}$ be any integers such that $0\le\check{s}\le\hat{s}\le\min\{T-2,\lfloor(\Cupper-\Vupper)/V\rfloor\}$.
Let $\check{t}=1$ if $a_1=a_2=0$, and let $\check{t}=2$ otherwise.
Let $\hat{t}=T$ if $a_5=a_6=0$, and let $\hat{t}=T-1$ otherwise.

(i)~Consider the following family of inequalities:
\begin{equation}\label{eq:A-inequality1}
x_t\le(a_1+a_2\eta)y_{t-1}+(a_3+a_4\eta)y_t+(a_5+a_6\eta)y_{t+1}-\sum_{s\in\Se}(\Cupper-\Vupper-sV)(y_{t-s}-y_{t-s-1}),
\end{equation}
where 
$\eta\in[0,\hat{\eta}]$,  
$\Se\subseteq[\check{s},\hat{s}]_\Z$,
$t\in[\check{t},\hat{t}]_\Z$, and
$t\ge s+2$ for all $s\in\Se$.
Note that 
inequality family \eqref{eqn-T:x_t-ub-1-1} in Proposition~\ref{prop-T:x_t-ub-1},
inequality family \eqref{eqn-T:x_t-ub-3-1} in Proposition~\ref{prop-T:x_t-ub-3}, and
inequality family \eqref{eqn-T:x_t-ub-5-1} in Proposition~\ref{prop-T:x_t-ub-5}
are special cases of this inequality family.
Consider any given point $(\BFx,\BFy)\in\R_{+}^{2T}$.
We show that the set $\Se$, the real number $\eta$, and the integer $t$ corresponding to a most violated inequality \eqref{eq:A-inequality1} can be determined in $O(T)$ time.

For any integer $t\le T$, let 
$$\theta(t)=\sum_{\tau=2}^t\max\{y_\tau-y_{\tau-1},0\}.$$
Then, for any $t\in[2,T]_\Z$,
\begin{equation}\label{eq:A-ysummation1}
\sum_{\substack{s\in[\check{s},\hat{s}]_\Z\\ t-s\ge2}}\max\{y_{t-s}-y_{t-s-1},0\}=\theta(t-\check{s})-\theta(t-\hat{s}-1)
\end{equation}
and
\begin{equation}\label{eq:A-ysummation2}
\sum_{\substack{s\in[\check{s}+1,\hat{s}+1]_\Z\\ t-s\ge2}}\max\{y_{t-s}-y_{t-s-1},0\}=\theta(t-\check{s}-1)-\theta(t-\hat{s}-2).
\end{equation}
Furthermore, for any $t\in[2,T]_\Z$,
\begin{align*}
&\sum_{\substack{s\in[\check{s},\hat{s}]_\Z\\ t-s\ge2}}s\max\{y_{t-s}-y_{t-s-1},0\}-\sum_{\substack{s\in[\check{s}+1,\hat{s}+1]_\Z\\ t-s\ge2}}s\max\{y_{t-s}-y_{t-s-1},0\}\\
&\qquad=\left\{
\begin{array}{ll}
\check{s}\max\{y_{t-\check{s}}-y_{t-\check{s}-1},0\}-(\hat{s}+1)\max\{y_{t-\hat{s}-1}-y_{t-\hat{s}-2},0\},&\hbox{if $2\le t-\hat{s}-1$;}\\[2pt]
\check{s}\max\{y_{t-\check{s}}-y_{t-\check{s}-1},0\},&\hbox{if $t-\hat{s}-1<2\le t-\check{s}$;}\\[2pt]
0,&\hbox{if $t-\check{s}<2$.}
\end{array}
\right.
\end{align*}
Note that
\begin{align*}
    \theta(t-\check{s})-\theta(t-\check{s}-1)=\left\{
    \begin{array}{ll}
      \max\{y_{t-\check{s}}-y_{t-\check{s}-1},0\}, &\hbox{if $t-\check{s}\ge 2$;}\\[2pt]
      0,&\hbox{if $t-\check{s}<2$.}
    \end{array}\right. 
\end{align*}
and
\begin{align*}
    \theta(t-\hat{s}-1)-\theta(t-\hat{s}-2)=\left\{
    \begin{array}{ll}
      \max\{y_{t-\hat{s}-1}-y_{t-\hat{s}-2},0\}, &\hbox{if $t-\hat{s}-1\ge 2$;}\\[2pt]
      0,&\hbox{if $t-\hat{s}-1<2$.}
    \end{array}\right. 
\end{align*}
Hence, for any $t\in[2,T]_\Z$,
\begin{align}\label{eq:A-ysummation3}
&\sum_{\substack{s\in[\check{s},\hat{s}]_\Z\\ t-s\ge2}}s\max\{y_{t-s}-y_{t-s-1},0\}-\sum_{\substack{s\in[\check{s}+1,\hat{s}+1]_\Z\\ t-s\ge2}}s\max\{y_{t-s}-y_{t-s-1},0\}\nonumber\\
&\qquad\qquad=\check{s}[\theta(t-\check{s})-\theta(t-\check{s}-1)]-(\hat{s}+1)[\theta(t-\hat{s}-1)-\theta(t-\hat{s}-2)].
\end{align}
For any $\eta\in[0,\hat{\eta}]$, $\Se\subseteq[\check{s},\hat{s}]_\Z$, and $t\in[\check{t},\hat{t}]_\Z$ such that $t\ge s+2$ $\forall s\in\Se$, let
$$\tilde{v}(\eta,\Se,t)=x_t-(a_1+a_2\eta)y_{t-1}-(a_3+a_4\eta)y_t-(a_5+a_6\eta)y_{t+1}+\sum_{s\in\Se}(\Cupper-\Vupper-sV)(y_{t-s}-y_{t-s-1}).$$
If $\tilde{v}(\eta,\Se,t)>0$, then $\tilde{v}(\eta,\Se,t)$ is the amount of violation of inequality \eqref{eq:A-inequality1}.
If $\tilde{v}(\eta,\Se,t)\le0$, then there is no violation of inequality \eqref{eq:A-inequality1}.
For any $\eta\in[0,\hat{\eta}]$ and $t\in[\check{t},\hat{t}]_\Z$, let
$$v(\eta,t)=\max_{\Se\subseteq[\check{s},\min\{\hat{s},t-2\}]_\Z}\{\tilde{v}(\eta,\Se,t)\}.$$
If $v(\eta,t)>0$, then $v(\eta,t)$ is the largest possible violation of inequality \eqref{eq:A-inequality1} for this combination of $\eta$ and $t$.
If $v(\eta,t)\le 0$, then the largest possible violation of inequality \eqref{eq:A-inequality1} is zero for this combination of $\eta$ and $t$.

Note that $\Cupper-\Vupper-sV\ge0$ for any $s\in[\check{s},\hat{s}]_\Z$.
Thus, for any given $\eta\in[0,\hat{\eta}]$ and $t\in[\check{t},\hat{t}]_\Z$, 
$\tilde{v}(\eta,\Se,t)$ is maximized when $\Se$ contains all $s\in[\check{s},\min\{\hat{s},t-2\}]_\Z$ such that $y_{t-s}-y_{t-s-1}>0$ (if any).
If it does not exist any $s\in[\check{s},\min\{\hat{s},t-2\}]_{\Z}$ such that $y_{t-s}-y_{t-s-1}>0$, 
then $\tilde{v}(\eta,\Se,t)$ is maximized when $\Se=\emptyset$, and $v(\eta,t)=x_t-(a_1+a_2\eta)y_{t-1}-(a_3+a_4\eta)y_t-(a_5+a_6\eta)y_{t+1}$.
Hence, for any $\eta\in[0,\hat{\eta}]$ and $t\in[\check{t},\hat{t}]_\Z$,
\begin{align*}
v(\eta,t)=\ &x_t-(a_1+a_2\eta)y_{t-1}-(a_3+a_4\eta)y_t-(a_5+a_6\eta)y_{t+1}\\
            &+(\Cupper-\Vupper)\sum_{\substack{s\in[\check{s},\hat{s}]_\Z\\ t-s\ge 2}}\max\{y_{t-s}-y_{t-s-1},0\}
             -V\sum_{\substack{s\in[\check{s},\hat{s}]_\Z\\ t-s\ge 2}}s\max\{y_{t-s}-y_{t-s-1},0\}.
\end{align*}
When $t=\check{t}$, we have
\begin{align*}
v(\eta,\check{t})=\left\{
  \begin{array}{ll}
    x_1-(a_3+a_4\eta)y_1-(a_5+a_6\eta)y_2, &\hbox{ if $\check{t}=1$;}\\[2pt]
    x_2-(a_1+a_2\eta)y_1-(a_3+a_4\eta)y_2-(a_5+a_6\eta)y_3\\[2pt]
    \qquad\qquad\qquad\qquad\quad+(\Cupper-\Vupper)\max\{y_2-y_1,0\},&\hbox{ if $\check{t}=2$ and $\check{s}=0$;}\\[2pt]
    x_2-(a_1+a_2\eta)y_1-(a_3+a_4\eta)y_2-(a_5+a_6\eta)y_3, &\hbox{ otherwise.}
  \end{array}\right.
\end{align*}
For any $\eta\in[0,\hat{\eta}]$ and $t\in[\check{t}+1,\hat{t}]_\Z$,
\begin{align*}
v(\eta,t)-v(\eta,t-1)=\ &(x_t-x_{t-1})\\
                        &-(a_1+a_2\eta)(y_{t-1}-y_{t-2})-(a_3+a_4\eta)(y_t-y_{t-1})-(a_5+a_6\eta)(y_{t+1}-y_t)\\
                        &+(\Cupper-\Vupper)\left[\sum_{\substack{s\in[\check{s},\hat{s}]_\Z\\ t-s\ge2}}\max\{y_{t-s}-y_{t-s-1},0\}
                         -\sum_{\substack{s\in[\check{s}+1,\hat{s}+1]_\Z\\ t-s\ge2}}\max\{y_{t-s}-y_{t-s-1},0\}\right]\\
                        &-V\left[\sum_{\substack{s\in[\check{s},\hat{s}]_\Z\\ t-s\ge2}}s\max\{y_{t-s}-y_{t-s-1},0\}
                         -\sum_{\substack{s\in[\check{s}+1,\hat{s}+1]_\Z\\ t-s\ge2}}(s-1)\max\{y_{t-s}-y_{t-s-1},0\}\right],
\end{align*}
which implies that
\begin{align*}
v(\eta,t)=\ &v(\eta,t-1)+(x_t-x_{t-1})\\
            &-(a_1+a_2\eta)(y_{t-1}-y_{t-2})-(a_3+a_4\eta)(y_t-y_{t-1})-(a_5+a_6\eta)(y_{t+1}-y_t)\\
            &+(\Cupper-\Vupper)\left[\sum_{\substack{s\in[\check{s},\hat{s}]_\Z\\ t-s\ge2}}\max\{y_{t-s}-y_{t-s-1},0\}
             -\sum_{\substack{s\in[\check{s}+1,\hat{s}+1]_\Z\\ t-s\ge2}}\max\{y_{t-s}-y_{t-s-1},0\}\right]\allowdisplaybreaks\\
            &-V\sum_{\substack{s\in[\check{s}+1,\hat{s}+1]_\Z\\ t-s\ge2}}\max\{y_{t-s}-y_{t-s-1},0\}\\
            &-V\left[\sum_{\substack{s\in[\check{s},\hat{s}]_\Z\\ t-s\ge2}}s\max\{y_{t-s}-y_{t-s-1},0\}
             -\sum_{\substack{s\in[\check{s}+1,\hat{s}+1]_\Z\\ t-s\ge2}}s\max\{y_{t-s}-y_{t-s-1},0\}\right].
\end{align*}
Thus, by \eqref{eq:A-ysummation1}, \eqref{eq:A-ysummation2}, and \eqref{eq:A-ysummation3},
\begin{align}\label{eq:A-recursion1}
v(\eta,t)=\ &v(\eta,t-1)+(x_t-x_{t-1})\nonumber\\[-2pt]
            &-(a_1+a_2\eta)(y_{t-1}-y_{t-2})-(a_3+a_4\eta)(y_t-y_{t-1})-(a_5+a_6\eta)(y_{t+1}-y_t)\nonumber\\[-2pt]
            &+(\Cupper-\Vupper)[\theta(t-\check{s})-\theta(t-\hat{s}-1)-\theta(t-\check{s}-1)+\theta(t-\hat{s}-2)]\nonumber\\[-2pt]
            &-V[\check{s}\theta(t-\check{s})-(\check{s}-1)\theta(t-\check{s}-1)-(\hat{s}+1)\theta(t-\hat{s}-1)+\hat{s}\theta(t-\hat{s}-2)]
\end{align}
for any $\eta\in[0,\hat{\eta}]$ and $t\in[\check{t}+1,\hat{t}]_\Z$.
Note that $\tilde{v}(\eta,\Se,t)$ is linear in $\eta$.
Thus, for any given $t$, $v(\eta,t)$ is maximized when $\eta=0$ or $\eta=\hat{\eta}$.
That is, the largest possible value of $v(\eta,t)$ is equal to $v(0,t)$ if $a_2y_{t-1}+a_4y_t+a_6y_{t+1}\ge 0$, 
and the largest possible value of $v(\eta,t)$ is equal to $v(\hat{\eta},t)$ if $a_2y_{t-1}+a_4y_t+a_6y_{t+1}<0$.
Hence, to determine the $\eta$ and $t$ values corresponding to the largest violation of inequality \eqref{eq:A-inequality1},
it suffices to determine $v(0,\check{t}),v(0,\check{t}+1),\ldots,v(0,\hat{t})$ and $v(\hat{\eta},\check{t}),v(\hat{\eta},\check{t}+1),\ldots,v(\hat{\eta},\hat{t})$.
Algorithm~\ref{SeparationAlg-A1} performs this computation.

\begin{algorithm}[t]
\caption{Determination of \rred{a} most violated inequality \eqref{eq:A-inequality1} for any given $(\BFx,\BFy)\in\R_{+}^{2T}$}\label{SeparationAlg-A1}
\begin{algorithmic}[1]
  \small
  \State $\theta(t)\leftarrow 0\ \forall\ t\in[-\hat{s},1]_\Z$
  \For {$t=2,\ldots,\hat{t}$}
    \State $\theta(t)\leftarrow\theta(t-1)+\max\{y_t-y_{t-1},0\}$
  \EndFor
  \For {$\eta=0,\hat{\eta}$}
    \If {$\check{t}=1$}
      \State $v(\eta,\check{t})\leftarrow x_1-(a_3+a_4\eta)y_1-(a_5+a_6\eta)y_2$
    \ElsIf {$\check{s}=0$}
      \State $v(\eta,\check{t})\leftarrow x_2-(a_1+a_2\eta)y_1-(a_3+a_4\eta)y_2-(a_5+a_6\eta)y_3+(\Cupper-\Vupper)\max\{y_2-y_1,0\}$
    \Else
      \State $v(\eta,\check{t})\leftarrow x_2-(a_1+a_2\eta)y_1-(a_3+a_4\eta)y_2-(a_5+a_6\eta)y_3$
    \EndIf
    \For {$t=\check{t}+1,\ldots,\hat{t}$}
      \State $v(\eta,t)\leftarrow v(\eta,t-1)+(x_t-x_{t-1})\vspace{-2pt}\hfill\break
             \hbox{\hskip70pt}-(a_1+a_2\eta)(y_{t-1}-y_{t-2})-(a_3+a_4\eta)(y_t-y_{t-1})-(a_5+a_6\eta)(y_{t+1}-y_t)\vspace{-2pt}\hfill\break
             \hbox{\hskip70pt}+(\Cupper-\Vupper)[\theta(t-\check{s})-\theta(t-\hat{s}-1)-\theta(t-\check{s}-1)+\theta(t-\hat{s}-2)]\vspace{-2pt}\hfill\break
             \hbox{\hskip70pt}-V[\check{s}\theta(t-\check{s})-(\check{s}-1)\theta(t-\check{s}-1)-(\hat{s}+1)\theta(t-\hat{s}-1)+\hat{s}\theta(t-\hat{s}-2)]$
    \EndFor
  \EndFor
  \State $(\eta^*,t^*)\leftarrow\argmax_{(\eta,t)\in\{0,\hat{\eta}\}\times[\check{t},\hat{t}]_\Z}\{v(\eta,t)\}$
  \State $\Se^*\leftarrow\emptyset$
  \For {$s=\check{s},\ldots,\min\{\hat{s},t^*-2\}$}
    \State {\bf if} $y_{t^*-s}-y_{t^*-s-1}>0$ {\bf then} $\Se^*\leftarrow\Se^*\cup \{s\}$
  \EndFor
\end{algorithmic}
\end{algorithm}

In Algorithm~\ref{SeparationAlg-A1},
step~1 sets $\theta(t)$ to zero when $t\le1$.
Steps 2--4 determine the $\theta(t)$ values recursively for $t=2,3,\ldots,\hat{t}$.
These steps require $O(T)$ time.
Steps 5--16 consider the case $\eta=0$ and the case $\eta=\hat{\eta}$.
For each of these two $\eta$ values, these steps first determine $v(\eta,\check{t})$, and then determine
$v(\eta,\check{t}+1),v(\eta,\check{t}+2),\ldots,v(\eta,\hat{t})$ recursively using equation \eqref{eq:A-recursion1}.
These steps require $O(T)$ time.
Steps 17--21 identify \rred{a} most violated inequality \eqref{eq:A-inequality1} by setting the $\eta$ and $t$ values to $(\eta^*,t^*)=\argmax_{(\eta,t)\in\{0,\hat{\eta}\}\times[\check{t},\hat{t}]_\Z}\{v(\eta,t)\}$
and setting $\Se$ equal to the set of $s$ values such that $s\in[\check{s},\min\{\hat{s},t^*-2\}]_\Z$ and $y_{t^*-s}-y_{t^*-s-1}>0$.
These steps also require $O(T)$ time.
Therefore, the total computational time of Algorithm~\ref{SeparationAlg-A1} is $O(T)$.

(ii) Consider the following family of inequalities:
\begin{equation}\label{eq:A-inequality2}
x_t\le(a_1+a_2\eta)y_{t-1}+(a_3+a_4\eta)y_t+(a_5+a_6\eta)y_{t+1}-\sum_{s\in\Se}(\Cupper-\Vupper-sV)(y_{t+s}-y_{t+s+1}),
\end{equation}
where 
$\eta\in[0,\hat{\eta}]$,  
$\Se\subseteq[\check{s},\hat{s}]_\Z$,
$t\in[\check{t},\hat{t}]_\Z$, and
$t\ge s+2$ for all $s\in\Se$.
Note that 
inequality family \eqref{eqn-T:x_t-ub-1-2} in Proposition~\ref{prop-T:x_t-ub-1},
inequality family \eqref{eqn-T:x_t-ub-3-2} in Proposition~\ref{prop-T:x_t-ub-3}, and
inequality family \eqref{eqn-T:x_t-ub-5-2} in Proposition~\ref{prop-T:x_t-ub-5}
are special cases of this inequality family.
Consider any given point $(\BFx,\BFy)\in\R_{+}^{2T}$.
Let $x'_t=x_{T-t+1}$ and $y'_t=y_{T-t+1}$ for $t\in[1,T]_\Z$.
Inequality \eqref{eq:A-inequality2} becomes
$$x'_{T-t+1}\le(a_1+a_2\eta)y'_{T-t+2}+(a_3+a_4\eta)y'_{T-t+1}+(a_5+a_6\eta)y'_{T-t}-\sum_{s\in\Se}(\Cupper-\Vupper-sV)(y'_{T-t-s+1}-y'_{T-t-s}).$$
Letting $t'=T-t+1$, this inequality becomes
\begin{equation}\label{eq:A-inequality2prime}
x'_{t'}\le(a_5+a_6\eta)y'_{t'-1}+(a_3+a_4\eta)y'_{t'}+(a_1+a_2\eta)y'_{t'+1}-\sum_{s\in\Se}(\Cupper-\Vupper-sV)(y'_{t'-s}-y'_{t'-s-1}).
\end{equation}
Let $\check{t}'=1$ if $a_5=a_6=0$, and let $\check{t}'=2$ otherwise.
Let $\hat{t}'=T$ if $a_1=a_2=0$, and let $\hat{t}'=T-1$ otherwise.
From the analysis in part~(i), the set $\Se\subseteq[\check{s},\hat{s}]$, the real number $\eta\in[0,\hat{\eta}]_\Z$, and the integer $t'\in[\check{t}',\hat{t}']_\Z$ with $t'\ge s+2$ $\forall s\in\Se$ corresponding to a most violated inequality \eqref{eq:A-inequality2prime} can be obtained in $O(T)$ time using Algorithm~\ref{SeparationAlg-A1}.
Hence, the set $\Se\subseteq[\check{s},\hat{s}]$, the real number $\eta\in[0,\hat{\eta}]_\Z$, and the integer $t\in[\check{t},\hat{t}]_\Z$ with $t\le T-s-1$ $\forall s\in\Se$ corresponding to a most violated inequality \eqref{eq:A-inequality2} can be obtained in $O(T)$ time.

Summarizing (i) and (ii), we conclude that for any given point $(\BFx,\BFy)\in\R_{+}^{2T}$, \tred{the} most violated inequalities 
\eqref{eqn-T:x_t-ub-1-1}--\eqref{eqn-T:x_t-ub-1-2}, \eqref{eqn-T:x_t-ub-3-1}--\eqref{eqn-T:x_t-ub-3-2}, and \eqref{eqn-T:x_t-ub-5-1}--\eqref{eqn-T:x_t-ub-5-2}
in Propositions \ref{prop-T:x_t-ub-1}, \ref{prop-T:x_t-ub-3}, and \ref{prop-T:x_t-ub-5}, respectively,
can be determined in $O(T)$ time if such violated inequalities exist.
\Halmos


\subsection{Proof of Proposition \ref{prop:separation-B}}

\noindent{\bf Proposition \ref{prop:separation-B}.} {\it
For any given point $(\BFx,\BFy)\in\R_{+}^{2T}$, \tred{the} most violated inequalities 
\eqref{eqn-T:x_t-ub-1-1}--\eqref{eqn-T:x_t-ub-1-2}, \eqref{eqn-T:x_t-ub-3-1}--\eqref{eqn-T:x_t-ub-3-2}, and \eqref{eqn-T:x_t-ub-5-1}--\eqref{eqn-T:x_t-ub-5-2}
in Propositions \ref{prop-T:x_t-ub-2}, \ref{prop-T:x_t-ub-4}, and \ref{prop-T:x_t-ub-6}, respectively,
can be determined in $O(T^3)$ time if such violated inequalities exist.}
\vskip8pt

\noindent{\bf Proof.}
Let $\hat{\eta}$ and $a_1,\ldots,a_6$ be any real numbers such that $\hat{\eta}\ge 0$.
Let $\check{s}$, $\check{s}_{\max}$, and $\hat{s}_{\max}$ be any integers such that $0\le\check{s}\le\check{s}_{\max}\le\hat{s}_{\max}\le\min\{T-2,\lfloor(\Cupper-\Vupper)/V\rfloor\}$.
Let $\check{t}=1$ if $a_1=a_2=0$, and let $\check{t}=2$ otherwise.
Let $\hat{t}=T$ if $a_5=a_6=0$, and let $\hat{t}=T-1$ otherwise.

(i)~Consider the following family of inequalities:
\begin{equation}\label{eq:B-inequality1}
x_t\le(a_1+a_2\eta)y_{t-1}+(a_3+a_4\eta)y_t+(a_5+a_6\eta)y_{t+1}-\sum_{s\in\Se}(\Cupper-\Vupper-sV)(y_{t-s}-y_{t-s-1}),
\end{equation}
where 
$\eta\in[0,\hat{\eta}]$,
$\Se=[\check{s},\alpha]_\Z\cup[\beta,s_{\max}]_\Z$, 
$t\in[s_{\max}+2,\hat{t}]_\Z$, and
$\alpha$, $\beta$, and $s_{\max}$ are integers such that 
(a)~$\check{s}_{\max}\le s_{\max}\le\hat{s}_{\max}$,
(b)~$\check{s}\le\alpha<\beta\le s_{\max}$, and
(c)~$\beta=\alpha+1$ or $s_{\max}\le L+\alpha$.
Note that 
inequality family \eqref{eqn-T:x_t-ub-1-1} in Proposition~\ref{prop-T:x_t-ub-2}, 
inequality family \eqref{eqn-T:x_t-ub-3-1} in Proposition~\ref{prop-T:x_t-ub-4}, and 
inequality family \eqref{eqn-T:x_t-ub-5-1} in Proposition~\ref{prop-T:x_t-ub-6} 
are special cases of this inequality family.
Consider any given point $(\BFx,\BFy)\in\R_{+}^{2T}$.
We show that the integers $\alpha$, $\beta$, $s_{\max}$, $t$ and the real number $\eta$ corresponding to a most violated inequality 
can be determined in $O(T^3)$ time.

For any $\eta\in[0,\hat{\eta}]_\Z$, $s_{\max}\in[\check{s}_{\max},\hat{s}_{\max}]_\Z$, $\Se\subseteq[\check{s},s_{\max}]_\Z$, $t\in[s_{\max}+2,\hat{t}]_\Z$, let
$$\tilde{v}(\eta,\Se,t)=x_t-(a_1+a_2\eta)y_{t-1}-(a_3+a_4\eta)y_t-(a_5+a_6\eta)y_{t+1}+\sum_{s\in\Se}(\Cupper-\Vupper-sV)(y_{t-s}-y_{t-s-1}).$$
If $\tilde{v}(\eta,\Se,t)>0$, then $\tilde{v}(\eta,\Se,t)$ is the amount of violation of inequality \eqref{eq:B-inequality1}.
If $\tilde{v}(\eta,\Se,t)\le 0$, then there is no violation of inequality \eqref{eq:B-inequality1}.
Note that $\tilde{v}(\eta,\Se,t)$ is linear in $\eta$.
Thus, for any given $\Se$ and $t$, the function $\tilde{v}(\eta,\Se,t)$ is maximized at $\eta=0$ if $a_2y_{t-1}+a_4y_t+a_6y_{t+1}\ge 0$, 
and is maximized at $\eta=\hat{\eta}$ if $a_2y_{t-1}+a_4y_t+a_6y_{t+1}<0$.
For any $s_{\max}\in[\check{s}_{\max},\hat{s}_{\max}]_\Z$, $t\in[s_{\max}+2,\hat{t}]_\Z$, and $i\in[\check{s},s_{\max}]_\Z$, let
\begin{align*}
v_1(s_{\max},t,i)=\left\{\begin{array}{ll}
  \tilde{v}(0,[\check{s},i]_\Z,t), &\hbox{if $a_2y_{t-1}+a_4y_t+a_6y_{t+1}\ge 0$;}\\[2pt]
  \tilde{v}(\hat{\eta},[\check{s},i]_\Z,t), &\hbox{if $a_2y_{t-1}+a_4y_t+a_6y_{t+1}<0$;}
\end{array}\right.
\end{align*}
that is,
\begin{align*}
v_1(s_{\max},t,i)=\left\{\begin{array}{ll}
  x_t-a_1y_{t-1}-a_3y_t-a_5y_{t+1}\\[2pt]
  \qquad\qquad\qquad\ +\sum_{s=\check{s}}^i(\Cupper-\Vupper-sV)(y_{t-s}-y_{t-s-1}), &\hbox{ if $a_2y_{t-1}+a_4y_t+a_6y_{t+1}\ge 0$;}\\[2pt]
  x_t-(a_1+a_2\hat{\eta})y_{t-1}-(a_3+a_4\hat{\eta})y_t-(a_5+a_6\hat{\eta})y_{t+1}\\[2pt]
  \qquad\qquad\qquad\ +\sum_{s=\check{s}}^i(\Cupper-\Vupper-sV)(y_{t-s}-y_{t-s-1}), &\hbox{ if $a_2y_{t-1}+a_4y_t+a_6y_{t+1}<0$.}
\end{array}\right.
\end{align*}
For any $s_{\max}\in[\check{s}_{\max},\hat{s}_{\max}]_\Z$, $t\in[s_{\max}+2,\hat{t}]_\Z$, and $j\in[\check{s}+1,s_{\max}]_\Z$, let
$$v_2(s_{\max},t,j)=\sum_{s=j}^{s_{\max}}(\Cupper-\Vupper-sV)(y_{t-s}-y_{t-s-1}).$$
Note that
\begin{align*}
v_1(s_{\max},t,i)\!=\!\left\{\begin{array}{ll}
  v_1(s_{\max},t,i-1)+(\Cupper-\Vupper-iV)(y_{t-i}-y_{t-i-1}), &\hbox{if $i\ge\check{s}+1$;}\\[2pt]
  x_t-a_1y_{t-1}-a_3y_t-a_5y_{t+1}\\[2pt]
  \qquad\qquad\qquad\quad\ \ +(\Cupper-\Vupper-\check{s}V)(y_{t-\check{s}}-y_{t-\check{s}-1}), &\hbox{if $i=\check{s}$ and $a_2y_{t-1}+a_4y_t+a_6y_{t+1}\ge 0$;}\\[2pt]
  x_t-(a_1+a_2\hat{\eta})y_{t-1}-(a_3+a_4\hat{\eta})y_t\\[2pt]
  \,\ -(a_5+a_6\hat{\eta})y_{t+1}+(\Cupper-\Vupper-\check{s}V)(y_{t-\check{s}}-y_{t-\check{s}-1}), &\hbox{if $i=\check{s}$ and $a_2y_{t-1}+a_4y_t+a_6y_{t+1}<0$.}
\end{array}\right.
\end{align*}
Thus, for each $s_{\max}$ and $t$, the values of $v_1(s_{\max},t,\check{s}),v_1(s_{\max},t,\check{s}+1),\ldots,v_1(s_{\max},t,s_{\max})$ can be determined recursively in $O(T)$ time.
This implies that the $v_1(s_{\max},t,i)$ values (for all $s_{\max}$, $t$, and $i$) can be determined in $O(T^3)$ time.
Similarly, the $v_2(s_{\max},t,j)$ values (for all $s_{\max}$, $t$, and $j$) can be determined in $O(T^3)$ time.
For each $s_{\max}$, $t$, and $j$, let
$$\hat{v}_2(s_{\max},t,j)=\underset{\beta\in[j,s_{\max}]_\Z}{\max}\{v_2(s_{\max},t,\beta)\}$$
and
$$\hat{\beta}(s_{\max},t,j)=\underset{\beta\in[j,s_{\max}]_\Z}{\argmax}\{v_2(s_{\max},t,\beta)\}.$$
Note that for each $s_{\max}$ and $t$, the values of 
$\hat{v}_2(s_{\max},t,\check{s}+1),\hat{v}_2(s_{\max},t,\check{s}+2),\ldots,\hat{v}_2(s_{\max},t,s_{\max})$
and 
$\hat{\beta}_2(s_{\max},t,\check{s}+1),\hat{\beta}_2(s_{\max},t,\check{s}+2),\ldots,\hat{\beta}_2(s_{\max},t,s_{\max})$
can be determined in $O(T)$ time by setting
\begin{align*}
\hat{v}_2(s_{\max},t,j)=\left\{
\begin{array}{ll}
  \max\{\hat{v}_2(s_{\max},t,j+1),v_2(s_{\max},t,j)\}, &\hbox{if $j\le s_{\max}-1$;}\\[2pt]
  v_2(s_{\max},t,s_{\max}), &\hbox{if $j=s_{\max}$;}
\end{array}\right.
\end{align*}
and
\begin{align*}
\hat{\beta}(s_{\max},t,j)=\left\{
\begin{array}{ll}
  \hat{\beta}(s_{\max},t,j+1), &\hbox{if $\hat{v}_2(s_{\max},t,j+1)\ge v_2(s_{\max},t,j)$;}\\[2pt]
  j, &\hbox{if $\hat{v}_2(s_{\max},t,j+1)<v_2(s_{\max},t,j)$.}
\end{array}\right.
\end{align*}
This implies that the $\hat{v}_2(s_{\max},t,j)$ and $\hat{\beta}(s_{\max},t,j)$ values (for all $s_{\max}$, $t$, and $j$) can be determined in $O(T^3)$ time.
Note that the condition ``$\beta=\alpha+1$ or $s_{\max}\le L+\alpha$" in Proposition~\ref{prop:separation-B} implies that ``$\Se=[\check{s},s_{\max}]_\Z$" or ``$s_{\max}\le L+\alpha$."
For any $s_{\max}\in[\check{s}_{\max},\hat{s}_{\max}]_\Z$ and $t\in[s_{\max}+2,\hat{t}]_\Z$, 
if $\Se=[\check{s},s_{\max}]_\Z$, then the largest possible amount of violation of inequality \eqref{eq:B-inequality1} is equal to $v_1(s_{\max},t,s_{\max})$.
For any $s_{\max}\in[\check{s}_{\max},\hat{s}_{\max}]_\Z$, $t\in[s_{\max}+2,\hat{t}]_\Z$, and $\alpha\in[\check{s},s_{\max}-1]_\Z$, 
if $s_{\max}\le L+\alpha$, then the largest possible amount of violation of inequality \eqref{eq:B-inequality1} is equal to 
$v_1(s_{\max},t,\alpha)+v_2(s_{\max},t,\hat{\beta}(s_{\max},t,\alpha+1))=v_1(s_{\max},t,\alpha)+\hat{v}_2(s_{\max},t,\alpha+1)$.

To determine the most violated inequality \eqref{eq:B-inequality1} that satisfies the conditions in Proposition~\ref{prop:separation-B}, 
we first determine all $v_1(s_{\max},t,i)$, $v_2(s_{\max},t,j)$, $\hat{v}_2(s_{\max},t,j)$, and $\hat{\beta}(s_{\max},t,j)$ values, which requires $O(T^3)$ time.
Next, we search for the $s_{\max}$ and $t$ values such that $v_1(s_{\max},t,s_{\max})$ is the largest possible.
This requires $O(T^2)$ time.
Let $s_{\max}^*$ and $t^*$ be the $s_{\max}$ and $t$ values obtained, and let $\Se^*=[\check{s},s^*_{\max}]_\Z$.
Let $\eta^*=0$ if $a_2y_{t^*-1}+a_4y_{t^*}+a_6y_{t^*+1}\ge 0$, and $\eta^*=\hat{\eta}$ otherwise.
Next, we search for the $s_{\max}$, $t$, and $\alpha$ values, where $\alpha\in[s_{\max}-L,s_{\max}-1]_\Z$, such that $v_1(s_{\max},t,\alpha)+\hat{v}_2(s_{\max},t,\alpha+1)$ is the largest possible.
This requires $O(T^3)$ time.
Let $s^{**}_{\max}$, $t^{**}$, and $\alpha^{**}$ be the $s_{\max}$, $t$, and $\alpha$ values obtained, 
and let $\Se^{**}=[\check{s},\alpha^{**}]_\Z\cup[\beta^{**},s^{**}_{\max}]_\Z$, where $\beta^{**}=\hat{\beta}(s^{**}_{\max},t^{**},\alpha^{**}+1)$.
Let $\eta^{**}=0$ if $a_2y_{t^{**}-1}+a_4y_{t^{**}}+a_6y_{t^{**}+1}\ge 0$, and $\eta^{**}=\hat{\eta}$ otherwise.
If $v_1(s^*_{\max},t^*,s^*_{\max})>v_1(s^{**}_{\max},t^{**},\alpha^{**})+\hat{v}_2(s_{\max}^{**},t^{**},\alpha^{**}+1)$, 
then \rred{a} most violated inequality \eqref{eq:B-inequality1} is obtained by setting $\Se=\Se^*$, $\eta=\eta^*$, and $t=t^*$.
Otherwise, it is obtained by setting $\Se=\Se^{**}$, $\eta=\eta^{**}$, and $t=t^{**}$.
The overall computational time of this process is $O(T^3)$.

(ii)~Consider the following family of inequalities:
\begin{equation}\label{eq:B-inequality2}
x_t\le(a_1+a_2\eta)y_{t-1}+(a_3+a_4\eta)y_t+(a_5+a_6\eta)y_{t+1}-\sum_{s\in\Se}(\Cupper-\Vupper-sV)(y_{t+s}-y_{t+s+1}),
\end{equation}
where 
$\eta\in[0,\hat{\eta}]$,
$\Se=[\check{s},\alpha]_\Z\cup[\beta,s_{\max}]_\Z$, 
$t\in[\check{t},T-s_{\max}-1]_\Z$, and
$\alpha$, $\beta$, and $s_{\max}$ are integers such that
(a)~$\check{s}_{\max}\le s_{\max}\le\hat{s}_{\max}$,
(b)~$\check{s}\le\alpha<\beta\le s_{\max}$, and
(c)~$\beta=\alpha+1$ or $s_{\max}\le L+\alpha$.
Note that 
inequality family \eqref{eqn-T:x_t-ub-1-2} in Proposition~\ref{prop-T:x_t-ub-2},
inequality family \eqref{eqn-T:x_t-ub-3-2} in Proposition~\ref{prop-T:x_t-ub-4}, and
inequality family \eqref{eqn-T:x_t-ub-5-2} in Proposition~\ref{prop-T:x_t-ub-6}
are special cases of this inequality family.
Consider any given point $(\BFx,\BFy)\in\R_{+}^{2T}$.
Let $x'_t=x_{T-t+1}$ and $y'_t=y_{T-t+1}$ for $t\in[1,T]_\Z$.
Inequality \eqref{eq:B-inequality2} becomes
$$x'_{T-t+1}\le(a_1+a_2\eta)y'_{T-t+2}+(a_3+a_4\eta)y'_{T-t+1}+(a_5+a_6\eta)y'_{T-t}-\sum_{s\in\Se}(\Cupper-\Vupper-sV)(y'_{T-t-s+1}-y'_{T-t-s}).$$
Letting $t'=T-t+1$, this inequality becomes
\begin{equation}\label{eq:B-inequality2prime}
x'_{t'}\le(a_5+a_6\eta)y'_{t'-1}+(a_3+a_4\eta)y'_{t'}+(a_1+a_2\eta)y'_{t'+1}-\sum_{s\in\Se}(\Cupper-\Vupper-sV)(y'_{t'-s}-y'_{t'-s-1}).
\end{equation}
Let $\hat{t}'=T$ if $a_1=a_2=0$, and let $\hat{t}'=T-1$ otherwise.
From the analysis in part~(i), the integers $\alpha$, $\beta$, $s_{\max}$, and $t'\in[s_{\max}+2,\hat{t}']_\Z$ corresponding to a most violated inequality \eqref{eq:B-inequality2prime} can be obtained in $O(T^3)$ time.
Hence, the integers $\alpha$, $\beta$, $s_{\max}$, and $t\in[\check{t},T-s_{\max}-1]_\Z$ corresponding to a most violated inequality \eqref{eq:B-inequality2} can be obtained in $O(T^3)$ time.

Summarizing (i) and (ii), we conclude that for any given point $(\BFx,\BFy)\in\R_{+}^{2T}$, \tred{the} most violated inequalities 
\eqref{eqn-T:x_t-ub-1-1}--\eqref{eqn-T:x_t-ub-1-2}, \eqref{eqn-T:x_t-ub-3-1}--\eqref{eqn-T:x_t-ub-3-2}, and \eqref{eqn-T:x_t-ub-5-1}--\eqref{eqn-T:x_t-ub-5-2}
in Propositions \ref{prop-T:x_t-ub-2}, \ref{prop-T:x_t-ub-4}, and \ref{prop-T:x_t-ub-6}, respectively,
can be determined in $O(T^3)$ time if such violated inequalities exist.
\Halmos

\subsection{Proof of Proposition \ref{prop-T:kth-ramp-2}}

\noindent{\bf Proposition \ref{prop-T:kth-ramp-2}.} {\it
Consider any $k\in[1,T-1]_\Z$ such that $\Cupper-\Clower-kV>0$, any $m\in[0,k-1]_\Z$, and any $\Se\subseteq[0,\min\{k-1,L-m-1\}]_\Z$.
For any $t\in[k+1,T-m]_\Z$, the inequality
\begin{align}
x_t-x_{t-k}\le (\Clower+(k-m)V)y_t+V\sum_{i=1}^m y_{t+i}-\Clower y_{t-k}-\sum_{s\in\Se}(\Clower+(k-s)V-\Vupper)(y_{t-s}-y_{t-s-1})\tag{\ref{eqn-T:kth-ramp-2-1}}
\end{align}
is valid for $\convP$.
For any $t\in[m+1,T-k]_\Z$, the inequality
\begin{align}
x_t-x_{t+k}\le (\Clower+(k-m)V)y_t+V\sum_{i=1}^m y_{t-i}-\Clower y_{t+k}-\sum_{s\in\Se}(\Clower+(k-s)V-\Vupper)(y_{t+s}-y_{t+s+1})\tag{\ref{eqn-T:kth-ramp-2-2}}
\end{align}
is valid for $\convP$.
Furthermore, \eqref{eqn-T:kth-ramp-2-1} and \eqref{eqn-T:kth-ramp-2-2} are facet-defining for $\convP$ when $m=0$ and $s\ge\min\{k-1,1\}$ for all $s\in\Se$.}
\vskip8pt

\noindent{\bf Proof.}
We first prove that inequality \eqref{eqn-T:kth-ramp-2-1} is valid and facet-defining for $\convP$.
For notational convenience, we define $s_{\max}=\max\{s:s\in\Se\}$ if $\Se\ne\emptyset$, and $s_{\max}=-1$ if $\Se=\emptyset$.
Consider any $t\in[k+1,T-m]_\Z$.
To prove that the linear inequality \eqref{eqn-T:kth-ramp-2-1} is valid for $\convP$, it suffices to show that it is valid for $\Pe$.
Consider any element $(\BFx,\BFy)$ of $\Pe$.
We show that $(\BFx,\BFy)$ satisfies \eqref{eqn-T:kth-ramp-2-1}.
We divide the analysis into three cases:

Case~1: $y_t=0$.
In this case, by \eqref{eqn:p-lower-bound} and \eqref{eqn:p-upper-bound}, $-x_{t-k}\le-\Clower y_{t-k}$ and $x_t=0$.
Thus, \rred{the} left-hand side of \eqref{eqn-T:kth-ramp-2-1} is at most $-\Clower y_{t-k}$ and the first term on the right-hand side of \eqref{eqn-T:kth-ramp-2-1} is $0$.
Because $y_t=0$ and $t\in[2,T]_{\Z}$, by Lemma \ref{lem:lookbackward}(i), we have $y_{t-j}-y_{t-j-1}\le0$ for all $j\in[0,\min\{t-2,L-1\}]_{\Z}$.
Because $\Se\subseteq[0,\min\{k-1,L-m-1\}]_{\Z}$, $m\ge0$ and, $t\ge k+1$, we have $\Se\subseteq[0,\min\{t-2,L-1\}]_{\Z}$.
Thus, $y_{t-s}-y_{t-s-1}\le0$ for all $s\in\Se$.
Because $\Se\subseteq[0,k-1]_{\Z}$ and $\Clower+V>\Vupper$, for any $s\in\Se$, the coefficient ``$\Clower+(k-s)V-\Vupper$" on the right-hand side of \eqref{eqn-T:kth-ramp-2-1} is positive.
Hence, the right-hand side of \eqref{eqn-T:kth-ramp-2-1} is at least $-\Clower y_{t-k}$.
Therefore, in this case, $(\BFx,\BFy)$ satisfies \eqref{eqn-T:kth-ramp-2-1}.

Case~2: $y_t=1$ and $y_{t-s'}-y_{t-s'-1}=1$ for some $s'\in\Se$.
In this case, $y_{t-s'}=1$ and $y_{t-s'-1}=0$.
Because $y_t=1$ and $t\in[2,T]_{\Z}$, by Lemma \ref{lem:lookbackward}(ii), there exists at most one $j\in[0,\min\{t-2,L\}]_{\Z}$ such that $y_{t-j}-y_{t-j-1}=1$.
Because $\Se\subseteq[0,\min\{k-1,L-m-1\}]_{\Z}$, $m\ge0$ and, $t\ge k+1$, we have $\Se\subseteq[0,\min\{t-2,L\}]_{\Z}$.
\rred{Thus,} $y_{t-s}-y_{t-s-1}\le0$ for all $s\in\Se\setminus\{s'\}$.
Because $y_{t-s'}-y_{t-s'-1}=1$ and $t-s'\in[2,T]_{\Z}$, by \eqref{eqn:p-minup}, we have $\rred{y_{\tau}}=1$ for all $\rred{\tau}\in[t-s',\min\{T,t-s'+L-1\}]_{\Z}$.
Because $\Se\subseteq[0, L-m-1]_{\Z}$, we have $t-s'+L-1\ge t+m$.
Thus, $y_{\tau}=1$ for all $\tau\in[t-s',t+m]_{\Z}$\rred{, which implies that $(\Clower+(k-m)V)y_t+V\sum_{i=1}^my_{t+i}=\Clower+kV$}.
Because $\Se\subseteq[0,k-1]_{\Z}$ and $\Clower+V>\Vupper$, the coefficient ``$\Clower+(k-s)V-\Vupper$" on the right-hand side of \eqref{eqn-T:kth-ramp-2-1} is positive for all $s\in\Se$.
Hence, the right-hand side of \rred{\eqref{eqn-T:kth-ramp-2-1}} is at least $\rred{\Clower+kV-\Clower y_{t-k}-(\Clower+(k-s')V-\Vupper)}=s'V+\Vupper-\Clower y_{t-k}$.
\rred{By \eqref{eqn:p-ramp-up}, $\sum_{\tau=t-s'}^{t}(x_{\tau}-x_{\tau-1})\le\sum_{\tau=t-s'}^{t}Vy_{\tau-1}+\sum_{\tau=t-s'}^{t}\Vupper(1-y_{\tau-1})$}, which implies that $x_t-x_{t-s'-1}\le s'V+\Vupper$.
Because $y_{t-s'-1}=0$, by \eqref{eqn:p-upper-bound}, \rred{$x_{t-s'-1}=0$.}
\rred{By \eqref{eqn:p-lower-bound}, $-x_{t-k}\ge-\Clower y_{t-k}$.}
\rred{Thus,} $x_t-x_{t-k}\le s'V+\Vupper-\Clower y_{t-k}$.
Therefore, in this case, $(\BFx,\BFy)$ satisfies \eqref{eqn-T:kth-ramp-2-1}.

Case~3: $y_t=1$ and $y_{t-s}-y_{t-s-1}\le0$ for all $s\in\Se$.
\rred{Because $\Se\subseteq[0,k-1]_{\Z}$ and $\Clower+V>\Vupper$, we have $\Clower+(k-s)V-\Vupper>0$ for each $s\in\Se$.
Hence, the term ``$-\sum_{s\in\Se}(\Clower+(k-s)V-\Vupper)(y_{t-s}-y_{t-s-1})$" on the right-hand side of inequality \eqref{eqn-T:kth-ramp-2-1} is nonnegative.
We divide our analysis into three subcases.}

\rred{Case~3.1: $y_{\tau}=0$ for some $\tau\in[t,t+m]_\Z$.
Let $t'=\min\{\tau\in[t,t+m]_\Z:y_{\tau}=0\}$.
Then, $y_{\tau}=1$ for all $\tau\in[t,t'-1]_\Z$.
Thus, the right-hand side of \eqref{eqn-T:kth-ramp-2-1} is at least $\Clower+(k-m)V+(t'-t-1)V-\Clower y_{t-k}$.
Because $k-m\ge 1$ and $\Clower+V>\Vupper$, the right-hand side of \eqref{eqn-T:kth-ramp-2-1} is at least $\Vupper+(t'-t-1)V-\Clower y_{t-k}$.
By \eqref{eqn:p-ramp-down}, $\sum_{\tau=t+1}^{t'}(x_{\tau-1}-x_{\tau})\le\sum_{\tau=t+1}^{t'}Vy_{\tau}+\sum_{\tau=t+1}^{t'}\Vupper(1-y_{\tau})$, 
which implies that $x_t-x_{t'}\le\Vupper+(t'-t-1)V$.
Because $y_{t'}=0$, by \eqref{eqn:p-upper-bound}, $x_{t'}=0$.
By \eqref{eqn:p-lower-bound}, $-x_{t-k}\le-\Clower y_{t-k}$.
Hence, $x_t-x_{t-k}\le\Vupper+(t'-t-1)V-\Clower y_{t-k}$.
Thus, the left-hand side of \eqref{eqn-T:kth-ramp-2-1} is less than or equal to the right-hand side.}

\rred{Case~3.2: $y_{\tau}=1$ for all $\tau\in[t,t+m]_\Z$ and $y_{\tau}=0$ for some $\tau\in[t-k,t-1]_\Z$.
In this case, the right-hand side of \eqref{eqn-T:kth-ramp-2-1} is at least $\Clower+kV-\Clower y_{t-k}$.
Let $t'=\max\{\tau\in[t-k,t-1]_\Z:y_{\tau}=0\}$.
Because $t'\ge t-k$ and $\Clower+V>\Vupper$, the right-hand side of \eqref{eqn-T:kth-ramp-2-1} is greater than $\Vupper+(t-t'-1)V-\Clower y_{t-k}$.
By \eqref{eqn:p-ramp-up}, $\sum_{\tau=t'+1}^t(x_{\tau}-x_{\tau-1})\le\sum_{\tau=t'+1}^tVy_{\tau-1}+\sum_{\tau=t'+1}^t\Vupper(1-y_{\tau-1})$, 
which implies that $x_t-x_{t'}\le\Vupper+(t-t'-1)V$.
Because $y_{t'}=0$, by \eqref{eqn:p-upper-bound}, $x_{t'}=0$.
By \eqref{eqn:p-lower-bound}, $-x_{t-k}\le-\Clower y_{t-k}$.
Hence, $x_t-x_{t-k}\le\Vupper+(t-t'-1)V-\Clower y_{t-k}$.
Thus, the left-hand side of \eqref{eqn-T:kth-ramp-2-1} is less than the right-hand side.}

\rred{Case~3.3: $y_{\tau}=1$ for all $\tau\in[t-k,t+m]_\Z$.
In this case, the right-hand side of \eqref{eqn-T:kth-ramp-2-1} is at least $kV$.
By \eqref{eqn:p-ramp-up}, $\sum_{\tau=t-k+1}^t(x_{\tau}-x_{\tau-1})\le\sum_{\tau=t-k+1}^tVy_{\tau-1}+\sum_{\tau=t-k+1}^t\Vupper(1-y_{\tau-1})$, 
which implies that $x_t-x_{t-k}\le kV$.
Thus, the left-hand side of \eqref{eqn-T:kth-ramp-2-1} is less than or equal to the right-hand side.}

\rred{In Cases 3.1--3.3, $(\BFx,\BFy)$ satisfies \eqref{eqn-T:kth-ramp-2-1}.
Summarizing Cases 1--3, we conclude that \eqref{eqn-T:kth-ramp-2-1} is valid for $\convP$.}

\rred{Consider any $t\in[k+1,T-m]_\Z$.
To prove that inequality \eqref{eqn-T:kth-ramp-2-1} is facet-defining for $\convP$ when $m=0$ and $s\ge\min\{k-1,1\}$ for all $s\in\Se$, 
it suffices to show that there exist $2T$ affinely independent points in $\convP$ that satisfy \eqref{eqn-T:kth-ramp-2-1} at equality when $m=0$ and $s\ge\min\{k-1,1\}$ for all $s\in\Se$.
Because $\BFzero\in\convP$ and $\BFzero$ satisfies \eqref{eqn-T:kth-ramp-2-1} at equality, it suffices to create the remaining $2T-1$ nonzero linearly independent points.
We denote these $2T-1$ points as $(\bar{\BFx}^r,\bar{\BFy}^r)$ for $r\in[1,T]_{\Z}\setminus\{t-k\}$ and $(\hat{\BFx}^r,\hat{\BFy}^r)$ for $r\in[1,T]_{\Z}$, 
and denote the $q$th component of $\bar{\BFx}^r$, $\bar{\BFy}^r$, $\hat{\BFx}^r$, and $\hat{\BFy}^r$ as $\bar{x}^r_q$, $\bar{y}^r_q$, $\hat{x}^r_q$, and $\hat{y}^r_q$, respectively.
Let $\epsilon=\min\{\Vupper-\Clower, \Cupper-\Clower-kV\}>0$.}
We divide these $2T-1$ points into the following eight groups:
\begin{enumerate}[label=(A\arabic*)]
    \item\label{points:prop14-eq1-A1} For each $r\in[1,t-k-1]_{\Z}$, we create a point $(\bar{\BFx}^r,\bar{\BFy}^r)$ as follows:
    \begin{equation*}
        \bar{x}^r_q=\left\{
        \begin{array}{ll}
        \Clower, &\hbox{for $q\in[1,r-1]_{\Z}$};\\
        \Clower+\epsilon, &\hbox{for $q=r$};\\
        0, &\hbox{for $q\in[r+1,T]$};
        \end{array}\right.
    \end{equation*}
    and
    \begin{equation*}
        \bar{y}^r_q=\left\{
        \begin{array}{ll}
        1, &\hbox{for $q\in[1,r]_{\Z}$};\\
        0, &\hbox{for $q\in[r+1,T]_{\Z}$}.
        \end{array}\right.
    \end{equation*}
    It is easy to verify that $(\bar{\BFx}^r,\bar{\BFy}^r)$ satisfies \eqref{eqn:p-minup}--\eqref{eqn:p-ramp-down}.
    Thus, $(\bar{\BFx}^r,\bar{\BFy}^r)\in\convP$.
    Note that $\bar{x}^r_t=\bar{x}^r_{t-k}=\bar{y}^r_t=\bar{y}^r_{t-k}=0$ and $m=0$.
    Because $t-s-1\ne r$ for all $s\in\Se$, we have $\bar{y}^r_{t-s}-\bar{y}^r_{t-s-1}=0$ for all $s\in\Se$.
    Hence, $(\bar{\BFx}^r,\bar{\BFy}^r)$ satisfies \eqref{eqn-T:kth-ramp-2-2} at equality.
    
    \item\label{points:prop14-eq1-A2} For each $r\in[t-k+1,t-1]_{\Z}$, we create a point $(\bar{\BFx}^r,\bar{\BFy}^r)$ as follows:
    \begin{equation*}
        \bar{x}^r_q=\left\{
        \begin{array}{ll}
        \Clower, &\hbox{for $q\in[1,t-1]_{\Z}\setminus\{r\}$};\\
        \Clower+\epsilon, &\hbox{for $q=r$};\\
        0, &\hbox{for $q\in[t,T]_{\Z}$};
        \end{array}\right.
    \end{equation*}
    and
    \begin{equation*}
        \bar{y}^r_q=\left\{
        \begin{array}{ll}
        1, &\hbox{for $q\in[1,t-1]_{\Z}$};\\
        0, &\hbox{for $q\in[t,T]_{\Z}$}.
        \end{array}\right.
    \end{equation*}
    It is easy to verify that $(\bar{\BFx}^r,\bar{\BFy}^r)$ satisfies \eqref{eqn:p-minup}--\eqref{eqn:p-ramp-down}.
    Thus, $(\bar{\BFx}^r,\bar{\BFy}^r)\in\convP$.
    Note that $\bar{x}^r_t=\bar{y}^r_t=0$, $\bar{x}^r_{t-k}=\Clower$, $\bar{y}^r_{t-k}=1$, and $m=0$.
    The existence of $r\in[t-k+1,t-1]_\Z$ implies that $k\ge 2$, which in turn implies that $s\ge 1$ for all $s\in\Se$.
    Hence, $\bar{y}^r_{t-s}-\bar{y}^r_{t-s-1}=0$ for all $s\in\Se$.
    Thus, $(\bar{\BFx}^r,\bar{\BFy}^r)$ satisfies \eqref{eqn-T:kth-ramp-2-2} at equality.
    
    \item\label{points:prop14-eq1-A3}
     We create a point $(\bar{\BFx}^t,\bar{\BFy}^t)$ as follows:
     \begin{equation*}
        \bar{x}^t_q=\left\{
        \begin{array}{ll}
        \Clower, &\hbox{for $q\in[1,t-k-1]_{\Z}$};\\
        \Clower+(q-t+k)V+\epsilon, &\hbox{for $q\in[t-k,t]_{\Z}$};\\
        \Clower+kV, &\hbox{for $q\in[t+1,T]_{\Z}$};
        \end{array}\right.
    \end{equation*}
    and $\bar{y}^t_q=1$ for all $q\in[1,T]_{\Z}$.
    It is easy to verify that $(\bar{\BFx}^t,\bar{\BFy}^t)$ satisfies \eqref{eqn:p-minup}--\eqref{eqn:p-upper-bound}.
    Note that $\bar{x}^t_q-\bar{x}^t_{q-1}=0$ when $q\in[2,t-k-1]_{\Z}$, $0<\bar{x}^t_q-\bar{x}^t_{q-1}\le V$ when $q\in[t-k,t]_{\Z}$, and $-\epsilon\le\bar{x}^t_q-\bar{x}^t_{q-1}\le0$ when $q\in[t+1,T]_{\Z}$.
    Thus, $-V\bar{y}^t_q-\Vupper(1-\bar{y}^t_q)\le\bar{x}^t_q-\bar{x}^t_{q-1}\le V\bar{y}^t_{q-1}+\Vupper(1-\bar{y}^t_{q-1})$ for all $q\in[2,T]_\Z$.
    Hence, $(\bar{\BFx}^t,\bar{\BFy}^t)$ satisfies \eqref{eqn:p-ramp-up} and \eqref{eqn:p-ramp-down}.
    Therefore, $(\bar{\BFx}^t,\bar{\BFy}^t)\in\convP$.
    Note that $\bar{x}^t_t=\Clower+kV+\epsilon$, $\bar{x}^t_{t-k}=\Clower+\epsilon$, $\bar{y}^t_t=\bar{y}^t_{t-k}=1$, $m=0$, and $\bar{y}^t_{t-s}-\bar{y}^t_{t-s-1}=0$ for all $s\in\Se$.
    Thus, $(\bar{\BFx}^t,\bar{\BFy}^t)$ satisfies \eqref{eqn-T:kth-ramp-2-1} at equality.
    
    \item\label{points:prop14-eq1-A4} For each $r\in[t+1,T]_\Z$, we create a point $(\bar{\BFx}^r,\bar{\BFy}^r)$ as follows:
    \begin{equation*}
        \bar{x}^r_q=\left\{\rred{
        \begin{array}{ll}
        0, &\hbox{for $q\in[1,r-1]_\Z$};\\
        \Clower+\epsilon, &\hbox{for $q=r$};\\
        \Clower, &\hbox{for $q\in[r+1,T]_\Z$};
        \end{array}}\right.
    \end{equation*}
    and 
    \begin{equation*}
        \bar{y}^r_q=\left\{\rred{
        \begin{array}{ll}
        0, &\hbox{for $q\in[1,r-1]_\Z$};\\
        1, &\hbox{for $q\in[r,T]_\Z$}.\\
        \end{array}}\right.
    \end{equation*}
    It is easy to verify that $(\bar{\BFx}^r,\bar{\BFy}^r)$ satisfies \eqref{eqn:p-minup}--\eqref{eqn:p-ramp-down}.
    Thus, $(\bar{\BFx}^r,\bar{\BFy}^r)\in\convP$.
    Note that $\bar{x}^r_t=\bar{x}^r_{t-k}=\bar{y}^r_t=\rred{\bar{y}^r_{t-k}}=0$, $m=0$, and $\bar{y}^r_{t-s}-\bar{y}^r_{t-s-1}=0$ for all $s\in\Se$.
    Hence, $(\bar{\BFx}^t,\bar{\BFy}^t)$ satisfies \eqref{eqn-T:kth-ramp-2-1} at equality.
    
    \item\label{points:prop14-eq1-A5} For each \rred{$r\in[1,t-1]_{\Z}$}, we create the same point $(\hat{\BFx}^r,\hat{\BFy}^r)$ as in group \ref{points:prop1-eq1-A2} in the proof of Proposition~\ref{prop-T:x_t-ub-1}.
    Thus, $(\hat{\BFx}^r,\hat{\BFy}^r)\in\convP$.
    To show that $(\hat{\BFx}^r,\hat{\BFy}^r)$ satisfies \eqref{eqn-T:kth-ramp-2-1} at equality, we first consider the case where $t-r-1\notin\Se$.
    In this case, $\hat{x}^r_t=\hat{y}^r_t=0$ and $m=0$. Because $t-k\le t-s_{\max}-1\le r$, we have $\hat{x}^r_{t-k}=\Clower$ and $\hat{y}^r_{t-k}=1$.
    Because $t-s-1\ne r$ for all $s\in\Se$, we have $\hat{y}^r_{t-s}-\hat{y}^r_{t-s-1}=0$ for all $s\in\Se$.
    Hence, $(\hat{\BFx}^r,\hat{\BFy}^r)$ satisfies \eqref{eqn-T:kth-ramp-2-1} at equality.
    Next, we consider the case where $t-r-1\in\Se$.
    In this case, $\hat{x}^r_t=\Vupper+(t-r-1)V$, $\hat{y}^r_t=1$, $\hat{x}^r_{t-k}=\hat{y}^r_{t-k}=0$, and $m=0$.
    In addition, $\hat{y}^r_{t-s}-\hat{y}^r_{t-s-1}=1$ when $s=t-r-1$, and $\hat{y}^r_{t-s}-\hat{y}^r_{t-s-1}=0$ when $s\ne t-r-1$.
    Hence, $(\hat{\BFx}^r,\hat{\BFy}^r)$ satisfies \eqref{eqn-T:kth-ramp-2-1} at equality.
    
    \item\label{points:prop14-eq1-A6} We create a point $(\hat{\BFx}^t,\hat{\BFy}^t)$ as follows: 
    \begin{equation*}
        \hat{\BFx}^t_q=\left\{
        \begin{array}{ll}
        \Clower, &\hbox{for $q\in[1,t-k-1]_\Z$};\\
        \Clower+(q-t+k)V, &\hbox{for $q\in[t-k,t]_\Z$};\\
        \Clower+kV, &\hbox{for $q\in[t+1,T]_\Z$};
        \end{array}\right.
    \end{equation*}
    and $\hat{y}^{t}_q=1$ for all $q\in[1,T]_\Z$.
    It is easy to verify that $(\hat{\BFx}^t,\hat{\BFy}^t)$ satisfies \eqref{eqn:p-minup}--\eqref{eqn:p-upper-bound}.
    Note that 
    $\hat{x}^t_q-\hat{x}^t_{q-1}=0$ when $q\in[2,t-k]_\Z$, 
    $\hat{x}^t_q-\hat{x}^t_{q-1}=V$ when $q\in[t-k+1,t]_\Z$, and 
    $\hat{x}^t_q-\hat{x}^t_{q-1}=0$ when $q\in[t+1,T]_\Z$.
    Thus, $-V\hat{y}^t_q-\Vupper(1-\hat{y}^t_q)\le\hat{x}^t_q-\hat{x}^t_{q-1}\le V\hat{y}^t_{q-1}+\Vupper(1-\hat{y}^t_{q-1})$ for all $q\in[2,T]_\Z$.
    Hence, $(\hat{\BFx}^t,\hat{\BFy}^t)$ satisfies \eqref{eqn:p-ramp-up} and \eqref{eqn:p-ramp-down}.
    Therefore, $(\hat{\BFx}^t,\hat{\BFy}^t)\in\convP$.
    Note that $\hat{x}^t_t=\Clower+kV$, $\hat{x}^t_{t-k}=\Clower$, $\hat{y}^t_t=\hat{y}^t_{t-k}=1$, $m=0$,
    and $\hat{y}^t_{t-s}-\hat{y}^t_{t-s-1}=0$ for all $s\in\Se$, 
    Thus, $(\hat{\BFx}^t,\hat{\BFy}^t)$ satisfies \eqref{eqn-T:kth-ramp-2-1} at equality.
    
    \item\label{points:prop14-eq1-A7} For each $r\in[t+1,T]_{\Z}$, we create the same point $(\hat{\BFx}^r,\hat{\BFy}^r)$ as in group \ref{points:prop1-eq1-A4} in the proof of Proposition~\ref{prop-T:x_t-ub-1}.
    Thus, $(\hat{\BFx}^r,\hat{\BFy}^r)\in\convP$.
    Note that $\hat{x}^r_t=\hat{x}^r_{t-k}=\hat{y}^r_t=\hat{y}^r_{t-k}=0$, $m=0$, and $\hat{y}^r_{t-s}-\hat{y}^r_{t-s-1}=0$ for all $s\in\Se$.
    Hence, $(\hat{\BFx}^r,\hat{\BFy}^r)$ satisfies \eqref{eqn-T:kth-ramp-2-1} at equality.
\end{enumerate}

Table \ref{tab:eqn-T:kth-ramp-2-1-facet-matrix-1} shows a matrix with $2T-1$ rows, where each row represents a point created by this process.
This matrix can be transformed into the matrix in Table \ref{tab:eqn-T:kth-ramp-2-1-facet-matrix-2} via the following Gaussian elimination process:

\begin{enumerate}[label=(\roman*)]
    \item For each $r\in[1,t-k-1]_{\Z}$, the point with index $r$ in group (B1), denoted $(\underline{\bar{\BFx}}^r,\underline{\bar{\BFy}}^r)$, is obtained by 
    setting $(\underline{\bar{\BFx}}^r,\underline{\bar{\BFy}}^r)=(\bar{\BFx}^r,\bar{\BFy}^r)-(\hat{\BFx}^r,\hat{\BFy}^r)$.
    Here, $(\bar{\BFx}^r,\bar{\BFy}^r)$ is the point with index $r$ in group \ref{points:prop14-eq1-A1}, and $(\hat{\BFx}^r,\hat{\BFy}^r)$ is the point with index $r$ in group \ref{points:prop14-eq1-A5}.
    \rred{Note that $t-r-1\notin\Se$ for all $r\in[1,t-k-1]_\Z$.
    Thus, when $r\le t-k-1$, the point with index $r$ in group \ref{points:prop14-eq1-A5} is given by $\hat{x}^r_q=\Clower$ and $\hat{y}^r_q=1$ for $q\in[1,r]_\Z$, and $\hat{x}^r_q=\hat{y}^r_q=0$ for $q\in[r+1,T]_\Z$.}
    
    \item For each $r\in[t-k+1,t-1]_{\Z}$, the point with index $r$ in group (B2), denoted $(\underline{\bar{\BFx}}^r,\underline{\bar{\BFy}}^r)$, is obtained by 
    setting $(\underline{\bar{\BFx}}^r,\underline{\bar{\BFy}}^r)=(\bar{\BFx}^r,\bar{\BFy}^r)-(\hat{\BFx}^{t-1},\hat{\BFy}^{t-1})$.
    Here, $(\bar{\BFx}^r,\bar{\BFy}^r)$ is the point in group \ref{points:prop14-eq1-A2}, and $(\hat{\BFx}^{t-1},\hat{\BFy}^{t-1})$ is the point with index $t-1$ in group \ref{points:prop14-eq1-A5}.
    Note that because $s\ge 1$ for all $s\in\Se$, the point with index $t-1$ in group \ref{points:prop14-eq1-A5} is given by 
    $\hat{x}^{t-1}_q=\Clower$ and $\hat{y}^{t-1}_q=1$ for $q\in[1,t-1]_\Z$, and $\hat{x}^{t-1}_q=\hat{y}^{t-1}_q=0$ for $q\in[t,T]_\Z$.
    
    \item The point in group (B3), denoted $(\underline{\bar{\BFx}}^t,\underline{\bar{\BFy}}^t)$, is obtained by 
    setting $(\underline{\bar{\BFx}}^t,\underline{\bar{\BFy}}^t)=(\bar{\BFx}^t,\bar{\BFy}^t)-(\hat{\BFx}^t,\hat{\BFy}^t)$.
    Here, $(\bar{\BFx}^t,\bar{\BFy}^t)$ is the point in group \ref{points:prop14-eq1-A3}, and $(\hat{\BFx}^t,\hat{\BFy}^t)$ is the point in group \ref{points:prop14-eq1-A6}.

    \item For each $r\in[t+1,T]_{\Z}$, the point with index $r$ in group (B4), denoted $(\underline{\bar{\BFx}}^r,\underline{\bar{\BFy}}^r)$, is obtained by 
    setting $(\underline{\bar{\BFx}}^r,\underline{\bar{\BFy}}^r)=(\bar{\BFx}^r,\bar{\BFy}^r)-(\hat{\BFx}^r,\hat{\BFy}^r)$.
    Here, $(\bar{\BFx}^r,\bar{\BFy}^r)$ is the point with index $r$ in group \ref{points:prop14-eq1-A4}, and $(\hat{\BFx}^r,\hat{\BFy}^r)$ is the point in group \ref{points:prop14-eq1-A7}.
    
    \item For each \rred{$r\in[1,t-1]_{\Z}$}, the point with index $r$ in group \rred{(B5)}, denoted $(\underline{\hat{\BFx}}^r,\underline{\hat{\BFy}}^r)$, is obtained by setting $(\underline{\hat{\BFx}}^r,\underline{\hat{\BFy}}^r)=(\hat{\BFx}^r,\hat{\BFy}^r)$ if $t-r-1\notin\Se$, and setting $(\underline{\hat{\BFx}}^r,\underline{\hat{\BFy}}^r)=\rred{(\hat{\BFx}^t,\hat{\BFy}^t)-(\hat{\BFx}^r,\hat{\BFy}^r)}$ if $t-r-1\in\Se$.
    Here, $(\hat{\BFx}^r,\hat{\BFy}^r)$ is the point with index $r$ in group \ref{points:prop14-eq1-A5}, and $(\hat{\BFx}^t,\hat{\BFy}^t)$ is the point in group \ref{points:prop14-eq1-A6}.

    \item The point in group \rred{(B6)}, denoted $(\underline{\hat{\BFx}}^t,\underline{\hat{\BFy}}^t)$, is obtained by setting $(\underline{\hat{\BFx}}^t,\underline{\hat{\BFy}}^t)=(\hat{\BFx}^t,\hat{\BFy}^t)-(\hat{\BFx}^{t+1},\hat{\BFy}^{t+1})$.
    Here, $(\hat{\BFx}^t,\hat{\BFy}^t)$ is the point in group \ref{points:prop14-eq1-A6}, and $(\hat{\BFx}^{t+1},\hat{\BFy}^{t+1})$ is the point with index $t+1$ in group \ref{points:prop14-eq1-A7}.
    
    \item For each $r\in[t+1,T]_{\Z}$, the point with index $r$ in group \rred{(B7)}, denoted $(\underline{\hat{\BFx}}^r,\underline{\hat{\BFy}}^r)$, is obtained by setting $(\underline{\hat{\BFx}}^r,\underline{\hat{\BFy}}^r)=(\hat{\BFx}^r,\hat{\BFy}^r)-(\hat{\BFx}^{r+1},\hat{\BFy}^{r+1})$ if $r\neq T$, and setting $(\underline{\hat{\BFx}}^r,\underline{\hat{\BFy}}^r)=(\hat{\BFx}^r,\hat{\BFy}^r)$ if $r=T$.
    Here, $(\hat{\BFx}^r,\hat{\BFy}^r)$ and $(\hat{\BFx}^{r+1},\hat{\BFy}^{r+1})$ are the points with indices $r$ and $r+1$, respectively, in group \ref{points:prop14-eq1-A7}.
\end{enumerate}

\afterpage{
\begin{landscape}
\begin{table}
    \renewcommand{\arraystretch}{2}
    \centering
    \caption{A matrix with the rows representing $2T-1$ linearly independent points in $\convP$ satisfying inequality \eqref{eqn-T:kth-ramp-2-1} at equality}
    \rule{0pt}{0ex}
    \setlength\tabcolsep{4pt}
    \sscriptsize
        \begin{tabular}{|c|c|c|*{11}{c}|*{11}{c}|}
        \hline
         \multirow{2}{*}{Group} & \multirow{2}{*}{Point} & \multirow{2}{*}{Index $r$} & \multicolumn{11}{c|}{$\BFx$} & \multicolumn{11}{c|}{$\BFy$} \\
         \cline{4-14}\cline{15-25}
         &&& $1$ & $\cdots$ & $t\!-\!k\!-\!1$ & $t\!-\!k$ & $t\!-\!k\!+\!1$ & $\cdots$ & $t\!-\!1$ & $t$ & $t\!+\!1$ & $\cdots$ & $T$ 
           & $1$ & $\cdots$ & $t\!-\!k\!-\!1$ & $t\!-\!k$ & $t\!-\!k\!+\!1$ & $\cdots$ & $t\!-\!1$ & $t$ & $t\!+\!1$ & $\cdots$ & $T$ \\
         
         \hline
         \multirow{3}{*}{(A1)} & \multirow{10}{*}{$(\bar{\BFx}^r,\bar{\BFy}^r)$}
         & $1$ 
         & $\Clower\!+\!\epsilon$ & $\cdots$ & $0$ & $0$ & $0$ & $\cdots$ & $0$ & $0$ & $0$ & $\cdots$ & $0$
         & $1$ & $\cdots$ & $0$ & $0$ & $0$ & $\cdots$ & $0$ & $0$ & $0$ & $\cdots$ & $0$\\
         && $\vdots$
         & $\vdots$ & \rotatebox{0}{$\ddots$} & $\vdots$ & $\vdots$ & $\vdots$ && $\vdots$ & $\vdots$ & $\vdots$ && $\vdots$
         & $\vdots$ & \rotatebox{0}{$\ddots$} & $\vdots$ & $\vdots$ & $\vdots$ && $\vdots$ & $\vdots$ & $\vdots$ && $\vdots$\\
         && $t\!-\!k\!-\!1$ 
         & $\Clower$ & $\cdots$ & $\Clower\!+\!\epsilon$ & $0$ & $0$ & $\cdots$ & $0$ & $0$ & $0$ & $\cdots$ & $0$
         & $1$ & $\cdots$ & $1$ & $0$ & $0$ & $\cdots$ & $0$ & $0$ & $0$ & $\cdots$ & $0$\\
         
         \cline{1-1} \cline{3-25}
         \multirow{3}{*}{(A2)} &
         & $t\!-\!k\!+\!1$ 
         & $\Clower$ & $\cdots$ & $\Clower$ & $\Clower$ & $\Clower\!+\!\epsilon$ & $\cdots$ & $\Clower$ & $0$ & $0$ & $\cdots$ & $0$
         & $1$ & $\cdots$ & $1$ & $1$ & $1$ & $\cdots$ & $1$ & $0$ & $0$ & $\cdots$ & $0$\\
         && $\vdots$
         & $\vdots$ && $\vdots$ & $\vdots$ & $\vdots$ & \rotatebox{0}{$\ddots$} & $\vdots$ & $\vdots$ & $\vdots$ && $\vdots$
         & $\vdots$ && $\vdots$ & $\vdots$ & $\vdots$ && $\vdots$ & $\vdots$ & $\vdots$ && $\vdots$\\
         && $t\!-\!1$ 
         & $\Clower$ & $\cdots$ & $\Clower$ & $\Clower$ & $\Clower$ & $\cdots$ & $\Clower\!+\!\epsilon$ & $0$ & $0$ & $\cdots$ & $0$
         & $1$ & $\cdots$ & $1$ & $1$ & $1$ & $\cdots$ & $1$ & $0$ & $0$ & $\cdots$ & $0$\\
         
         \cline{1-1} \cline{3-25}
         (A3) &
         & $t$
         & $\Clower$ & $\cdots$ & $\Clower$ & $\Clower\!+\!\epsilon$ & $\Clower\!+\!V\!+\!\epsilon$ & $\cdots$ & $\Clower\!+\!(k\!-\!1)V\!+\!\epsilon$ & $\Clower\!+\!kV\!+\!\epsilon$ & $\Clower\!+\!kV$ & $\cdots$ & $\Clower\!+\!kV$
         & $1$ & $\cdots$ & $1$ & $1$ & $1$ & $\cdots$ & $1$ & $1$ & $1$ & $\cdots$ & $1$\\
         
         \cline{1-1} \cline{3-25}
         \multirow{3}{*}{(A4)} &
         &$t\!+\!1$
         & $0$ & $\cdots$ & $0$ & $0$ & $0$ & $\cdots$ & $0$ & $0$ & $\Clower\!+\!\epsilon$ & $\cdots$ & $\Clower$
         & $0$ & $\cdots$ & $0$ & $0$ & $0$ & $\cdots$ & $0$ & $0$ & $1$ & $\cdots$ & $1$\\
         && $\vdots$
         & $\vdots$ && $\vdots$ & $\vdots$ & $\vdots$ && $\vdots$ & $\vdots$ & $\vdots$ & \rotatebox{0}{$\ddots$} & $\vdots$
         & $\vdots$ && $\vdots$ & $\vdots$ & $\vdots$ && $\vdots$ & $\vdots$ & $\vdots$ & \rotatebox{0}{$\rred \ddots$} & $\vdots$\\
         && $T$
         & $0$ & $\cdots$ & $0$ & $0$ & $0$ & $\cdots$ & $0$ & $0$ & $0$ & $\cdots$ & $\Clower\!+\!\epsilon$
         & $0$ & $\cdots$ & $0$ & $0$ & $0$ & $\cdots$ & $0$ & $0$ & $0$ & $\cdots$ & $1$\\
         
         \hline
         \multirow{3}{*}{\rred{(A5)}} & \multirow{7}{*}{$(\hat{\BFx}^r,\hat{\BFy}^r)$}
         & $1$ & \multicolumn{11}{c|}{\multirow{3}{*}{(See Note \rred{\ref{tab:eqn-T:kth-ramp-2-1-facet-matrix-1}-1})}} & \multicolumn{11}{c|}{\multirow{3}{*}{(See Note \rred{\ref{tab:eqn-T:kth-ramp-2-1-facet-matrix-1}-1})}}\\
         && $\vdots$ & \multicolumn{11}{c|}{} & \multicolumn{11}{c|}{}\\
         && $t-1$ & \multicolumn{11}{c|}{} & \multicolumn{11}{c|}{}\\
         
         \cline{1-1} \cline{3-25}
         \rred{(A6)} &
         & $t$
         & $\Clower$ & $\cdots$ & $\Clower$ & $\Clower$ & $\Clower\!+\!V$ & $\cdots$ &
           $\Clower\!+\!(k\!-\!1)V$ & $\Clower\!+\!kV$ & $\Clower\!+\!kV$ & $\cdots$ & $\Clower\!+\!kV$
         & $1$ & $\cdots$ & $1$ & $1$ & $1$ & $\cdots$ & $1$ & $1$ & $1$ & $\cdots$ & $1$\\
         
         \cline{1-1} \cline{3-25}
         \multirow{3}{*}{\rred{(A7)}} &
         &$t\!+\!1$
         & $0$ & $\cdots$ & $0$ & $0$ & $0$ & $\cdots$ & $0$ & $0$ & $\Clower$ & $\cdots$ & $\Clower$
         & $0$ & $\cdots$ & $0$ & $0$ & $0$ & $\cdots$ & $0$ & $0$ & $1$ & $\cdots$ & $1$\\
         && $\vdots$
         & $\vdots$ && $\vdots$ & $\vdots$ & $\vdots$ && $\vdots$ & $\vdots$ & $\vdots$ & \rotatebox{0}{$\ddots$} & $\vdots$
         & $\vdots$ && $\vdots$ & $\vdots$ & $\vdots$ && $\vdots$ & $\vdots$ & $\vdots$ & \rotatebox{0}{$\ddots$} & $\vdots$\\
         && $T$
         & $0$ & $\cdots$ & $0$ & $0$ & $0$ & $\cdots$ & $0$ & $0$ & $0$ & $\cdots$ & $\Clower$
         & $0$ & $\cdots$ & $0$ & $0$ & $0$ & $\cdots$ & $0$ & $0$ & $0$ & $\cdots$ & $1$\\
         \hline
         
         \multicolumn{25}{l}{
         Note \rred{\ref{tab:eqn-T:kth-ramp-2-1-facet-matrix-1}-1}: For $r\in\rred{[1,t-1]_\Z}$, the $\BFx$ and $\BFy$ vectors in group \rred{(A5)} are given as follows:}\\
         \multicolumn{25}{l}{
         $\hat{\BFx}^r=(\underbrace{\Clower,\ldots,\Clower}_{r\ {\rm terms}},\underbrace{0,\ldots,0}_{T-r\ {\rm terms}\!\!\!\!\!})$
         and
         $\hat{\BFy}^r=(\underbrace{1,\ldots,1}_{r\ {\rm terms}},\underbrace{0,\ldots,0}_{T-r\ {\rm terms}\!\!\!\!\!})$
         if $t-r-1\notin\Se$;}\\
         \multicolumn{25}{l}{
         $\hat{\BFx}^r=(\underbrace{0,\ldots,0}_{r\ {\rm terms}},\underbrace{\Vupper,\Vupper+V,\Vupper+2V,\ldots,\Vupper+(t-r-1)V}_{t-r\ {\rm terms}},\underbrace{\Vupper+(t-r-1)V,\ldots,\Vupper+(t-r-1)V}_{T-t\ {\rm terms}\!\!\!\!\!})$
         and
         $\hat{\BFy}^r=(\underbrace{0,\ldots,0}_{r\ {\rm terms}},\underbrace{1,\ldots,1}_{T-r\ {\rm terms}\!\!\!\!\!})$
         if $t-r-1\in\Se$.
         }
    \end{tabular}
    \label{tab:eqn-T:kth-ramp-2-1-facet-matrix-1}
\end{table}
\end{landscape}


\begin{landscape}
\begin{table}
    \renewcommand{\arraystretch}{2}
    \centering
    \caption{Lower triangular matrix obtained from Table \ref{tab:eqn-T:kth-ramp-2-1-facet-matrix-1} via Gaussian elimination}
    \rule{0pt}{0ex}
    \setlength\tabcolsep{6.5pt}
    \sscriptsize
        \begin{tabular}{|c|c|c|*{11}{c}|*{7}{c}|}
        \hline
         \multirow{2}{*}{Group} & \multirow{2}{*}{Point} & \multirow{2}{*}{Index $r$} & \multicolumn{11}{c|}{$\BFx$} & \multicolumn{7}{c|}{$\BFy$} \\
         \cline{4-14}\cline{15-21}
         &&& $\ \ 1\ \ $ & $\cdots$ & $t\!-\!k\!-\!1$ & $t\!-\!k$ & $t\!-\!k\!+\!1$ & $\cdots$ & $t\!-\!1$ & $\ \ t\ \ $ & $t\!+\!1$ & $\cdots$ & $\ \ T\ \ $ 
           & $\ \ 1\ \ $ & $\cdots$ & $t-1$ & $\ \ t\ \ $ & $t+1$ & $\cdots$ & $\ \ T\ \ $ \\
         
         \hline
         \multirow{3}{*}{(B1)} & \multirow{10}{*}{$(\underline{\bar{\BFx}}^r,\underline{\bar{\BFy}}^r)$}
         & $1$ 
         & $\epsilon$ & $\cdots$ & $0$ & $0$ & $0$ & $\cdots$ & $0$ & $0$ & $0$ & $\cdots$ & $0$
         & $0$ & $\cdots$ & $0$ & $0$ & $0$ & $\cdots$ & $0$\\
         && $\vdots$
         & $\vdots$ & \rotatebox{0}{$\ddots$} & $\vdots$ & $\vdots$ & $\vdots$ && $\vdots$ & $\vdots$ & $\vdots$ && $\vdots$
         & $\vdots$ && $\vdots$ & $\vdots$ & $\vdots$ && $\vdots$\\
         && $t\!-\!k\!-\!1$ 
         & $0$ & $\cdots$ & $\epsilon$ & $0$ & $0$ & $\cdots$ & $0$ & $0$ & $0$ & $\cdots$ & $0$
         & $0$ & $\cdots$ & $0$ & $0$ & $0$ & $\cdots$ & $0$\\
         
         \cline{1-1} \cline{3-21}
         \multirow{3}{*}{(B2)} &
         & $t\!-\!k\!+\!1$ 
         & $0$ & $\cdots$ & $0$ & $0$ & $\epsilon$ & $\cdots$ & $0$ & $0$ & $0$ & $\cdots$ & $0$
         & $0$ & $\cdots$ & $0$ & $0$ & $0$ & $\cdots$ & $0$\\
         && $\vdots$
         & $\vdots$ && $\vdots$ & $\vdots$ & $\vdots$ & \rotatebox{0}{$\ddots$} & $\vdots$ & $\vdots$ & $\vdots$ && $\vdots$
         & $\vdots$ && $\vdots$ & $\vdots$ & $\vdots$ && $\vdots$\\
         && $t\!-\!2$ 
         & $0$ & $\cdots$ & $0$ & $0$ & $0$ & $\cdots$ & $\epsilon$ & $0$ & $0$ & $\cdots$ & $0$
         & $0$ & $\cdots$ & $0$ & $0$ & $0$ & $\cdots$ & $0$\\
         
         \cline{1-1} \cline{3-21}
         (B3) &
         & $t$
         & $0$ & $\cdots$ & $0$ & $\epsilon$ & $\epsilon$ & $\cdots$ & $\epsilon$ & $\epsilon$ & $0$ & $\cdots$ & $0$
         & $0$ & $\cdots$ & $0$ & $0$ & $0$ & $\cdots$ & $0$\\
         
         \cline{1-1} \cline{3-21}
         \multirow{3}{*}{(B4)} &
         &$t\!+\!1$
         & $0$ & $\cdots$ & $0$ & $0$ & $0$ & $\cdots$ & $0$ & $0$ & $\epsilon$ & $\cdots$ & $0$
         & $0$ & $\cdots$ & $0$ & $0$ & $0$ & $\cdots$ & $0$\\
         && $\vdots$
         & $\vdots$ && $\vdots$ & $\vdots$ & $\vdots$ && $\vdots$ & $\vdots$ & $\vdots$ & \rotatebox{0}{$\ddots$} & $\vdots$
         & $\vdots$ && $\vdots$ & $\vdots$ & $\vdots$ && $\vdots$\\
         && $T$
         & $0$ & $\cdots$ & $0$ & $0$ & $0$ & $\cdots$ & $0$ & $0$ & $0$ & $\cdots$ & $\epsilon$
         & $0$ & $\cdots$ & $0$ & $0$ & $0$ & $\cdots$ & $0$\\
         
         \hline
         \multirow{3}{*}{\rred{(B5)}} & \multirow{7}{*}{\rred{$(\underline{\hat{\BFx}}^r,\underline{\hat{\BFy}}^r)$}}
         & $1$
         & \multicolumn{11}{c|}{}
         & $\rred 1$ & $\rred \cdots$ & $\rred 0$ & $\rred 0$ & $\rred 0$ & $\rred \cdots$ & $\rred 0$\\
         && $\vdots$
         & \multicolumn{11}{c|}{(Omitted)}
         & $\rred \vdots$ & \rotatebox{0}{$\rred \ddots$} & $\rred \vdots$ & $\rred \vdots$ & $\rred \vdots$ && $\rred \vdots$\\
         && $t-1$
         & \multicolumn{11}{c|}{}
         & $\rred 1$ & $\rred \cdots$ & $\rred 1$ & $\rred 0$ & $\rred 0$ & $\rred \cdots$ & $\rred 0$\\
         
         \cline{1-1} \cline{3-21}
         \rred{(B6)} &
         & $t$
         & \multicolumn{11}{c|}{(Omitted)}
         & $1$ & $\cdots$ & $1$ & $1$ & $0$ & $\cdots$ & $0$\\
         
         \cline{1-1} \cline{3-21}
         \multirow{3}{*}{\rred{(B7)}} &
         &$t\!+\!1$
         & \multicolumn{11}{c|}{}
         & $0$ & $\cdots$ & $0$ & $0$ & $1$ & $\cdots$ & $0$\\
         && $\vdots$
         & \multicolumn{11}{c|}{(Omitted)}
         & $\vdots$ && $\vdots$ & $\vdots$ & $\vdots$ & \rotatebox{0}{$\ddots$} & $\vdots$\\
         && $T$
         & \multicolumn{11}{c|}{}
         & $0$ & $\cdots$ & $0$ & $0$ & $0$ & $\cdots$ & $1$\\
         \hline
    \end{tabular}
    \label{tab:eqn-T:kth-ramp-2-1-facet-matrix-2}
\end{table}
\end{landscape}
}
    

The matrix shown in Table \ref{tab:eqn-T:kth-ramp-2-1-facet-matrix-2} is lower triangular; that is, the position of the last nonzero component of a row of the matrix is greater than the position of the last nonzero component of the previous row.
This implies that these $2T-1$ points in groups \ref{points:prop14-eq1-A1}--\ref{points:prop14-eq1-A7} are linearly independent.
Therefore, inequality \eqref{eqn-T:kth-ramp-2-1} is facet-defining for $\convP$.

Next, we show that inequality \eqref{eqn-T:kth-ramp-2-2} is valid for $\convP$ and is facet-defining for $\convP$ when $m=0$ and $s\ge\min\{k-1,1\}$ $\forall s\in\Se$.
Denote $x'_t=x_{T-t+1}$ and $y'_t=y_{T-t+1}$ for $t\in[1,T]_\Z$.
Because inequality \eqref{eqn-T:kth-ramp-2-1} is valid for $\convP$ and is facet-defining for $\convP$ when $m=0$ and $s\ge\min\{k-1,1\}$ $\forall s\in\Se$ for any $t\in[k+1,T-m]_\Z$, the inequality
\begin{align*}
x'_{T-t+1}-x'_{T-t+k+1}\le\
& (\Clower+(k-m)V)y'_{T-t+1}+V\sum_{i=1}^my'_{T-t-i+1}-\Clower y'_{T-t+k+1}\\
& -\sum_{s\in\Se}(\Clower+(k-s)V-\Vupper)(y'_{T-t+s+1}-y'_{T-t+s+2})
\end{align*}
is valid for $\convPprime$ and is facet-defining for $\convPprime$ when $m=0$ and $s\ge\min\{k-1,1\}$ $\forall s\in\Se$ for any $t\in[k+1,T-m]_\Z$.
Let $t'=T-t+1$.
Then, the inequality
$$x'_{t'}-x'_{t'+k}\le(\Clower+(k-m)V)y'_{t'}+V\sum_{i=1}^my'_{t'-i}-\Clower y'_{t'+k}-\sum_{s\in\Se}(\Clower+(k-s)V-\Vupper)(y'_{t'+s}-y'_{t'+s+1})$$
is valid for $\convPprime$ and is facet-defining for $\convPprime$ when $m=0$ and $s\ge\min\{k-1,1\}$ $\forall s\in\Se$ for any $t'\in[m+1,T-k]_\Z$.
Hence, by Lemma~\ref{lem:Pprime}, inequality \eqref{eqn-T:kth-ramp-2-2} is valid for $\convP$ and is facet-defining for $\convP$ when $m=0$ and $s\ge\min\{k-1,1\}$ $\forall s\in\Se$ for any $t\in[m+1,T-k]_\Z$.\Halmos

\subsection{Proof of Proposition \ref{prop-T:kth-ramp-2-separation}}

\noindent{\bf Proposition \ref{prop-T:kth-ramp-2-separation}.} {\it
For any given point $(\BFx,\BFy)\in\R^{2T}_+$, \tred{the} most violated inequalities \eqref{eqn-T:kth-ramp-2-1} and \eqref{eqn-T:kth-ramp-2-2} can be determined in $O(T^3)$ time if such violated inequalities exist.}
\vskip8pt

\noindent{\bf Proof.}
We first consider inequality \eqref{eqn-T:kth-ramp-2-1}.
Consider any given $(\BFx,\BFy)\in\R^{2T}_{+}$.
For notational convenience, denote $\hat{k}=\max\{k\in[1,T-1]_\Z:\Cupper-\Clower-kV>0\}$, and denote $\hat{s}_{km}=\min\{k-1,L-m-1\}$ for any $k\in[1,\hat{k}]_\Z$ and $m\in[0,k-1]_\Z$.
For any $t\in[1,T]_\Z$, let
$$\theta(t)=\sum_{\tau=2}^t\max\{y_\tau-y_{\tau-1},0\}.$$
Then, for any $k\in[1,\hat{k}]_\Z$, $m\in[0,k-1]_\Z$, and $t\in[k+1,T-m]_\Z$,
\begin{equation}\label{eq:P10-ysummation1}
\sum_{s=1}^{\hat{s}_{km}}\max\{y_{t-s}-y_{t-s-1},0\}=\sum_{\tau=t-\hat{s}_{km}}^{t-1}\max\{y_\tau-y_{\tau-1},0\}=\theta(t-1)-\theta(t-\hat{s}_{km}-1).
\end{equation}
For any $k\in[1,\hat{k}]_\Z$, $m\in[0,k-1]_\Z$, $t\in[k+1,T-m]_\Z$, and $\Se\subseteq[0,\hat{s}_{km}]_\Z$, let
$$\tilde{v}_{km}(\Se,t)=x_t-x_{t-k}-(\Clower+(k-m)V)y_t-V\sum_{i=1}^m y_{t+i}+\Clower y_{t-k}+\sum_{s\in\Se}(\Clower+(k-s)V-\Vupper)(y_{t-s}-y_{t-s-1}).$$
If $\tilde{v}_{km}(\Se,t)>0$, then $\tilde{v}_{km}(\Se,t)$ is the amount of violation of inequality \eqref{eqn-T:kth-ramp-2-1}.
If $\tilde{v}_{km}(\Se,t)\le 0$, there is no violation of inequality \eqref{eqn-T:kth-ramp-2-1}.
For any $k\in[1,\hat{k}]_\Z$, $m\in[0,k-1]_\Z$, and $t\in[k+1,T-m]_\Z$, let
$$v_{km}(t)=\max_{\Se\subseteq[0,\hat{s}_{km}]_\Z}\{\tilde{v}_{km}(\Se,t)\}.$$
If $v_{km}(t)>0$, then $v_{km}(t)$ is the largest possible violation of inequality \eqref{eqn-T:kth-ramp-2-1} for this combination of $k$, $m$, and $t$.
If $v_{km}(t)\le 0$, the largest possible violation of inequality \eqref{eqn-T:kth-ramp-2-1} is zero for this combination of $k$, $m$, and $t$.
Because $\Clower+V>\Vupper$, we have $\Clower+(k-s)V-\Vupper>0$ for all $k\in[1,\hat{k}]_\Z$, $s\in[0,\hat{s}_{km}]_\Z$, and $m\in[0,k-1]_\Z$.
Thus, for any $k\in[1,\hat{k}]_\Z$, $m\in[0,k-1]_\Z$, and $t\in[k+1,T-m]_\Z$, 
$\tilde{v}_{km}(\Se,t)$ is maximized when $\Se$ contains all $s\in[0,\hat{s}_{km}]_\Z$ such that $y_{t-s}-y_{t-s-1}>0$ (if any).
If it does not exist any $s\in[0,\hat{s}]_\Z$ such that $y_{t-s}-y_{t-s-1}>0$, then $\tilde{v}_{km}(\Se,t)$ is maximized when $\Se=\emptyset$, 
and $v_{km}(t)=x_t-x_{t-k}-(\Clower+(k-m)V)y_t-V\sum_{i=1}^m y_{t+i}+\Clower y_{t-k}$.
Hence, for any $k\in[1,\hat{k}]_\Z$, $m\in[0,k-1]_\Z$, and $t\in[k+1,T-m]_\Z$, 
$$v_{km}(t)=x_t\!-\!x_{t-k}\!-\!(\Clower\!+\!(k\!-\!m)V)y_t\!-\!V\sum_{i=1}^m y_{t+i}\!+\!\Clower y_{t-k}\!+\!\sum_{s=0}^{\hat{s}_{km}}(\Clower\!+\!(k\!-\!s)V\!-\!\Vupper)\max\{y_{t-s}\!-\!y_{t-s-1},0\}.$$

Determining $\theta(t)$ for all $t\in[1,T]_\Z$ can be done recursively in $O(T)$ time by setting $\theta(1)=0$ and setting $\theta(t)=\theta(t-1)+\max\{y_t-y_{t-1},0\}$ for $t=2,\ldots,T$.
Clearly, for each $k\in[1,\hat{k}]_\Z$ and each $m\in[0,k-1]_\Z$, the value of $v_{km}(k+1)$ can be determined in $O(T)$ time.
For any $k\in[1,\hat{k}]_\Z$, $m\in[0,k-1]_\Z$, and $t\in[k+2,T-m]_\Z$,
\begin{align*}
v_{km}(t)-v_{km}(t-1)
&=(x_t-x_{t-1})-(x_{t-k}-x_{t-k-1})-(\Clower+(k-m)V)(y_t-y_{t-1})\\
&\quad -V\left[\sum_{i=1}^m y_{t+i}-\sum_{i=1}^m y_{t+i-1}\right]+\Clower(y_{t-k}-y_{t-k-1})\\
&\quad +(\Clower+kV-\Vupper)\left[\sum_{s=0}^{\hat{s}_{km}}\max\{y_{t-s}-y_{t-s-1},0\}-\sum_{s=0}^{\hat{s}_{km}}\max\{y_{t-s-1}-y_{t-s-2},0\}\right]\\
&\quad -V\left[\sum_{s=0}^{\hat{s}_{km}}s\max\{y_{t-s}-y_{t-s-1},0\}-\sum_{s=0}^{\hat{s}_{km}}s\max\{y_{t-s-1}-y_{t-s-2},0\}\right]\\
&=(x_t-x_{t-1})-(x_{t-k}-x_{t-k-1})-(\Clower+(k-m)V)(y_t-y_{t-1})\\
&\quad -V(y_{t+m}-y_t)+\Clower(y_{t-k}-y_{t-k-1})\\
&\quad +(\Clower+kV-\Vupper)\left[\max\{y_t-y_{t-1},0\}-\max\{y_{t-\hat{s}_{km}-1}-y_{t-\hat{s}_{km}-2},0\}\right]\\
&\quad -V\left[\sum_{s=1}^{\hat{s}_{km}}\max\{y_{t-s}-y_{t-s-1},0\}-\hat{s}_{km}\max\{y_{t-\hat{s}_{km}-1}-y_{t-\hat{s}_{km}-2},0\}\right].
\end{align*}
This, together with \eqref{eq:P10-ysummation1}, implies that
\begin{align*}
v_{km}(t)=\ &v_{km}(t-1)+(x_t-x_{t-1})-(x_{t-k}-x_{t-k-1})-(\Clower+(k-m)V)(y_t-y_{t-1})\\
&-V(y_{t+m}-y_t)+\Clower(y_{t-k}-y_{t-k-1})\\
&+(\Clower+kV-\Vupper)\left[\max\{y_t-y_{t-1},0\}-\max\{y_{t-\hat{s}_{km}-1}-y_{t-\hat{s}_{km}-2},0\}\right]\\
&-V\left[\theta(t-1)-\theta(t-\hat{s}_{km}-1)-\hat{s}_{km}\max\{y_{t-\hat{s}_{km}-1}-y_{t-\hat{s}_{km}-2},0\}\right].
\end{align*}
Thus, for each $k\in[1,\hat{k}]_\Z$ and $m\in[0,k-1]_\Z$, the values of $v_{km}(k+1),v_{km}(k+2),\ldots,v_{km}(T-m)$ can be determined recursively in $O(T)$ time.
Hence, the values of $k$, $m$, $t$ and the set $\Se$ corresponding to the largest possible violation of inequality \eqref{eqn-T:kth-ramp-2-1} can be obtained in $O(T^3)$ time.

Next, we consider inequality \eqref{eqn-T:kth-ramp-2-2}.
\rred{
Consider any given $(\BFx,\BFy)\in\R^{2T}_{+}$.
Let $x'_t=x_{T-t+1}$ and $y'_t=y_{T-t+1}$ for $t\in[1,T]_\Z$.
Inequality \eqref{eqn-T:kth-ramp-2-2} becomes
\begin{align*}
x'_{T-t+1}-x'_{T-t-k+1}\le\
&(\Clower+(k-m)V)y'_{T-t+1}+V\sum_{i=1}^m y'_{T-t+i+1}-\Clower y'_{T-t-k+1}\\
&-\sum_{s\in\Se}(\Clower+(k-s)V-\Vupper)(y'_{T-t-s+1}-y'_{T-t-s}).
\end{align*}
Letting $t'=T-t+1$, this inequality becomes
\begin{align}
x'_{t'}-x'_{t'-k}\le\
&(\Clower+(k-m)V)y'_{t'}+V\sum_{i=1}^m y'_{t'+i}-\Clower y'_{t'-k}\nonumber\\
&-\sum_{s\in\Se}(\Clower+(k-s)V-\Vupper)(y'_{t'-s}-y'_{t'-s-1}).\label{eqn-T:kth-ramp-2-2-prime}
\end{align}
Because the values of $k$, $m$, $t$ and the set $\Se$ corresponding to the largest possible violation of inequality \eqref{eqn-T:kth-ramp-2-1} can be obtained in $O(T^3)$ time, the values of $k$, $m$, $t'$ and the set $\Se$ corresponding to the largest possible violation of inequality \eqref{eqn-T:kth-ramp-2-2-prime} can be obtained in $O(T^3)$ time.
Hence, the values of $k$, $m$, $t$ and the set $\Se$ corresponding to the largest possible violation of inequality \eqref{eqn-T:kth-ramp-2-2} can be obtained in $O(T^3)$ time.
\Halmos
}

\subsection{Proof of Proposition \ref{prop-T:kth-ramp-3}}

\noindent{\bf Proposition \ref{prop-T:kth-ramp-3}.} {\it
Consider any $k\in[1,T-1]_\Z$ such that $\Cupper-\Clower-kV>0$, any $m\in[0,k-1]_\Z$, and any $\Se\subseteq[0,\min\{k-1,L-m-2\}]_\Z$.
For any $t\in[k+1,T-m-1]_\Z$, the inequality
\begin{align}
x_t-x_{t-k}&\le (\Clower+(k-m)V-\Vupper)y_{t+m+1}+V\sum_{i=1}^m y_{t+i}+\Vupper y_t-\Clower y_{t-k}\nonumber\\
           &\quad -\sum_{s\in\Se}(\Clower+(k-s)V-\Vupper)(y_{t-s}-y_{t-s-1})\tag{\ref{eqn-T:kth-ramp-3-1}}
\end{align}
is valid and facet-defining for $\convP$.
For any $t\in[m+2,T-k]_\Z$, the inequality
\begin{align}
x_t-x_{t+k}&\le (\Clower+(k-m)V-\Vupper)y_{t-m-1}+V\sum_{i=1}^m y_{t-i}+\Vupper y_t-\Clower y_{t+k}\nonumber\\
           &\quad -\sum_{s\in\Se}(\Clower+(k-s)V-\Vupper)(y_{t+s}-y_{t+s+1})\tag{\ref{eqn-T:kth-ramp-3-2}}
\end{align}
is valid and facet-defining for $\convP$.}
\vskip8pt

\noindent{\bf Proof.}
We first prove that inequality \eqref{eqn-T:kth-ramp-3-1} is valid and facet-defining for $\convP$.
For notational convenience, we define $s_{\max}=\max\{s:s\in\Se\}$ if $\Se\ne\emptyset$, and $s_{\max}=-1$ if $\Se=\emptyset$.
Consider any $t\in[k+1,T-m-1]_\Z$.
To prove that the linear inequality \eqref{eqn-T:kth-ramp-3-1} is valid for $\convP$, it suffices to show that it is valid for $\Pe$.
Consider any element $(\BFx,\BFy)$ of $\Pe$.
We show that $(\BFx,\BFy)$ satisfies \eqref{eqn-T:kth-ramp-3-1}.
We divide the analysis into three cases:

Case~1: $y_t=0$.
In this case, by \eqref{eqn:p-lower-bound} and \eqref{eqn:p-upper-bound}, $-x_{t-k}\le-\Clower y_{t-k}$ and $x_t=0$.
Thus, the left-hand side of \eqref{eqn-T:kth-ramp-3-1} is at most \rred{$-\Clower y_{t-k}$}.
Because $y_t=0$ and $t\in[2,T]_{\Z}$, by Lemma \ref{lem:lookbackward}(i), $y_{t-j}-y_{t-j-1}\le0$ for all $j\in[0,\min\{t-2,L-1\}]_{\Z}$.
Because $\Se\subseteq[0,\min\{k-1,L-m-2\}]_{\Z}$, $m\ge0$, and $t\ge k+1$, we have $\Se\subseteq[0,\min\{t-2,L-1\}]_{\Z}$.
Thus, $y_{t-s}-y_{t-s-1}\le0$ for all $s\in\Se$.
Because $m\le k-1$, $\Se\subseteq[0,k-1]_{\Z}$, and $\Clower+V>\Vupper$, the coefficients ``$\Clower+(k-m)V-\Vupper$" and ``$\Clower+(k-s)V-\Vupper$" on the right-hand side of \eqref{eqn-T:kth-ramp-3-1} are positive for any $s\in\Se$.
Thus, the right-hand side of \eqref{eqn-T:kth-ramp-3-1} is at least $-\Clower y_{t-k}$.
Therefore, in this case, $(\BFx,\BFy)$ satisfies \eqref{eqn-T:kth-ramp-3-1}.

Case~2: $y_t=1$ and $y_{t-s'}-y_{t-s'-1}=1$ for some $s'\in\Se$.
In this case, $y_{t-s'}=1$ and $y_{t-s'-1}=0$.
Because $y_t=1$ and $t\in[2,T]_{\Z}$, by Lemma \ref{lem:lookbackward}(ii), there exists at most one $j\in[0,\min\{t-2,L\}]_{\Z}$ such that $y_{t-j}-y_{t-j-1}=1$.
Because $\Se\subseteq[0,\min\{k-1,L-m-2\}]_{\Z}$, $m\ge0$, and $t\ge k+1$, we have $\Se\subseteq[0,\min\{t-2,L\}]_{\Z}$.
Thus, $y_{t-s}-y_{t-s-1}\le0$ for all $s\in\Se\setminus\{s'\}$.
Because $y_{t-s'}-y_{t-s'-1}=1$ and $t-s'\in[2,T]_{\Z}$, by \eqref{eqn:p-minup}, we have $y_{\tau}=1$ for all $\tau\in[t-s',\min\{T,t-s'+L-1\}]_{\Z}$.
Because $\Se\subseteq[0, L-m-2]_{\Z}$, we have $t-s'+L-1\ge t+m+1$.
Thus, $y_{\tau}=1$ for all $\tau\in[t-s',t+m+1]_{\Z}$\rred{, which implies that $(\Clower+(k-m)V-\Vupper)y_{t+m+1}+V\sum_{i=1}^m y_{t+i}+\Vupper y_t=\Clower+kV$}.
Because $\Se\subseteq[0,k-1]_{\Z}$ and $\Clower+V>\Vupper$, \rred{the} coefficient ``$\Clower+(k-s)V-\Vupper$" on the right-hand side of inequality \eqref{eqn-T:kth-ramp-3-1} is positive \rred{for all $s\in\Se$}. 
Hence, the right-hand side of \eqref{eqn-T:kth-ramp-3-1} is at least \rred{$\Clower+kV-\Clower y_{t-k}-(\Clower+(k-s')V-\Vupper)$}$=s'V+\Vupper-\Clower y_{t-k}$.
By \eqref{eqn:p-ramp-up}, $\sum_{\tau=t-s'}^t(x_{\tau}-x_{\tau-1})\le\sum_{\tau=t-s'}^tVy_{\tau-1}+\sum_{\tau=t-s'}^t\Vupper(1-y_{\tau-1})$, which implies that $x_t-x_{t-s'-1}\le s'V+\Vupper$.
Because $y_{t-s'-1}=0$, by \eqref{eqn:p-upper-bound}, $x_{t-s'-1}=0$.
\rred{By} \eqref{eqn:p-lower-bound}, $-x_{t-k}\le-\Clower y_{t-k}$.
Thus, $x_t-x_{t-k}\le s'V+\Vupper-\Clower y_{t-k}$.
Therefore, in this case, $(\BFx,\BFy)$ satisfies \eqref{eqn-T:kth-ramp-3-1}.

Case~3: $y_t=1$ and $y_{t-s}-y_{t-s-1}\le0$ for all $s\in\Se$.
Because $\Se\subseteq[0,k-1]_{\Z}$, $m\in[0, k-1]_{\Z}$, and $\Clower+V>\Vupper$, we have $\Clower+(k-m)V-\Vupper>0$ and $\Clower+(k-s)V-\Vupper>0$ for each $s\in\Se$.
Hence, the terms ``$(\Clower+(k-m)V-\Vupper)y_{t+m+1}$" and ``$-\sum_{s\in\Se}(\Clower+(k-s)V-\Vupper)(y_{t-s}-y_{t-s-1})$" on the right-hand side of inequality \eqref{eqn-T:kth-ramp-3-1} are nonnegative.
We divide our analysis into three subcases.

Case~3.1: $y_{\tau}=0$ for some $\tau\in[t,t+m+1]_\Z$.
Let $t'=\min\{\tau\in[t,t+m+1]_\Z:y_{\tau}=0\}$.
Then, $y_{\tau}=1$ for all $\tau\in[t,t'-1]_\Z$.
Thus, the right-hand side of \eqref{eqn-T:kth-ramp-3-1} is at least $(t'-t-1)V+\Vupper-\Clower y_{t-k}$.
By \eqref{eqn:p-ramp-down}, $\sum_{\tau=t+1}^{t'}(x_{\tau-1}-x_{\tau})\le\sum_{\tau=t+1}^{t'}Vy_{\tau}+\sum_{\tau=t+1}^{t'}\Vupper(1-y_{\tau})$, 
which implies that $x_t-x_{t'}\le(t'-t-1)V+\Vupper$.
Because $y_{t'}=0$, by \eqref{eqn:p-upper-bound}, $x_{t'}=0$.
By \eqref{eqn:p-lower-bound}, $-x_{t-k}\le-\Clower y_{t-k}$.
Hence, $x_t-x_{t-k}\le(t'-t-1)V+\Vupper-\Clower y_{t-k}$.
Thus, the left-hand side of \eqref{eqn-T:kth-ramp-3-1} is less than or equal to the right-hand side.

Case~3.2: $y_{\tau}=1$ for all $\tau\in[t,t+m+1]_\Z$ and $y_{\tau}=0$ for some $\tau\in[t-k,t-1]_\Z$.
In this case, the right-hand side of \eqref{eqn-T:kth-ramp-3-1} is at least $(\Clower+(k-m)V-\Vupper)+mV+\Vupper-\Clower y_{t-k}=\Clower+kV-\Clower y_{t-k}$.
Let $t'=\max\{\tau\in[t-k,t-1]_\Z:y_{\tau}=0\}$.
Because $t'\ge t-k$ and $\Clower+V>\Vupper$, the right-hand side of \eqref{eqn-T:kth-ramp-3-1} is greater than $\Vupper+(t-t'-1)V-\Clower y_{t-k}$.
By \eqref{eqn:p-ramp-up}, $\sum_{\tau=t'+1}^t(x_{\tau}-x_{\tau-1})\le\sum_{\tau=t'+1}^tVy_{\tau-1}+\sum_{\tau=t'+1}^t\Vupper(1-y_{\tau-1})$, 
which implies that $x_t-x_{t'}\le\Vupper+(t-t'-1)V$.
Because $y_{t'}=0$, by \eqref{eqn:p-upper-bound}, $x_{t'}=0$.
By \eqref{eqn:p-lower-bound}, $-x_{t-k}\le-\Clower y_{t-k}$.
Hence, $x_t-x_{t-k}\le\Vupper+(t-t'-1)V-\Clower y_{t-k}$.
Thus, the left-hand side of \eqref{eqn-T:kth-ramp-3-1} is less than the right-hand side.

Case~3.3: $y_{\tau}=1$ for all $\tau\in[t-k,t+m+1]_\Z$.
In this case, the right-hand side of \eqref{eqn-T:kth-ramp-3-1} is at least $kV$.
By \eqref{eqn:p-ramp-up}, $\sum_{\tau=t-k+1}^t(x_{\tau}-x_{\tau-1})\le\sum_{\tau=t-k+1}^tVy_{\tau-1}+\sum_{\tau=t-k+1}^t\Vupper(1-y_{\tau-1})$, 
which implies that $x_t-x_{t-k}\le kV$.
Thus, the left-hand side of \eqref{eqn-T:kth-ramp-3-1} is less than or equal to the right-hand side.

\rred{In Cases 3.1--3.3, $(\BFx,\BFy)$ satisfies \eqref{eqn-T:kth-ramp-3-1}.
Summarizing Cases 1--3, we conclude that \eqref{eqn-T:kth-ramp-3-1} is valid for $\convP$.}

\rred{Consider any $t\in[k+1,T-m-1]_\Z$.
To prove that inequality \eqref{eqn-T:kth-ramp-3-1} is facet-defining for $\convP$, 
it suffices to show that there exist $2T$ affinely independent points in $\convP$ that satisfy \eqref{eqn-T:kth-ramp-3-1} at equality.
Because $\BFzero\in\convP$ and $\BFzero$ satisfies \eqref{eqn-T:kth-ramp-3-1} at equality, it suffices to create the remaining $2T-1$ nonzero linearly independent points.
We denote these $2T-1$ points as $(\bar{\BFx}^r,\bar{\BFy}^r)$ for $r\in[1,T]_{\Z}\setminus\{t-k\}$ and $(\hat{\BFx}^r,\hat{\BFy}^r)$ for $r\in[1,T]_{\Z}$, 
and denote the $q$th component of $\bar{\BFx}^r$, $\bar{\BFy}^r$, $\hat{\BFx}^r$, and $\hat{\BFy}^r$ as $\bar{x}^r_q$, $\bar{y}^r_q$, $\hat{x}^r_q$, and $\hat{y}^r_q$, respectively.
Let $\epsilon=\min\{\Vupper-\Clower,\Cupper-\Clower-kV\}>0$.}

We divide these $2T-1$ points into the following eight groups.

\begin{enumerate}[label=(A\arabic*)]
    \item\label{points:prop16-eq1-A1} For each $r\in[1,t-1]_{\Z}\setminus\{t-k\}$, we create a point $(\bar{\BFx}^r,\bar{\BFy}^r)$ as follows:
    \begin{equation*}
        \bar{x}^r_q=\left\{
        \begin{array}{ll}
            \Clower, &\hbox{for $q\in[1,t-1]_{\Z}\setminus\{r\}$};\\
            \Clower+\epsilon, &\hbox{for $q=r$};\\
            \Vupper, &\hbox{for $q=t$};\\
            0, &\hbox{for $q\in[t+1,T]_{\Z}$};
        \end{array}\right.
    \end{equation*}
    and
    \begin{equation*}
        \bar{y}^r_q=\left\{
        \begin{array}{ll}
            1,&\hbox{for $q\in[1,t]_{\Z}$};\\
            0, &\hbox{for $q\in[t+1,T]_{\Z}$}.
        \end{array}\right.
    \end{equation*}
    It is easy to verify that $(\bar{\BFx}^r,\bar{\BFy}^r)$ satisfies \eqref{eqn:p-minup}--\eqref{eqn:p-ramp-down}.
    Thus, $(\bar{\BFx}^r,\bar{\BFy}^r)\in\convP$.
    Note that $\bar{x}^r_t=\Vupper$, $\bar{x}^r_{t-k}=\Clower$, $\bar{y}^r_t=\bar{y}^r_{t-k}=1$, $\bar{y}^r_{t+m+1}=0$, $\sum_{i=1}^m\bar{y}^r_{t+i}=0$, and $\bar{y}^r_{t-s}-\bar{y}^r_{t-s-1}=0$ for all $s\in\Se$.
    Hence, $(\bar{\BFx}^r,\bar{\BFy}^r)$ satisfies \eqref{eqn-T:kth-ramp-3-1} at equality.
    
    \item\label{points:prop16-eq1-A2} We create the same point $(\bar{\BFx}^t,\bar{\BFy}^t)$ as in group \ref{points:prop14-eq1-A3} in the proof of Proposition~\ref{prop-T:kth-ramp-2}. 
    Thus, $(\bar{\BFx}^t,\bar{\BFy}^t)\in\convP$.
    Note that $\bar{x}^t_t=\Clower+kV+\epsilon$, $\bar{x}^t_{t-k}=\Clower+\epsilon$, $\bar{y}^t_t=\bar{y}^t_{t-k}=\bar{y}^t_{t+m+1}=1$, $\sum_{i=1}^m \bar{y}^t_{t+i}=m$, and $\bar{y}^t_{t-s}-\bar{y}^t_{t-s-1}=0$ for all $s\in\Se$.
    Hence, $(\bar{\BFx}^t,\bar{\BFy}^t)$ satisfies \eqref{eqn-T:kth-ramp-3-1} at equality.
    
    \item\label{points:prop16-eq1-A3} For each $r\in[t+1,T]_{\Z}$, we create a point $(\bar{\BFx}^r,\bar{\BFy}^r)$ as follows:
    \begin{equation*}
        \bar{x}^r_q=\left\{
        \begin{array}{ll}
        \Clower, &\hbox{for $q\in[1,t-k-1]_{\Z}$};\\
        \Clower+(q-t+k)V, &\hbox{for $q\in[t-k,t]_{\Z}$};\\
        \Clower+kV, &\hbox{for $q\in[t+1,T]_{\Z}\setminus\{r\}$};\\
        \Clower+kV+\epsilon, &\hbox{for $q=r$};
        \end{array}\right.
    \end{equation*}
    and $\bar{y}^r_q=1$ for all $q\in[1,T]_{\Z}$.
    It is easy to verify that $(\bar{\BFx}^r,\bar{\BFy}^r)$ satisfies \eqref{eqn:p-minup}--\eqref{eqn:p-upper-bound}.
    Note that $\bar{x}^r_q-\bar{x}^r_{q-1}=0$ when $q\in[2,t-k]_{\Z}$, $\bar{x}^r_q-\bar{x}^r_{q-1}=V$ when $q\in[t-k+1,t]_{\Z}$, and $-\epsilon\le\bar{x}^r_q-\bar{x}^r_{q-1}\le\epsilon$ when $q\in[t+1,T]_{\Z}$.
    Thus, $-V\bar{y}^r_q-\Vupper(1-\bar{y}^r_q)\le\bar{x}^r_q-\bar{x}^r_{q-1}\le V\bar{y}^r_{q-1}+\Vupper(1-\bar{y}^r_{q-1})$ for all $q\in[2,T]_\Z$.
    Hence, $(\bar{\BFx}^r,\bar{\BFy}^r)$ satisfies \eqref{eqn:p-ramp-up} and \eqref{eqn:p-ramp-down}.
    Therefore, $(\bar{\BFx}^r,\bar{\BFy}^r)\in\convP$.
    Note that $\bar{x}^r_t=\Clower+kV$, $\bar{x}^r_{t-k}=\Clower$, $\bar{y}^r_t=\bar{y}^r_{t-k}=\bar{y}^r_{t+m+1}=1$, $\sum_{i=1}^m \bar{y}^r_{t+i}=m$, and $\bar{y}^r_{t-s}-\bar{y}^r_{t-s-1}=0$ for all $s\in\Se$.
    Hence, $(\bar{\BFx}^r,\bar{\BFy}^r)$ satisfies \eqref{eqn-T:kth-ramp-3-1} at equality.
    
    \item\label{points:prop16-eq1-A4} For each \rred{$r\in[1,t-1]_{\Z}$}, we create the same point $(\hat{\BFx}^r,\hat{\BFy}^r)$ as in group \ref{points:prop1-eq1-A2} in the proof of Proposition~\ref{prop-T:x_t-ub-1}.
    Thus, $(\hat{\BFx}^r,\hat{\BFy}^r)\in\convP$.
    To show that $(\hat{\BFx}^r,\hat{\BFy}^r)$ satisfies \eqref{eqn-T:kth-ramp-3-1} at equality, we first consider the case where $t-r-1\notin\Se$.
    In this case, $\hat{x}^r_q=\hat{y}^r_q=0$ for all $q\in[t,t+m+1]_\Z$.
    Because $t-k\le t-s_{\max}-1\le r$, we have $\hat{x}^r_{t-k}=\Clower$, and $\hat{y}^r_{t-k}=1$.
    Because $t-s-1\ne r$ for all $s\in\Se$, we have $\hat{y}^r_{t-s}-\hat{y}^r_{t-s-1}=0$ for all $s\in\Se$.
    Hence, $(\hat{\BFx}^r,\hat{\BFy}^r)$ satisfies \eqref{eqn-T:kth-ramp-3-1} at equality.
    Next, we consider the case where $t-r-1\in\Se$.
    In this case, $\hat{x}^r_t=\Vupper+(t-r-1)V$ and $\hat{y}^r_q=1$ for all $q\in[t,t+m+1]_{\Z}$.
    Because $t-k\le t-s_{\max}-1\le r$, we have $\hat{x}^r_{t-k}=\hat{y}^r_{t-k}=0$.
    In addition, $\hat{y}^r_{t-s}-\hat{y}^r_{t-s-1}=1$ when $s=t-r-1$, and $\hat{y}^r_{t-s}-\hat{y}^r_{t-s-1}=0$ when $s\neq t-r-1$.
    Hence, $(\hat{\BFx}^r,\hat{\BFy}^r)$ satisfies \eqref{eqn-T:kth-ramp-3-1} at equality.
    
    \item\label{points:prop16-eq1-A5} For each $r\in[t,t+m]_{\Z}$, we create a point $(\hat{\BFx}^r,\hat{\BFy}^r)$ as follows:
    \begin{equation*}
        \hat{x}^r_q=\left\{
        \begin{array}{ll}
            \Clower, &\hbox{for $q\in[1,2t-r-1]_{\Z}$};\\
            \Vupper+(q+r-2t)V, &\hbox{for $q\in[2t-r,t-1]_{\Z}$};\\
            \Vupper+(r-q)V, &\hbox{for $q\in[t,r]_{\Z}$};\\
            0, &\hbox{for $q\in[r+1,T]_{\Z}$};
        \end{array}\right.
    \end{equation*}
    and
    \begin{equation*}
        \hat{y}^r_q=\left\{
        \begin{array}{ll}
            1, &\hbox{for $q\in[1,r]_{\Z}$};\\
            0, &\hbox{for $q\in[r+1,T]_{\Z}$}.
        \end{array}\right.
    \end{equation*}
    It is easy to verify that $(\hat{\BFx}^r,\hat{\BFy}^r)$ satisfies \eqref{eqn:p-minup}--\eqref{eqn:p-upper-bound}.
    Note that $-V\le\hat{x}^r_q-\hat{x}^r_{q-1}\le V$ when $q\in[2,r]_\Z$, $\hat{x}^r_q-\hat{x}^r_{q-1}=-\Vupper$ when $q=r+1$, and $\hat{x}^r_q-\hat{x}^r_{q-1}=0$ when $q\in[r+2,T]_\Z$.
    Thus, $-V\hat{y}^r_q-\Vupper(1-\hat{y}^r_q)\le\hat{x}^r_q-\hat{x}^r_{q-1}\le V\hat{y}^r_{q-1}+\Vupper(1-\hat{y}^r_{q-1})$ for all $q\in[2,T]_{\Z}$.
    Hence, $(\hat{\BFx}^r,\hat{\BFy}^r)$ satisfies \eqref{eqn:p-ramp-up} \rred{and} \eqref{eqn:p-ramp-down}.
    Therefore, $(\hat{\BFx}^r,\hat{\BFy}^r)\in\convP$.
    Note that $\hat{x}^r_t=\Vupper+(r-t)V$ and $\hat{y}^r_t=1$.
    Note also that $\hat{y}^r_{t+m+1}=0$ and $V\sum_{i=1}^m y^r_{t+i}=(r-t)V$.
    Because $t-k\le t-m-1\le 2t-r-1$, we have $\hat{x}^r_{t-k}=\Clower$ and $\hat{y}^r_{t-k}=1$.
    For any $s\in\Se$, because $t-s\le t\le r$, we have $\hat{y}^r_{t-s}-\hat{y}^r_{t-s-1}=0$.
    Thus, $(\hat{\BFx}^r,\hat{\BFy}^r)$ satisfies \eqref{eqn-T:kth-ramp-3-1} at equality.
    
    \item\label{points:prop16-eq1-A6} We create a point $(\hat{\BFx}^{t+m+1},\hat{\BFy}^{t+m+1})$ as follows:
    \begin{equation*}
        \hat{\BFx}^{t+m+1}_q=\left\{
        \begin{array}{ll}
        \Clower, &\hbox{for $q\in[1,t-k-1]_\Z$};\\
        \Clower+(q-t+k)V, &\hbox{for $q\in[t-k,t]_\Z$};\\
        \Clower+kV, &\hbox{for $q\in[t+1,T]_\Z$};
        \end{array}\right.
    \end{equation*}
    and $\hat{y}^{t+m+1}_q=1$ for all $q\in[1,T]_\Z$.
    It is easy to verify that $(\hat{\BFx}^{t+m+1},\hat{\BFy}^{t+m+1})$ satisfies \eqref{eqn:p-minup}--\eqref{eqn:p-upper-bound}.
    Note that 
    $\hat{x}^{t+m+1}_q-\hat{x}^{t+m+1}_{q-1}=0$ when $q\in[2,t-k]_\Z$, 
    $\hat{x}^{t+m+1}_q-\hat{x}^{t+m+1}_{q-1}=V$ when $q\in[t-k+1,t]_\Z$, and 
    $\hat{x}^{t+m+1}_q-\hat{x}^{t+m+1}_{q-1}=0$ when $q\in[t+1,T]_\Z$.
    Thus, $-V\hat{y}^{t+m+1}_q-\Vupper(1-\hat{y}^{t+m+1}_q)\le\hat{x}^{t+m+1}_q-\hat{x}^{t+m+1}_{q-1}\le V\hat{y}^{t+m+1}_{q-1}+\Vupper(1-\hat{y}^{t+m+1}_{q-1})$ for all $q\in[2,T]_\Z$.
    Hence, $(\hat{\BFx}^{t+m+1},\hat{\BFy}^{t+m+1})$ satisfies \eqref{eqn:p-ramp-up} and \eqref{eqn:p-ramp-down}.
    Therefore, $(\hat{\BFx}^{t+m+1},\hat{\BFy}^{t+m+1})\in\convP$.
    Note that $\hat{x}^{t+m+1}_t=\Clower+kV$, $\hat{x}^{t+m+1}_{t-k}=\Clower$, $\hat{y}^{t+m+1}_{t+m+1}=\hat{y}^{t+m+1}_t=\hat{y}^{t+m+1}_{t-k}=1$, $\sum_{i=1}^m y^{t+m+1}_{t+i}=m$,
    and $\hat{y}^{t+m+1}_{t-s}-\hat{y}^{t+m+1}_{t-s-1}=0$ for all $s\in\Se$, 
    Thus, $(\hat{\BFx}^{t+m+1},\hat{\BFy}^{t+m+1})$ satisfies \eqref{eqn-T:kth-ramp-3-1} at equality.
    
    \item\label{points:prop16-eq1-A7} For each $r\in[t+m+2,T]_{\Z}$, we create a point $(\hat{\BFx}^r,\hat{\BFy}^r)$ as follows:
    \begin{equation*}
        \hat{x}^r_q=\left\{
        \begin{array}{ll}
        0, &\hbox{for $q\in[1,r-1]_{\Z}$};\\
        \Clower, &\hbox{for $q\in[r,T]_{\Z}$};
        \end{array}\right.
    \end{equation*}
    and
    \begin{equation*}
        \hat{y}^r_q=\left\{
        \begin{array}{ll}
        0, &\hbox{for $q\in[1,r-1]_{\Z}$};\\
        1, &\hbox{for $q\in[r,T]_{\Z}$}.
        \end{array}\right.
    \end{equation*}
    It is easy to verify that $(\hat{\BFx}^r,\hat{\BFy}^r)$ satisfies \eqref{eqn:p-minup}--\eqref{eqn:p-ramp-down}.
    Thus, $(\hat{\BFx}^r,\hat{\BFy}^r)\in\convP$.
    Note that $\hat{x}^r_t=\hat{x}^r_{t-k}=0$, $\hat{y}^r_t=\hat{y}^r_{t-k}=\hat{y}^r_{t+m+1}=0$, $\sum_{i=1}^m y^r_{t+i}=0$,
    and $\hat{y}^r_{t-s}-\hat{y}^r_{t-s-1}=0$ for all $s\in\Se$, 
    Hence, $(\hat{\BFx}^r,\hat{\BFy}^r)$ satisfies \eqref{eqn-T:kth-ramp-3-1} at equality.
\end{enumerate}

Table \ref{tab:eqn-T:kth-ramp-3-1-facet-matrix-1} shows a matrix with $2T-1$ rows, where each row represents a point created by this process.
This matrix can be transformed into the matrix in Table \ref{tab:eqn-T:kth-ramp-3-1-facet-matrix-2} via the following Gaussian elimination process:

\begin{enumerate}[label=(\roman*)]
    \item For each $r\in[1,t-1]_{\Z}\setminus\{t-k\}$, the point with index $r$ in group (B1), denoted $(\underline{\bar{\BFx}}^r,\underline{\bar{\BFy}}^r)$, 
    is obtained by setting $(\underline{\bar{\BFx}}^r,\underline{\bar{\BFy}}^r)=(\bar{\BFx}^r,\bar{\BFy}^r)-(\hat{\BFx}^t,\hat{\BFy}^t)$.
    Here, $(\bar{\BFx}^r,\bar{\BFy}^r)$ is the point with index $r$ in group \ref{points:prop16-eq1-A1}, and $(\hat{\BFx}^t,\hat{\BFy}^t)$ is the point with index $t$ in group \ref{points:prop16-eq1-A5}.
    
    \item The point in group (B2), denoted $(\underline{\bar{\BFx}}^t,\underline{\bar{\BFy}}^t)$, 
    is obtained by setting $(\underline{\bar{\BFx}}^t,\underline{\bar{\BFy}}^t)=(\bar{\BFx}^t,\bar{\BFy}^t)-(\hat{\BFx}^{t+m+1},\hat{\BFy}^{t+m+1})$.
    Here, $(\bar{\BFx}^t,\bar{\BFy}^t)$ is the point in group \ref{points:prop16-eq1-A2}, and $(\hat{\BFx}^{t+m+1},\hat{\BFy}^{t+m+1})$ is the point in group \ref{points:prop16-eq1-A6}.

    \item For each $r\in[t+1,T]_{\Z}$, the point with index $r$ in group (B3), denoted $(\underline{\bar{\BFx}}^r,\underline{\bar{\BFy}}^r)$, 
    is obtained by setting $(\underline{\bar{\BFx}}^r,\underline{\bar{\BFy}}^r)=(\bar{\BFx}^r,\bar{\BFy}^r)-(\hat{\BFx}^{t+m+1},\hat{\BFy}^{t+m+1})$.
    Here, $(\bar{\BFx}^r,\bar{\BFy}^r)$ is the point with index $r$ in group \ref{points:prop16-eq1-A3}, and $(\hat{\BFx}^{t+m+1},\hat{\BFy}^{t+m+1})$ is the point in group \ref{points:prop16-eq1-A6}.
    
    \item For each \rred{$r\in[1,t-1]_{\Z}$}, the point with index $r$ in group \rred{(B4)}, denoted $(\underline{\hat{\BFx}}^r,\underline{\hat{\BFy}}^r)$, 
    is obtained by setting $(\underline{\hat{\BFx}}^r,\underline{\hat{\BFy}}^r)=(\hat{\BFx}^r,\hat{\BFy}^r)$ if $t-r-1\notin\Se$, 
    and setting $(\underline{\hat{\BFx}}^r,\underline{\hat{\BFy}}^r)=\rred{(\hat{\BFx}^{t+m+1},\hat{\BFy}^{t+m+1})-(\hat{\BFx}^r,\hat{\BFy}^r)}$ if $t-r-1\in\Se$.
    Here, $(\hat{\BFx}^r,\hat{\BFy}^r)$ is the point with index $r$ in group \ref{points:prop16-eq1-A4}, and $(\hat{\BFx}^{t+m+1},\hat{\BFy}^{t+m+1})$ is the point in group \ref{points:prop16-eq1-A6}.
    
    \item For each $r\in[t,t+m]_{\Z}$, the point with index $r$ in group \rred{(B5)}, denoted
    $(\underline{\hat{\BFx}}^r,\underline{\hat{\BFy}}^r)$, is obtained by setting $(\underline{\hat{\BFx}}^r,\underline{\hat{\BFy}}^r)=(\hat{\BFx}^r,\hat{\BFy}^r)$.
    Here, $(\hat{\BFx}^r,\hat{\BFy}^r)$ is the point with index $r$ in group \ref{points:prop16-eq1-A5}.
    
    \item The point in group \rred{(B6)}, denoted $(\underline{\hat{\BFx}}^{t+m+1},\underline{\hat{\BFy}}^{t+m+1})$, 
    is obtained by setting $(\underline{\hat{\BFx}}^{t+m+1},\underline{\hat{\BFy}}^{t+m+1})=(\hat{\BFx}^{t+m+1},\hat{\BFy}^{t+m+1})-(\hat{\BFx}^{t+m+2},\hat{\BFy}^{t+m+2})$.
    Here, $(\hat{\BFx}^{t+m+1},\hat{\BFy}^{t+m+1})$ is the point in group \ref{points:prop16-eq1-A6}, and $(\hat{\BFx}^{t+m+2},\hat{\BFy}^{t+m+2})$ is the point with index $t+m+2$ in group \ref{points:prop16-eq1-A7}.
    
    \item For each $r\in[t+m+2,T]_{\Z}$, the point with index $r$ in group \rred{(B7)}, denoted $(\underline{\hat{\BFx}}^r,\underline{\hat{\BFy}}^r)$, 
    is obtained by setting $(\underline{\hat{\BFx}}^r,\underline{\hat{\BFy}}^r)=(\hat{\BFx}^r,\hat{\BFy}^r)-(\hat{\BFx}^{r+1},\hat{\BFy}^{r+1})$ if $r\neq T$, 
    and setting $(\underline{\hat{\BFx}}^r,\underline{\hat{\BFy}}^r)=(\hat{\BFx}^r,\hat{\BFy}^r)$ if $r=T$.
    Here, $(\hat{\BFx}^r,\hat{\BFy}^r)$ and $(\hat{\BFx}^{r+1},\hat{\BFy}^{r+1})$ are the points with indices $r$ and $r+1$, respectively, in group \ref{points:prop16-eq1-A7}.
\end{enumerate}

\afterpage{
\begin{landscape}
\begin{table}
    \renewcommand{\arraystretch}{1.75}
    \centering
    \caption{A matrix with the rows representing $2T-1$ linearly independent points in $\convP$ satisfying inequality \eqref{eqn-T:kth-ramp-3-1} at equality}
    \rule{0pt}{0ex}
    \setlength\tabcolsep{2.5pt}
    \tiny
        \begin{tabular}{|c|c|c|*{15}{c}|*{11}{c}|}
        \hline
        \multirow{2}{*}{Group} & \multirow{2}{*}{Point} & \multirow{2}{*}{Index $r$} & \multicolumn{15}{c|}{$\BFx$} & \multicolumn{11}{c|}{$\BFy$} \\
        \cline{4-18}\cline{19-29}
        &&& $1$ & $\cdots$ & $t\!-\!k\!-\!1$ & $t\!-\!k$ & $t\!-\!k\!+\!1$ & $\cdots$ & $t\!-\!1$ & $t$ & $t\!+\!1$ & $\cdots$ & $t\!+\!m$ & $t\!+\!m\!+\!1$ & $t\!+\!m\!+\!2$ & $\cdots$ & $T$ 
        & $\ 1\ $ & $\cdots$ & $t\!-\!1$ & $t$ & $t\!+\!1$ & $\cdots$ & $t\!+\!m$ & $t\!+\!m\!+\!1$ & $t\!+\!m\!+\!2$ & $\cdots$ & $\ T\ $\\
         
        \hline
        \multirow{6}{*}{(A1)} & \multirow{14}{*}{$(\bar{\BFx}^r,\bar{\BFy}^r)$}
        & $1$ 
        & $\Clower\!+\!\epsilon$ & $\cdots$ & $\Clower$ & $\Clower$ & $\Clower$ & $\cdots$ & $\Clower$ & $\Vupper$ & $0$ & $\cdots$ & $0$ & $0$ & $0$ & $\cdots$ & $0$
        & $1$ & $\cdots$ & $1$ & $1$ & $0$ & $\cdots$ & $0$ & $0$ & $0$ & $\cdots$ & $0$\\
                  
        && $\vdots$
        & $\vdots$ & \rotatebox{0}{$\ddots$} & $\vdots$ & $\vdots$ & $\vdots$ && $\vdots$ & $\vdots$ & $\vdots$ && $\vdots$ & $\vdots$ & $\vdots$ && $\vdots$
        & $\vdots$ && $\vdots$ & $\vdots$ & $\vdots$ && $\vdots$ & $\vdots$ & $\vdots$ && $\vdots$\\
         
        && $t\!-\!k\!-\!1$ 
        & $\Clower$ & $\cdots$ & $\Clower\!+\!\epsilon$ & $\Clower$ & $\Clower$ & $\cdots$ & $\Clower$ & $\Vupper$ & $0$ & $\cdots$ & $0$ & $0$ & $0$ & $\cdots$ & $0$
        & $1$ & $\cdots$ & $1$ & $1$ & $0$ & $\cdots$ & $0$ & $0$ & $0$ & $\cdots$ & $0$\\
         
        && $t\!-\!k\!+\!1$ 
        & $\Clower$ & $\cdots$ & $\Clower$ & $\Clower$ & $\Clower\!+\!\epsilon$ & $\cdots$ & $\Clower$ & $\Vupper$ & $0$ & $\cdots$ & $0$ & $0$ & $0$ & $\cdots$ & $0$
        & $1$ & $\cdots$ & $1$ & $1$ & $0$ & $\cdots$ & $0$ & $0$ & $0$ & $\cdots$ & $0$\\
         
        && $\vdots$
        & $\vdots$ && $\vdots$ & $\vdots$ & $\vdots$ & \rotatebox{0}{$\ddots$} & $\vdots$ & $\vdots$ & $\vdots$ && $\vdots$ & $\vdots$ & $\vdots$ && $\vdots$
        & $\vdots$ && $\vdots$ & $\vdots$ & $\vdots$ && $\vdots$ & $\vdots$ & $\vdots$ && $\vdots$\\
         
        && $t\!-\!1$ 
        & $\Clower$ & $\cdots$ & $\Clower$ & $\Clower$ & $\Clower$ & $\cdots$ & $\Clower\!+\!\epsilon$ & $\Vupper$ & $0$ & $\cdots$ & $0$ & $0$ & $0$ & $\cdots$ & $0$
        & $1$ & $\cdots$ & $1$ & $1$ & $0$ & $\cdots$ & $0$ & $0$ & $0$ & $\cdots$ & $0$\\
         
        \cline{1-1} \cline{3-29}
        (A2) &
        &$t$
        & $\Clower$ & $\cdots$ & $\Clower$ & $\Clower\!+\!\epsilon$ & $\Clower\!+\!V\!+\!\epsilon$ & $\cdots$ & 
          $\Clower\!+\!(k\!-\!1)V\!+\!\epsilon$ & $\Clower\!+\!kV\!+\!\epsilon$ & $\Clower\!+\!kV$ & $\cdots$ & $\Clower\!+\!kV$ & $\Clower\!+\!kV$ & $\Clower\!+\!kV$ & $\cdots$ & $\Clower\!+\!kV$
        & $1$ & $\cdots$ & $1$ & $1$ & $1$ & $\cdots$ & $1$ & $1$ & $1$ & $\cdots$ & $1$\\
         
        \cline{1-1} \cline{3-29}
        \multirow{7}{*}{(A3)} &
        &$t\!+\!1$
        & $\Clower$ & $\cdots$ & $\Clower$ & $\Clower$ & $\Clower\!+\!V$ & $\cdots$ & $\Clower\!+\!(k\!-\!1)V$ & $\Clower\!+\!kV$ & $\Clower\!+\!kV\!+\!\epsilon$ & $\cdots$ & $\Clower\!+\!kV$ &
          $\Clower\!+\!kV$ & $\Clower\!+\!kV$ & $\cdots$ & $\Clower\!+\!kV$
        & $1$ & $\cdots$ & $1$ & $1$ & $1$ & $\cdots$ & $1$ & $1$ & $1$ & $\cdots$ & $1$\\
         
        && $\vdots$
        & $\vdots$ && $\vdots$ & $\vdots$ & $\vdots$ && $\vdots$ & $\vdots$ & $\vdots$ & \rotatebox{0}{$\ddots$} & $\vdots$ & $\vdots$ & $\vdots$ && $\vdots$
        & $\vdots$ && $\vdots$ & $\vdots$ & $\vdots$ && $\vdots$ & $\vdots$ & $\vdots$ && $\vdots$\\
         
        && $t\!+\!m$
        & $\Clower$ & $\cdots$ & $\Clower$ & $\Clower$ & $\Clower\!+\!V$ & $\cdots$ & $\Clower\!+\!(k\!-\!1)V$ & $\Clower\!+\!kV$ & $\Clower\!+\!kV$ & $\cdots$ & $\Clower\!+\!kV\!+\!\epsilon$ &
          $\Clower\!+\!kV$ & $\Clower\!+\!kV$ & $\cdots$ & $\Clower\!+\!kV$
        & $1$ & $\cdots$ & $1$ & $1$ & $1$ & $\cdots$ & $1$ & $1$ & $1$ & $\cdots$ & $1$\\
         
        && $t\!+\!m\!+\!1$
        & $\Clower$ & $\cdots$ & $\Clower$ & $\Clower$ & $\Clower\!+\!V$ & $\cdots$ & $\Clower\!+\!(k\!-\!1)V$ & $\Clower\!+\!kV$ & $\Clower\!+\!kV$ & $\cdots$ & $\Clower\!+\!kV$ &
          $\Clower\!+\!kV\!+\!\epsilon$ & $\Clower\!+\!kV$ & $\cdots$ & $\Clower\!+\!kV$
        & $1$ & $\cdots$ & $1$ & $1$ & $1$ & $\cdots$ & $1$ & $1$ & $1$ & $\cdots$ & $1$\\
         
        && $t\!+\!m\!+\!2$
        & $\Clower$ & $\cdots$ & $\Clower$ & $\Clower$ & $\Clower\!+\!V$ & $\cdots$ & $\Clower\!+\!(k\!-\!1)V$ & $\Clower\!+\!kV$ & $\Clower\!+\!kV$ & $\cdots$ & $\Clower\!+\!kV$ & 
          $\Clower\!+\!kV$ & $\Clower\!+\!kV\!+\!\epsilon$ & $\cdots$ & $\Clower\!+\!kV$
        & $1$ & $\cdots$ & $1$ & $1$ & $1$ & $\cdots$ & $1$ & $1$ & $1$ & $\cdots$ & $1$\\
         
        && $\vdots$
        & $\vdots$ && $\vdots$ & $\vdots$ & $\vdots$ && $\vdots$ & $\vdots$ & $\vdots$ && $\vdots$ & $\vdots$ & $\vdots$ & \rotatebox{0}{$\ddots$} & $\vdots$
        & $\vdots$ && $\vdots$ & $\vdots$ & $\vdots$ && $\vdots$ & $\vdots$ & $\vdots$ && $\vdots$\\
         
         && $T$
         & $\Clower$ & $\cdots$ & $\Clower$ & $\Clower$ & $\Clower\!+\!V$ & $\cdots$ & $\Clower\!+\!(k\!-\!1)V$ & $\Clower\!+\!kV$ & $\Clower\!+\!kV$ & $\cdots$ & $\Clower\!+\!kV$ &
           $\Clower\!+\!kV$ & $\Clower\!+\!kV$ & $\cdots$ & $\Clower\!+\!kV\!+\!\epsilon$
         & $1$ & $\cdots$ & $1$ & $1$ & $1$ & $\cdots$ & $1$ & $1$ & $1$ & $\cdots$ & $1$\\
         
        \hline
        \multirow{3}{*}{\rred{(A4)}} & \multirow{11}{*}{$(\hat{\BFx}^r,\hat{\BFy}^r)$}
        & $1$ &\multicolumn{15}{c|}{\multirow{3}{*}{(See Note \rred{\ref{tab:eqn-T:kth-ramp-3-1-facet-matrix-1}-1})}} & \multicolumn{11}{c|}{\multirow{3}{*}{(See Note \rred{\ref{tab:eqn-T:kth-ramp-3-1-facet-matrix-1}-1})}}\\    
        && $\vdots$ &\multicolumn{15}{c|}{} & \multicolumn{11}{c|}{}\\
        && $t\!-\!1$ &\multicolumn{15}{c|}{} & \multicolumn{11}{c|}{}\\
         
        \cline{1-1} \cline{3-29}
        \multirow{4}{*}{\rred{(A5)}} &
        &$t$
        & $\Clower$ & $\cdots$ & $\Clower$ & $\Clower$ & $\Clower$ & $\cdots$ & $\Clower$ & $\Vupper$ & $0$ & $\cdots$ & $0$ & $0$ & $0$ & $\cdots$ & $0$
        & $1$ & $\cdots$ & $1$ & $1$ & $0$ & $\cdots$ & $0$ & $0$ & $0$ & $\cdots$ & $0$\\
         
        && $t\!+\!1$
        & $\Clower$ & $\cdots$ & $\Clower$ & $\Clower$ & $\Clower$ & $\cdots$ & $\Vupper$ & $\Vupper\!+\!V$ & $\Vupper$ & $\cdots$ & $0$ & $0$ & $0$ & $\cdots$ & $0$
        & $1$ & $\cdots$ & $1$ & $1$ & $1$ & $\cdots$ & $0$ & $0$ & $0$ & $\cdots$ & $0$\\
         
        && $\vdots$
        & $\vdots$ && $\vdots$ & $\vdots$ & $\vdots$ && $\vdots$ & $\vdots$ & $\vdots$ & \rotatebox{0}{$\ddots$} & $\vdots$ & $\vdots$ & $\vdots$ && $\vdots$
        & $\vdots$ && $\vdots$ & $\vdots$ & $\vdots$ & \rotatebox{0}{$\ddots$} & $\vdots$ & $\vdots$ & $\vdots$ && $\vdots$\\
        
        &&$t\!+\!m$
        & $\Clower$ & $\cdots$ & $\Clower$ & $\Clower$ & $\substack{\mbox{(See Note}\\\mbox{\rred{\ref{tab:eqn-T:kth-ramp-3-1-facet-matrix-1}-2})}}$ & $\cdots$ & $\Vupper\!+\!(m\!-\!1)V$ & $\Vupper+mV$ & $\Vupper\!+\!(m\!-\!1)V$ & $\cdots$ & $\Vupper$ & $0$ & $0$ & $\cdots$ & $0$
        & $1$ & $\cdots$ & $1$ & $1$ & $1$ & $\cdots$ & $1$ & $0$ & $0$ & $\cdots$ & $0$\\
        
        \cline{1-1} \cline{3-29}
        \rred{(A6)} & 
        &$t\!+\!m\!+\!1$
        & $\Clower$ & $\cdots$ & $\Clower$ & $\Clower$ & $\Clower\!+\!V$ & $\cdots$ & $\Clower\!+\!(k\!-\!1)V$ & $\Clower\!+\!kV$ & $\Clower\!+\!kV$ & $\cdots$ & $\Clower\!+\!kV$ &
          $\Clower\!+\!kV$ & $\Clower\!+\!kV$ & $\cdots$ & $\Clower\!+\!kV$
        & $1$ & $\cdots$ & $1$ & $1$ & $1$ & $\cdots$ & $1$ & $1$ & $1$ & $\cdots$ & $1$\\
         
        \cline{1-1} \cline{3-29}
        \multirow{3}{*}{\rred{(A7)}} & 
        & $t\!+\!m\!+\!2$
        & $0$ & $\cdots$ & $0$ & $0$ & $0$ & $\cdots$ & $0$ & $0$ & $0$ & $\cdots$ & $0$ & $0$ & $\Clower$ & $\cdots$ & $\Clower$
        & $0$ & $\cdots$ & $0$ & $0$ & $0$ & $\cdots$ & $0$ & $0$ & $1$ & $\cdots$ & $1$\\
        
        && $\vdots$
        & $\vdots$ && $\vdots$ & $\vdots$ & $\vdots$ && $\vdots$ & $\vdots$ & $\vdots$ && $\vdots$ & $\vdots$ & $\vdots$ & \rotatebox{0}{$\ddots$} & $\vdots$
        & $\vdots$ && $\vdots$ & $\vdots$ & $\vdots$ && $\vdots$ & $\vdots$ & $\vdots$ & \rotatebox{0}{$\ddots$} & $\vdots$\\
        
        && $T$
        & $0$ & $\cdots$ & $0$ & $0$ & $0$ & $\cdots$ & $0$ & $0$ & $0$ & $\cdots$ & $0$ & $0$ & $0$ & $\cdots$ & $\Clower$
        & $0$ & $\cdots$ & $0$ & $0$ & $0$ & $\cdots$ & $0$ & $0$ & $0$ & $\cdots$ & $1$\\
        \hline
        
        \multicolumn{29}{l}{Note \rred{\ref{tab:eqn-T:kth-ramp-3-1-facet-matrix-1}-1}: For $r\in\rred{[1,t-1]_\Z}$, the $\BFx$ and $\BFy$ vectors in group \rred{(A4)} are given as follows:}\\
        \multicolumn{29}{l}{
        $\hat{\BFx}^r=(\underbrace{\Clower,\ldots,\Clower}_{r\ {\rm terms}},\underbrace{0,\ldots,0}_{T-r\ {\rm terms}\!\!\!\!\!})$
        and
        $\hat{\BFy}^r=(\underbrace{1,\ldots,1}_{r\ {\rm terms}},\underbrace{0,\ldots,0}_{T-r\ {\rm terms}\!\!\!\!\!})$
        if $t-r-1\notin\Se$;}\\
        \multicolumn{29}{l}{
        $\hat{\BFx}^r=(\underbrace{0,\ldots,0}_{r\ {\rm terms}},\underbrace{\Vupper,\Vupper+V,\Vupper+2V,\ldots,\Vupper(r-t-1)V}_{t-r\ {\rm terms}},\underbrace{\Vupper+(t-r-1)V,\ldots,\Vupper+(t-r-1)V}_{T-t\ {\rm terms}\!\!\!\!\!})$
        and
        $\hat{\BFy}^r=(\underbrace{0,\ldots,0}_{r\ {\rm terms}},\underbrace{1,\ldots,1}_{T-r\ {\rm terms}\!\!\!\!\!})$
        if $t-r-1\in\Se$.}\\
        \multicolumn{29}{l}{Note \rred{\ref{tab:eqn-T:kth-ramp-3-1-facet-matrix-1}-2}: In group \rred{(A5)}, $\hat{x}^{t+m}_{t-k+1}=\Clower$ if $m<k-1$, and $\hat{x}^{t+m}_{t-k+1}=\Vupper$ if $m=k-1$.}
    \end{tabular}
    \label{tab:eqn-T:kth-ramp-3-1-facet-matrix-1}
\end{table}
\end{landscape}

\begin{landscape}
\begin{table}
    \renewcommand{\arraystretch}{2}
    \centering
    \caption{Lower triangular matrix obtained from Table \ref{tab:eqn-T:kth-ramp-3-1-facet-matrix-1} via Gaussian elimination}
    \rule{0pt}{0ex}
    \setlength\tabcolsep{3pt}
    \sscriptsize
        \begin{tabular}{|c|c|c|*{15}{c}|*{11}{c}|}
        \hline
        \multirow{2}{*}{Group} & \multirow{2}{*}{Point} & \multirow{2}{*}{Index $r$} & \multicolumn{15}{c|}{$\BFx$} & \multicolumn{11}{c|}{$\BFy$} \\
        \cline{4-18}\cline{19-29}
        &&& $\ \ 1\ \ $ & $\cdots$ & $t\!-\!k\!-\!1$ & $t\!-\!k$ & $t\!-\!k\!+\!1$ & $\cdots$ & $t\!-\!1$ & $t$ & $t\!+\!1$ & $\cdots$ & $t\!+\!m$ & $t\!+\!m\!+\!1$ & $t\!+\!m\!+\!2$ & $\cdots$ & $\ \ T\ \ $ 
        & $\ \ 1\ \ $ & $\cdots$ & $t\!-\!1$ & $t$ & $t\!+\!1$ & $\cdots$ & $t\!+\!m$ & $t\!+\!m\!+\!1$ & $t\!+\!m\!+\!2$ & $\cdots$ & $\ \ T\ \ $\\
         
        \hline
        \multirow{6}{*}{(B1)} & \multirow{14}{*}{$(\underline{\bar{\BFx}}^r,\underline{\bar{\BFy}}^r)$}
        & $1$ 
        & $\epsilon$ & $0$ & $\cdots$ & $0$ & $0$ & $\cdots$ & $0$ & $0$ & $0$ & $\cdots$ & $0$ & $0$ & $0$ & $\cdots$ & $0$
        & $0$ & $\cdots$ & $0$ & $0$ & $0$ & $\cdots$ & $0$ & $0$ & $0$ & $\cdots$ & $0$\\
         
        && $\vdots$
        & $\vdots$ & \rotatebox{0}{$\ddots$} & $\vdots$ & $\vdots$ & $\vdots$ && $\vdots$ & $\vdots$ & $\vdots$ && $\vdots$ & $\vdots$ & $\vdots$ && $\vdots$
        & $\vdots$ && $\vdots$ & $\vdots$ & $\vdots$ && $\vdots$ & $\vdots$ & $\vdots$ && $\vdots$\\
         
        && $t\!-\!k\!-\!1$ 
        & $0$ & $\cdots$ & $\epsilon$ & $0$ & $0$ & $\cdots$ & $0$ & $0$ & $0$ & $\cdots$ & $0$ & $0$ & $0$ & $\cdots$ & $0$
        & $0$ & $\cdots$ & $0$ & $0$ & $0$ & $\cdots$ & $0$ & $0$ & $0$ & $\cdots$ & $0$\\
         
        && $t\!-\!k\!+\!1$ 
        & $0$ & $\cdots$ & $0$ & $0$ & $\epsilon$ & $\cdots$ & $0$ & $0$ & $0$ & $\cdots$ & $0$ & $0$ & $0$ & $\cdots$ & $0$
        & $0$ & $\cdots$ & $0$ & $0$ & $0$ & $\cdots$ & $0$ & $0$ & $0$ & $\cdots$ & $0$\\
         
        && $\vdots$
        & $\vdots$ & $\vdots$ && $\vdots$ & $\vdots$ & \rotatebox{0}{$\ddots$} & $\vdots$ & $\vdots$ & $\vdots$ && $\vdots$ & $\vdots$ & $\vdots$ && $\vdots$
        & $\vdots$ && $\vdots$ & $\vdots$ & $\vdots$ && $\vdots$ & $\vdots$ & $\vdots$ && $\vdots$\\
         
        && $t\!-\!1$ 
        & $0$ & $\cdots$ & $0$ & $0$ & $0$ & $\cdots$ & $\epsilon$ & $0$ & $0$ & $\cdots$ & $0$ & $0$ & $0$ & $\cdots$ & $0$
        & $0$ & $\cdots$ & $0$ & $0$ & $0$ & $\cdots$ & $0$ & $0$ & $0$ & $\cdots$ & $0$\\
         
        \cline{1-1} \cline{3-29}
        (B2) &
        &$t$
        & $0$ & $0$ & $\cdots$ & $\epsilon$ & $\epsilon$ & $\cdots$ & $\epsilon$ & $\epsilon$ & $0$ & $\cdots$ & $0$ & $0$ & $0$ & $\cdots$ & $0$
        & $0$ & $\cdots$ & $0$ & $0$ & $0$ & $\cdots$ & $0$ & $0$ & $0$ & $\cdots$ & $0$\\
         
        \cline{1-1} \cline{3-29}
        \multirow{7}{*}{(B3)} &
        &$t\!+\!1$
        & $0$ & $0$ & $\cdots$ & $0$ & $0$ & $\cdots$ & $0$ & $0$ & $\epsilon$ & $\cdots$ & $0$ & $0$ & $0$ & $\cdots$ & $0$
        & $0$ & $\cdots$ & $0$ & $0$ & $0$ & $\cdots$ & $0$ & $0$ & $0$ & $\cdots$ & $0$\\
         
        && $\vdots$
        & $\vdots$ & $\vdots$ && $\vdots$ & $\vdots$ && $\vdots$ & $\vdots$ & $\vdots$ & \rotatebox{0}{$\ddots$} & $\vdots$ & $\vdots$ & $\vdots$ && $\vdots$
        & $\vdots$ && $\vdots$ & $\vdots$ & $\vdots$ && $\vdots$ & $\vdots$ & $\vdots$ && $\vdots$\\
         
        && $t\!+\!m$
        & $0$ & $0$ & $\cdots$ & $0$ & $0$ & $\cdots$ & $0$ & $0$ & $0$ & $\cdots$ & $\epsilon$ & $0$ & $0$ & $\cdots$ & $0$
        & $0$ & $\cdots$ & $0$ & $0$ & $0$ & $\cdots$ & $0$ & $0$ & $0$ & $\cdots$ & $0$\\
         
        && $t\!+\!m\!+\!1$
        & $0$ & $0$ & $\cdots$ & $0$ & $0$ & $\cdots$ & $0$ & $0$ & $0$ & $\cdots$ & $0$ & $\epsilon$ & $0$ & $\cdots$ & $0$
        & $0$ & $\cdots$ & $0$ & $0$ & $0$ & $\cdots$ & $0$ & $0$ & $0$ & $\cdots$ & $0$\\
         
        && $t\!+\!m\!+\!2$
        & $0$ & $0$ & $\cdots$ & $0$ & $0$ & $\cdots$ & $0$ & $0$ & $0$ & $\cdots$ & $0$ & $0$ & $\epsilon$ & $\cdots$ & $0$
        & $0$ & $\cdots$ & $0$ & $0$ & $0$ & $\cdots$ & $0$ & $0$ & $0$ & $\cdots$ & $0$\\
         
        && $\vdots$
        & $\vdots$ & $\vdots$ && $\vdots$ & $\vdots$ && $\vdots$ & $\vdots$ & $\vdots$ && $\vdots$ & $\vdots$ & $\vdots$ & \rotatebox{0}{$\ddots$} & $\vdots$
        & $\vdots$ && $\vdots$ & $\vdots$ & $\vdots$ && $\vdots$ & $\vdots$ & $\vdots$ && $\vdots$\\
         
         && $T$
         & $0$ & $0$ & $\cdots$ & $0$ & $0$ & $\cdots$ & $0$ & $0$ & $0$ & $\cdots$ & $0$ & $0$ & $0$ & $\cdots$ & $\epsilon$
         & $0$ & $\cdots$ & $0$ & $0$ & $0$ & $\cdots$ & $0$ & $0$ & $0$ & $\cdots$ & $0$\\
         
        \hline
        \multirow{3}{*}{\rred{(B4)}} & \multirow{11}{*}{$(\underline{\hat{\BFx}}^r,\underline{\hat{\BFy}}^r)$}
        & $1$
        & \multicolumn{15}{c|}{}
        & $\rred 1$ & $\rred \cdots$ & $\rred 0$ & $\rred 0$ & $\rred 0$ & $\rred \cdots$ & $\rred 0$ & $\rred 0$ & $\rred 0$  & $\rred \cdots$ & $\rred 0$\\
        && $\vdots$
        & \multicolumn{15}{c|}{(Omitted)}
        & $\rred \vdots$ & \rotatebox{0}{$\rred \ddots$} & $\rred \vdots$ & $\rred \vdots$ & $\rred \vdots$ && $\rred \vdots$ & $\rred \vdots$ & $\rred \vdots$ && $\rred \vdots$\\
        && $t\!-\!1$
        & \multicolumn{15}{c|}{}
        & $\rred 1$ & $\rred \cdots$ & $\rred 1$ & $\rred 0$ & $\rred 0$ & $\rred \cdots$ & $\rred 0$ & $\rred 0$ & $\rred 0$ & $\rred \cdots$ & $\rred 0$\\
         
        \cline{1-1} \cline{3-29}
        \multirow{4}{*}{\rred{(B5)}} &
        & $t$
        & \multicolumn{15}{c|}{}
        & $1$ & $\cdots$ & $1$ & $1$ & $0$ & $\cdots$ & $0$ & $0$ & $0$ & $\cdots$ & $0$\\
         
        && $t\!+\!1$
        & \multicolumn{15}{c|}{}
        & $1$ & $\cdots$ & $1$ & $1$ & $1$ & $\cdots$ & $0$ & $0$ & $0$ & $\cdots$ & $0$\\
         
        && $\vdots$
        & \multicolumn{15}{c|}{\raisebox{10pt}{(Omitted)}}
        & $\vdots$ && $\vdots$ & $\vdots$ & $\vdots$ & \rotatebox{0}{$\ddots$} & $\vdots$ & $\vdots$ & $\vdots$ && $\vdots$\\
        
        &&$t\!+\!m$
        & \multicolumn{15}{c|}{}
        & $1$ & $\cdots$ & $1$ & $1$ & $1$ & $\cdots$ & $1$ & $0$ & $0$ & $\cdots$ & $0$\\
        
        \cline{1-1} \cline{3-29}
        \rred{(B6)} & 
        &$t\!+\!m\!+\!1$
        & \multicolumn{15}{c|}{(Omitted)}
        & $1$ & $\cdots$ & $1$ & $1$ & $1$ & $\cdots$ & $1$ & $1$ & $0$ & $\cdots$ & $0$\\
         
        \cline{1-1} \cline{3-29}
        \multirow{3}{*}{\rred{(B7)}} & 
        & $t\!+\!m\!+\!2$
        & \multicolumn{15}{c|}{}
        & $0$ & $\cdots$ & $0$ & $0$ & $0$ & $\cdots$ & $0$ & $0$ & $1$ & $\cdots$ & $0$\\
        
        && $\vdots$
        & \multicolumn{15}{c|}{(Omitted)}
        & $\vdots$ && $\vdots$ & $\vdots$ & $\vdots$ && $\vdots$ & $\vdots$ & $\vdots$ & \rotatebox{0}{$\ddots$} & $\vdots$\\
        
        && $T$
        & \multicolumn{15}{c|}{}
        & $0$ & $\cdots$ & $0$ & $0$ & $0$ & $\cdots$ & $0$ & $0$ & $0$ & $\cdots$ & $1$\\
        \hline
    \end{tabular}
    \label{tab:eqn-T:kth-ramp-3-1-facet-matrix-2}
\end{table}
\end{landscape}
}

    

 The matrix shown in Table \ref{tab:eqn-T:kth-ramp-3-1-facet-matrix-2} is lower triangular; that is, the position of the last nonzero component of a row of the matrix is greater than the position of the last nonzero component of the previous row.
This implies that the $2T-1$ points in groups \ref{points:prop16-eq1-A1}--\ref{points:prop16-eq1-A7} are linearly independent.
Therefore, inequality \eqref{eqn-T:kth-ramp-3-1} is facet-defining for $\convP$.

\rred{Next, we show that inequality \eqref{eqn-T:kth-ramp-3-2} is valid and facet-defining for $\convP$.
Denote $x'_t=x_{T-t+1}$ and $y'_t=y_{T-t+1}$ for $t\in[1,T]_\Z$.
Because inequality \eqref{eqn-T:kth-ramp-3-1} is valid and facet-defining for $\convP$ for any $t\in[k+1,T-m-1]_\Z$, the inequality
\begin{align*}
x'_{T-t+1}-x'_{T-t+k+1}\le\
& (\Clower+(k-m)V-\Vupper)y'_{T-t-m}+V\sum_{i=1}^my'_{T-t-i+1}+\Vupper y'_{T-t+1}-\Clower y'_{T-t+k+1}\\
& -\sum_{s\in\Se}(\Clower+(k-s)V-\Vupper)(y'_{T-t+s+1}-y'_{T-t+s+2})
\end{align*}
is valid and facet-defining for $\convPprime$ for any $t\in[k+1,T-m-1]_\Z$.
Let $t'=T-t+1$.
Then, the inequality
$$x'_{t'}\!-\!x'_{t'+k}\!\le\!(\Clower\!+\!(k\!-\!m)V\!-\!\Vupper)y'_{t'-m-1}\!+\!V\!\sum_{i=1}^my'_{t'-i}\!+\!\Vupper y'_{t'}\!-\!\Clower y'_{t'+k}\!-\!\sum_{s\in\Se}\!(\Clower\!+\!(k\!-\!s)V\!-\!\Vupper)(y'_{t'+s}\!-\!y'_{t'+s+1})$$
is valid and facet-defining for $\convPprime$ for any $t'\in[m+2,T-k]_\Z$.
Hence, by Lemma~\ref{lem:Pprime}, inequality \eqref{eqn-T:kth-ramp-3-2} is valid and facet-defining for $\convP$ for any $t\in[m+2,T-k]_\Z$.\Halmos
}


\subsection{Proof of Proposition \ref{prop-T:kth-ramp-3-separation}}

\noindent{\bf Proposition \ref{prop-T:kth-ramp-3-separation}.} {\it
For any given point $(\BFx,\BFy)\in\R^{2T}_+$, \tred{the} most violated inequalities \eqref{eqn-T:kth-ramp-3-1} and \eqref{eqn-T:kth-ramp-3-2} can be determined in $O(T^3)$ time if such violated inequalities exist.}
\vskip8pt

\noindent{\bf Proof.}
We first consider inequality \eqref{eqn-T:kth-ramp-3-1}.
Consider any given $(\BFx,\BFy)\in\R^{2T}_{+}$.
For notational convenience, denote $\hat{k}=\max\{k\in[1,T-1]_\Z:\Cupper-\Clower-kV>0\}$, and denote $\hat{s}_{km}=\min\{k-1,L-m-2\}$ for any $k\in[1,\hat{k}]_\Z$ and $m\in[0,k-1]_\Z$.
For any $t\in[1,T]_\Z$, let
$$\theta(t)=\sum_{\tau=2}^t\max\{y_\tau-y_{\tau-1},0\}.$$
Then, for any $k\in[1,\hat{k}]_\Z$, $m\in[0,k-1]_\Z$, and $t\in[k+1,T-m-1]_\Z$,
\begin{equation}\label{eq:P12-ysummation1}
\sum_{s=1}^{\hat{s}_{km}}\max\{y_{t-s}-y_{t-s-1},0\}=\sum_{\tau=t-\hat{s}_{km}}^{t-1}\max\{y_\tau-y_{\tau-1},0\}=\theta(t-1)-\theta(t-\hat{s}_{km}-1).
\end{equation}
For any $k\in[1,\hat{k}]_\Z$, $m\in[0,k-1]_\Z$, $t\in[k+1,T-m-1]_\Z$, and $\Se\subseteq[0,\hat{s}_{km}]_\Z$, let
\begin{align*}
\tilde{v}_{km}(\Se,t)=\ &x_t-x_{t-k}-(\Clower+(k-m)V-\Vupper)y_{t+m+1}-V\sum_{i=1}^m y_{t+i}-\Vupper y_t+\Clower y_{t-k}\\[-2pt]
                        &+\sum_{s\in\Se}(\Clower+(k-s)V-\Vupper)(y_{t-s}-y_{t-s-1}).
\end{align*}
If $\tilde{v}_{km}(\Se,t)>0$, then $\tilde{v}_{km}(\Se,t)$ is the amount of violation of inequality \eqref{eqn-T:kth-ramp-3-1}.
If $\tilde{v}_{km}(\Se,t)\le 0$, there is no violation of inequality \eqref{eqn-T:kth-ramp-3-1}.
For any $k\in[1,\hat{k}]_\Z$, $m\in[0,k-1]_\Z$, and $t\in[k+1,T-m-1]_\Z$, let
$$v_{km}(t)=\max_{\Se\subseteq[0,\hat{s}_{km}]_\Z}\{\tilde{v}_{km}(\Se,t)\}.$$
If $v_{km}(t)>0$, then $v_{km}(t)$ is the largest possible violation of inequality \eqref{eqn-T:kth-ramp-3-1} for this combination of $k$, $m$, and $t$.
If $v_{km}(t)\le 0$, the largest possible violation of inequality \eqref{eqn-T:kth-ramp-3-1} is zero for this combination of $k$, $m$, and $t$.
Because $\Clower+V>\Vupper$, we have $\Clower+(k-s)V-\Vupper>0$ for all $k\in[1,\hat{k}]_\Z$, $s\in[0,\hat{s}_{km}]_\Z$, and $m\in[0,k-1]_\Z$.
Thus, for any $k\in[1,\hat{k}]_\Z$, $m\in[0,k-1]_\Z$, and $t\in[k+1,T-m-1]_\Z$, 
$\tilde{v}_{km}(\Se,t)$ is maximized when $\Se$ contains all $s\in[0,\hat{s}_{km}]_\Z$ such that $y_{t-s}-y_{t-s-1}>0$ (if any).
If it does not exist any $s\in[0,\hat{s}]_\Z$ such that $y_{t-s}-y_{t-s-1}>0$, then $\tilde{v}_{km}(\Se,t)$ is maximized when $\Se=\emptyset$, 
and $v_{km}(t)=x_t-x_{t-k}-(\Clower+(k-m)V-\Vupper)y_{t+m+1}-V\sum_{i=1}^m y_{t+i}-\Vupper y_t+\Clower y_{t-k}$.
Hence, for any $k\in[1,\hat{k}]_\Z$, $m\in[0,k-1]_\Z$, and $t\in[k+1,T-m-1]_\Z$, 
\begin{align*}
v_{km}(t)=\ &x_t-x_{t-k}-(\Clower+(k-m)V-\Vupper)y_{t+m+1}-V\sum_{i=1}^m y_{t+i}-\Vupper y_t+\Clower y_{t-k}\\[-2pt]
            &+\sum_{s=0}^{\hat{s}_{km}}(\Clower+(k-s)V-\Vupper)\max\{y_{t-s}-y_{t-s-1},0\}.
\end{align*}

Determining $\theta(t)$ for all $t\in[1,T]_\Z$ can be done recursively in $O(T)$ time by setting $\theta(1)=0$ and setting $\theta(t)=\theta(t-1)+\max\{y_t-y_{t-1},0\}$ for $t=2,\ldots,T$.
Clearly, for each $k\in[1,\hat{k}]_\Z$ and each $m\in[0,k-1]_\Z$, the value of $v_{km}(k+1)$ can be determined in $O(T)$ time.
For any $k\in[1,\hat{k}]_\Z$, $m\in[0,k-1]_\Z$, and $t\in[k+2,T-m-1]_\Z$,
\begin{align*}
v_{km}(t)-v_{km}(t-1)
&=(x_t-x_{t-1})-(x_{t-k}-x_{t-k-1})-(\Clower+(k-m)V-\Vupper)(y_{t+m+1}-y_{t+m})\\
&\quad -V\left[\sum_{i=1}^m y_{t+i}-\sum_{i=1}^m y_{t+i-1}\right]-\Vupper(y_t-y_{t-1})+\Clower(y_{t-k}-y_{t-k-1})\\
&\quad +(\Clower+kV-\Vupper)\left[\sum_{s=0}^{\hat{s}_{km}}\max\{y_{t-s}-y_{t-s-1},0\}-\sum_{s=0}^{\hat{s}_{km}}\max\{y_{t-s-1}-y_{t-s-2},0\}\right]\\
&\quad -V\left[\sum_{s=0}^{\hat{s}_{km}}s\max\{y_{t-s}-y_{t-s-1},0\}-\sum_{s=0}^{\hat{s}_{km}}s\max\{y_{t-s-1}-y_{t-s-2},0\}\right]\\
&=(x_t-x_{t-1})-(x_{t-k}-x_{t-k-1})-(\Clower+(k-m)V-\Vupper)(y_{t+m+1}-y_{t+m})\\
&\quad -V(y_{t+m}-y_t)-\Vupper(y_t-y_{t-1})+\Clower(y_{t-k}-y_{t-k-1})\\
&\quad +(\Clower+kV-\Vupper)\left[\max\{y_t-y_{t-1},0\}-\max\{y_{t-\hat{s}_{km}-1}-y_{t-\hat{s}_{km}-2},0\}\right]\\
&\quad -V\left[\sum_{s=1}^{\hat{s}_{km}}\max\{y_{t-s}-y_{t-s-1},0\}-\hat{s}_{km}\max\{y_{t-\hat{s}_{km}-1}-y_{t-\hat{s}_{km}-2},0\}\right].
\end{align*}
This, together with \eqref{eq:P12-ysummation1}, implies that
\begin{align*}
v_{km}(t)=\ &v_{km}(t-1)+(x_t-x_{t-1})-(x_{t-k}-x_{t-k-1})-(\Clower+(k-m)V-\Vupper)(y_{t+m+1}-y_{t+m})\\
&-V(y_{t+m}-y_t)-\Vupper(y_t-y_{t-1})+\Clower(y_{t-k}-y_{t-k-1})\\
&+(\Clower+kV-\Vupper)\left[\max\{y_t-y_{t-1},0\}-\max\{y_{t-\hat{s}_{km}-1}-y_{t-\hat{s}_{km}-2},0\}\right]\\
&-V\left[\theta(t-1)-\theta(t-\hat{s}_{km}-1)-\hat{s}_{km}\max\{y_{t-\hat{s}_{km}-1}-y_{t-\hat{s}_{km}-2},0\}\right].
\end{align*}
Thus, for each $k\in[1,\hat{k}]_\Z$ and $m\in[0,k-1]_\Z$, the values of $v_{km}(k+1),v_{km}(k+2),\ldots,v_{km}(T-m-1)$ can be determined recursively in $O(T)$ time.
Hence, the values of $k$, $m$, $t$ and the set $\Se$ corresponding to the largest possible violation of inequality \eqref{eqn-T:kth-ramp-3-1} can be obtained in $O(T^3)$ time.

Next, we consider inequality \eqref{eqn-T:kth-ramp-3-2}.
Consider any given $(\BFx,\BFy)\in\R^{2T}_{+}$.
Let $x'_t=x_{T-t+1}$ and $y'_t=y_{T-t+1}$ for $t\in[1,T]_\Z$.
Inequality \eqref{eqn-T:kth-ramp-3-2} becomes
\begin{align*}
x'_{T-t+1}-x'_{T-t-k+1}\le\
&(\Clower+(k-m)V-\Vupper)y'_{T-t+m+2}+V\sum_{i=1}^m y'_{T-t+i+1}+\Vupper y'_{T-t+1}-\Clower y'_{T-t-k+1}\\
&-\sum_{s\in\Se}(\Clower+(k-s)V-\Vupper)(y'_{T-t-s+1}-y'_{T-t-s}).
\end{align*}
Letting $t'=T-t+1$, this inequality becomes
\begin{align}
x'_{t'}-x'_{t'-k}\le\
&(\Clower+(k-m)V-\Vupper)y'_{t'+m+1}+V\sum_{i=1}^m y'_{t'+i}+\Vupper y'_{t'}-\Clower y'_{t'-k}\nonumber\\
&-\sum_{s\in\Se}(\Clower+(k-s)V-\Vupper)(y'_{t'-s}-y'_{t'-s-1}).\label{eqn-T:kth-ramp-3-2-prime}
\end{align}
Because the values of $k$, $m$, $t$ and the set $\Se$ corresponding to the largest possible violation of inequality \eqref{eqn-T:kth-ramp-3-1} can be obtained in $O(T^3)$ time, the values of $k$, $m$, $t'$ and the set $\Se$ corresponding to the largest possible violation of inequality \eqref{eqn-T:kth-ramp-3-2-prime} can be obtained in $O(T^3)$ time.
Hence, the values of $k$, $m$, $t$ and the set $\Se$ corresponding to the largest possible violation of inequality \eqref{eqn-T:kth-ramp-3-2} can be obtained in $O(T^3)$ time.
\Halmos


\section{Supplement to Section~\ref{sec:comp-exper}}\label{apx:B}
\subsection{Results of the Third Experiment Under Different Demand Settings}\label{apx:B-1}

For the third experiment, we consider two additional cases, namely a less congested demand setting and a more congested demand setting.
For the more congested demand setting, we obtain the new demand $\bar{d}_t^b$ at bus $b\in\Be$ for each period $t\in[1,T]_{\Z}$ by increasing the corresponding demand $d_t^b$ in the third experiment by $10\%$, while ensuring that $\bar{d}_t^b$ will not exceed the total generation capacity of these 54 thermal generators at the same time.
Thus, we set $\bar{d}_t^b = \min\{1.1 d_t^b, \sum_{g\in\Ge}\Cupper^g\}$. 
For the less congested demand setting, we obtain $\bar{d}_t^b$ by decreasing the demand $d_t^b$ by $10\%$; that is, we set $\bar{d}_t^b = 0.9 d_t^b$.

\afterpage{
\begin{table}[b]
  \renewcommand{\arraystretch}{1.2}
  \centering
  \caption{Performance of MIP Formulations in the Third Experiment Under a Less Congested Demand Setting}
  \fontsize{8}{12}\selectfont
  \setlength\tabcolsep{5pt}
  \begin{tabular}{|*{10}{c|}}
    \hline
    \multirow{2}{*}{Instance} & \multirow{2}{*}{\# var} & \multirow{2}{*}{\# bin var} & \multicolumn{2}{c|}{\# cstr} & \multicolumn{2}{c|}{CPU time [TGap]} 
    & \multicolumn{2}{c|}{\# nodes} & \# user cuts \\
    \cline{4-10}
       &&& \F1^+ & \F1^+-X & \F1^+     & \F1^+-X    & \F1^+  & \F1^+-X & \F1^+-X \\
    \hline
     1 & \multirow{20}{*}{6372} & \multirow{20}{*}{1296} & \multirow{20}{*}{36124} & \multirow{20}{*}{43576} & 307.8      & 212.0      & 19395  & 12293  & 88\\
     2 &&&&& 174.7      & 233.8      & 13023  & 10430  & 95 \\
     3 &&&&& 166.4      & 522.7      & 11248  & 13803  & 96 \\
     4 &&&&& 588.9      & 342.1      & 36443  & 17313  & 98 \\
     5 &&&&& 62.0       & 106.6      & 6775   & 2699   & 70 \\
     6 &&&&& **[0.11\%] & **[0.07\%] & 115728 & 103776 & 229 \\
     7 &&&&& 1116.6     & 848.6      & 43524  & 36756  & 126 \\
     8 &&&&& 1247.9     & 1985.2     & 63266  & 59530  & 217 \\
     9 &&&&& 821.9      & 1774.3     & 41451  & 56742  & 198 \\
    10 &&&&& 1881.1     & 2230.6     & 60109  & 69543  & 188 \\
    11 &&&&& 745.1      & 2793.1     & 40789  & 72982  & 182 \\
    12 &&&&& 87.8       & 226.9      & 9905   & 9968   & 93 \\
    13 &&&&& 35.5       & 66.7       & 2370   & 2702   & 45 \\
    14 &&&&& 1187.4     & 1043.2     & 49699  & 43317  & 143 \\
    15 &&&&& 272.5      & 818.3      & 21934  & 39631  & 173 \\
    16 &&&&& 306.6      & 206.8      & 21323  & 9596   & 99 \\
    17 &&&&& 200.1      & 614.2      & 18638  & 27580  & 128 \\
    18 &&&&& 82.3       & 543.2      & 9904   & 18321  & 88 \\
    19 &&&&& 70.6       & 119.5      & 6875   & 3618   & 73 \\
    20 &&&&& 81.8       & 148.5      & 10301  & 10550  & 66 \\
    \hline
  \end{tabular}
  \label{tab:Exp3-less-congested}
\end{table}

\begin{table}[ht]
  \renewcommand{\arraystretch}{1.2}
  \centering
  \caption{The Strength of LP Relaxations of MIP Formulations in the Third Experiment Under a Less Congested Demand Setting}
  \setlength\tabcolsep{5pt}
  \fontsize{8}{12}\selectfont
  \begin{tabular}{|c|c|*{10}{c}|}
    \hline
    \multirow{6}{*}{IGap} & Instance & 1 & 2 & 3 & 4 & 5 & 6 & 7 & 8 & 9 & 10\\
    \cline{2-12}
     & \F1^+ & 0.57\% & 0.59\% & 0.50\% & 0.63\% & 0.53\% & 0.58\% & 0.58\% & 0.64\% & 0.53\% & 0.58\% \\
     & \F1^+-X & 0.57\% & 0.59\% & 0.50\% & 0.63\% & 0.53\% & 0.58\% & 0.58\% & 0.63\% & 0.52\% & 0.57\% \\
    \cline{2-12}
     & Instance & 11 & 12 & 13 & 14 & 15 & 16 & 17 & 18 & 19 & 20\\
    \cline{2-12}
     & \F1^+ & 0.60\% & 0.57\% & 0.47\% & 0.58\% & 0.58\% & 0.59\% & 0.60\% & 0.59\% & 0.47\% & 0.46\% \\
     & \F1^+-X & 0.60\% & 0.57\% & 0.47\% & 0.58\% & 0.58\% & 0.59\% & 0.60\% & 0.59\% & 0.47\% & 0.46\% \\
    \hline
    \multirow{4}{*}{Pct. reduction} & Instance & 1 & 2 & 3 & 4 & 5 & 6 & 7 & 8 & 9 & 10\\
    \cline{2-12}
    & \F1^+-X & 0.14\% & 0\% & 0\% & 0\% & 0\% & 0\% & 0\% & 1.36\% & 1.82\% & 1.17\% \\
    \cline{2-12}
    & Instance & 11 & 12 & 13 & 14 & 15 & 16 & 17 & 18 & 19 & 20\\
    \cline{2-12}
    & \F1^+-X & 0\% & 0.17\% & 0\% & 0.04\% & 0.09\% & 0.13\% & 0\% & 0\% & 0.65\% & 0.11\%\\
    \hline
  \end{tabular}
  \label{tab:Exp3-less-congested-2}
\end{table}
}


\tred{Tables \ref{tab:Exp3-less-congested} and \ref{tab:Exp3-less-congested-2} present} the computational results for the less congested demand setting.
Under this setting, formulations \F1^+ and \F1^+-X exhibit similar overall performance.
Both formulations successfully solved 19 instances within the time limit.
The integrality gaps for \F1^+ and \F1^+-X are nearly identical across all instances.
This can be attributed to the decrease in demand for each time period, 
resulting in a smaller number of generators being started up, 
as well as a reduction in the number of ramp-ups and ramp-downs.
Therefore, few valid inequalities are applied in each solution process.
Consequently, the reduction in the integrality gap is small from \F1^+ to \F1^+-X.
For most instances, the solution time of \F1^+ is smaller than \F1^+-X, because valid inequalities \eqref{multi:eqn-q2:x1-ub}--\eqref{multi:eqn-q2:x1-x2-ub-2} are added as constraints in \F1^+-X, 
making the subproblems at each node of the branching process more difficult to solve.

\tred{Tables \ref{tab:Exp3-more-congested} and \ref{tab:Exp3-more-congested-2} present} the computational results for the more congested setting.
Under this setting, formulation \F1^+-X outperforms formulation \F1^+ in all instances.
The integrality gaps of the \F1^+-X formulation are much smaller than those of \F1^+, 
and the ``Pct.~reduction" is approximately 90\% for each instance.
Eight instances are solved to optimality by formulation \F1^+-X, 
and the TGaps for the remaining twelve instances are all within 0.05\%.
On the other hand, none of these instances are solved successfully by formulation \F1^+, 
and the TGaps are all above 0.26\%.
For most instances, formulation \F1^+-X explores less nodes than formulation \F1^+.
This is because as the demand for each time period increases, more generators are needed, 
and the number of ramp-ups and ramp-downs also increases.
As a result, more user cuts are used during the solution process.

\afterpage{
\begin{table}[t]
  \renewcommand{\arraystretch}{1.2}
  \centering
  \caption{Computational Results of the Third Experiment Under a More Congested Demand Setting}
  \fontsize{8}{12}\selectfont
  \setlength\tabcolsep{5pt}
  \begin{tabular}{|*{10}{c|}}
    \hline
    \multirow{2}{*}{Instance} & \multirow{2}{*}{\# var} & \multirow{2}{*}{\# bin var} & \multicolumn{2}{c|}{\# cstr} & \multicolumn{2}{c|}{CPU time [TGap]} 
    & \multicolumn{2}{c|}{\# nodes} & \# user cuts \\
    \cline{4-10}
       &&& \F1^+ & \F1^+-X & \F1^+     & \F1^+-X    & \F1^+  & \F1^+-X & \F1^+-X \\
    \hline
     1 & \multirow{20}{*}{6372} & \multirow{20}{*}{1296} & \multirow{20}{*}{36124} & \multirow{20}{*}{43576} & **[0.27\%] & 616.8      & 409894  & 124616  & 809\\
     2 &&&&& **[0.30\%] & 503.2      & 893931  & 62034   & 847 \\
     3 &&&&& **[0.42\%] & 471.4      & 49184   & 587395  & 581 \\
     4 &&&&& **[0.28\%] & 928.7     & 579540  & 87807   & 652 \\
     5 &&&&& **[0.33\%] & **[0.04\%] & 516315  & 860515  & 684 \\
     6 &&&&& **[0.31\%] & **[0.03\%] & 588982  & 1021155 & 764 \\
     7 &&&&& **[0.40\%] & 1195.5     & 602518  & 223840  & 669 \\
     8 &&&&& **[0.53\%] & **[0.05\%] & 634564  & 523103  & 1087 \\
     9 &&&&& **[0.51\%] & **[0.05\%] & 543776  & 289863  & 819 \\
    10 &&&&& **[0.53\%] & **[0.02\%] & 589285  & 434974  & 925 \\
    11 &&&&& **[0.39\%] & **[0.04\%] & 589812  & 432163  & 1088 \\
    12 &&&&& **[0.45\%] & **[0.03\%] & 641832  & 609698  & 932 \\
    13 &&&&& **[0.43\%] & 1714.3    & 610207  & 251633  & 897 \\
    14 &&&&& **[0.45\%] & **[0.03\%] & 603593  & 286153  & 827 \\
    15 &&&&& **[0.46\%] & **[0.04\%] & 834771  & 506532  & 797 \\
    16 &&&&& **[0.35\%] & **[0.04\%] & 617076  & 432809  & 734 \\
    17 &&&&& **[0.27\%] & **[0.02\%] & 838514  & 462097  & 778 \\
    18 &&&&& **[0.27\%] & 3221.3    & 603741  & 562787  & 892 \\
    19 &&&&& **[0.35\%] & 880.5      & 603631  & 310824  & 792 \\
    20 &&&&& **[0.58\%] & **[0.02\%] & 639278  &  614575 & 962 \\
    \hline
  \end{tabular}
  \label{tab:Exp3-more-congested}
\end{table}

\begin{table}[ht]
  \renewcommand{\arraystretch}{1.2}
  \centering
  \caption{The Strength of LP Relaxations of MIP Formulations in the Third Experiment Under a More Congested Demand Setting}
  \setlength\tabcolsep{5pt}
  \fontsize{8}{12}\selectfont
  \begin{tabular}{|c|c|*{10}{c}|}
    \hline
    \multirow{6}{*}{IGap} & Instance & 1 & 2 & 3 & 4 & 5 & 6 & 7 & 8 & 9 & 10\\
    \cline{2-12}
     & \F1^+ & 2.33\% & 2.61\% & 2.53\% & 2.24\% & 2.43\% & 2.40\% & 2.48\% & 2.60\% & 2.56\% & 2.48\% \\
     & \F1^+-X & 0.21\% & 0.21\% & 0.26\% & 0.20\% & 0.21\% & 0.17\% & 0.23\% & 0.32\% & 0.29\% & 0.26\% \\
    \cline{2-12}
     & Instance & 11 & 12 & 13 & 14 & 15 & 16 & 17 & 18 & 19 & 20\\
    \cline{2-12}
     & \F1^+ & 2.59\% & 2.76\% & 2.61\% & 2.41\% & 2.56\% & 2.56\% & 2.35\% & 2.25\% & 2.41\% & 2.48\% \\
     & \F1^+-X & 0.24\% & 0.22\% & 0.23\% & 0.30\% & 0.26\% & 0.22\% & 0.24\% & 0.26\% & 0.25\% & 0.25\% \\
    \hline
    \multirow{4}{*}{Pct. reduction} & Instance & 1 & 2 & 3 & 4 & 5 & 6 & 7 & 8 & 9 & 10\\
    \cline{2-12}
    & \F1^+-X & 90.81\% & 91.93\% & 89.87\% & 91.24\% & 91.36\% & 93.04\% & 90.54\% & 87.50\% & 88.79\% & 89.52\% \\
    \cline{2-12}
    & Instance & 11 & 12 & 13 & 14 & 15 & 16 & 17 & 18 & 19 & 20\\
    \cline{2-12}
    & \F1^+-X & 90.62\% & 91.88\% & 91.33\% & 87.41\% & 89.93\% & 91.36\% & 89.80\% & 88.40\% & 89.72\% & 89.76\%\\
    \hline
  \end{tabular}
  \label{tab:Exp3-more-congested-2}
\end{table}
}


\subsection{Results of the Three Experiments by Disabling the Smart Features of the Solver}

In this section, we run the three experiments in Section~\ref{sec:comp-exper} by disabling the smart features of CPLEX, including the presolve, heuristics, and default cut generation, to examine the strength of our proposed strong valid inequalities to help solve UC formulations.
As the numbers of variables and constraints remain unchanged for all tested formulations in these three experiments, they are not reported in the following tables.

The results of the first experiment are summarized in Tables~\ref{tab:apxB:Exp1-1} and \ref{tab:apxB:Exp1-2}.
Using formulation F1, CPLEX is unable to solve any of these 20 instances to optimality within the one-hour time limit.
In Table~\ref{tab:apxB:Exp1-1}, the ``---" rows denote the instances for which a feasible solution cannot be found by CPELX within the time limit using formulation F1.
Using formulation F1-X, CPLEX can solve 2 instances to optimality within the time limit.
The number of nodes explored by F1-X is much smaller than that of formulation F1.
The number of user cuts added by F1-X in the solution process is smaller compared with the total number of constraints in formulations F1 and F1-X.
For the instances that cannot be solved to optimality using formulation F1-X, the terminating gaps are all within $0.05\%$ except for instances 2 and 7, and are much smaller than those using formulation F1.
The integrality gaps generated by formulation F1-X are substantially smaller than those of F1, with reductions of at least $55\%$.
These results demonstrate that the proposed strong valid inequalities significantly tighten the single-binary formulation, thereby enhancing the efficiency of the solution process.
\afterpage{
\begin{table}[ht]
  \renewcommand{\arraystretch}{1.2}
  \centering
  \caption{\tred{Performance of MIP Formulations in the First Experiment by Disabling Smart Features}}
  \setlength\tabcolsep{7pt}
  \fontsize{8}{12}\selectfont
  \begin{tabular}{|*{6}{c|}}
    \hline
    \multirow{2}{*}{Instance} & \multicolumn{2}{c|}{CPU time [TGap]} 
    & \multicolumn{2}{c|}{\# nodes} & \# user cuts \\
    \cline{2-6}
     & F1           & F1-X       & F1      & F1-X    & F1-X \\
     \hline
     1 & **[0.25\%] & **[0.04\%] & 3243082 & 1331163 & 428 \\
     2 & **[0.33\%] & **[0.10\%] & 1687199 & 906171  & 768  \\
     3 & **[0.35\%] & **[0.05\%] & 1398467 & 403775  & 937 \\
     4 & **[0.30\%] & **[0.01\%] & 1217117 & 1592032 & 490 \\
     5 & **[0.48\%] & **[0.03\%] & 1218680 & 494229  & 823 \\
     6 & **[0.53\%] & **[0.02\%] & 2492793 & 837237  & 823 \\
     7 & **[0.31\%] & **[0.06\%] & 1647817 & 576308  & 472 \\
     8 & **[0.53\%] & **[0.05\%] & 1431881 & 547503  & 732 \\
     9 & **[0.44\%] & **[0.04\%] & 1062217 & 481563  & 899 \\
    10 & **[0.62\%] & **[0.03\%] & 1232270 & 552396  & 790 \\
    11 & **[0.47\%] &   671.2    & 397815  & 17587   & 1050 \\
    12 & **[0.66\%] & **[0.03\%] & 345767  & 132849  & 2423 \\
    13 & **[0.71\%] & **[0.02\%] & 300637  & 50590   & 1907 \\
    14 & ---        & **[0.03\%] & ---     & 70987   & 3195 \\
    15 & ---        & **[0.03\%] & ---     & 17541   & 2050 \\
    16 & **[0.70\%] & 935.7      & 262321  & 15358   & 1347 \\
    17 & ---        & **[0.02\%] & ---     & 68966   & 2168 \\
    18 & **[1.63\%] & **[0.01\%] & 155156  & 106438  & 2116 \\
    19 & ---        & **[0.02\%] & ---     & 61927   & 2338 \\
    20 & ---        & **[0.02\%] & ---     & 66462   & 1810 \\
    \hline
  \end{tabular}
  \label{tab:apxB:Exp1-1}
\end{table}

\begin{table}[ht]
  \renewcommand{\arraystretch}{1.2}
  \centering
  \caption{\tred{The Strength of LP Relaxations of MIP Formulations in the First Experiment by Disabling Smart Features}}
  \setlength\tabcolsep{5pt}
  \fontsize{8}{12}\selectfont
  \begin{tabular}{|c|c|*{10}{c}|}
    \hline
    \multirow{6}{*}{IGap} & Instance & 1 & 2 & 3 & 4 & 5 & 6 & 7 & 8 & 9 & 10\\
    \cline{2-12}
     & F1 & 0.45\% & 0.39\% & 0.41\% & 0.36\% & 0.51\% & 0.62\% & 0.34\% & 0.55\% & 0.51\% & 0.64\% \\
     & F1-X & 0.20\% & 0.14\% & 0.08\% & 0.06\% & 0.05\% & 0.05\% & 0.07\% & 0.06\% & 0.05\% & 0.05\% \\
    \cline{2-12}
     & Instance & 11 & 12 & 13 & 14 & 15 & 16 & 17 & 18 & 19 & 20\\
    \cline{2-12}
     & F1 & 0.32\% & 0.32\% & 0.40\% & 0.39\% & 0.52\% & 0.36\% & 0.45\% & 0.42\% & 0.37\% & 0.44\% \\
     & F1-X & 0.05\% & 0.02\% & 0.02\% & 0.03\% & 0.03\% & 0.08\% & 0.01\% & 0.01\% & 0.02\% & 0.02\% \\
    \hline
    \multirow{4}{*}{Pct. reduction} & Instance & 1 & 2 & 3 & 4 & 5 & 6 & 7 & 8 & 9 & 10\\
    \cline{2-12}
    & F1-X & 55.7\% & 63.9\% & 81.6\% & 84.5\% & 90.2\% & 92.4\% & 78.7\% & 89.5\% & 89.5\% & 92.8\% \\
    \cline{2-12}
    & Instance & 11 & 12 & 13 & 14 & 15 & 16 & 17 & 18 & 19 & 20\\
    \cline{2-12}
    & F1-X & 83.1\% & 94.1\% & 95.1\% & 92.3\% & 94.0\% & 77.1\% & 96.0\% & 96.6\% & 94.2\% & 94.4\%\\
    \hline
  \end{tabular}
  \label{tab:apxB:Exp1-2}
\end{table}
}

Tables~\ref{tab:apx:Exp2-1} and \ref{tab:apx:Exp2-2} present the results for the second experiment.
Using formulation F2, no instance can be solved to optimality by CPLEX within the time limit.
In contrast, using formulation F2-X, the two-binary formulation with our proposed valid inequalities, CPLEX can solve 5 instances to optimality within the time limit.
Using formulation F2-Y, the two-binary formulation with valid inequalities from \cite{pan2016polyhedral}, CPLEX is able to solve 2 instances to optimality within the time limit.
Using formulation F2-Z, the two-binary formulation with our and \citeauthor{pan2016polyhedral}'s inequalities, CPLEX can solve only 1 instance to optimality within the time limit.
This can be attributed to the increase in the size of the LP relaxation at each node during the branching process, resulting from the addition of a large number of user cuts, which adversely affects the solution time. 
Fewer nodes are explored by the three strong formulations F2-X, F2-Y, and F2-Z, compared to formulation F2.
The number of user cuts added during the solution process by these three strong formulations is smaller compared to the total number of constraints in formulations F2, F2-X, F2-Y, and F2-Z.
For nearly all of the unsolved instances, the terminating gaps of the three strong formulations are within $0.05\%$.
The integrality gaps of these three strong formulations are also much smaller than those using formulation F2.
Furthermore, formulations F1-X and F2-Y have similar performance, suggesting that the single-binary formulation with our proposed valid inequalities exhibits comparable results to the two-binary formulation with strong valid inequalities.
In addition, formulation F2-X outperforms both F2-Y and F2-Z, demonstrating that the effectiveness of our proposed valid inequalities in tightening the two-binary formulation is more significant than that of valid inequalities from \cite{pan2016polyhedral}.
\afterpage{
\begin{table}[ht]
  \renewcommand{\arraystretch}{1.2}
  \centering
  \caption{\tred{Performance of MIP Formulations in the Second Experiment by Disabling Smart Features}}
  \setlength\tabcolsep{5pt}
  \fontsize{7}{12}\selectfont
  \begin{tabular}{|*{12}{c|}}
    \hline
    \multirow{2}{*}{Instance} & \multicolumn{4}{c|}{CPU time [TGap]} 
    & \multicolumn{4}{c|}{\# nodes} & \multicolumn{3}{c|}{\# user cuts} \\
    \cline{2-12}
       & F2         & F2-X    & F2-Y   & F2-Z        & F2      & F2-X     & F2-Y    & F2-Z     & F2-X   & F2-Y   & F2-Z\\
    \hline
     1 & **[0.26\%]  & 2429.2     & 184.4       & **[0.03\%]  & 3749277 & 4217040 & 266731  & 1948319  & 461  & 211  & 571\\
     2 & ** [0.29\%] & **[0.05\%] & **[0.09\%]  & **[0.05\%]  & 3496986 & 2166004 & 1841074 & 1683486  & 521  & 304  & 576\\
     3 & **[0.34\%]  & **[0.06\%] & **[0.05\%]  & **[0.05\%]  & 1936293 & 1603280 & 1303901 & 520019   & 730  & 293  & 426\\
     4 & ** [0.26\%] & **[0.02\%] & **[0.02\%]  & **[0.02\%]  & 2231992 & 2161428 & 1690409 & 1537977  & 396  & 171  & 338\\
     5 & **[0.46\%]  & **[0.04\%] & **[0.04\%]  & **[0.03\%]  & 1786842 & 1790680 & 1151283 & 715773   & 780  & 238  & 535\\
     6 & **[0.51\%]  & 3241.8     & **[0.01\%]  & **[0.01\%]  & 4239872 & 3331564 & 1556754 & 1175628  & 782  & 298  & 563\\
     7 & **[0.27\%]  & **[0.05\%] & 332.8       & **[0.06\%]  & 1824764 & 1492391 & 161419  & 915783   & 504  & 177  & 450\\
     8 & ** [0.46\%] & **[0.05\%] & **[0.05\%]  & **[0.05\%]  & 1955325 & 1620740 & 703513  & 631110   & 542  & 220  & 444\\
     9 & **[0.43\%]  & **[0.03\%] & **[0.05\%]  & **[0.05\%]  & 2294145 & 1603961 & 661026  & 403170   & 1007 & 351  & 608\\
    10 & **[0.60\%]  & **[0.02\%] & **[0.04\%]  & **[0.03\%]  & 2024929 & 1983320 & 875849  & 950878   & 869  & 299   & 720\\
    11 & **[0.42\%]  & **[0.05\%] & **[0.05\%]  & **[0.05\%]  & 647225  & 479026  & 448558  & 184850   & 1066 & 643   & 1392\\
    12 & **[0.38\%]  & **[0.02\%] & **[0.06\%]  & **[0.03\%]  & 558586  & 410723  & 66363   & 42267    & 1983 & 639   & 1077\\
    13 & **[0.41\%]  & **[0.02\%] & **[0.02\%]  & **[0.02\%]  & 562676  & 437129  & 148136  & 148136   & 1420 & 1650  & 1483\\
    14 & **[0.57\%]  & **[0.04\%] & **[0.03\%]  & **[0.03\%]  &  478871 & 317574  &  412241 & 164262   & 2285 & 3048  & 1140\\
    15 & **[0.82\%]  & 797.1      & **[0.02\%]  & **[0.03\%]  & 617261  & 70838   & 181056  & 41661    & 1827 & 658   & 1739\\
    16 & **[0.48\%]  & 1812.6     & **[0.03\%]  & **[0,02\%]  & 521416  & 283080  & 236032  & 107088   & 1576 & 721   & 1372\\
    17 & **[0.80\%]  & 788.0      & **[0.02\%]  & 2570.1      & 314340  & 85493   & 204695  & 34590    & 1446 & 563   & 1239\\
    18 & **[0.81\%]  & **[0.01\%] & **[0.01\%]  & **[0,02\%]  & 482334  &  304661 & 195106  & 125472   & 1987 & 679   & 1501\\
    19 & **[0.69\%]  & **[0.02\%] & **[0.02\%]  & **[0.02\%]  & 403289  &  370957 & 173920  & 94781    & 2124 & 600 & 1238\\
    20 & **[0.81\%] & **[0.01\%] & **[0.02\%]   & **[0.04\%]  &  452740 & 290555  & 149267  & 22281    & 1764 & 496 & 1315\\
    \hline
  \end{tabular}
  \label{tab:apx:Exp2-1}
\end{table}

\begin{table}[ht]
  \renewcommand{\arraystretch}{1.2}
  \centering
  \caption{\tred{The Strength of LP Relaxations of MIP Formulations in the Second Experiment by Disabling Smart Features}}
  \setlength\tabcolsep{5pt}
  \fontsize{8}{12}\selectfont
  \begin{tabular}{|c|c|*{10}{c}|}
    \hline
    \multirow{10}{*}{IGap} & Instance & 1 & 2 & 3 & 4 & 5 & 6 & 7 & 8 & 9 & 10\\
    \cline{2-12}
    & F2 & 0.37\% & 0.31\% & 0.25\% & 0.20\% & 0.26\% & 0.25\% & 0.25\% & 0.29\% & 0.27\% & 0.36\% \\
    & F2-X & 0.20\% & 0.14\% & 0.08\% & 0.06\% & 0.05\% & 0.04\% & 0.07\% & 0.06\% & 0.06\% & 0.05\% \\
    & F2-Y & 0.20\% & 0.14\% & 0.08\% & 0.05\% & 0.05\% & 0.05\% & 0.08\% & 0.06\% & 0.06\% & 0.05\% \\
    & F2-Z & 0.20\% & 0.14\% & 0.08\% & 0.05\% & 0.05\% & 0.04\% & 0.07\% & 0.06\% & 0.06\% & 0.05\%\\
    \cline{2-12}
    & Instance & 11 & 12 & 13 & 14 & 15 & 16 & 17 & 18 & 19 & 20\\
    \cline{2-12}
    & F2 & 0.23\% & 0.19\% & 0.20\% & 0.20\% & 0.26\% & 0.20\% & 0.22\% & 0.19\% & 0.17\% & 0.22\% \\
    & F2-X & 0.05\% & 0.02\% & 0.02\% & 0.03\% & 0.03\% & 0.02\% & 0.03\% & 0.02\% & 0.02\% & 0.04\% \\
    & F2-Y & 0.06\% & 0.03\% & 0.02\% & 0.03\% & 0.03\% & 0.02\% & 0.02\% & 0.02\% & 0.02\% & 0.03\%\\
    & F2-Z & 0.05\% & 0.02\% & 0.02\% & 0.03\% & 0.03\% & 0.02\% & 0.02\% & 0.02\% & 0.02\% & 0.03\%\\
    \hline
    \multirow{8}{*}{Pct. reduction} & Instance & 1 & 2 & 3 & 4 & 5 & 6 & 7 & 8 & 9 & 10\\
    \cline{2-12}
    & F2-X & 45.5\% & 54.8\% & 69.2\% & 69.6\% & 80.0\% & 83.4\% & 72.1\% & 78.0\% & 77.4\% & 86.9\% \\
    & F2-Y & 45.5\% & 54.7\% & 67.9\% & 73.2\% & 81.8\% & 80.6\% & 69.6\% & 78.8\% & 78.6\% & 85.9\%\\
    & F2-Z & 45.5\% & 54.8\% & 69.4\% & 73.2\% & 81.8\% & 84.1\% & 72.1\% & 80.3\% & 78.6\% & 86.9\%\\
    \cline{2-12}
    & Instance & 11 & 12 & 13 & 14 & 15 & 16 & 17 & 18 & 19 & 20\\
    \cline{2-12}
    & F2-X & 78.2\% & 87.5\% & 89.3\% & 86.9\% & 88.6\% & 91.4\% & 88.2\% & 89.7\% & 85.6\% & 83.2\%\\
    & F2-Y & 76.2\% & 84.9\% & 89.0\% & 86.0\% & 89.3\% & 87.8\% & 89.3\% & 91.8\% & 88.1\% & 84.3\%\\
    & F2-Z & 78.1\% & 87.7\% & 89.5\% & 87.0\% & 89.4\% & 91.4\% & 89.3\% & 91.8\% & 88.1\% & 84.4\%\\
    \hline
  \end{tabular}
  \label{tab:apx:Exp2-2}
\end{table}
}
The results for the third experiment are presented in Tables~\ref{tab:axp:Exp3-1} and \ref{tab:axp:Exp3-2}.
Using formulation \F1^+, CPLEX cannot solve any of these 20 instances to optimality within the time limit while using formulation \F1^+-X, CPLEX can solve 1 instance to optimality.
For the instance that cannot be solved to optimality within the time limit, the terminating gaps of formulation \F1^+-X are much smaller than those of formulation \F1^+.
The number of nodes explored by formulation \F1^+-X is smaller than that of formulation \F1^+.
The number of user cuts added in the solution process is also smaller than the total number of constraints in formulations \F1^+ and \F1^+-X.
The integrality gaps of formulation \F1^+-X are also much smaller than those using formulation \F1^+, which indicates that our proposed valid inequalities can tighten the single-binary formulation significantly.

\afterpage{
\begin{table}[ht]
  \renewcommand{\arraystretch}{1.2}
  \centering
  \caption{\tred{Performance of MIP Formulations in the Third Experiment by Disabling Smart Features}}
  \fontsize{8}{12}\selectfont
  \setlength\tabcolsep{7pt}
  \begin{tabular}{|*{6}{c|}}
    \hline
    \multirow{2}{*}{Instance} & \multicolumn{2}{c|}{CPU time [TGap]} 
    & \multicolumn{2}{c|}{\# nodes} & \# user cuts \\
    \cline{2-6}
    & \F1^+ & \F1^+-X & \F1^+ & \F1^+-X & \F1^+-X \\
    \hline
     1 & **[1.07\%] & **[0.31\%] & 678548  & 267276 & 1636 \\
     2 & **[1.14\%] & **[0.33\%] & 812095  & 204492 & 1470 \\
     3 & **[0.85\%] & **[0.23\%] & 755989  & 276532 & 1500 \\
     4 & **[1.04\%] & **[0.45\%] & 828645  & 157520 & 1421 \\
     5 & **[1.32\%] & 3398.7     & 601218  & 298525 & 1943 \\
     6 & **[1.23\%] & **[0.17\%] & 511022  & 324346 & 1594 \\
     7 & **[1.42\%] & **[0.23\%] & 962694  & 491768 & 1712 \\
     8 & **[1.20\%] & **[0.37\%] & 747792  & 294910 & 1424 \\
     9 & **[1.10\%] & **[0.27\%] & 880796  & 215853 & 1512 \\
    10 & **[0.91\%] & **[0.26\%] & 1046090 & 285761 & 1862 \\
    11 & **[1.11\%] & **[0.31\%] & 870860  & 382853 & 1457 \\
    12 & **[1.59\%] & **[0.26\%] & 765177  & 275495 & 1955 \\
    13 & **[0.96\%] & **[0.22\%] & 902408  & 206672 & 1464 \\
    14 & **[1.09\%] & **[0.47\%] & 902171  & 285107 & 1594 \\
    15 & **[1.27\%] & **[0.35\%] & 704154  & 267693 & 1580 \\
    16 & **[1.23\%] & **[0.30\%] & 850453  & 101352 & 1338 \\
    17 & **[1.17\%] & **[0.38\%] & 569100  & 276125 & 1412 \\
    18 & **[0.96\%] & **[0.27\%] & 710371  & 282015 & 1551 \\
    19 & **[1.23\%] & **[0.27\%] & 639002  & 274153 & 1569 \\
    20 & **[1.20\%] & **[0.25\%] & 608163  & 414109 & 1420 \\
    \hline
  \end{tabular}
  \label{tab:axp:Exp3-1}
\end{table}

\begin{table}[ht]
  \renewcommand{\arraystretch}{1.2}
  \centering
  \caption{\tred{The Strength of LP Relaxations of MIP Formulations in the Third Experiment by Disabling Smart Features}}
  \setlength\tabcolsep{5pt}
  \fontsize{8}{12}\selectfont
  \begin{tabular}{|c|c|*{10}{c}|}
    \hline
    \multirow{6}{*}{IGap} & Instance & 1 & 2 & 3 & 4 & 5 & 6 & 7 & 8 & 9 & 10\\
    \cline{2-12}
     & \F1^+ & 0.94\% & 1.10\% & 0.81\% & 1.04\% & 0.86\% & 0.89\% & 1.15\% & 1.08\% & 1.03\% & 0.88\% \\
     & \F1^+-X & 0.45\% & 0.46\% & 0.43\% & 0.63\% & 0.32\% & 0.37\% & 0.46\% & 0.58\% & 0.48\% & 0.46\% \\
    \cline{2-12}
     & Instance & 11 & 12 & 13 & 14 & 15 & 16 & 17 & 18 & 19 & 20\\
    \cline{2-12}
     & \F1^+ & 0.97\% & 1.23\% & 0.94\% & 1.07\% & 1.08\% & 0.93\% & 0.96\% & 0.89\% & 1.04\% & 0.90\% \\
     & \F1^+-X & 0.47\% & 0.53\% & 0.44\% & 0.59\% & 0.49\% & 0.42\% & 0.49\% & 0.48\% & 0.48\% & 0.40\% \\
    \hline
    \multirow{4}{*}{Pct. reduction} & Instance & 1 & 2 & 3 & 4 & 5 & 6 & 7 & 8 & 9 & 10\\
    \cline{2-12}
    & \F1^+-X & 52.2\% & 58.4\% & 46.6\% & 39.7\% & 62.3\% & 59.1\% & 59.7\% & 46.4\% & 52.8\% & 47.4\% \\
    \cline{2-12}
    & Instance & 11 & 12 & 13 & 14 & 15 & 16 & 17 & 18 & 19 & 20\\
    \cline{2-12}
    & \F1^+-X & 51.6\% & 57.1\% & 53.5\% & 44.8\% & 54.5\% & 55.4\% & 48.9\% & 46.3\% & 54.0\% & 56.1\%\\
    \hline
  \end{tabular}
  \label{tab:axp:Exp3-2}
\end{table}
}


\end{APPENDICES}
\end{document}